\DeclareRobustCommand{\SkipTocEntry}[5]{}
\definecolor{blue}{rgb}{0.38, 0.51, 0.71}
\definecolor{red}{RGB}{175, 49, 39}
\definecolor{green}{RGB}{146, 227, 95}
\tikzstyle{tikzfig}=[baseline=-0.25em,scale=0.5]
\tikzstyle{none}=[inner sep=0mm]
\newcommand{\tikzfig}[1]{%
{\tikzstyle{every picture}=[tikzfig]
\IfFileExists{#1.tikz}
  {\input{#1.tikz}}
  {%
    \IfFileExists{./figures/#1.tikz}
      {\input{./figures/#1.tikz}}
      {\tikz[baseline=-0.5em]{\node[draw=red,font=\color{red},fill=red!10!white] {\textit{#1}};}}%
  }}%
}
\tikzstyle{every loop}=[]
\def\slashedarrowfill@#1#2#3#4#5{%
	$\m@th\thickmuskip0mu\medmuskip\thickmuskip\thinmuskip\thickmuskip
	\relax#5#1\mkern-7mu%
	\cleaders\hbox{$#5\mkern-2mu#2\mkern-2mu$}\hfill
	\mathclap{#3}\mathclap{#2}%
	\cleaders\hbox{$#5\mkern-2mu#2\mkern-2mu$}\hfill
	\mkern-7mu#4$%
}
\def\rightslashedarrowfill@{%
	\slashedarrowfill@\relbar\relbar\mapstochar\rightarrow}
\newcommand\xslashedrightarrow[2][]{%
	\ext@arrow 0055{\rightslashedarrowfill@}{#1}{#2}}
\newcommand{\ostar}{\mathbin{\mathpalette\make@circled\star}}
\newcommand{\make@circled}[2]{%
	\ooalign{$\m@th#1\smallbigcirc{#1}$\cr\hidewidth$\m@th#1#2$\hidewidth\cr}%
}
\newcommand{\smallbigcirc}[1]{%
	\vcenter{\hbox{\scalebox{0.77778}{$\m@th#1\bigcirc$}}}%
}
\DeclareFontFamily{U}{min}{}
\DeclareFontShape{U}{min}{m}{n}{<-> dmjhira}{}
\DeclareMathAlphabet\EuRoman{U}{eur}{m}{n}
\SetMathAlphabet\EuRoman{bold}{U}{eur}{b}{n}
\newcommand{\euler}{\EuRoman}
\tikzset{curve/.style={settings={#1},to path={(\tikztostart)
			.. controls ($(\tikztostart)!\pv{pos}!(\tikztotarget)!\pv{height}!270:(\tikztotarget)$)
			and ($(\tikztostart)!1-\pv{pos}!(\tikztotarget)!\pv{height}!270:(\tikztotarget)$)
			.. (\tikztotarget)\tikztonodes}},
	settings/.code={\tikzset{quiver/.cd,#1}
		\def\pv##1{\pgfkeysvalueof{/tikz/quiver/##1}}},
	quiver/.cd,pos/.initial=0.35,height/.initial=0}
\tikzset{tail reversed/.code={\pgfsetarrowsstart{tikzcd to}}}
\tikzset{2tail/.code={\pgfsetarrowsstart{Implies[reversed]}}}
\tikzset{2tail reversed/.code={\pgfsetarrowsstart{Implies}}}
\tikzset{no body/.style={/tikz/dash pattern=on 0 off 1mm}}
\ProvideDocumentCommand{\hypersetup}{m}{}
\definecolor{blue(pigment)}{rgb}{0.2, 0.2, 0.6}
\definecolor{americanrose}{rgb}{1.0, 0.01, 0.24}
\definecolor{nicegreen}{rgb}{0.0, 0.5, 0.0}
\definecolor{deepmagenta}{rgb}{0.8, 0.0, 0.8}
\definecolor{deepcarrotorange}{rgb}{0.91, 0.41, 0.17}
\definecolor{cadetgrey}{rgb}{0.57, 0.64, 0.69}
\newtheorem{theoremm}{Theorem}[section]
\declaretheorem[style=plain,name=Theorem,numberlike=theoremm]{theorem}
\declaretheorem[style=plain,name=Theorem,numbered=no]{theorem*}
\declaretheorem[style=plain,name=Lemma,numberlike=theoremm]{lemma}
\declaretheorem[style=plain,name=Proposition,numberlike=theoremm]{proposition}
\declaretheorem[style=plain,name=Corollary,numberlike=theoremm]{corollary}
\declaretheorem[style=definition,name=Definition,numberlike=theorem]{definition}
\declaretheorem[style=remark,name=Example,numberlike=theorem]{example}
\declaretheorem[style=remark,name=Remark,numberlike=theorem]{remark}
\declaretheorem[style=remark,name=Notation,numberlike=theorem]{notation}
\newcommand{\on}[1]{\operatorname{#1}}
\newcommand{\setj}[1]{\left\{ #1 \right\}}
\newcommand{\kotimes}{\otimes_{\Bbbk}}
\newcommand{\thru}{\mathtt{th}}
\newcommand{\obj}[1]{\underline{\mathbf{#1}}}
\newcommand{\mobij}[1]{#1}
\newcommand{\mobijd}[1]{#1}
\newcommand{\mobijs}[1]{\underline{\mathbf{#1}}}
\newcommand{\mobijt}[1]{#1}
\newcommand{\uqsl}{{U}_{q}(\mathfrak{sl}_{2})}
\newcommand{\uqg}{{U}_{q}(\mathfrak{g})}
\DeclareMathSymbol{\blackdiamond}{\mathbin}{mathb}{"0C}
\tikzstyle{blob}=[fill=white, draw=black, shape=circle, inner sep=1.5pt]
\tikzstyle{fullblob}=[fill=black, draw=black, shape=circle, tikzit fill=black, tikzit draw=black, inner sep=1.5pt]
\tikzstyle{jonesrectangle}=[fill=white, draw=black, shape=rectangle, minimum width=1cm, minimum height=0.6cm]
\tikzstyle{bluearrow}=[draw={rgb,255: red,0; green,0; blue,255}, thick, <-]
\tikzstyle{bieski}=[-, draw={rgb,255: red,41; green,28; blue,218}, dashed, thick]
\tikzstyle{thickstrand}=[-, thick]
\tikzstyle{redarrow}=[draw={rgb,255: red,255; green,0; blue,0}, thick, <-]
\tikzstyle{blueline}=[draw={rgb,255: red,0; green,0; blue,255}, thick, -]
\tikzstyle{background}=[-, dashed, draw={rgb,255: red,50; green,50; blue,50}, thin]
\tikzstyle{blue arrow}=[->, draw=blue, thick]
\tikzstyle{redline}=[-, draw=red, thick]
\tikzstyle{arrow}=[->, thick]
\begin{document}
	\title{A Coboundary Temperley--Lieb Category for \texorpdfstring{\lowercase{$\mathfrak{sl}_{2}$}}{sl2}-Crystals}
	
	\author{Moaaz Alqady}
	\address{M.A., Department of Mathematics, University of Oregon, Fenton Hall, Eugene, OR 97403, USA}
	\email{malqady@uoregon.edu}
	
	\author{Mateusz Stroi{\' n}ski}
	\address{M.S., Department of Mathematics, Uppsala University, Box. 480, SE-75106, Uppsala, Sweden}
	\email{mateusz.stroinski@math.uu.se}
	
	\begin{abstract}
       By considering a suitable renormalization of the Temperley--Lieb category, we study its specialization to the case $q=0$. Unlike the $q\neq 0$ case, the obtained monoidal category, $\mathcal{TL}_0(\Bbbk)$, is not rigid or braided. 
       We provide a closed formula for the Jones--Wenzl projectors in $\mathcal{TL}_0(\Bbbk)$ and give semisimple bases for its endomorphism algebras. We explain how to obtain the same basis using the representation theory of finite inverse monoids, via the associated M{\" o}bius inversion.
       We then describe a coboundary structure on $\mathcal{TL}_0(\Bbbk)$ and show that its idempotent completion is coboundary monoidally equivalent to the category of $\mathfrak{sl}_{2}$-crystals. This gives a diagrammatic description of the commutor for $\mathfrak{sl}_{2}$-crystals defined by Henriques and Kamnitzer and of the resulting action of the cactus group.
       We also study fiber functors of $\mathcal{TL}_0(\Bbbk)$ and discuss how they differ from the $q\neq 0$ case.
	\end{abstract}

	\maketitle
	
	\tableofcontents
    \addtocontents{toc}{\SkipTocEntry}
\subsection*{Acknowledgements}\label{sec:acknowledgements}
 The authors would like to thank Victor Ostrik for introducing them to the idea of a diagrammatic presentation for $\mathfrak{sl}_{2}$-crystals, and for many valuable discussions; Volodymyr Mazorchuk, for suggesting the connection between Jones-Wenzl projectors and M{\" o}bius inversion; Iva Halacheva for discussions about the cactus group action on crystals; and Alexis Langlois-R{\' e}millard for many helpful comments on the manuscript.
	
\section{Introduction}

Diagrammatically defined algebras and monoidal categories are an increasingly important tool in algebraic and categorical representation theory. Among the early examples of such structures are the Temperley--Lieb algebras and Temperley--Lieb categories. They find applications in many different settings, such as knot invariants (\cite{Ka}) and Soergel bimodules (\cite{E2}), and help formalize connections between the areas in which they arise. However, perhaps their most important, {\it defining} property, is the equivalence 
\[
\mathbf{Fund}(\uqsl)
 \simeq \mathcal{TL}_{q}(\Bbbk),
\]
between the Temperley--Lieb category $\mathcal{TL}_{q}(\Bbbk)$ and the category of tensor powers of the fundamental representation of $\uqsl$, providing a diagrammatic interpretation of the latter category. In many cases, this interpretation gives a better understanding of the representation theory of $\uqsl$, and it often greatly facilitates computations. This is to a large extent because the above equivalence is monoidal, and so the tensor product of $\uqsl$-modules is captured by horizontal concatenation of diagrams. Finding diagrammatic categories with similar property for $\uqg$ for more general $\mathfrak{g}$ is an extensively studied problem, initially posed in \cite{Ku}, where it was solved for $\mathfrak{g}$ of rank $2$. It has since been solved in type $A$ in \cite{CKM} and recently also in types $C$ in \cite{BERT}. For ongoing progress in types $B$ and $D$, see \cite{BW},\cite{BT}, and see \cite{SW} for results in type $F$.

Another fundamental idea in the theory of quantum groups is that of crystal bases, as introduced by Kashiwara (\cite{K1}). For finite-dimensional semisimple $\mathfrak{g}$, and finite-dimensional modules over $\uqg$, crystal bases can be obtained as a specialization of the canonical bases of \cite{Lu}, by setting $q=0$. However, in order to make this claim precise (and well-defined), a certain renormalization needs to be imposed. Working more abstractly, we obtain the category $\mathfrak{g}\mathbf{-Crys}$ of $\mathfrak{g}$-crystals. Crucially, this category is monoidal, and hence can be used to find bases for tensor products of $\uqg$-modules.

In this article, we connect these two aspects of representation theory of quantum groups in the simplest case, $\mathfrak{g} = \mathfrak{sl}_{2}$, by defining a diagrammatic monoidal category $\mathcal{TL}_{0}(\Bbbk)$ which is a specialization of the Temperley--Lieb category at $q=0$. Similarly to the definition of $\mathfrak{g}$-crystals, in order to make the specialization claim well-defined, a renormalization of the Temperley--Lieb category is required. We describe it in detail in \autoref{def:renormalization}. As one of our main results, \autoref{thm:equivalence}, we show that the completion of $\mathcal{TL}_{0}(\Bbbk)$ under direct sums and direct summands, which we denote by $\mathbf{CrysTL}$,
is monoidally equivalent to the category $\mathfrak{sl}_{2}\mathbf{-Crys}$.

We remark that the category $\mathcal{TL}_{0}(\Bbbk)$ we study has been previously considered by Virk in \cite{V}, in the context of an action of $\mathfrak{sl}_{2}\mathbf{-Crys}$ on a graded version of category $\mathcal{O}$ for $\mathfrak{sl}_{2}$, and, much more recently, in the recent work of Etingof and Penneys (\cite{EP}), where the unusual features of the monoidal structure on $\mathcal{TL}_{0}(\Bbbk)$ are used to illustrate the necessity of braiding in \cite[Theorem~1.1]{EP}; see \cite[Section~3.1]{EP} for details.

Beyond an elementary, detailed proof of the monoidal equivalence between
$\mathbf{CrysTL}$ and $\mathfrak{sl}_{2}\mathbf{-Crys}$, as well as a sketch of a different argument in \autoref{rem:alternativeequivalence}, we give elementary proofs of other structural properties of $\mathbf{CrysTL}$:
\begin{enumerate}
 \item\label{case1} In \autoref{TLBasis}, we establish a basis theorem for $\mathcal{TL}_{0}(\Bbbk)$, showing that Temperley--Lieb diagrams give bases for hom-spaces.
 \item\label{case2} In \autoref{JWuniqueness}, we find a closed formula for its Jones--Wenzl projectors, stated in \autoref{def:JW}.
 \item\label{case3} In \autoref{cor:semisimple}, we give a representation-theoretic proof of its semisimplicity. 
\end{enumerate}
  In the cases \eqref{case1} and \eqref{case3}, the proofs resemble those for the case of generic $q$, respectively $q\neq 0$, but the zigzag relation for $q=0$ implies significant simplifications. In case \eqref{case2}, the formula we obtain for Jones--Wenzl projectors is both more explicit and simpler in form than those known for $q \neq 0$, see \cite{Mo} and \cite{FK}.

Additionally, we observe that the endomorphism algebras in $\mathcal{TL}_{0}(\Bbbk)$ are easily identified as reduced (or {\it contracted}) monoid algebras. We connect this observation to the approach of Smith (\cite{Sm}) to $\mathfrak{g}$-crystals via categories enriched in $\mathbf{Set}_{\ast}$, the category of pointed sets. We show that the monoids in question are finite inverse monoids, which allows us to recover our descriptions of both the Jones--Wenzl projectors and of the induced semisimple bases purely in terms of representation theory of finite inverse monoids, more precisely, in terms of M{\" o}bius inversion for such monoids (see \cite[Chapter~9]{St}).

While many algebraic and representation-theoretic properties of $\mathcal{TL}_{0}(\Bbbk)$ are as nice, and often easier to derive, as the (generic) case $q \neq 0$, essentially the opposite is true for the monoidal structure of $\mathcal{TL}_{0}(\Bbbk)$. In particular:
\begin{enumerate}
    \item\label{introrigid} $\mathcal{TL}_{0}(\Bbbk)$ is not rigid (\autoref{prop:notrigid});
    \item\label{introbraided} $\mathcal{TL}_{0}(\Bbbk)$ is not braided 
    (\autoref{notbraided}).
\end{enumerate}
In fact, the results of \cite{EP} show that \eqref{introrigid} implies \eqref{introbraided}, establishing a connection between the two. We study further the consequences of these two properties, the former by investigating the fiber functors for $\mathbf{CrysTL}$ in \autoref{subsec:fiberfunctors}, and the latter by providing a diagrammatic description of the coboundary category structure on $\mathcal{TL}_{0}(\Bbbk)$. This structure extends uniquely to all of $\mathbf{CrysTL}$.

While the category $\mathfrak{g}\mathbf{-Crys}$ does not admit a braiding, perhaps the most interesting categorical structure it is endowed with is its {\it coboundary structure}. 
A coboundary structure on a monoidal category $\mathcal{C}$ consists of a {\it commutor} $\sigma_{A,B}: A \otimes B \rightarrow B \otimes A$, invertible for all $A,B$ in $\mathcal{C}$, natural in both $A$ and $B$, and such that it defines an action of the {\it cactus group} $J_{n}$ on $A^{\otimes n}$, just like symmetric and braided monoidal categories define actions of the symmetric groups and braid groups respectively. 
In \cite[Section~3]{D}, Drinfeld gives a construction which, given a quasitriangular, topologically free Hopf algebra over $\Bbbk[[\hbar]]$, produces a coboundary Hopf algebra. This yields a commutor on $\mathbf{Rep}(\uqg)$, and, as described in \cite{KT}, a commutor on $\mathfrak{g}\mathbf{-Crys}$.
Following an idea of Berenstein, a more explicit, combinatorial construction of a commutor for $\mathfrak{g}\mathbf{-Crys}$ (which is closely connected, but not equal, to that in \cite{KT}, see there for details), and further also for $\mathbf{Rep}(\uqg)$, was given by Henriques and Kamnitzer in \cite{HKam}.

In \autoref{sec:commutor}, we make the coboundary structure of \cite{HKam} on
$\mathfrak{sl}_{2}\mathbf{-Crys}$ more accessible by determining the corresponding coboundary structure on the diagrammatic category $\mathcal{TL}_{0}(\Bbbk)$. Our description, formulated in \autoref{TLCob}, is given directly by closed formulas in terms of the semisimple basis we obtain in \autoref{EndAlgebras} by using Jones--Wenzl projectors. 

We establish the coboundary axioms purely combinatorially, using diagrammatic properties of $\mathcal{TL}_{0}(\Bbbk)$ we obtain in the other sections. We then show that the coboundary structure we define on $\mathcal{TL}_{0}(\Bbbk)$ makes the equivalence of \autoref{thm:equivalence} a coboundary equivalence.
Towards that end, in \autoref{ProjEmb} we describe the bijection induced by this equivalence between the various summands of objects in $\mathcal{TL}_{0}(\Bbbk)$ and the direct summands of the tensor powers of the crystal underlying the defining $2$-dimensional representations of $\uqsl$.

In \autoref{cor:intervalreversal} we also calculate the interval-reversing morphisms (see \cite[Section~3.1]{HKam}) in the coboundary structure for $\mathcal{TL}_{0}(\Bbbk)$, the diagrams for which have a rather intuitive visual form. 
Finally, we remark that our description of the commutor involves an automorphism $\kappa_{m,n}$ of the set $\mathscr{D}_{m+n}$ of the set of cap diagrams whose domain has $m+n$ strands, and in turn a bijection on the summands of the $(m+n)^\text{th}$ tensor power of the fundamental representation of $\uqsl$. While $\kappa_{m,n}$ too seem quite visually intuitive, the authors are not aware of a representation-theoretic interpretation for these bijections.

A fiber functor on $\mathcal{TL}_{0}(\Bbbk)$ defines a bilinear form $\mathsf{b}$, realizing the cap diagram. In the case $q\neq 0$ studied in \cite{EO}, and similarly for some other rigid diagrammatic categories (see \cite{Tu}), the zig-zag relation allows one to reduce the characterization of fiber functors to just equivalence relations of bilinear forms which satisfy an additional property imposed by the circle evaluation relation. In particular, the images of the cup and the cap diagrams determine each other - we give a brief account of this in \autoref{Turaev}.

This reduction cannot be made in the case of $\mathcal{TL}_{0}(\Bbbk)$. The bilinear form $\mathsf{b}$ must be degenerate, and the $2$-tensor $\mathsf{t}$ realizing the cup diagram lies in the tensor product of the left and the right radicals of the form. Circle evaluation entails $\mathsf{b}(\mathsf{t}) = 1$, which allows us 
to decompose a fiber functor $(\mathsf{b},\mathsf{t})$ as an inflation of a fiber functor $(\mathsf{b}',\mathsf{t}')$ by a pair $(\mathsf{b}'',\mathsf{t}'')$ such that $\mathsf{b}''(\mathsf{t}'') = 0$ and all the degenerate blocks of $\mathsf{b}'$ are two-dimensional, see~\autoref{prop:inflationreduction}. 
We thus study the fiber functors of the latter kind, and show that, fixing $\mathsf{b}'$, their isomorphism classes correspond to the orbits of the conjugacy action of $\on{GL}(V)$ on $\mathfrak{sl}(V)$, where $V$ is the left radical of $\mathsf{b}''$, see~\autoref{cor:GIT}.

This shows that both the problem of classifying fiber functors, as well as the structure of the moduli space they form (see \cite[Section~3.1]{EO}) is more complicated than in the case $q\neq 0$. Indeed, the case $\mathsf{b}'' = 0$ gives us a moduli space of choices of $\mathsf{t}''$, which, as an affine GIT quotient, is of dimension $m-1$ (\autoref{cor:GIT}), while the dimension of the total space on which $\mathsf{b}$ is defined equals $2m$. Again, this contrasts the generic case $q \neq 0$ where $\mathsf{b,t}$ determine each other.

Furthermore, in \autoref{notagroupoid} we show that for $q = 0$ the {\it category} of fiber functors does not form a groupoid. We give examples of non-invertible morphisms of fiber functors, and in \autoref{prop:HopfModules} explain how the existence of such morphisms in the rigid case would contradict the Fundamental Theorem of Hopf Modules. Finally, we give examples of some operations for fiber functors on $\mathcal{TL}_{0}(\Bbbk)$, indicating further structure on the category they form.

The rest of the document is organized as follows. \autoref{sec:prelim} contains the necessary preliminaries and definitions for $\mathfrak{g}$-crystals, coboundary categories and Temperley--Lieb categories for $q\neq 0$. We also recall the commutor of \cite{HKam}. In \autoref{sec:crysTL}, we define the category $\mathcal{TL}_{0}(\Bbbk)$, establish its semisimplicity (\autoref{cor:semisimple}), the basis theorem (\autoref{TLBasis}), the description of Jones--Wenzl projectors (\autoref{JWuniqueness}). 
In \autoref{sec:mobius}, we describe the relation between M{\" o}bius inversion for inverse monoids and the semisimple basis for $\mathcal{TL}_{0}(\Bbbk)$. In \autoref{sec:equiv}, we establish a monoidal equivalence between the category of $\mathfrak{sl}_{2}$-crystals and (the Cauchy completion) of $\mathcal{TL}_{0}(\Bbbk)$ (\autoref{thm:equivalence}). In \autoref{sec:commutor} we define the commutor for $\mathcal{TL}_{0}(\Bbbk)$ (\autoref{TLCob}), and in \autoref{sec:coboundaryequiv}, we show that the equivalence of \autoref{thm:equivalence} is a coboundary equivalence (\autoref{coboundaryF}).
Finally, in \autoref{subsec:fiberfunctors}, we study the category fiber functors of $\mathcal{TL}_0(\Bbbk)$.

\section{Preliminaries}\label{sec:prelim}
	
	\subsection{$\mathfrak{g}$-Crystals}
	
	Given any complex reductive Lie algebra $\mathfrak{g}$, a $\mathfrak{g}$-crystal should be thought of as a combinatorial model for a finite-dimensional representation of $\mathfrak{g}$.
	Let $\Lambda$ denote the weight lattice of $\mathfrak{g}$, $\Lambda_+$ its set of dominant weights, $I$ the vertex set of the corresponding Dynkin diagram, $\{\alpha_i\}_{i\in I}$ the set of its simple roots, and $\{\alpha_i^\vee\}_{i\in I}$ the set of its simple coroots.
	
	\begin{definition}\label{def:gcrystal}
		A \emph{$\mathfrak{g}$-crystal} is a finite set $B$ together with maps 
        $\{e_i,f_i: B\to B\amalg \{0\}\}_{i\in I}$, $\{\varepsilon_i,\phi_i: B\to \mathbb{Z}\}_{i\in I}$ 
        and
        $\mathrm{wt}:B\to \Lambda$,
        such that, for all $i\in I$, and $b,b'\in B$,
		\begin{enumerate}
			\item\label{semiregular} $\varepsilon_i(b) = \max\{n\,|\,e_i^n(b)\neq 0\}$ and $\phi(b) = \max\{n\,|\,f_i^n(b)\neq 0\}$;
			\item $\phi_i(b)-\varepsilon_i(b) = \langle \mathrm{wt(b),\alpha_i^\vee\rangle}$;
			\item if $e_i(b)\neq 0$, then $\mathrm{wt}(e_i(b)) = \mathrm{wt}(b)+\alpha_i$; similarly if $f_i(b)\neq 0$, then $\mathrm{wt}(f_i(b)) = \mathrm{wt}(b)-\alpha_i$;
			\item $b'=e_i(b)$ if and only if $b=f_i(b')$.
		\end{enumerate}
	The maps $e_i$ and $f_i$ are called the \emph{Kashiwara operators} and are a renormalizations of the usual Chevalley operators of $\mathfrak{g}$.
	\end{definition}

	\begin{definition}
		Let $A$ and $B$ be two $\mathfrak{g}$-crystals. A \emph{morphism of crystals} $\psi:A\to B$ is a (set-theoretic) function $\psi:A\amalg \{0\} \to B \amalg \{0\}$ such that, for all $a\in A$ and $i\in I$,
		\begin{enumerate}
			\item $\psi(0)=0$
			\item $\mathrm{wt}(\psi(a)) = \mathrm{wt}(a)$
			\item $\psi(e_i\cdot a) = e_i\cdot \psi(a)$ and $\psi(f_i\cdot a) = f_i\cdot \psi(a)$.
		\end{enumerate}
	\end{definition}
    
 \begin{remark}
    In some settings a more general notion of crystals (see \cite[Definition~4.5.1]{HK}) is considered, which omits property~\eqref{semiregular} in \autoref{def:gcrystal}. The crystals we consider would then be referred to as \emph{normal} or \emph{semiregular} crystals. Similarly, there is also a more general notion of a morphism of crystals (see \cite[Definition~4.5.5]{HK}), and the notion we use is sometimes referred to as \emph{strict} morphisms. The less general notions we consider are motivated by the representation theory of quantum groups.
 \end{remark}
	
	Perhaps the most remarkable property of crystals is the existence of a tensor product operation on them, which allows us to understand and decompose tensor products of representations of $U_q(\mathfrak{g})$ according to simple combinatorial rules. 
	
	\begin{definition} \label{crysTensor}
		Given two crystals $A$ and $B$, the tensor product $A\otimes B$ is defined as the set $A\times B$ whose crystal structure is given by
		\begin{enumerate}
			\item $\mathrm{wt}(a\otimes b) = \mathrm{wt}(a) + \mathrm{wt}(b)$
			\item $e_i(a\otimes b) = 
			\begin{cases}
				e_i a \otimes b & \text{if } \phi_i(a)\geq \varepsilon_i(b)\\
				a \otimes e_i b & \text{if } \phi_i(a) < \varepsilon_i(b)
			\end{cases}$
			\item $f_i(a\otimes b) = 
			\begin{cases}
				f_i a \otimes b & \text{if } \phi_i(a) > \varepsilon_i(b)\\
				a \otimes f_i b & \text{if } \phi_i(a) \leq \varepsilon_i(b),
			\end{cases}$
		\end{enumerate}
		for all $a\in A$ and $b\in B$.
	\end{definition}
	
	One may think of a crystal $B$ more visually as an edge-colored directed graph with vertex set $B$, and where there is an edge of color $i\in I$ from $b$ to $b'$ if $f_i(b) = b'$.
	Such a graph is called the crystal graph of $B$, and it carries the same information as the crystal itself.
	A crystal is called $\emph{connected}$ if its crystal graph is connected.
	
	\begin{example}
		The following crystal graph corresponds to the adjoint representation of $\mathfrak{sl}_3$ and it can be seen in the weight lattice (with the two bases vectors of the 0-weight space identified).
		
		\begin{figure}[!htb]
			\centering
			\begin{minipage}{.5\textwidth}
				\centering
				% https://q.uiver.app/#q=WzAsOCxbMCwwLCJcXGJ1bGxldCJdLFsxLDAsIlxcYnVsbGV0Il0sWzIsMCwiXFxjaXJjIl0sWzAsMSwiXFxidWxsZXQiXSxbMCwyLCJcXGNpcmMiXSxbMSwyLCJcXGJ1bGxldCJdLFsyLDIsIlxcYnVsbGV0Il0sWzIsMSwiXFxidWxsZXQiXSxbMCwzLCIiLDAseyJjb2xvdXIiOlsyNDAsNjAsNjBdfV0sWzMsNCwiIiwwLHsiY29sb3VyIjpbMCw2MCw2MF19XSxbMCwxLCIiLDIseyJjb2xvdXIiOlswLDYwLDYwXX1dLFsxLDIsIiIsMix7ImNvbG91ciI6WzI0MCw2MCw2MF19XSxbNCw1LCIiLDAseyJjb2xvdXIiOlswLDYwLDYwXX1dLFs1LDYsIiIsMCx7ImNvbG91ciI6WzI0MCw2MCw2MF19XSxbMiw3LCIiLDIseyJjb2xvdXIiOlsyNDAsNjAsNjBdfV0sWzcsNiwiIiwyLHsiY29sb3VyIjpbMCw2MCw2MF19XV0=
\[\begin{tikzcd}
	\bullet & \bullet & \circ \\
	\bullet && \bullet \\
	\circ & \bullet & \bullet
	\arrow[draw={rgb,255:red,255;green,0;blue,0}, thick, from=1-1, to=1-2]
	\arrow[draw={rgb,255:red,0;green,0;blue,255}, thick, from=1-1, to=2-1]
	\arrow[draw={rgb,255:red,0;green,0;blue,255}, thick, from=1-2, to=1-3]
	\arrow[color={rgb,255:red,0;green,0;blue,255}, thick, from=1-3, to=2-3]
	\arrow[draw={rgb,255:red,255;green,0;blue,0}, thick, from=2-1, to=3-1]
	\arrow[color={rgb,255:red,255;green,0;blue,0}, thick, from=2-3, to=3-3]
	\arrow[color={rgb,255:red,255;green,0;blue,0}, thick, from=3-1, to=3-2]
	\arrow[color={rgb,255:red,0;green,0;blue,255}, thick, from=3-2, to=3-3]
\end{tikzcd}\]
			\end{minipage}%
			\begin{minipage}{0.5\textwidth}
				\centering
                \[
				\begin{tikzpicture}
	\begin{pgfonlayer}{nodelayer}
		\node [style=none] (18) at (0.5, 2.7) {};
		\node [style=none] (26) at (-0.5, 0.9) {};
		\node [style=none] (31) at (-1, 1.8) {};
		\node [style=none] (34) at (0.5, -0.9) {};
		\node [style=none] (38) at (0.5, 0.9) {};
		\node [style=none] (40) at (-1, 0) {};
		\node [style=none] (44) at (-0.5, -0.9) {};
		\node [style=none] (46) at (-0.5, 0.9) {};
		\node [style=none] (50) at (-0.5, -0.9) {};
		\node [style=none] (54) at (-0.5, -0.9) {};
		\node [style=none] (55) at (-1, -1.8) {};
		\node [style=none] (56) at (0.5, -0.9) {};
		\node [style=blob] (61) at (0, 0) {};
		\node [style=none] (62) at (1, 1.8) {};
		\node [style=none] (63) at (0.5, 0.9) {};
		\node [style=none] (64) at (2, 1.8) {};
		\node [style=none] (67) at (0.5, 2.7) {};
		\node [style=none] (68) at (1.5, 2.7) {};
		\node [style=none] (71) at (2.5, 0.9) {};
		\node [style=none] (72) at (0.5, 0.9) {};
		\node [style=none] (74) at (1, 1.8) {};
		\node [style=none] (75) at (2, 1.8) {};
		\node [style=none] (78) at (3, 0) {};
		\node [style=none] (80) at (2.5, -0.9) {};
		\node [style=none] (82) at (2.5, 0.9) {};
		\node [style=none] (84) at (1, -1.8) {};
		\node [style=none] (85) at (2.5, -0.9) {};
		\node [style=none] (86) at (0.5, -0.9) {};
		\node [style=none] (87) at (2, -1.8) {};
		\node [style=none] (88) at (1, 0) {};
		\node [style=none] (89) at (2, 0) {};
		\node [style=none] (91) at (-2.5, -0.9) {};
		\node [style=none] (93) at (-3, 0) {};
		\node [style=none] (95) at (-2.5, 0.9) {};
		\node [style=none] (99) at (-0.5, 0.9) {};
		\node [style=none] (100) at (-2.5, 0.9) {};
		\node [style=none] (101) at (-1, 0) {};
		\node [style=none] (102) at (-2, 1.8) {};
		\node [style=none] (103) at (-1, 1.8) {};
		\node [style=fullblob] (104) at (1.5, 0.9) {};
		\node [style=fullblob] (105) at (1.5, -0.9) {};
		\node [style=fullblob] (106) at (0, 1.8) {};
		\node [style=fullblob] (107) at (0, -1.8) {};
		\node [style=none] (115) at (-2.5, -0.9) {};
		\node [style=none] (116) at (-1, -1.8) {};
		\node [style=none] (119) at (-1, 0) {};
		\node [style=none] (121) at (-2, 0) {};
		\node [style=none] (122) at (-0.5, -0.9) {};
		\node [style=fullblob] (123) at (-1.5, -0.9) {};
		\node [style=fullblob] (124) at (-1.5, 0.9) {};
		\node [style=none] (130) at (-1.5, 2.7) {};
		\node [style=none] (131) at (-0.5, 2.7) {};
		\node [style=none] (133) at (-1, 1.8) {};
		\node [style=none] (137) at (-2, 1.8) {};
		\node [style=none] (138) at (-0.5, 0.9) {};
		\node [style=none] (144) at (0.5, -0.9) {};
		\node [style=none] (146) at (0.5, -2.7) {};
		\node [style=none] (147) at (1, -1.8) {};
		\node [style=none] (148) at (2, -1.8) {};
		\node [style=none] (150) at (1.5, -2.7) {};
		\node [style=none] (157) at (-0.5, -0.9) {};
		\node [style=none] (158) at (-1.5, -2.7) {};
		\node [style=none] (159) at (-1, -1.8) {};
		\node [style=none] (161) at (-2, -1.8) {};
		\node [style=none] (162) at (-0.5, -2.7) {};
	\end{pgfonlayer}
	\begin{pgfonlayer}{edgelayer}
		\draw [style=background] (26.center) to (31.center);
		\draw [style=background] (40.center) to (44.center);
		\draw [style=background] (40.center) to (46.center);
		\draw [style=background] (26.center) to (40.center);
		\draw [style=background] (54.center) to (56.center);
		\draw [style=background] (54.center) to (55.center);
		\draw [style=background] (54.center) to (61);
		\draw [style=background] (40.center) to (61);
		\draw [style=background] (26.center) to (61);
		\draw [style=background] (61) to (40.center);
		\draw [style=background] (61) to (44.center);
		\draw [style=background] (61) to (56.center);
		\draw [style=background] (61) to (46.center);
		\draw [style=background] (62.center) to (64.center);
		\draw [style=background] (62.center) to (63.center);
		\draw [style=background] (62.center) to (68.center);
		\draw [style=background] (62.center) to (67.center);
		\draw [style=background] (61) to (63.center);
		\draw [style=background] (26.center) to (72.center);
		\draw [style=background] (72.center) to (26.center);
		\draw [style=background] (72.center) to (74.center);
		\draw [style=background] (72.center) to (61);
		\draw [style=background] (86.center) to (50.center);
		\draw [style=background] (86.center) to (84.center);
		\draw [style=background] (86.center) to (61);
		\draw [style=background] (89.center) to (78.center);
		\draw [style=background] (89.center) to (88.center);
		\draw [style=background] (89.center) to (80.center);
		\draw [style=background] (89.center) to (82.center);
		\draw [style=background] (86.center) to (88.center);
		\draw [style=background] (88.center) to (89.center);
		\draw [style=background] (88.center) to (34.center);
		\draw [style=background] (88.center) to (38.center);
		\draw [style=background] (72.center) to (88.center);
		\draw [style=background] (88.center) to (61);
		\draw [style=background] (61) to (88.center);
		\draw [style=background] (54.center) to (101.center);
		\draw [style=background] (104) to (71.center);
		\draw [style=background] (104) to (72.center);
		\draw [style=background] (104) to (75.center);
		\draw [style=background] (104) to (74.center);
		\draw [style=background] (72.center) to (104);
		\draw [style=background] (62.center) to (104);
		\draw [style=background] (89.center) to (104);
		\draw [style=background] (104) to (88.center);
		\draw [style=background] (104) to (89.center);
		\draw [style=background] (88.center) to (104);
		\draw [style=background] (105) to (85.center);
		\draw [style=background] (105) to (86.center);
		\draw [style=background] (105) to (84.center);
		\draw [style=background] (105) to (87.center);
		\draw [style=background] (105) to (89.center);
		\draw [style=background] (105) to (88.center);
		\draw [style=background] (86.center) to (105);
		\draw [style=background] (89.center) to (105);
		\draw [style=background] (88.center) to (105);
		\draw [style=background] (26.center) to (106);
		\draw [style=background] (106) to (18.center);
		\draw [style=background] (106) to (31.center);
		\draw [style=background] (106) to (46.center);
		\draw [style=background] (62.center) to (106);
		\draw [style=background] (106) to (63.center);
		\draw [style=background] (72.center) to (106);
		\draw [style=background] (106) to (74.center);
		\draw [style=background] (54.center) to (107);
		\draw [style=background] (86.center) to (107);
		\draw [style=background] (119.center) to (121.center);
		\draw [style=background] (119.center) to (122.center);
		\draw [style=background] (40.center) to (121.center);
		\draw [style=background] (121.center) to (101.center);
		\draw [style=background] (121.center) to (93.center);
		\draw [style=background] (121.center) to (91.center);
		\draw [style=background] (121.center) to (95.center);
		\draw [style=background] (122.center) to (116.center);
		\draw [style=background] (122.center) to (119.center);
		\draw [style=background] (40.center) to (123);
		\draw [style=background] (54.center) to (123);
		\draw [style=background] (119.center) to (123);
		\draw [style=background] (123) to (122.center);
		\draw [style=background] (123) to (115.center);
		\draw [style=background] (123) to (116.center);
		\draw [style=background] (121.center) to (123);
		\draw [style=background] (122.center) to (123);
		\draw [style=background] (123) to (119.center);
		\draw [style=background] (123) to (121.center);
		\draw [style=background] (26.center) to (124);
		\draw [style=background] (40.center) to (124);
		\draw [style=background] (124) to (99.center);
		\draw [style=background] (124) to (100.center);
		\draw [style=background] (124) to (101.center);
		\draw [style=background] (124) to (103.center);
		\draw [style=background] (124) to (102.center);
		\draw [style=background] (124) to (121.center);
		\draw [style=background] (121.center) to (124);
		\draw [style=background] (137.center) to (133.center);
		\draw [style=background] (138.center) to (133.center);
		\draw [style=background] (106) to (131.center);
		\draw [style=background] (133.center) to (131.center);
		\draw [style=background] (133.center) to (130.center);
		\draw [style=background] (133.center) to (137.center);
		\draw [style=background] (133.center) to (138.center);
		\draw [style=background] (146.center) to (147.center);
		\draw [style=background] (150.center) to (147.center);
		\draw [style=background] (147.center) to (148.center);
		\draw [style=background] (147.center) to (146.center);
		\draw [style=background] (147.center) to (144.center);
		\draw [style=background] (147.center) to (150.center);
		\draw [style=background] (158.center) to (159.center);
		\draw [style=background] (161.center) to (159.center);
		\draw [style=background] (162.center) to (159.center);
		\draw [style=background] (123) to (161.center);
		\draw [style=background] (159.center) to (158.center);
		\draw [style=background] (159.center) to (157.center);
		\draw [style=background] (159.center) to (161.center);
		\draw [style=background] (159.center) to (162.center);
		\draw [style=background] (107) to (162.center);
		\draw [style=background] (107) to (159.center);
		\draw [style=background] (107) to (146.center);
		\draw [style=background] (107) to (147.center);
		\draw [style=bluearrow] (124) to (106);
		\draw [style=bluearrow] (123) to (61);
		\draw [style=bluearrow] (61) to (104);
		\draw [style=redarrow] (61) to (107);
		\draw [style=redarrow] (104) to (105);
		\draw [style=redarrow] (124) to (123);
		\draw [style=bluearrow] (107) to (105);
		\draw [style=redarrow] (106) to (61);
	\end{pgfonlayer}
\end{tikzpicture}
				\]
			\end{minipage}
		\end{figure}
		
	\end{example} 
	\begin{example}
		The following tensor product computation for $\mathfrak{sl}_2$-crystals shows the decomposition
		$$V(2) \otimes V(3) \cong V(5) \oplus V(3)  \oplus V(1),$$
		where $V(\lambda)$ is the highest weight representation of $\mathfrak{sl}_2$ of weight $\lambda$.
		% https://q.uiver.app/#q=WzAsMjMsWzIsMiwiXFxidWxsZXQiXSxbMiwzLCJcXGJ1bGxldCJdLFsyLDQsIlxcYnVsbGV0Il0sWzMsMiwiXFxidWxsZXQiXSxbMywzLCJcXGJ1bGxldCJdLFszLDQsIlxcYnVsbGV0Il0sWzQsNCwiXFxidWxsZXQiXSxbNSw0LCJcXGJ1bGxldCJdLFs0LDIsIlxcYnVsbGV0Il0sWzQsMywiXFxidWxsZXQiXSxbNSwzLCJcXGJ1bGxldCJdLFs1LDIsIlxcYnVsbGV0Il0sWzEsMF0sWzAsMV0sWzAsMiwiXFxidWxsZXQiXSxbMCwzLCJcXGJ1bGxldCJdLFswLDQsIlxcYnVsbGV0Il0sWzIsMCwiXFxidWxsZXQiXSxbMywwLCJcXGJ1bGxldCJdLFs0LDAsIlxcYnVsbGV0Il0sWzUsMCwiXFxidWxsZXQiXSxbMSw1XSxbNiwxXSxbMCwxXSxbMSwyXSxbNSw2XSxbNiw3XSxbMTQsMTVdLFsxNSwxNl0sWzE3LDE4XSxbMTgsMTldLFsxOSwyMF0sWzMsNF0sWzQsOV0sWzksMTBdLFs4LDExXSxbMTIsMjEsIiIsMCx7InN0eWxlIjp7ImhlYWQiOnsibmFtZSI6Im5vbmUifX19XSxbMTMsMjIsIiIsMCx7InN0eWxlIjp7ImhlYWQiOnsibmFtZSI6Im5vbmUifX19XSxbMiw1XV0=
		\[\begin{tikzcd}
			& {} & \bullet & \bullet & \bullet & \bullet \\
			{} &&&&&& {} \\
			\bullet && \bullet & \bullet & \bullet & \bullet \\
			\bullet && \bullet & \bullet & \bullet & \bullet \\
			\bullet && \bullet & \bullet & \bullet & \bullet \\
			& {}
			\arrow[no head, from=1-2, to=6-2]
			\arrow[thick, from=1-3, to=1-4]
			\arrow[thick, from=1-4, to=1-5]
			\arrow[thick, from=1-5, to=1-6]
			\arrow[no head, from=2-1, to=2-7]
			\arrow[thick, from=3-1, to=4-1]
			\arrow[thick, from=3-3, to=4-3]
			\arrow[thick, from=3-4, to=4-4]
			\arrow[thick, from=3-5, to=3-6]
			\arrow[thick, from=4-1, to=5-1]
			\arrow[thick, from=4-3, to=5-3]
			\arrow[thick, from=4-4, to=4-5]
			\arrow[thick, from=4-5, to=4-6]
			\arrow[thick, from=5-3, to=5-4]
			\arrow[thick, from=5-4, to=5-5]
			\arrow[thick, from=5-5, to=5-6]
		\end{tikzcd}\]
	\end{example}

	Connected graphs should be thought of as analogs of irreducible representations, and thus the problem of decomposing some tensor product of representations into irreducible under this correspondence amounts to decomposing the crystal graph of the tensor crystal into connected components.
	
	This correspondence, however, is not perfect.
	A crystal $B$ is called of highest weight $\lambda\in \Lambda_+$ is there exists $b_0\in B$ with $\mathrm{wt}(b_0) = \lambda$, and such that $B$ is generated by $b_0$ under the repeated actions of the maps $f_i$; such $b_0$ is then called a \emph{highest weight element}.
	Unfortunately, not every connected crystal is of highest weight, and there exists non-isomorphic highest weight crystals of the same weight $\lambda$.
	This issue can be fixed by considering families of crystals instead.
	
	Let $\mathcal B = \{B_\lambda \,|\, \lambda \in \Lambda_+\}$ be a family of crystals, where $B_\lambda$ is a crystal of highest weight $\lambda$.
	Fixing inclusions of crystals $\iota_{\lambda, \mu}: B_{\lambda+\mu}\to B_{\lambda}\otimes B_{\mu}$ for every $\lambda, \mu\in \Lambda_+$, we say that $(\mathcal B, (\iota)_{\lambda, \mu \,\in\, \Lambda_+})$ is a \emph{closed family} of crystals.
	It turns out that there is a \emph{unique} closed family of crystals (see \cite[Section~6.4]{J}), and so we may unambiguously refer to $B_\lambda$ as members of this unique family.
	
	We can now define the category of crystals.
	
	\begin{definition}
		The category $\mathfrak{g}\mathbf{-Crys}$ is the $\mathbb{k}$-linear category whose objects are crystals $B$ such that each connected component of $B$ is $B_\lambda$ for some $\lambda\in\Lambda_+$, and whose morphisms are $\mathbb{k}$-linear combinations of crystal morphisms. It is semisimple and monoidal, where the monoidal structure is given by the tensor product of crystals, and the direct sum of two crystals is their disjoint union.
	\end{definition}
	
	\subsection{Coboundary Categories and Cactus Actions}\label{CobCats}	

    In this section, we will review the basic definitions and properties of a coboundary cateogory, and in the next section we discuss the explicit commutor on $\mathfrak{g}\mathbf{-Crys}$, following the definitions and notation given in \cite{HKam}.
	\begin{definition}[{\cite[Section~3]{D}}]
		A \emph{coboundary category} is a monoidal category $\mathcal C$ together with a natural isomorphism, called the \emph{commutor}, $\sigma_{A,B}:A\otimes B\to B\otimes A$ satisfying
		\begin{enumerate}
			\item Symmetry Axiom: $\sigma_{B,A}\circ\sigma_{A,B} = 1_{A\otimes B}$.
			\item Unit Axiom: $\eta_L^{-1}\circ \eta_R = \sigma_{\mathbb 1,A}$, where $\eta_R:\mathbb{1}\otimes A \to A \leftarrow A\otimes \mathbb{1}:\eta_L$ are the unit isomorphisms of $\mathcal C$.
			\item Cactus Axiom: The following diagram commutes, where $\alpha$ is the associator of $\mathcal C$.
			% https://q.uiver.app/#q=WzAsNixbMiwwLCJBXFxvdGltZXMoQ1xcb3RpbWVzIEIpIl0sWzAsMCwiQVxcb3RpbWVzKEJcXG90aW1lcyBDKSJdLFs0LDAsIihDXFxvdGltZXMgQilcXG90aW1lcyBBIl0sWzAsMiwiKEFcXG90aW1lcyBCKVxcb3RpbWVzIEMiXSxbMiwyLCIoQlxcb3RpbWVzIEEpXFxvdGltZXMgQyJdLFs0LDIsIkNcXG90aW1lcyAoQlxcb3RpbWVzIEEpIl0sWzEsMCwiMVxcb3RpbWVzXFxzaWdtYV97QixDfSJdLFswLDIsIlxcc2lnbWFfe0EsQ1xcb3RpbWVzIEJ9Il0sWzEsMywiXFxhbHBoYV97QSxCLEN9IiwyXSxbMyw0LCJcXHNpZ21hX3tBLEJ9XFxvdGltZXMgMSIsMl0sWzQsNSwiXFxzaWdtYV97Qlxcb3RpbWVzIEEsIEN9IiwyXSxbNSwyLCJcXGFscGhhX3tDLEIsQX0iLDJdXQ==
			\[\begin{tikzcd}
				{A\otimes(B\otimes C)} && {A\otimes(C\otimes B)} && {(C\otimes B)\otimes A} \\
				\\
				{(A\otimes B)\otimes C} && {(B\otimes A)\otimes C} && {C\otimes (B\otimes A)}
				\arrow["{1\otimes\sigma_{B,C}}", from=1-1, to=1-3]
				\arrow["{\alpha_{A,B,C}}"', from=1-1, to=3-1]
				\arrow["{\sigma_{A,C\otimes B}}", from=1-3, to=1-5]
				\arrow["{\sigma_{A,B}\otimes 1}"', from=3-1, to=3-3]
				\arrow["{\sigma_{B\otimes A, C}}"', from=3-3, to=3-5]
				\arrow["{\alpha_{C,B,A}}"', from=3-5, to=1-5]
			\end{tikzcd}\]
		\end{enumerate}
	\end{definition}
	
	Given a coboundary category $\mathcal C$ and objects $A_1,\dots, A_n\in \mathcal C$, using the commutor, we can define natural isomorphisms $\sigma_{p,r,q}$ for $1\leq p\leq r<q\leq n$, via
	$$(\sigma_{p,r,q})_{A_1,\dots,A_n} = 1_{A_1\otimes\dots\otimes A_{p-1}}\otimes \sigma_{A_p\otimes \dots\otimes A_r, A_{r+1}\otimes\dots\otimes A_q}\otimes 1_{A_{q+1}\otimes\dots\otimes A_n},$$
	which commute the factors $A_p\otimes \dots\otimes A_r$ over $A_{r+1}\otimes \dots\otimes A_q$.
	These in turn define the so-called ``interval-reversal'' morphisms
	\begin{equation*}
		\begin{split}
			s_{p,q}:A_1\otimes &\dots\otimes A_{p-1}\otimes A_p \otimes A_{p+1} \otimes \dots, \otimes A_{q-1}\otimes A_{q}\otimes A_{q+1}\otimes\dots\otimes A_n \to\\
			&A_1\otimes \dots\otimes A_{p-1}\otimes A_q \otimes A_{q-1} \otimes \dots, \otimes A_{p+1}\otimes A_{p}\otimes A_{q+1}\otimes\dots\otimes A_n
		\end{split}
	\end{equation*}
	defined recursively by $s_{p,p+1}=\sigma_{p,p,p+1}$ and $s_{p,q} = \sigma_{p,p,q}\circ s_{p+1, q}$.
	
	Conversely, given $s_{p,q}$, one can recover the commutor morphisms by noting that $\sigma_{p,r,q} = s_{p,q}\circ s_{p+1, q}\circ s_{p,r}$ (see~\cite{HKam}, Lemma 3] for a proof), and then observing that the commutor $\sigma_{A,B}$ may be thought of simply as $\sigma_{1,1,2}$. 
	
	The morphisms $s_{p,q}$ satisfy the following relations:
	\begin{enumerate}
		\item $s_{p,q}^2 = 1$ for every $1\leq p < q \leq n$.
		\item $s_{p,q}s_{k,l} = s_{k,l}s_{p,q}$ for all $1\leq p < q \leq n$ and $1\leq k < l \leq n$ satisfying $[p,q]\cap[k,l] = \emptyset$.
		\item $s_{p,q}s_{k,l} = s_{p+q-l,p+q-k}s_{p,q}$ for all $1\leq p < q \leq n$ and $1\leq k < l \leq n$ satisfying $[k,l]\subset [p,q]$.
	\end{enumerate}
	These relations may be visualized as follows:
	
	\[\begin{tikzpicture}
	\begin{pgfonlayer}{nodelayer}
		\node [style=none] (0) at (-7, 1.5) {};
		\node [style=none] (1) at (-6.5, 1.5) {};
		\node [style=none] (2) at (-6, 1.5) {};
		\node [style=fullblob] (3) at (-6.5, 0.75) {};
		\node [style=none] (4) at (-7, 0) {};
		\node [style=none] (5) at (-6.5, 0) {};
		\node [style=none] (6) at (-6, 0) {};
		\node [style=fullblob] (7) at (-6.5, -0.5) {};
		\node [style=none] (8) at (-7, -1.5) {};
		\node [style=none] (9) at (-6.5, -1.5) {};
		\node [style=none] (10) at (-6, -1.5) {};
		\node [style=none] (11) at (-5, 1.5) {};
		\node [style=none] (12) at (-4.5, 1.5) {};
		\node [style=none] (13) at (-4, 1.5) {};
		\node [style=none] (14) at (-5, -1.5) {};
		\node [style=none] (15) at (-4.5, -1.5) {};
		\node [style=none] (16) at (-4, -1.5) {};
		\node [style=none] (17) at (-5.5, 0) {};
		\node [style=none] (18) at (-5.5, 0) {=};
		\node [style=none] (19) at (-2.5, 1.5) {};
		\node [style=none] (20) at (-2, 1.5) {};
		\node [style=none] (21) at (-1.5, 1.5) {};
		\node [style=none] (22) at (-2.5, -1.5) {};
		\node [style=none] (23) at (-2, -1.5) {};
		\node [style=none] (24) at (-1.5, -1.5) {};
		\node [style=fullblob] (25) at (-2, -0.5) {};
		\node [style=none] (26) at (-2.5, 0) {};
		\node [style=none] (27) at (-2, 0) {};
		\node [style=none] (28) at (-1.5, 0) {};
		\node [style=none] (29) at (-1, 1.5) {};
		\node [style=none] (30) at (-0.5, 1.5) {};
		\node [style=fullblob] (31) at (-0.75, 0.75) {};
		\node [style=none] (32) at (-1, 0) {};
		\node [style=none] (33) at (-0.5, 0) {};
		\node [style=none] (34) at (-1, -1.5) {};
		\node [style=none] (35) at (-0.5, -1.5) {};
		\node [style=none] (36) at (0, 0) {=};
		\node [style=none] (37) at (0.5, 1.5) {};
		\node [style=none] (38) at (1, 1.5) {};
		\node [style=none] (39) at (1.5, 1.5) {};
		\node [style=none] (40) at (0.5, -1.5) {};
		\node [style=none] (41) at (1, -1.5) {};
		\node [style=none] (42) at (1.5, -1.5) {};
		\node [style=fullblob] (43) at (1, 0.75) {};
		\node [style=none] (44) at (0.5, 0) {};
		\node [style=none] (45) at (1, 0) {};
		\node [style=none] (46) at (1.5, 0) {};
		\node [style=none] (47) at (2, 1.5) {};
		\node [style=none] (48) at (2.5, 1.5) {};
		\node [style=fullblob] (49) at (2.25, -0.5) {};
		\node [style=none] (50) at (2, 0) {};
		\node [style=none] (51) at (2.5, 0) {};
		\node [style=none] (52) at (2, -1.5) {};
		\node [style=none] (53) at (2.5, -1.5) {};
		\node [style=none] (54) at (4, 1.5) {};
		\node [style=none] (55) at (4.5, 1.5) {};
		\node [style=none] (56) at (5, 1.5) {};
		\node [style=none] (57) at (5.5, 0) {=};
		\node [style=none] (58) at (6, 1.5) {};
		\node [style=none] (59) at (6.5, 1.5) {};
		\node [style=none] (60) at (4, 0) {};
		\node [style=none] (61) at (4.5, 0) {};
		\node [style=none] (62) at (5, -0.5) {};
		\node [style=none] (63) at (4, -1.5) {};
		\node [style=none] (64) at (4.5, -1.5) {};
		\node [style=none] (65) at (5, -1.5) {};
		\node [style=none] (66) at (7, 1.5) {};
		\node [style=none] (67) at (6, 0) {};
		\node [style=none] (68) at (6.5, 0) {};
		\node [style=none] (69) at (7, 0) {};
		\node [style=none] (70) at (6, -1.5) {};
		\node [style=none] (71) at (6.5, -1.5) {};
		\node [style=none] (72) at (7, -1.5) {};
		\node [style=fullblob] (73) at (4.5, 0.75) {};
		\node [style=fullblob] (74) at (4.25, -0.5) {};
		\node [style=fullblob] (75) at (6.75, 0.75) {};
		\node [style=fullblob] (76) at (6.5, -0.5) {};
		\node [style=none] (78) at (-3.25, 0) {,};
		\node [style=none] (79) at (3.25, 0) {,};
	\end{pgfonlayer}
	\begin{pgfonlayer}{edgelayer}
		\draw [style=thickstrand] (1.center) to (9.center);
		\draw [style=thickstrand] [in=150, out=-90, looseness=1.25] (0.center) to (3);
		\draw [style=thickstrand] [in=270, out=30, looseness=1.25] (3) to (2.center);
		\draw [style=thickstrand] [in=90, out=-30] (3) to (6.center);
		\draw [style=thickstrand] [in=90, out=-150] (3) to (4.center);
		\draw [style=thickstrand] [in=270, out=180] (7) to (4.center);
		\draw [style=thickstrand] [in=0, out=-90] (6.center) to (7);
		\draw [style=thickstrand] [in=90, out=-30] (7) to (10.center);
		\draw [style=thickstrand] [in=90, out=-150] (7) to (8.center);
		\draw [style=thickstrand] (11.center) to (14.center);
		\draw [style=thickstrand] (12.center) to (15.center);
		\draw [style=thickstrand] (13.center) to (16.center);
		\draw [style=thickstrand] [bend right=15] (29.center) to (31);
		\draw [style=thickstrand] [bend left=15] (30.center) to (31);
		\draw [style=thickstrand] [bend right=15] (31) to (32.center);
		\draw [style=thickstrand] [bend left=15] (31) to (33.center);
		\draw [style=thickstrand] (33.center) to (35.center);
		\draw [style=thickstrand] (32.center) to (34.center);
		\draw [style=thickstrand] (19.center) to (26.center);
		\draw [style=thickstrand] (20.center) to (27.center);
		\draw [style=thickstrand] (21.center) to (28.center);
		\draw [style=thickstrand] [bend left] (28.center) to (25);
		\draw [style=thickstrand] (27.center) to (25);
		\draw [style=thickstrand] [bend right] (26.center) to (25);
		\draw [style=thickstrand] [bend right] (25) to (22.center);
		\draw [style=thickstrand] (23.center) to (25);
		\draw [style=thickstrand] [bend right, looseness=1.25] (24.center) to (25);
		\draw [style=thickstrand] [bend right] (37.center) to (43);
		\draw [style=thickstrand] (38.center) to (43);
		\draw [style=thickstrand] [bend left] (39.center) to (43);
		\draw [style=thickstrand] [bend right] (43) to (44.center);
		\draw [style=thickstrand] (43) to (45.center);
		\draw [style=thickstrand] [bend right] (46.center) to (43);
		\draw [style=thickstrand] (45.center) to (41.center);
		\draw [style=thickstrand] (44.center) to (40.center);
		\draw [style=thickstrand] (46.center) to (42.center);
		\draw [style=thickstrand] [bend left=15] (49) to (50.center);
		\draw [style=thickstrand] [bend right=15] (49) to (51.center);
		\draw [style=thickstrand] [bend right=15] (49) to (52.center);
		\draw [style=thickstrand] [bend right=15] (53.center) to (49);
		\draw [style=thickstrand] (50.center) to (47.center);
		\draw [style=thickstrand] (51.center) to (48.center);
		\draw [style=thickstrand] [in=150, out=-90] (54.center) to (73);
		\draw [style=thickstrand] (55.center) to (73);
		\draw [style=thickstrand] [in=30, out=-90] (56.center) to (73);
		\draw [style=thickstrand] [in=90, out=-90] (73) to (61.center);
		\draw [style=thickstrand] [in=90, out=-150] (73) to (60.center);
		\draw [style=thickstrand] [in=90, out=-30] (73) to (62.center);
		\draw [style=thickstrand] [in=30, out=-90] (61.center) to (74);
		\draw [style=thickstrand] [in=270, out=150] (74) to (60.center);
		\draw [style=thickstrand] [bend right=15] (74) to (63.center);
		\draw [style=thickstrand] [in=300, out=90] (64.center) to (74);
		\draw [style=thickstrand] [bend right=15] (75) to (66.center);
		\draw [style=thickstrand] [bend left=15] (75) to (59.center);
		\draw [style=thickstrand] [in=90, out=-60] (75) to (69.center);
		\draw [style=thickstrand] [in=90, out=-120] (75) to (68.center);
		\draw [style=thickstrand] (58.center) to (67.center);
		\draw [style=thickstrand] [in=180, out=-90] (67.center) to (76);
		\draw [style=thickstrand] (68.center) to (76);
		\draw [style=thickstrand] [bend left=45] (69.center) to (76);
		\draw [style=thickstrand] [in=330, out=90] (72.center) to (76);
		\draw [style=thickstrand] (76) to (71.center);
		\draw [style=thickstrand] [in=90, out=-150] (76) to (70.center);
		\draw [style=thickstrand] (65.center) to (62.center);
	\end{pgfonlayer}
\end{tikzpicture}
\]
	
	\begin{definition}
		The group generated by elements $s_{p,q}$ for $1\leq p < q\leq n$ satisfying the relations (1),(2), and (3) above is called \emph{the $n$-fruit cactus group}, denoted by $J_n$.
	\end{definition}
	
	\setcounter{MaxMatrixCols}{12}
	
	The cactus group admits a natural map $J_n\to S_n$, denoted $\rho\to\widehat{\rho}$ extending $s_{p,q}\mapsto \widehat{s_{p,q}}$, where 
	$$\widehat{s_{p,q}} = \begin{pmatrix}
		1 & \dots & p-1 & p & p+1 & \dots & q-1 & q & q+1 & \dots & n \\
		1 & \dots & p-1 & q & q-1 & \dots & p+1 & p & q+1 & \dots & n \end{pmatrix}\in S_n.$$
	
	The kernel of this map is the \emph{pure cactus group} and is identified with the fundamental group of the Deligne-Mumford compactification $\overline{M_0}^{n+1}(\mathbb R)$ of the moduli space of real genus-0 curves with $n$ market points \cite[Theorem~9]{HKam}.\footnote{The namesake of the cactus group comes from this geometric viewpoint, as the points of the moduli space $\overline{M_0}^{n+1}(\mathbb R)$ look somewhat like cacti from the genus Opuntia (see \cite[Section~3.2]{HKam}).}
	
	So far, we have abused notation by using $s_{p,q}$ to denote generators of the group $J_n$ as well as natural isomorphisms $$(s_{p,q})_{A_1,\dots,A_n}:A_1\otimes \dots \otimes A_n \to A_{\widehat{s_{p,q}}(1)}\otimes \dots \otimes A_{\widehat{s_{p,q}}(n)}.$$
	Whenever confusion is possible, we shall distinguish the two by denoting the latter by $s_{p,q}^{\mathcal C}$.
	
	By expressing an element of $\rho\in J_n$ as a product of the generators $s_{p,q}$, we get a natural isomorphism by composing the corresponding morphisms $s_{p,q}^{\mathcal C}$, which is denoted by 
	$$\tau(\rho; A_1, \dots, A_n):A_1\otimes \dots \otimes A_n \to A_{\widehat{\rho}(1)}\otimes \dots \otimes A_{\widehat{\rho}(n)}.$$
	
	\begin{theorem} \cite[Theorem~7]{HKam} 
		Let $\mathcal C$ be a coboundary category, and let $A_1,\dots, A_n\in \mathcal C$. If $\rho\in J_n$, the natural isomorphisms $\tau(\rho; A_1, \dots, A_n)$ defined above satisfy
		$$\tau(\rho'; A_{\widehat{\rho}(1)}, \dots, A_{\widehat{\rho}(n)})\circ\tau(\rho; A_1, \dots, A_n)=\tau(\rho\rho'; A_1, \dots, A_n).$$ Moreover, these natural isomorphisms are exactly the ones which can be generated by using the commutor of $\mathcal C$.
	\end{theorem}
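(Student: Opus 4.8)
The plan is to reduce the statement to a single concrete verification: that the morphisms $s_{p,q}^{\mathcal C}$ satisfy, as honest morphisms in $\mathcal C$, the three defining relations (1)--(3) of the cactus group $J_n$ recalled above. Granting this, both assertions follow formally from the presentation of $J_n$ by generators and relations. \textbf{Step 1 (the relations hold for $s_{p,q}^{\mathcal C}$).} Relation (1), $s_{p,q}^{2}=1$, follows from the symmetry axiom $\sigma_{B,A}\circ\sigma_{A,B}=1$ together with Mac Lane coherence: after rebracketing so that the block $A_p\otimes\cdots\otimes A_q$ and its reverse sit as single objects, both $s_{p,q}^{\mathcal C}$ and its square are built from one commutor and composites of associators and unitors, and the coherence part cancels. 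Relation (2), commutativity of $s_{p,q}$ and $s_{k,l}$ for $[p,q]\cap[k,l]=\emptyset$, follows from bifunctoriality of $\otimes$ and naturality of $\sigma$: the two morphisms act on disjoint blocks, so up to coherence isomorphisms they are $f\otimes 1$ and $1\otimes g$, and $(1\otimes g)\circ(f\otimes 1)=f\otimes g=(f\otimes 1)\circ(1\otimes g)$. Relation (3), the nested case $[k,l]\subset[p,q]$, is the substantive point: grouping the factors into the blocks dictated by $p\le k\le l\le q$ reduces it, via coherence, to a statement about at most three tensor factors, which is exactly the commutativity of the cactus-axiom diagram, with the unit axiom used to dispose of the degenerate cases where one of the blocks is empty.

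\textbf{Step 2 (well-definedness of $\tau$ and the composition law).} For $\rho\in J_n$ and a word $w=s_{p_1,q_1}\cdots s_{p_m,q_m}$ representing it, the composite $s_{p_1,q_1}^{\mathcal C}\circ\cdots\circ s_{p_m,q_m}^{\mathcal C}$ is well-typed because $\widehat{(\,\cdot\,)}\colon J_n\to S_n$ is a group homomorphism, so the target of each factor is the source of the next. Any other word $w'$ representing $\rho$ is obtained from $w$ by finitely many local applications of relations (1)--(3); by Step 1 each such move replaces a subcomposite by an equal one, and, since $\otimes$ and $\circ$ are functorial, pre- and post-composition with the unchanged part of the word preserves the equality. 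Hence the composite depends only on $\rho$, defining $\tau(\rho;A_1,\dots,A_n)$. Concatenating a word for $\rho$ with a word for $\rho'$, read in the order matching the stated convention (so that the $\rho$-morphism is applied first and the $\rho'$-morphism is read off the permuted tuple $A_{\widehat\rho(1)},\dots,A_{\widehat\rho(n)}$), represents $\rho\rho'$ and yields precisely $\tau(\rho';A_{\widehat\rho(1)},\dots,A_{\widehat\rho(n)})\circ\tau(\rho;A_1,\dots,A_n)$, which is the claimed identity. Naturality of each $\tau(\rho;-)$ is inherited from naturality of $\sigma$ and of the coherence isomorphisms.

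\textbf{Step 3 (``exactly the ones generated by the commutor'').} By construction every $\tau(\rho;-)$ is a composite of instances of $\sigma$, associators and unitors, hence lies among the natural isomorphisms generated by the commutor. Conversely, any such generated isomorphism, after normalizing brackets by coherence, is a composite of morphisms of the form $\sigma_{p,r,q}^{\mathcal C}$ (the commutor applied to a block against an adjacent block, sandwiched by identities). Using the identity $\sigma_{p,r,q}=s_{p,q}\,s_{p+1,q}\,s_{p,r}$ in $J_n$ recorded before the theorem, each $\sigma_{p,r,q}^{\mathcal C}$ equals $\tau$ of the corresponding element of $J_n$; in particular $\sigma_{A,B}=\sigma_{1,1,2}^{\mathcal C}=\tau(s_{1,2};A,B)$. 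By the composition law of Step 2 any composite of such morphisms is again a single $\tau(\rho;-)$, so the two classes of natural isomorphisms coincide.

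The main obstacle is Step 1, and within it relation (3): extracting it from the three-object cactus axiom requires careful bookkeeping of the associativity and unit isomorphisms needed to contract the blocks $A_p\otimes\cdots\otimes A_{k-1}$, $A_k\otimes\cdots\otimes A_l$ and $A_{l+1}\otimes\cdots\otimes A_q$ (some possibly empty) into single objects, and then to transport the cactus-diagram equality back through these coherence isomorphisms. All remaining ingredients are either axioms applied verbatim or formal consequences of bifunctoriality, naturality and Mac Lane coherence.
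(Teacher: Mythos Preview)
The paper does not prove this theorem; it is quoted verbatim from \cite{HKam} as background, with no argument supplied. So there is no ``paper's proof'' to compare against, and your task is really to reproduce (or replace) the argument of Henriques--Kamnitzer.

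Your overall strategy is the right one and matches \cite{HKam}: verify that the categorical morphisms $s_{p,q}^{\mathcal C}$ satisfy the defining relations of $J_n$, and then Steps~2 and~3 are indeed formal consequences of the presentation. Steps~2 and~3 are fine as written.

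The gap is in Step~1, relation~(3). Your claim that ``grouping the factors into the blocks dictated by $p\le k\le l\le q$ reduces it, via coherence, to a statement about at most three tensor factors, which is exactly the cactus-axiom diagram'' does not work as stated. The morphism $s_{p,q}^{\mathcal C}$ is defined by the recursion $s_{p,q}=\sigma_{p,p,q}\circ s_{p+1,q}$, i.e.\ by peeling off one strand at a time; it does \emph{not} act as $s_{1,3}^{\mathcal C}$ on the three blocks $X=A_p\otimes\cdots\otimes A_{k-1}$, $Y=A_k\otimes\cdots\otimes A_l$, $Z=A_{l+1}\otimes\cdots\otimes A_q$. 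Indeed, $s_{p,q}^{\mathcal C}$ sends $X\otimes Y\otimes Z$ to $Z^{\mathrm{rev}}\otimes Y^{\mathrm{rev}}\otimes X^{\mathrm{rev}}$ (each block internally reversed), whereas the three-object $s_{1,3}$ would send it to $Z\otimes Y\otimes X$. So there is no direct ``contract to three objects and invoke the cactus axiom'' move; the bookkeeping you mention at the end is not merely with associators but with the entire recursive tower defining $s_{p,q}^{\mathcal C}$. The actual argument in \cite{HKam} proceeds by induction on $q-p$, using the recursion $s_{p,q}=\sigma_{p,p,q}\circ s_{p+1,q}$, naturality of $\sigma$, and the cactus axiom at each step; relation~(3) does not collapse to a single instance of the axiom. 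You should replace the three-block reduction with this induction.
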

	
	We conclude this section with the definition of a \emph{coboundary functor}, following \cite[Definition~23.3.1]{Ya}.
%    by mimicking the usual definition of a braided monoidal functor.
	\begin{definition} \label{CobFunctor}
		Let $\mathcal C$ and $\mathcal D$ be two coboundary categories with commutors $\sigma^{\mathcal C}$ and $\sigma^{\mathcal D}$, respectively. 
		Given a monoidal functor $F:\mathcal C \to \mathcal D$ with structure isomorphisms $J_{A,B}:F(A\otimes B)\to F(A)\otimes F(B)$. We say $F$ is a \emph{coboundary functor} if the following diagram commutes for every objects $A$ and $B$ in $\mathcal C$.
		% https://q.uiver.app/#q=WzAsNCxbMCwwLCJGKEFcXG90aW1lcyBCKSJdLFsyLDAsIkYoQSlcXG90aW1lcyBGKEIpIl0sWzAsMiwiRihCXFxvdGltZXMgQSkiXSxbMiwyLCJGKEIpXFxvdGltZXMgRihBKSJdLFsxLDMsIlxcc2lnbWFee1xcbWF0aGNhbCBEfV97RihBKSxGKEIpfSJdLFswLDIsIkZcXGxlZnQoXFxzaWdtYV57XFxtYXRoY2FsIEN9X3tBLEJ9XFxyaWdodCkiLDJdLFswLDEsIkpfe0EsQn0iXSxbMiwzLCJKX3tCLEF9IiwyXV0=
		\[\begin{tikzcd}
			{F(A\otimes B)} && {F(A)\otimes F(B)} \\
			\\
			{F(B\otimes A)} && {F(B)\otimes F(A)}
			\arrow["{J_{A,B}}", from=1-1, to=1-3]
			\arrow["{F\left(\sigma^{\mathcal C}_{A,B}\right)}"', from=1-1, to=3-1]
			\arrow["{\sigma^{\mathcal D}_{F(A),F(B)}}", from=1-3, to=3-3]
			\arrow["{J_{B,A}}"', from=3-1, to=3-3]
		\end{tikzcd}\]
	\end{definition}
	
	\begin{remark}
		It is easy to see that a coboundary structure on a category $\mathcal C$ is the same data as a monoidal structure $\sigma_{X,Y}$ on the identity functor $\on{id}:\mathcal C \to \mathcal{C}^{\on{op}}$ (where $\mathcal{C}^{\on{op}}$ is the same category with the opposite tensor product $X\otimes^{\on{op}}Y:=Y\otimes X$) satisying $\sigma_{Y,X}\circ \sigma_{X,Y} =\on{id}_{X\otimes Y}$ \cite[8.3.25]{EGNO}.
		From that perspective, our definition of a coboundary functor simply means a functor $F:\mathcal C\to \mathcal D$ compatible with this data, i.e. the following diagram commutes.
		% https://q.uiver.app/#q=WzAsNCxbMCwwLCJcXG1hdGhjYWwgQyJdLFsxLDAsIlxcbWF0aGNhbCBEIl0sWzAsMSwiXFxtYXRoY2FsIENee1xcb257b3B9fSJdLFsxLDEsIlxcbWF0aGNhbCBEXntcXG9ue29wfX0iXSxbMCwyLCIoXFxpZCwgXFxzaWdtYV5cXG1hdGhjYWwgQykiLDJdLFsxLDMsIihcXGlkLCBcXHNpZ21hXlxcbWF0aGNhbCBEKSJdLFswLDEsIihGLEopIl0sWzIsMywiKEYsSl57XFxvbntvcH19KSIsMl1d
		\[\begin{tikzcd}
			{\mathcal C} & {\mathcal D} \\
			{\mathcal C^{\on{op}}} & {\mathcal D^{\on{op}}}
			\arrow["{(F,J)}", from=1-1, to=1-2]
			\arrow["{(\on{id}, \sigma^\mathcal C)}"', from=1-1, to=2-1]
			\arrow["{(\on{id}, \sigma^\mathcal D)}", from=1-2, to=2-2]
			\arrow["{(F,J^{\on{op}})}"', from=2-1, to=2-2]
		\end{tikzcd}\]
	\end{remark}
	
	\subsection{$\mathfrak{g}\mathbf{-Crys}$ as a Coboundary Category}
	
	Let $\theta:I\to I$ be the Dynkin diagram automorphism such that $\alpha_{\theta(i)} = -w_0 \cdot \alpha_i$, where $w_0$ is the longest element in the Weyl group of $\mathfrak{g}$ acting on simple roots by reflections.
	
	Given a crystal $B_\lambda$ of highest weight $\lambda\in \Lambda_+$, let $\overline{B_\lambda}$ be the crystal with underlying set $\{\overline{b}:b\in B_\lambda\}$ and where crystal structure is given by
	$$e_i(\overline{b}) = \overline{f_{\theta(i)}b}, \quad\quad f_i(\overline{b}) = \overline{e_{\theta(i)}b}, \quad\quad  \mathrm{wt}(\overline{b}) = \overline{w_0(b)}.$$
	
	Henriques and Kamnitzer have shown that $\overline{B_\lambda}$ is also a highest weight crystal of heighest weight $\lambda$ \cite[Lemma~2]{HKam}. Thus, by Schur's Lemma, we have a crystal isomorphism $\overline{B_\lambda}\to B_\lambda$. 
	Composing this isomorphism with the \emph{map of sets} (which is not a morphism of crystals) $B_\lambda\to \overline{B_\lambda}$ given by $b\mapsto \overline{b}$, we get a map of sets $\xi_{B_\lambda}:B_\lambda\to B_\lambda$.

	Next, one extends the set-theoretic maps $\xi_{B_\lambda}$ to any crystal (not necessarily highest weight) by taking $\xi_B: B\to B$ on a crystal $B$ to be given locally on each connected component $B_\lambda$ by $\xi_{B_\lambda}$.
	
	\begin{theorem} \cite[Theorem~6]{HKam}
		The commutor maps $\sigma_{A,B}:A\otimes B \to B\otimes A$ defined by $$(a,b)\mapsto\xi_{B\otimes A}(\xi_B(b),\xi_A(a))$$ is an isomorphism, natural in $A$ and $B$, endowing the category $\mathfrak{g}\mathbf{-Crys}$ with the structure of a coboundary category.
	\end{theorem}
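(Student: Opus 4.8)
The plan is to reduce the whole statement to a small number of structural facts about the set-maps $\xi_B$ and the ``bar'' operation $B\mapsto\overline B$, after which well-definedness, naturality and all three coboundary axioms become essentially formal. \emph{Step 1 (auxiliary properties of $\xi$).} I would first record, for every crystal $B$ and every crystal morphism $\psi$: (i) each $\xi_B$ is a bijection of $B$, being a composite of a set-bijection with a crystal isomorphism; (ii) $\overline{(-)}$ is an endofunctor of $\mathfrak g\mathbf{-Crys}$ with $\overline{\overline B}=B$ canonically (using $\theta^2=\mathrm{id}$ and $w_0^2=\mathrm{id}$), and the bar $\overline\psi$ of a crystal morphism agrees with $\psi$ on underlying sets; (iii) $\xi$ is natural with respect to crystal morphisms, i.e.\ $\psi\circ\xi_A=\xi_B\circ\psi$ for every crystal morphism $\psi\colon A\to B$ --- by semisimplicity $\psi$ restricts on each connected component to $0$ or an isomorphism, and by Schur's lemma highest-weight crystals admit at most one isomorphism between them, so such isomorphisms commute with $\xi$ by construction; (iv) the involution property $\xi_B^2=\mathrm{id}_B$ --- it suffices to treat $B=B_\lambda$, and then $\overline{\xi_{B_\lambda}}\colon\overline{\overline{B_\lambda}}=B_\lambda\to\overline{B_\lambda}$ is again a crystal isomorphism, hence equals $\xi_{B_\lambda}^{-1}$ by uniqueness, while $\overline{\xi_{B_\lambda}}=\xi_{B_\lambda}$ on underlying sets by (ii); (v) $\xi_{\mathbb 1}=\mathrm{id}$, and $\xi$ is compatible with the canonical associativity and unit constraints of $\mathfrak g\mathbf{-Crys}$, by (iii) applied to the canonical crystal isomorphisms $A\otimes\mathbb 1\cong A\cong\mathbb 1\otimes A$ and $(A\otimes B)\otimes C\cong A\otimes(B\otimes C)$.

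\emph{Step 2 (the flip lemma; $\sigma$ as a natural isomorphism).} The technical heart is a ``flip'' isomorphism $\mathrm{flip}_{X,Y}\colon\overline{X\otimes Y}\xrightarrow{\sim}\overline Y\otimes\overline X$, natural in $X,Y$, which on underlying sets is the transposition $\overline{x\otimes y}\mapsto\overline y\otimes\overline x$. I would prove this by a direct check, comparing the two branches of the tensor-product rule for Kashiwara operators and using the $\theta$-twisted identities $\varepsilon_i(\overline z)=\phi_{\theta(i)}(z)$ and $\phi_i(\overline z)=\varepsilon_{\theta(i)}(z)$. Granting it, and writing $\xi_A^{-1}\colon A\to\overline A$ for the inverse of the crystal isomorphism $\xi_A\colon\overline A\to A$ (which coincides with $\xi_A$ on underlying sets by (iv)), one sees that the map in the statement factors as a composite of crystal isomorphisms
\[
A\otimes B\ \xrightarrow{\ \xi_A^{-1}\otimes\xi_B^{-1}\ }\ \overline A\otimes\overline B\ \xrightarrow{\ \mathrm{flip}_{B,A}^{-1}\ }\ \overline{B\otimes A}\ \xrightarrow{\ \xi_{B\otimes A}\ }\ B\otimes A,
\]
since on underlying sets it sends $(a,b)\mapsto(\xi_A(a),\xi_B(b))\mapsto(\xi_B(b),\xi_A(a))\mapsto\xi_{B\otimes A}(\xi_B(b),\xi_A(a))$. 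Hence $\sigma_{A,B}$ is a well-defined isomorphism in $\mathfrak g\mathbf{-Crys}$, and its naturality in $A$ and $B$ follows at once from bifunctoriality of $\otimes$, naturality of $\xi$ (property (iii)), and naturality of $\mathrm{flip}$. The extension to the $\Bbbk$-linear category is automatic.

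\emph{Step 3 (the coboundary axioms).} Using (v) to suppress the canonical associativity and unit constraints, all three axioms collapse to (iv)--(v). For the unit axiom, $\sigma_{\mathbb 1,A}(b_0,a)=\xi_{A\otimes\mathbb 1}(\xi_A(a),b_0)$, and since $\xi_{A\otimes\mathbb 1}$ corresponds to $\xi_A$ under $A\otimes\mathbb 1\cong A$, this equals $(\xi_A^2(a),b_0)=(a,b_0)$ by (iv), which is exactly $(\eta_L^{-1}\circ\eta_R)(b_0,a)$. For the cactus axiom, I would evaluate both legs of the hexagon on $(a,b,c)$: applying the inner commutor produces a pair whose relevant sub-tensor factor is $\xi_{C\otimes B}(\xi_C(c),\xi_B(b))$ (resp.\ $\xi_{B\otimes A}(\xi_B(b),\xi_A(a))$), and the defining formula of the outer commutor reapplies $\xi$ to exactly that factor, so by (iv) the two nested applications of $\xi_{C\otimes B}$ (resp.\ $\xi_{B\otimes A}$) cancel; both legs then reduce to $\xi_{C\otimes B\otimes A}(\xi_C(c),\xi_B(b),\xi_A(a))$ and therefore agree. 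The symmetry axiom $\sigma_{B,A}\circ\sigma_{A,B}=\mathrm{id}$ is checked in the same spirit, composing the two factorizations of Step 2 and cancelling the inner $\xi$'s and the transpositions via (iv) and the flip relation; this also re-proves invertibility of $\sigma_{A,B}$. Assembling Steps 1--3 endows $\mathfrak g\mathbf{-Crys}$ with a coboundary structure. (Alternatively, following Henriques--Kamnitzer, one may instead define the interval-reversing maps $s_{p,q}$ directly from $\xi$ and verify the defining relations of the cactus group recalled above; the argument is of the same nature.)

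\emph{Expected main obstacle.} The only genuinely non-formal ingredients are the involution property $\xi_B^2=\mathrm{id}$ and the flip isomorphism $\overline{X\otimes Y}\cong\overline Y\otimes\overline X$. The former relies on the rigidity of highest-weight crystals (uniqueness of isomorphisms, i.e.\ the crystal Schur lemma), and the latter on a careful case analysis of the tensor-product rule together with the $\theta$-twisted behaviour of $\varepsilon_i,\phi_i$ under $\overline{(-)}$. Once these are in place, everything else --- well-definedness and naturality of $\sigma$, and all three coboundary axioms, the cactus axiom in particular --- follows essentially mechanically; this is precisely what makes the Henriques--Kamnitzer construction so efficient.
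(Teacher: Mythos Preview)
The paper does not give its own proof of this statement; it is quoted as a preliminary result from \cite[Theorem~6]{HKam} and used as a black box. Your proposal is essentially a faithful reconstruction of the Henriques--Kamnitzer argument: the reduction to the involution property $\xi_B^2=\mathrm{id}$ and the flip isomorphism $\overline{X\otimes Y}\cong\overline Y\otimes\overline X$, the factorisation of $\sigma_{A,B}$ through these, and the elementwise verification of the cactus axiom via the cancellation $\xi^2=\mathrm{id}$ are exactly the steps in \cite{HKam}. One small point worth making explicit: your verification of the symmetry axiom (``in the same spirit'') is correct but relies on one further application of your naturality property~(iii), namely to the flip morphism itself, together with the identification of $\xi_{\overline B\otimes\overline A}$ with $(\xi_B\times\xi_A)\circ\xi_{B\otimes A}\circ(\xi_B\times\xi_A)$ coming from naturality with respect to $\eta_B\otimes\eta_A$; once this relation is written down, $\sigma_{B,A}^{-1}=\sigma_{A,B}$ follows by a one-line manipulation. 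With that caveat, the sketch is complete.
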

	
	\begin{example}
		For $\mathfrak{g=sl}_2$, crystal graphs of $B_\lambda$ are path graphs with vertex set $\{b_k: k=0,1,\dots,\lambda\}$ and arrows $b_k\to b_{k+1}$ for $k=0,1,\dots, \lambda-1$. 
		The graph of $\overline{B_\lambda}$ is given by reversing the direction of all arrows in $B_\lambda$.
		The map $\xi_{B_\lambda}$ is thus given by $b_k\mapsto b_{\lambda-k}$.
	\end{example}
	
	It is natural to wonder whether the coboundary structure on the category $\mathfrak{g}\mathbf{-Crys}$ comes from a coboundary structure in the category $\mathcal U_q(\mathfrak{g})$-\textbf{Mod} of finite-dimensional representations of the quantum group of $\mathfrak{g}$.
	This is indeed the case, where the commutor for the representation category of a quantum group is defined via an automorphism $\xi:{U}_q(\mathfrak{g}) \to {U}_q(\mathfrak{g})$ resembling the definition of $\xi$ above (see \cite[Section~2.4]{HKam} for details).

	\subsection{The Temperley--Lieb Category}
	
For the remainder of this document, let $\Bbbk$ be a field.	
	
	\begin{definition}\label{def:renormalization}
		Let $\widetilde{\mathcal{TL}}_{q}(\Bbbk)$ be the strict $\Bbbk[q,q^{-1}]$-linear monoidal category defined as follows: 
		\begin{itemize}
			\item $\on{Ob}\widetilde{\mathcal{TL}}_{q}(\Bbbk) = \mathbb{N} = \setj{\obj{0},\obj{1},\obj{2},\ldots}$, with $\mathbb{1}_{\widetilde{\mathcal{TL}}_{q}(\Bbbk)} = \obj{0}$ and $\obj{m} \otimes \obj{n} = \obj{m+n}$;
			\item the morphisms of $\widetilde{\mathcal{TL}}_{q}(\Bbbk)$ are generated by $\on{cup}\in \on{Hom}_{\widetilde{\mathcal{TL}}_{q}(\Bbbk)}(\obj{0},\obj{2})$ and $\on{cap}\in \on{Hom}_{\widetilde{\mathcal{TL}}_{q}(\Bbbk)}(\obj{2},\obj{0})$ which we depict by, and identify with, string diagrams
			\[
			\raisebox{4pt}{$\on{cup}=\; $}
			\begin{tikzpicture}
	\begin{pgfonlayer}{nodelayer}
		\node [style=none] (0) at (-0.5, 0.5) {};
		\node [style=none] (1) at (0, 0) {};
		\node [style=none] (2) at (0.5, 0.5) {};
	\end{pgfonlayer}
	\begin{pgfonlayer}{edgelayer}
		\draw [style=thickstrand, bend right=45] (0.center) to (1.center);
		\draw [style=thickstrand, bend left=45] (2.center) to (1.center);
	\end{pgfonlayer}
\end{tikzpicture}\quad
			\raisebox{4pt}{ and 
				$\on{cap}=\;$ }
			\begin{tikzpicture}
	\begin{pgfonlayer}{nodelayer}
		\node [style=none] (0) at (-0.5, 0) {};
		\node [style=none] (1) at (0, 0.5) {};
		\node [style=none] (2) at (0.5, 0) {};
	\end{pgfonlayer}
	\begin{pgfonlayer}{edgelayer}
		\draw [style=thickstrand, bend left=45] (0.center) to (1.center);
		\draw [style=thickstrand, bend right=45] (2.center) to (1.center);
	\end{pgfonlayer}
\end{tikzpicture}
			\]
			\item the generators $\on{cup}$ and $\on{cap}$ satisfy the following relations:
			\[
			\begin{tikzpicture}
	\begin{pgfonlayer}{nodelayer}
		\node [style=none] (0) at (-1.5, 0.75) {};
		\node [style=none] (1) at (-1.25, 1) {};
		\node [style=none] (2) at (-1, 0.75) {};
		\node [style=none] (3) at (-2, 0.5) {};
		\node [style=none] (4) at (-1.75, 0.25) {};
		\node [style=none] (5) at (-1.5, 0.5) {};
		\node [style=none] (6) at (-1, 0) {};
		\node [style=none] (7) at (-2, 1.25) {};
		\node [style=none] (8) at (-0.5, 0.75) {$=$};
		\node [style=none] (9) at (0, 0) {};
		\node [style=none] (10) at (0, 1.25) {};
		\node [style=none] (11) at (1.5, 0.75) {};
		\node [style=none] (12) at (1.25, 1) {};
		\node [style=none] (13) at (1, 0.75) {};
		\node [style=none] (14) at (2, 0.5) {};
		\node [style=none] (15) at (1.75, 0.25) {};
		\node [style=none] (16) at (1.5, 0.5) {};
		\node [style=none] (17) at (1, 0) {};
		\node [style=none] (18) at (2, 1.25) {};
		\node [style=none] (19) at (0.5, 0.75) {$=$};
	\end{pgfonlayer}
	\begin{pgfonlayer}{edgelayer}
		\draw [style=thickstrand, bend left=45] (0.center) to (1.center);
		\draw [style=thickstrand, bend right=45] (2.center) to (1.center);
		\draw [style=thickstrand, bend right=45] (3.center) to (4.center);
		\draw [style=thickstrand, bend left=45] (5.center) to (4.center);
		\draw [style=thickstrand] (5.center) to (0.center);
		\draw [style=thickstrand] (6.center) to (2.center);
		\draw [style=thickstrand] (3.center) to (7.center);
		\draw [style=thickstrand] (9.center) to (10.center);
		\draw [style=thickstrand, bend right=45] (11.center) to (12.center);
		\draw [style=thickstrand, bend left=45] (13.center) to (12.center);
		\draw [style=thickstrand, bend left=45] (14.center) to (15.center);
		\draw [style=thickstrand, bend right=45] (16.center) to (15.center);
		\draw [style=thickstrand] (16.center) to (11.center);
		\draw [style=thickstrand] (17.center) to (13.center);
		\draw [style=thickstrand] (14.center) to (18.center);
	\end{pgfonlayer}
\end{tikzpicture}
			\]
			and
			\[
\begin{tikzpicture}
	\begin{pgfonlayer}{nodelayer}
		\node [style=none] (1) at (0, 1) {};
		\node [style=none] (3) at (-0.5, 0.5) {};
		\node [style=none] (4) at (0, 0) {};
		\node [style=none] (5) at (0.5, 0.5) {};
		\node [style=none] (6) at (1.5, 0.5) {$=[2]_{q}\on{id}_{0}$};
	\end{pgfonlayer}
	\begin{pgfonlayer}{edgelayer}
		\draw [style=thickstrand, bend right=45] (3.center) to (4.center);
		\draw [style=thickstrand, bend left=45] (5.center) to (4.center);
		\draw [style=thickstrand, bend right=45] (5.center) to (1.center);
		\draw [style=thickstrand, bend left=45] (3.center) to (1.center);
	\end{pgfonlayer}
\end{tikzpicture}
			\]
		\end{itemize}
	\end{definition}
	
	\begin{remark}
		The object $\obj{1}$ is self-dual, where the morphisms $\on{cup}$ and $\on{cap}$ provide the unit and the counit of an adjunction $(\obj{1},\obj{1},\on{cup},\on{cap})$. The category $\widetilde{\mathcal{TL}}_{q}(\Bbbk)$ in fact, is a well-studied category, see e.g. \cite{EO}, \cite{Tu}; in particular, it is braided and the dimension of $\obj{1}$ (i.e. the trace of its identity morphism) equals $[2]_{q}$. Indeed, $\widetilde{\mathcal{TL}}_{q}(\Bbbk)$ is the {\it free} strict $\Bbbk[q,q^{-1}]$-linear monoidal category on a self-dual object of dimension $[2]_{q}$, as is explained, and heavily used in \cite{EO}. 
	\end{remark}

	The following result is folklore and, at least in the non-quantum setting, dates back to \cite{RTW}. A much more general version of it is proven in detail in \cite[Theorem~2.58]{E1}. 
    Let $\mathbf{Fund}(\uqsl)$ be the full monoidal subcategory of $\mathbf{Rep}(\uqsl)$ given by tensor powers of the fundamental representation. 
	
	\begin{theorem}\label{uqsl2TL}
		Let $q \in \Bbbk^{\times}$. There is a monoidal equivalence $\mathcal{TL}_{q}(\Bbbk) \simeq \mathbf{Fund}(\uqsl)$. 
	\end{theorem}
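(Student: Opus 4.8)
The plan is to construct an explicit $\Bbbk[q,q^{-1}]$-linear monoidal functor $F\colon \widetilde{\mathcal{TL}}_{q}(\Bbbk)\to \mathbf{Fund}(\uqsl)$ — viewing the target as $\Bbbk[q,q^{-1}]$-linear through the specialization homomorphism $q\mapsto q\in\Bbbk^{\times}$ — and then to base-change it to $\mathcal{TL}_{q}(\Bbbk):=\widetilde{\mathcal{TL}}_{q}(\Bbbk)\otimes_{\Bbbk[q,q^{-1}]}\Bbbk$. Since both monoidal structures are strict and $\mathbf{Fund}(\uqsl)$ is generated by the fundamental representation $V$, we are forced to put $F(\obj{n})=V^{\otimes n}$. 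For the generators, one uses that $V\cong V^{*}$ in $\mathbf{Rep}(\uqsl)$: fixing such an isomorphism and composing with the (co)evaluation of the rigid category $\mathbf{Rep}(\uqsl)$ produces morphisms $\Bbbk\to V\otimes V$ and $V\otimes V\to\Bbbk$, and one sets $F(\on{cup})$ and $F(\on{cap})$ to be suitable scalar rescalings of these. The two defining relations of $\widetilde{\mathcal{TL}}_{q}(\Bbbk)$ then become a finite computation in $V$: the zig-zag relation is the snake identity for the chosen duality data, and the circle relation says that the categorical dimension of $V$ equals $[2]_{q}=q+q^{-1}$. Alternatively, and more economically, one invokes the universal property recalled in the Remark above: $\widetilde{\mathcal{TL}}_{q}(\Bbbk)$ is the free strict $\Bbbk[q,q^{-1}]$-linear monoidal category on a self-dual object of dimension $[2]_{q}$, and $V$ is exactly such an object, so $F$ exists and is unique.

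Essential surjectivity is immediate, since $F$ is already surjective on objects. Thus the content of the theorem is that $F$ is fully faithful, i.e.\ that every map $\on{Hom}_{\mathcal{TL}_{q}(\Bbbk)}(\obj{m},\obj{n})\to\on{Hom}_{\uqsl}(V^{\otimes m},V^{\otimes n})$ is bijective. The adjunction $(\obj{1},\obj{1},\on{cup},\on{cap})$ is preserved by $F$, so bending strands gives natural isomorphisms $\on{Hom}(\obj{m},\obj{n})\cong\on{Hom}(\obj{0},\obj{m+n})$ on both sides, compatible with $F$; hence it suffices to treat hom-spaces out of the monoidal unit.

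The core is a comparison of dimensions combined with a spanning statement, made uniformly in $q\in\Bbbk^{\times}$. On the diagrammatic side one proves the \emph{diagram basis theorem}: $\on{Hom}_{\widetilde{\mathcal{TL}}_{q}(\Bbbk)}(\obj{0},\obj{N})$ is a free $\Bbbk[q,q^{-1}]$-module whose basis is the set of planar matchings of $N$ points (rank $0$ for $N$ odd, the Catalan number $C_{N/2}$ for $N$ even). Spanning comes from the reduction algorithm: any composite of cups, caps and identities is isotopic to a planar diagram, and the two relations delete, respectively, every contractible turnback and every closed loop (the latter at the cost of a factor $[2]_{q}$), leaving a $\Bbbk[q,q^{-1}]$-combination of matchings; linear independence follows from the cellular structure of the Temperley--Lieb algebras via faithfulness of a suitable representation, and since the module is free, base change to $\Bbbk$ preserves the basis. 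On the representation-theoretic side the tensor powers $V^{\otimes n}$ are tilting modules, so the spaces $\on{Hom}_{\uqsl}(V^{\otimes m},V^{\otimes n})$ are likewise free of ranks insensitive to specialization of $q$; inverting $q$ reduces the rank computation to the semisimple case, where Clebsch--Gordan gives $V^{\otimes N}\cong\bigoplus_{k}m_{N,k}V_{k}$ and $\dim\on{Hom}(\Bbbk,V^{\otimes N})=m_{N,0}=C_{N/2}$, matching the diagrammatic count. It remains to see that $F$ is \emph{surjective} on hom-spaces — equivalently, that $\on{End}_{\uqsl}(V^{\otimes n})$ is generated by the images of the elementary cup--cap morphisms and that $\on{Hom}_{\uqsl}(\Bbbk,V^{\otimes N})$ is spanned by images of cup diagrams — after which surjectivity between two spaces (resp.\ two free modules) of equal finite rank forces bijectivity.

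The main obstacle is precisely this last generation/surjectivity statement, together with keeping the dimension count valid at \emph{all} $q\in\Bbbk^{\times}$, including roots of unity, where $\mathbf{Rep}(\uqsl)$ fails to be semisimple and naive Gram-determinant arguments break down (the relevant determinants are products of quantum integers $[k]_{q}$, which vanish at roots of unity). Both points are handled cleanly through the theory of tilting modules and the cellular/double-centralizer structure, carried out in the generality we need in \cite[Theorem~2.58]{E1}, with the classical prototype in \cite{RTW}.
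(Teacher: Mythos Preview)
The paper does not give its own proof of this theorem: it is stated as folklore, with a pointer to \cite{RTW} for the classical prototype and to \cite[Theorem~2.58]{E1} for a detailed modern proof in greater generality. Your proposal is a reasonable outline of exactly the standard argument those references carry out, and you even invoke the same two sources at the key step; so there is no independent proof in the paper to compare against, and your sketch is consistent with the literature the paper defers to.

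One minor comment on the internal logic of your sketch: once you have established that both hom-spaces have the same finite dimension, you only need \emph{either} injectivity or surjectivity of $F$ on hom-spaces, and the more common route is to prove injectivity directly by showing that the images of the Temperley--Lieb diagrams in $\on{Hom}_{\uqsl}(\Bbbk,V^{\otimes N})$ are linearly independent (which, incidentally, is often how the diagram basis theorem itself is established). Framing the remaining step as ``surjectivity'' is not wrong, but it is the less economical direction, and your phrasing risks suggesting that a separate generation argument for $\on{End}_{\uqsl}(V^{\otimes n})$ is needed on top of the dimension count.
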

	We now briefly formulate some well-known facts about additive and Karoubi envelopes, in a language closer to enriched category theory.
        We say that a $\Bbbk$-linear category $\mathcal{A}$ is {\it Cauchy complete} if it admits finite direct sums and is idempotent complete. Indeed, finite direct sums and retracts are precisely the {\it absolute colimits} for $\Bbbk$-linear functors, i.e. the colimits preserved by any such functor. 
	Given a $\Bbbk$-linear category $\mathcal{A}$ let $\mathcal{A}^{\mathtt{C}}$ denote the {\it Cauchy completion of $\mathcal{A}$}, given by the subcategory of $\mathbf{Fun}_{\Bbbk}(\mathcal{A},\mathbf{Vec}_{\Bbbk})$ consisting of absolute colimits of representable presheaves. It can be identified with the Karoubi envelope of the additive envelope of $\mathcal{A}$ - see \cite{BD}, \cite{Ri} for detailed accounts. The restricted Yoneda embedding yields a full and faithful functor $\iota: \mathcal{A} \hookrightarrow \mathcal{A}^{\mathtt{C}}$, such that for a Cauchy complete $\mathcal{E}$, the functor 
    \begin{equation}\label{eq:CauchyEquiv}
    \mathbf{Fun}_{\Bbbk}(\mathcal{A}^{\mathtt{C}},\mathcal{E})\xrightarrow{- \circ \iota} \mathbf{Fun}_{\Bbbk}(\mathcal{A,E})
    \end{equation}
     is an equivalence. A monoidal structure on $\mathcal{A}$ extends essentially uniquely to a monoidal structure on $\mathcal{A}^{\mathtt{C}}$ via Day convolution, and so $\iota$ is strong monoidal, and the equivalence~\eqref{eq:CauchyEquiv} lifts to an equivalence
     \begin{equation}\label{eq:MonCauchy}
    \mathbf{MonFun}_{\Bbbk}(\mathcal{A}^{\mathtt{C}},\mathcal{E})\xrightarrow{- \circ \iota} \mathbf{MonFun}_{\Bbbk}(\mathcal{A,E})
    \end{equation}
    between categories of (strong) monoidal functors to a Cauchy complete monoidal category $\mathcal{E}$.
    
    In particular, for two (monoidal) categories $\mathcal{A,B}$ such that $\mathcal{A}^{\mathtt{C}} \simeq \mathcal{B}^{\mathtt{C}}$, we have $\mathbf{Fun}_{\Bbbk}(\mathcal{A,E})\simeq \mathbf{Fun}_{\Bbbk}(\mathcal{B,E})$ for any (monoidal) Cauchy complete $\mathcal{E}$, and a similar equivalence for monoidal functors. In fact, already the case $\mathcal{E} = \mathbf{Vec}_{\Bbbk}$ suffices for the converse. These equivalent conditions are equivalent to $\mathcal{A,B}$ being equivalent in the bicategory of profunctors.
     
	\begin{corollary}
		Since any finite-dimensional $\uqsl$-module is a direct summand of a tensor power of the fundamental representation, we find that $\mathbf{Rep}(\uqsl) \simeq \mathbf{Fund}(\uqsl)^{\mathtt{C}} \simeq \mathcal{TL}_{q}(\Bbbk)^{\mathtt{C}}$ by \autoref{uqsl2TL}. 
	\end{corollary}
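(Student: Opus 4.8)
The plan is to deduce both equivalences from the essential uniqueness of the (monoidal) Cauchy completion recalled above, together with the elementary observation that $\mathbf{Rep}(\uqsl)$ is itself Cauchy complete.

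First I would treat the equivalence $\mathbf{Fund}(\uqsl)^{\mathtt{C}} \simeq \mathcal{TL}_{q}(\Bbbk)^{\mathtt{C}}$. By \autoref{uqsl2TL} there is a strong monoidal equivalence $F : \mathcal{TL}_{q}(\Bbbk) \xrightarrow{\sim} \mathbf{Fund}(\uqsl)$. Post-composing representable presheaves with $F$ (equivalently, passing to Karoubi envelopes of additive envelopes) extends $F$ to a strong monoidal functor $F^{\mathtt{C}} : \mathcal{TL}_{q}(\Bbbk)^{\mathtt{C}} \to \mathbf{Fund}(\uqsl)^{\mathtt{C}}$; this assignment is (pseudo)functorial, it carries identities to identities and natural isomorphisms to natural isomorphisms, so $F^{\mathtt{C}}$ is a monoidal equivalence with quasi-inverse $(F^{-1})^{\mathtt{C}}$. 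Alternatively one may feed the equivalence $-\circ F : \mathbf{MonFun}_{\Bbbk}(\mathbf{Fund}(\uqsl),\mathcal{E}) \to \mathbf{MonFun}_{\Bbbk}(\mathcal{TL}_{q}(\Bbbk),\mathcal{E})$ through the equivalences \eqref{eq:MonCauchy} for a Cauchy complete $\mathcal{E}$ and then invoke the monoidal Yoneda lemma.

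For the equivalence $\mathbf{Rep}(\uqsl) \simeq \mathbf{Fund}(\uqsl)^{\mathtt{C}}$, the category $\mathbf{Rep}(\uqsl)$ of finite-dimensional $\uqsl$-modules is abelian, hence admits finite direct sums and has all idempotents split, so it is Cauchy complete, and it is monoidal. The inclusion $j : \mathbf{Fund}(\uqsl) \hookrightarrow \mathbf{Rep}(\uqsl)$ is fully faithful, $\mathbf{Fund}(\uqsl)$ being a full monoidal subcategory, and it is strong monoidal. By the universal property of \eqref{eq:MonCauchy} — equivalently, by the essential uniqueness of the Cauchy completion as the closure of $\mathbf{Fund}(\uqsl)$ under absolute colimits inside a Cauchy complete category — $j$ factors, uniquely up to monoidal isomorphism, as $j \cong \bar{j} \circ \iota$, where $\iota : \mathbf{Fund}(\uqsl) \hookrightarrow \mathbf{Fund}(\uqsl)^{\mathtt{C}}$ is the canonical embedding and $\bar{j} : \mathbf{Fund}(\uqsl)^{\mathtt{C}} \to \mathbf{Rep}(\uqsl)$ is strong monoidal. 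It then remains to show $\bar{j}$ is an equivalence. Realizing $\mathbf{Fund}(\uqsl)^{\mathtt{C}}$ as the Karoubi envelope of the additive envelope of $\mathbf{Fund}(\uqsl)$, an object is a pair $(\obj{n_{1}} \oplus \dots \oplus \obj{n_{k}}, e)$ with $e$ an idempotent endomorphism, a morphism $(X,e) \to (Y,f)$ is a $g \in \on{Hom}(X,Y)$ with $fge = g$, and $\bar{j}$ sends $(X,e)$ to the image of $j(e)$; since $j$ is additive and fully faithful and idempotents split uniquely in $\mathbf{Rep}(\uqsl)$, the same hom-description survives after applying $j$, so $\bar{j}$ is fully faithful. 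For essential surjectivity, the hypothesis of the corollary says exactly that every $M \in \mathbf{Rep}(\uqsl)$ is a direct summand of some $\obj{1}^{\otimes n} = j(\obj{n})$; as $\mathbf{Fund}(\uqsl)^{\mathtt{C}}$ is closed under retracts of its objects and $\bar{j}$ preserves and reflects these, $M$ is in the essential image of $\bar{j}$. Hence $\bar{j}$ is a monoidal equivalence, and composing with the first step yields $\mathbf{Rep}(\uqsl) \simeq \mathbf{Fund}(\uqsl)^{\mathtt{C}} \simeq \mathcal{TL}_{q}(\Bbbk)^{\mathtt{C}}$.

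The only step requiring genuine care — and the closest thing to an obstacle in an otherwise formal argument — is the verification that $\bar{j}$ is fully faithful, i.e. that forming the additive-then-Karoubi envelope of a full subcategory and then mapping it into an ambient Cauchy complete category does not alter hom-spaces. This is precisely where the explicit description of $\mathbf{Fund}(\uqsl)^{\mathtt{C}}$ recalled in the excerpt, rather than merely its universal property, is the most convenient tool; everything else is bookkeeping with the functoriality of Cauchy completion and the splitting of idempotents.
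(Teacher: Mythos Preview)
Your proposal is correct and is precisely the argument the paper has in mind: the corollary is stated without a separate proof because the justification is already contained in its wording together with the preceding discussion of Cauchy completions and \autoref{uqsl2TL}. You have simply unpacked that discussion in detail---functoriality of $(-)^{\mathtt{C}}$ for the second equivalence, and Cauchy completeness of $\mathbf{Rep}(\uqsl)$ plus the direct-summand hypothesis for the first---which matches the paper's intended reasoning exactly.
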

	
	\section{The Crystal Temperley--Lieb Category}\label{sec:crysTL}
	
	\subsection{Renormalizing the Temperley--Lieb Category}
	We now define a renormalization of $\widetilde{\mathcal{TL}}_{q}(\Bbbk)$, which extends to a $\Bbbk[q]$-linear category. This already mirrors the development of $\mathfrak{g}$-crystals: we introduce a renormalization which allows us to set $q = 0$.
	\begin{definition}
		Let $\overline{\mathcal{TL}}_{q}(\Bbbk)$ be the strict $\Bbbk[q]$-linear monoidal category defined as follows: 
		\begin{itemize}
			\item $\on{Ob}\overline{\mathcal{TL}}_{q}(\Bbbk) = \mathbb{N} = \setj{\obj{0},\obj{1},\obj{2},\ldots}$, with $\mathbb{1}_{\overline{\mathcal{TL}}_{q}(\Bbbk)} = \obj{0}$ and $\obj{m} \otimes \obj{n} = \obj{m+n}$;
			\item the morphisms of $\overline{\mathcal{TL}}_{q}(\Bbbk)$ are generated by $\on{cup}\in \on{Hom}_{\overline{\mathcal{TL}}_{q}(\Bbbk)}(\obj{0},\obj{2})$ and $\on{cap}\in \on{Hom}_{\overline{\mathcal{TL}}_{q}(\Bbbk)}(\obj{2},\obj{0})$, which we depict by, and identify with, string diagrams
			\[
			\raisebox{4pt}{$\on{cup}=\; $}
			\begin{tikzpicture}
	\begin{pgfonlayer}{nodelayer}
		\node [style=none] (0) at (-0.5, 0.5) {};
		\node [style=none] (1) at (0, 0) {};
		\node [style=none] (2) at (0.5, 0.5) {};
	\end{pgfonlayer}
	\begin{pgfonlayer}{edgelayer}
		\draw [style=thickstrand, bend right=45] (0.center) to (1.center);
		\draw [style=thickstrand, bend left=45] (2.center) to (1.center);
	\end{pgfonlayer}
\end{tikzpicture}\quad
			\raisebox{4pt}{ and 
				$\on{cap}=\;$ }
			\begin{tikzpicture}
	\begin{pgfonlayer}{nodelayer}
		\node [style=none] (0) at (-0.5, 0) {};
		\node [style=none] (1) at (0, 0.5) {};
		\node [style=none] (2) at (0.5, 0) {};
	\end{pgfonlayer}
	\begin{pgfonlayer}{edgelayer}
		\draw [style=thickstrand, bend left=45] (0.center) to (1.center);
		\draw [style=thickstrand, bend right=45] (2.center) to (1.center);
	\end{pgfonlayer}
\end{tikzpicture}
			\]
			\item the generators $\on{cup}$ and $\on{cap}$ satisfy the following relations:
			\[
			\begin{tikzpicture}
	\begin{pgfonlayer}{nodelayer}
		\node [style=none] (0) at (-1.5, 0.75) {};
		\node [style=none] (1) at (-1.25, 1) {};
		\node [style=none] (2) at (-1, 0.75) {};
		\node [style=none] (3) at (-2, 0.5) {};
		\node [style=none] (4) at (-1.75, 0.25) {};
		\node [style=none] (5) at (-1.5, 0.5) {};
		\node [style=none] (6) at (-1, 0) {};
		\node [style=none] (7) at (-2, 1.25) {};
		\node [style=none] (9) at (0, 0) {};
		\node [style=none] (10) at (0, 1.25) {};
		\node [style=none] (11) at (1.5, 0.75) {};
		\node [style=none] (12) at (1.25, 1) {};
		\node [style=none] (13) at (1, 0.75) {};
		\node [style=none] (14) at (2, 0.5) {};
		\node [style=none] (15) at (1.75, 0.25) {};
		\node [style=none] (16) at (1.5, 0.5) {};
		\node [style=none] (17) at (1, 0) {};
		\node [style=none] (18) at (2, 1.25) {};
		\node [style=none] (19) at (0.5, 0.75) {$=$};
		\node [style=none] (20) at (-0.5, 0.75) {$=q\cdot$};
	\end{pgfonlayer}
	\begin{pgfonlayer}{edgelayer}
		\draw [style=thickstrand, bend left=45] (0.center) to (1.center);
		\draw [style=thickstrand, bend right=45] (2.center) to (1.center);
		\draw [style=thickstrand, bend right=45] (3.center) to (4.center);
		\draw [style=thickstrand, bend left=45] (5.center) to (4.center);
		\draw [style=thickstrand] (5.center) to (0.center);
		\draw [style=thickstrand] (6.center) to (2.center);
		\draw [style=thickstrand] (3.center) to (7.center);
		\draw [style=thickstrand] (9.center) to (10.center);
		\draw [style=thickstrand, bend right=45] (11.center) to (12.center);
		\draw [style=thickstrand, bend left=45] (13.center) to (12.center);
		\draw [style=thickstrand, bend left=45] (14.center) to (15.center);
		\draw [style=thickstrand, bend right=45] (16.center) to (15.center);
		\draw [style=thickstrand] (16.center) to (11.center);
		\draw [style=thickstrand] (17.center) to (13.center);
		\draw [style=thickstrand] (14.center) to (18.center);
	\end{pgfonlayer}
\end{tikzpicture}
			\]
			and
			\[\begin{tikzpicture}
	\begin{pgfonlayer}{nodelayer}
		\node [style=none] (0) at (0, 1) {};
		\node [style=none] (1) at (-0.5, 0.5) {};
		\node [style=none] (2) at (0, 0) {};
		\node [style=none] (3) at (0.5, 0.5) {};
		\node [style=none] (4) at (1.75, 0.5) {$=(q^{2}+1)\on{id}_{0}.$};
	\end{pgfonlayer}
	\begin{pgfonlayer}{edgelayer}
		\draw [style=thickstrand, bend right=45] (1.center) to (2.center);
		\draw [style=thickstrand, bend left=45] (3.center) to (2.center);
		\draw [style=thickstrand, bend right=45] (3.center) to (0.center);
		\draw [style=thickstrand, bend left=45] (1.center) to (0.center);
	\end{pgfonlayer}
\end{tikzpicture}
\]
		\end{itemize}
	\end{definition}
	
	Observe that the interchange law for monoidal categories implies that two diagrams which differ only by a rectilinear isotopy and represent morphisms in $\widetilde{\mathcal{TL}}_{q}(\Bbbk)$ necessarily represent the same morphism. 
    The same holds for $\overline{\mathcal{TL}}_{q}(\Bbbk)$; in both cases, this is an immediate consequence of the interchange law for monoidal categories.
	
	\begin{definition}
		Given an {\it invertible} scalar $a \in \Bbbk^{\times}$, the strict monoidal $\Bbbk$-linear category $\widetilde{\mathcal{TL}}_{a}(\Bbbk)$ is defined by substituting $a$ in place of $q$, for every occurence of the latter.
		
		Similarly, given {\it any} scalar $a \in \Bbbk$, the strict monoidal $\Bbbk$-linear category $\overline{\mathcal{TL}}_{a}(\Bbbk)$ is defined by substituting $a$ in place of $q$, for every occurence of the latter.
	\end{definition}
	
	\begin{remark}
		The $\Bbbk$-algebra homomorphism $\on{ev}_{a}: \Bbbk[q,q^{-1}] \rightarrow \Bbbk$ determined by sending $q$ to $a$ yields the induction functor $\Bbbk[q,q^{-1}]\!\on{-Mod} \rightarrow \mathbf{Vec}_{\Bbbk}$, which in turn defines a monoidal $2$-functor $\on{Ev}_{a}: \mathbf{Cat}_{\Bbbk[q,q^{-1}]} \rightarrow \mathbf{Cat}_{\Bbbk}$. 
        The monoidal category $\widetilde{\mathcal{TL}}_{a}(\Bbbk)$ is obtained as $\on{Ev}_{a}\big(\widetilde{\mathcal{TL}}_{q}(\Bbbk)\big)$. Similar considerations apply to $\overline{\mathcal{TL}}_{a}(\Bbbk)$.
	\end{remark}
	
	\begin{lemma}\label{NaFunctor}
		For $a \in \Bbbk^{\times}$, there is a strict monoidal functor $N_{a}: \widetilde{\mathcal{TL}}_{a}(\Bbbk) \rightarrow \overline{\mathcal{TL}}_{a}(\Bbbk)$, which is identity on objects, and is determined on morphisms by the assignments $N_{a}(\on{cup}) = \on{cup}$ and $N_{a}(\on{cap}) = \frac{1}{a}\cdot \on{cap}$.
		
		Similarly, there is a strict monoidal functor $D_{a}: \overline{\mathcal{TL}}_{a}(\Bbbk) \rightarrow \widetilde{\mathcal{TL}}_{a}(\Bbbk)$, which is identity on objects, and is determined on morphisms by the assignments $D_{a}(\on{cup}) = \on{cup}$ and $D_{a}(\on{cap}) = a\cdot \on{cap}$.
	\end{lemma}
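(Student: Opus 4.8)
The plan is to use the fact that $\widetilde{\mathcal{TL}}_{a}(\Bbbk)$ and $\overline{\mathcal{TL}}_{a}(\Bbbk)$ are strict monoidal categories presented by generators and relations, as already recorded in the remark that $\widetilde{\mathcal{TL}}_{q}(\Bbbk)$ is free on a self-dual object of dimension $[2]_{q}$. Since every object is a tensor power $\obj{1}^{\otimes n}$ of the single generating object, with unit $\obj{0} = \obj{1}^{\otimes 0}$, a strict monoidal functor out of either category is completely determined by the image of $\obj{1}$ together with the images of the generating morphisms $\on{cup}$ and $\on{cap}$; conversely, any assignment sending $\obj{1}$ to an object and $\on{cup},\on{cap}$ to morphisms of the correct (co)domains extends to a (necessarily unique) strict monoidal functor as soon as the images satisfy the defining relations. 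So I would define $N_{a}$ on objects by $N_{a}(\obj{1}) = \obj{1}$, hence $N_{a}(\obj{n}) = \obj{n}$, and on generating morphisms by $N_{a}(\on{cup}) = \on{cup}$, $N_{a}(\on{cap}) = \tfrac{1}{a}\on{cap}$, and then check that the defining relations of $\widetilde{\mathcal{TL}}_{a}(\Bbbk)$ are carried to valid equalities in $\overline{\mathcal{TL}}_{a}(\Bbbk)$; here invertibility of $a$ is used precisely to make sense of $\tfrac{1}{a}$.

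For the verification I would record the scalar bookkeeping once. Write
\[
Z_{1} = (\on{cap}\otimes\on{id}_{\obj{1}})\circ(\on{id}_{\obj{1}}\otimes\on{cup}),
\qquad
Z_{2} = (\on{id}_{\obj{1}}\otimes\on{cap})\circ(\on{cup}\otimes\on{id}_{\obj{1}})
\]
for the two zigzag composites $\obj{1}\to\obj{1}$. In $\widetilde{\mathcal{TL}}_{a}(\Bbbk)$ the relations read $Z_{1} = Z_{2} = \on{id}_{\obj{1}}$ and $\on{cap}\circ\on{cup} = [2]_{a}\on{id}_{\obj{0}} = (a+a^{-1})\on{id}_{\obj{0}}$, whereas in $\overline{\mathcal{TL}}_{a}(\Bbbk)$ they read $Z_{1} = Z_{2} = a\cdot\on{id}_{\obj{1}}$ and $\on{cap}\circ\on{cup} = (a^{2}+1)\on{id}_{\obj{0}}$. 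Applying $N_{a}$ termwise, each $Z_{i}$ contains exactly one $\on{cap}$ and no $\on{cup}$, so $N_{a}(Z_{i})$ equals $\tfrac{1}{a}$ times the corresponding zigzag in $\overline{\mathcal{TL}}_{a}(\Bbbk)$, namely $\tfrac{1}{a}(a\cdot\on{id}_{\obj{1}}) = \on{id}_{\obj{1}} = N_{a}(\on{id}_{\obj{1}})$; and $N_{a}(\on{cap}\circ\on{cup}) = \tfrac{1}{a}(a^{2}+1)\on{id}_{\obj{0}} = (a+a^{-1})\on{id}_{\obj{0}} = N_{a}\big([2]_{a}\on{id}_{\obj{0}}\big)$. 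Hence all relators are preserved and $N_{a}$ is well defined and strict monoidal. The construction of $D_{a}$ is symmetric: set $D_{a}(\on{cup}) = \on{cup}$ and $D_{a}(\on{cap}) = a\cdot\on{cap}$; then $D_{a}(Z_{i}) = a\cdot\on{id}_{\obj{1}}$ matches the zigzag relation of $\overline{\mathcal{TL}}_{a}(\Bbbk)$, and $D_{a}(\on{cap}\circ\on{cup}) = a(a+a^{-1})\on{id}_{\obj{0}} = (a^{2}+1)\on{id}_{\obj{0}}$ matches its circle relation.

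I do not expect a substantial obstacle: the two points needing care are making explicit that these categories are presented by generators and relations as strict monoidal categories (so the universal property applies), and tracking where the rescaling factors land, which is exactly the short computation above. As a sanity check I would note that $D_{a}\circ N_{a}$ and $N_{a}\circ D_{a}$ both fix $\on{cup}$ and $\on{cap}$, since $\tfrac{1}{a}\cdot a = 1$, so in fact $N_{a}$ and $D_{a}$ are mutually inverse isomorphisms of strict monoidal categories — though only the existence of the functors is asserted here.
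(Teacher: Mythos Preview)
Your proposal is correct and follows essentially the same approach as the paper: both verify well-definedness by checking that the defining zigzag and circle-evaluation relations are preserved under the rescaling $\on{cap}\mapsto \tfrac{1}{a}\on{cap}$ (respectively $\on{cap}\mapsto a\cdot\on{cap}$). Your phrasing for $D_a$ is slightly imprecise---you should say that the image of the $\overline{\mathcal{TL}}_a(\Bbbk)$ relation holds in $\widetilde{\mathcal{TL}}_a(\Bbbk)$, rather than that it ``matches the relation of $\overline{\mathcal{TL}}_a(\Bbbk)$''---but the computation itself is right.
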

	
	\begin{proof}
		We verify that that $D_{a}$ is well-defined by checking if it respects the defining relations for  are satisfied: 
		\[\begin{tikzpicture}
	\begin{pgfonlayer}{nodelayer}
		\node [style=none] (1) at (0.5, 1) {};
		\node [style=none] (3) at (0, 0.5) {};
		\node [style=none] (5) at (0, 0.5) {};
		\node [style=none] (6) at (-0.5, 0.5) {};
		\node [style=none] (7) at (-0.25, 0.25) {};
		\node [style=none] (8) at (0.5, 0) {};
		\node [style=none] (9) at (-0.5, 1.5) {};
		\node [style=none] (11) at (2.5, 0.5) {};
		\node [style=none] (12) at (2, 0.5) {};
		\node [style=none] (14) at (2, 0.5) {};
		\node [style=none] (15) at (1.5, 0.5) {};
		\node [style=none] (16) at (1.75, 0.25) {};
		\node [style=none] (17) at (2.5, 0) {};
		\node [style=none] (18) at (2.5, 1) {};
		\node [style=none] (19) at (2, 1) {};
		\node [style=none] (20) at (2.25, 1.25) {};
		\node [style=none] (21) at (2, 1) {};
		\node [style=none] (22) at (1.5, 1) {};
		\node [style=none] (23) at (1.5, 1.5) {};
		\node [style=none] (24) at (0.5, 1) {};
		\node [style=none] (25) at (0, 1) {};
		\node [style=none] (26) at (0.25, 1.25) {};
		\node [style=none] (27) at (0, 1) {};
		\node [style=none] (28) at (1, 0.75) {$=$};
		\node [style=none] (29) at (2, 0.75) {$\circ$};
		\node [style=none] (30) at (3.25, 1.25) {$\xmapsto{N_{a}}$};
		\node [style=none] (31) at (5.5, 1) {};
		\node [style=none] (32) at (5, 1) {};
		\node [style=none] (33) at (5.25, 1.25) {};
		\node [style=none] (34) at (5, 1) {};
		\node [style=none] (35) at (4.5, 1) {};
		\node [style=none] (36) at (4.5, 1.5) {};
		\node [style=none] (37) at (4, 1.25) {$\frac{1}{a}\cdot$};
		\node [style=none] (44) at (5.5, 0.5) {};
		\node [style=none] (45) at (5, 0.5) {};
		\node [style=none] (46) at (5, 0.5) {};
		\node [style=none] (47) at (4.5, 0.5) {};
		\node [style=none] (48) at (4.75, 0.25) {};
		\node [style=none] (49) at (5.5, 0) {};
		\node [style=none] (50) at (5, 0.75) {$\circ$};
		\node [style=none] (51) at (3.25, 0.25) {$\xmapsto{N_{a}}$};
		\node [style=none] (52) at (6.25, 0.75) {$= \frac{1}{a}\cdot $};
		\node [style=none] (53) at (8, 1) {};
		\node [style=none] (54) at (7.5, 0.5) {};
		\node [style=none] (55) at (7.5, 0.5) {};
		\node [style=none] (56) at (7, 0.5) {};
		\node [style=none] (57) at (7.25, 0.25) {};
		\node [style=none] (58) at (8, 0) {};
		\node [style=none] (59) at (7, 1.5) {};
		\node [style=none] (60) at (8, 1) {};
		\node [style=none] (61) at (7.5, 1) {};
		\node [style=none] (62) at (7.75, 1.25) {};
		\node [style=none] (63) at (7.5, 1) {};
		\node [style=none] (64) at (8.75, 0.75) {$=$};
		\node [style=none] (65) at (9.25, 1.5) {};
		\node [style=none] (66) at (9.25, 0) {};
		\node [style=none] (67) at (9.75, 0.75) {$\stackrel{N_{a}}{\mapsfrom}$};
		\node [style=none] (68) at (10.5, 1.5) {};
		\node [style=none] (69) at (10.5, 0) {};
	\end{pgfonlayer}
	\begin{pgfonlayer}{edgelayer}
		\draw [style=thickstrand, bend left=45] (7.center) to (6.center);
		\draw [style=thickstrand, bend right=45] (7.center) to (5.center);
		\draw [style=thickstrand] (9.center) to (6.center);
		\draw [style=thickstrand] (1.center) to (8.center);
		\draw [style=thickstrand, bend left=45] (16.center) to (15.center);
		\draw [style=thickstrand, bend right=45] (16.center) to (14.center);
		\draw [style=thickstrand] (11.center) to (17.center);
		\draw [style=thickstrand, bend right=45] (20.center) to (19.center);
		\draw [style=thickstrand, bend left=45] (20.center) to (18.center);
		\draw [style=thickstrand] (23.center) to (22.center);
		\draw [style=thickstrand, bend right=45] (26.center) to (25.center);
		\draw [style=thickstrand, bend left=45] (26.center) to (24.center);
		\draw [style=thickstrand] (27.center) to (5.center);
		\draw [style=thickstrand, bend right=45] (33.center) to (32.center);
		\draw [style=thickstrand, bend left=45] (33.center) to (31.center);
		\draw [style=thickstrand] (36.center) to (35.center);
		\draw [style=thickstrand, bend left=45] (48.center) to (47.center);
		\draw [style=thickstrand, bend right=45] (48.center) to (46.center);
		\draw [style=thickstrand] (44.center) to (49.center);
		\draw [style=thickstrand, bend left=45] (57.center) to (56.center);
		\draw [style=thickstrand, bend right=45] (57.center) to (55.center);
		\draw [style=thickstrand] (59.center) to (56.center);
		\draw [style=thickstrand] (53.center) to (58.center);
		\draw [style=thickstrand, bend right=45] (62.center) to (61.center);
		\draw [style=thickstrand, bend left=45] (62.center) to (60.center);
		\draw [style=thickstrand] (63.center) to (55.center);
		\draw [style=thickstrand] (65.center) to (66.center);
		\draw [style=thickstrand] (68.center) to (69.center);
	\end{pgfonlayer}
\end{tikzpicture}\]
		and
		\[\begin{tikzpicture}
	\begin{pgfonlayer}{nodelayer}
		\node [style=none] (0) at (0, 1.25) {};
		\node [style=none] (1) at (-0.5, 0.75) {};
		\node [style=none] (2) at (0, 0.25) {};
		\node [style=none] (3) at (0.5, 0.75) {};
		\node [style=none] (5) at (1.5, 0.5) {};
		\node [style=none] (6) at (2, 0) {};
		\node [style=none] (7) at (2.5, 0.5) {};
		\node [style=none] (8) at (1.5, 1) {};
		\node [style=none] (9) at (2, 1.5) {};
		\node [style=none] (10) at (2.5, 1) {};
		\node [style=none] (11) at (3.5, 0.5) {};
		\node [style=none] (12) at (4, 0) {};
		\node [style=none] (13) at (4.5, 0.5) {};
		\node [style=none] (14) at (3.5, 1) {};
		\node [style=none] (15) at (4, 1.5) {};
		\node [style=none] (16) at (4.5, 1) {};
		\node [style=none] (17) at (4, 0.75) {$\circ$};
		\node [style=none] (18) at (1, 0.75) {$=$};
		\node [style=none] (19) at (3, 0.75) {$=$};
		\node [style=none] (20) at (5.5, 1.25) {$\xmapsto{N_{a}}$};
		\node [style=none] (22) at (5.5, 0.25) {$\xmapsto{N_{a}}$};
		\node [style=none] (23) at (6.75, 0.5) {};
		\node [style=none] (24) at (7.25, 0) {};
		\node [style=none] (25) at (7.75, 0.5) {};
		\node [style=none] (26) at (6.75, 1) {};
		\node [style=none] (27) at (7.25, 1.5) {};
		\node [style=none] (28) at (7.75, 1) {};
		\node [style=none] (29) at (7.25, 0.75) {$\circ$};
		\node [style=none] (30) at (11.25, 0.75) {$= \frac{1}{a}(a^{2}+1)\on{id}_{0} = [2]_{a}\on{id}_{0}\stackrel{N_{a}}{\mapsfrom}[2]_{a}\on{id}_{0}$.};
		\node [style=none] (31) at (6.25, 1.25) {$\frac{1}{a} \cdot$};
	\end{pgfonlayer}
	\begin{pgfonlayer}{edgelayer}
		\draw [style=thickstrand, bend right=45] (1.center) to (2.center);
		\draw [style=thickstrand, bend left=45] (3.center) to (2.center);
		\draw [style=thickstrand, bend right=45] (3.center) to (0.center);
		\draw [style=thickstrand, bend left=45] (1.center) to (0.center);
		\draw [style=thickstrand, bend right=45] (5.center) to (6.center);
		\draw [style=thickstrand, bend left=45] (7.center) to (6.center);
		\draw [style=thickstrand, bend left=45] (8.center) to (9.center);
		\draw [style=thickstrand, bend right=45] (10.center) to (9.center);
		\draw [style=thickstrand] (8.center) to (5.center);
		\draw [style=thickstrand] (10.center) to (7.center);
		\draw [style=thickstrand, bend right=45] (11.center) to (12.center);
		\draw [style=thickstrand, bend left=45] (13.center) to (12.center);
		\draw [style=thickstrand, bend left=45] (14.center) to (15.center);
		\draw [style=thickstrand, bend right=45] (16.center) to (15.center);
		\draw [style=thickstrand, bend right=45] (23.center) to (24.center);
		\draw [style=thickstrand, bend left=45] (25.center) to (24.center);
		\draw [style=thickstrand, bend left=45] (26.center) to (27.center);
		\draw [style=thickstrand, bend right=45] (28.center) to (27.center);
	\end{pgfonlayer}
\end{tikzpicture}
\]
		
		The proof of well-definedness of $N_{a}$ is analogous.
	\end{proof}
	
	\begin{proposition}
		%For $a \in \Bbbk^{\times}$, there is a strict monoidal functor $N_{a}: \overline{\mathcal{TL}}_{a}(\Bbbk) \rightarrow \widetilde{\mathcal{TL}}_{a}(\Bbbk)$, which is identity on objects, and on morphisms it is determined by the assignments $D_{a}(\on{cup}) = \on{cup}$ and $N_{a}(\on{cap}) = a\cdot \on{cap}$. 
		The functors $D_{a},N_{a}$ are mutually inverse strict monoidal isomorphisms of categories.
	\end{proposition}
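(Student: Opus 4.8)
The plan is to leverage the fact, already established in \autoref{NaFunctor}, that both $D_{a}$ and $N_{a}$ are well-defined $\Bbbk$-linear strict monoidal functors that are the identity on objects. Consequently, the composites $D_{a}\circ N_{a}$ and $N_{a}\circ D_{a}$ are $\Bbbk$-linear strict monoidal endofunctors of $\widetilde{\mathcal{TL}}_{a}(\Bbbk)$ and of $\overline{\mathcal{TL}}_{a}(\Bbbk)$, respectively, each fixing every object. To conclude that $D_{a}$ and $N_{a}$ are mutually inverse isomorphisms of categories, it therefore suffices to show these two composites act as the identity on all morphisms. Here I would invoke that every morphism of $\widetilde{\mathcal{TL}}_{a}(\Bbbk)$ (and likewise of $\overline{\mathcal{TL}}_{a}(\Bbbk)$) is obtained from the generating morphisms $\on{cup}$ and $\on{cap}$, together with identity morphisms of the objects $\obj{n}$, by finitely many applications of composition, monoidal product, and $\Bbbk$-linear combination; since a $\Bbbk$-linear strict monoidal endofunctor fixing all objects commutes with all of these operations, it is the identity as soon as it fixes $\on{cup}$ and $\on{cap}$.

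The remaining step is then a one-line scalar computation on these two generators, using that $a\in\Bbbk^{\times}$ so that $a^{-1}$ is available: $D_{a}(N_{a}(\on{cup})) = D_{a}(\on{cup}) = \on{cup}$ and $D_{a}(N_{a}(\on{cap})) = D_{a}\!\big(\tfrac{1}{a}\on{cap}\big) = \tfrac{1}{a}\cdot a\cdot\on{cap} = \on{cap}$, and symmetrically $N_{a}(D_{a}(\on{cup})) = \on{cup}$ and $N_{a}(D_{a}(\on{cap})) = N_{a}(a\cdot\on{cap}) = a\cdot\tfrac{1}{a}\cdot\on{cap} = \on{cap}$. Hence $D_{a}\circ N_{a} = \on{id}_{\widetilde{\mathcal{TL}}_{a}(\Bbbk)}$ and $N_{a}\circ D_{a} = \on{id}_{\overline{\mathcal{TL}}_{a}(\Bbbk)}$, so $D_{a}$ and $N_{a}$ are inverse isomorphisms of categories; being strict monoidal functors whose inverses are also strict monoidal, they are strict monoidal isomorphisms, as claimed.

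I do not expect a genuine obstacle here. The only point requiring a modicum of care is the implicit universal-property argument in the first paragraph, namely that a $\Bbbk$-linear strict monoidal endofunctor of $\widetilde{\mathcal{TL}}_{a}(\Bbbk)$ fixing the objects is determined by its values on $\on{cup}$ and $\on{cap}$; this is immediate from the presentation of $\widetilde{\mathcal{TL}}_{a}(\Bbbk)$ (and $\overline{\mathcal{TL}}_{a}(\Bbbk)$) by generators and relations, and can alternatively be verified by a routine induction on the way a given morphism is built from the generators. Everything else is the trivial scalar bookkeeping above.
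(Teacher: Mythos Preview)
Your proof is correct and follows essentially the same approach as the paper: both reduce to checking that the composites $D_{a}\circ N_{a}$ and $N_{a}\circ D_{a}$ fix the generators $\on{cup}$ and $\on{cap}$, via the same scalar computation. You are simply more explicit than the paper about why checking on generators suffices, invoking the presentation by generators and relations.
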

	
	\begin{proof}
       	The claim follows from 
        \[
        (D_{a}\circ N_{a})(\on{cup}) = D_{a}(\on{cup}) = \on{cup}\text{ and } (D_{a}\circ N_{a})(\on{cup}) = D_{a}(\frac{1}{a}\on{cup}) = \frac{1}{a} D_{a}(\on{cup}) = \frac{1}{a} a \on{cup} = \on{cup},
        \]
        and a similar calculation for $N_{a} \circ D_{a}$. 
	\end{proof}
	
	\begin{notation}
		For $a \in \Bbbk^{\times}$, we denote by $\mathcal{TL}_{a}(\Bbbk)$ the unique up to isomorphism category given by the common value of $\overline{\mathcal{TL}}_{a}(\Bbbk)$ and $\widetilde{\mathcal{TL}}_{q}(\Bbbk)$. 
        Further, we let $\mathcal{TL}_{0}(\Bbbk) := \overline{\mathcal{TL}}_{0}(\Bbbk)$ and $\mathbf{CrysTL}:=\mathcal{TL}_0(\Bbbk)^{\mathtt{C}}$, its Cauchy completion.
	\end{notation}
	
	We now establish a basis theorem for $\mathcal{TL}_{0}(\Bbbk)$. While the basis is given by Temperley--Lieb diagrams just like for $\mathcal{TL}_{a}(\Bbbk)$ with $a \neq 0$, and hence also the dimensions of the $\on{Hom}$-spaces coincide with that case, the proofs can be somewhat different (essentially due to the zig-zag relation), and in some cases simpler. 
    We include them for completeness.
	
	\begin{definition}
		Let $\overline{(-)}: \mathcal{TL}_{q}(\Bbbk)^{\on{op}} \rightarrow \mathcal{TL}_{q}(\Bbbk)$ be the contravariant monoidal equivalence determined by the assignments $\overline{\on{cup}} = \on{cap}$ and $\overline{\on{cap}} = \on{cup}$. 
       % Similarly, let $\omega: \mathcal{TL}_{q}(\Bbbk)^{\on{op},\on{rev}} \rightarrow \mathcal{TL}_{q}(\Bbbk)$ be the contravariant $\on{rev}$-monoidal equivalence determined by $\overline{\on{cup}} = \on{cap}$ and $\overline{\on{cap}} = \on{cup}$.%we never used it!
	\end{definition}
	
	\begin{definition}
		We say that a non-zero morphism of $\mathcal{TL}_{0}(\Bbbk)$ is a {\it Temperley--Lieb diagram}, if it can be obtained from the generators $\on{cap}, \on{cup}$ and identity morphisms by tensoring and composition.
		
		Further, we define a {\it cup diagram} as a non-zero morphism of $\mathcal{TL}_{0}(\Bbbk)$ which can be obtained from the generator $\on{cup}$ and identity morphisms by tensoring and composition, and we define {\it cap diagrams} similarly.
	\end{definition}

     \begin{definition}
	Given any Temperley--Lieb diagram $\euler{x}$, we define
	$$ K (\euler{x}) = \{(i,j):\text{$\euler{x}$ has a cap joining $i$ and $j$}\},\quad 
	\overline{K}(\euler{x}) = \{(i,j):\text{$\euler{x}$ has a cup joining $i$ and $j$}\}.$$
     \end{definition}
	
	\begin{remark}
		Formally, there is a category $\mathcal{TL}_{\ast}$ enriched over the category $\mathbf{Set}_{\ast}$ of pointed sets, defined similarly to $\mathcal{TL}_{0}$, where the zig-zag evaluates to $\ast$ rather than to $0$. 
        Temperley--Lieb diagrams can be defined as the morphisms of (the category underlying) $\mathcal{TL}_{\ast}$, viewed as a subcategory of (the category underlying) $\mathcal{TL}_{0}(\Bbbk)$. 
        This $\mathbf{Set}_{\ast}$-enriched perspective is used in \cite{Sm}.
	\end{remark}
	
	We refer to morphisms of the form $(a,\on{cup},b) := \on{id}_{\obj{a}} \otimes \on{cup} \otimes \on{id}_{\obj{b}}$ or of the form $(a,\on{cap},b) := \on{id}_{\obj{a}} \otimes \on{cap} \otimes \on{id}_{\obj{b}}$ as {\it basic cups} and {\it basic caps}, respectively. 
    By definition, any Temperley--Lieb diagram can be written as a finite composition of basic caps and basic cups.

    The {\it number of through-strands} $\thru(\euler{x})$ of a Temperley--Lieb diagram $\euler{x}$ is defined by extending assignments $\thru(\on{cap}) = 0 = \thru(\on{cup})$ and $\thru(\on{id}_{\obj{m}}) = m$ by $\thru(\euler{x} \otimes \euler{y}) = \thru(\euler{x}) + \thru(\euler{y})$. 
    Of the relations defining $\mathcal{TL}_{0}(\Bbbk)$, the zig-zag relation is the only subhomogeneous with respect to $\thru$, and therefore we generally only have $\thru(\euler{x} \circ \euler{y}) \leq \min(\thru(\euler{x}), \thru(\euler{y}))$. 
    As a result, we find a chain of ideals $\thru_{\leq 0} \subseteq \thru_{\leq 1} \subseteq \cdots$ in the category $\mathcal{TL}_{0}(\Bbbk)$, such that $\mathcal{TL}_{0}(\Bbbk) = \bigcup_{i=0}^{\infty} \thru_{\leq i}$, where $\thru_{\leq i}$ is given by linear combinations of Temperley--Lieb diagrams with at most $i$ through-strands. 
    The ideal $\thru_{\leq i}$ can also be described as that generated by the morphisms with $i$ through-strands. For a general morphism $f$ of $\mathcal{TL}_{0}(\Bbbk)$, we set $\thru(f) = \min\setj{k \; | \; f \in \thru_{\leq k}}$. 
    In particular, for a linear combination $\sum_{k=1}^{n} \lambda_{i} \euler{x}_{i}$, we have $\thru(\sum_{k=1}^{n} \lambda_{i} \euler{x}_{i}) = \max_{i}\setj{\thru(\euler{x}_{i})}$.
	
	It is easy to verify that for any cup diagram $\euler{x}$, we have $\overline{\euler{x}} \circ \euler{x} = \on{id}_{\obj{\thru(\euler{x})}}$. 
	
	\begin{lemma}\label{linindepcupcap}
		Fix $k,m \in \mathbb{N}$. Consider a pair of cup diagrams $\euler{x},\euler{y} \in \on{Hom}_{\mathcal{TL}_{0}(\Bbbk)}(\obj{m},\obj{m+2k})$. We have 
		\[
		\thru(\overline{\euler{x}} \circ \euler{y}) \leq m = \thru(\euler{x}) = \thru(\euler{y}),
		\]
		with equality if and only if $\euler{x} = \euler{y}$.
	\end{lemma}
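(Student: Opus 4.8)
First I would dispose of the easy assertions. For any cup diagram $\euler{z}\colon\obj{m}\to\obj{m+2k}$, the identity $\overline{\euler{z}}\circ\euler{z}=\on{id}_{\obj{m}}$ noted just above the lemma, together with submultiplicativity $\thru(f\circ g)\le\min(\thru(f),\thru(g))$, forces $m=\thru(\on{id}_{\obj{m}})\le\thru(\euler{z})$, while $\euler{z}=\euler{z}\circ\on{id}_{\obj{m}}$ forces $\thru(\euler{z})\le m$; hence $\thru(\euler{x})=\thru(\euler{y})=m$, and symmetrically $\thru(\overline{\euler{x}})=m$, so $\thru(\overline{\euler{x}}\circ\euler{y})\le\min(\thru(\overline{\euler{x}}),\thru(\euler{y}))=m$. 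For the ``if'' direction of the equality claim: if $\euler{x}=\euler{y}$ then $\overline{\euler{x}}\circ\euler{y}=\overline{\euler{x}}\circ\euler{x}=\on{id}_{\obj{m}}$, which has through-number exactly $m$.

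For the converse, the plan is to induct on $k$, the number of cups. When $k=0$ both diagrams equal $\on{id}_{\obj{m}}$ and there is nothing to prove. For $k\ge 1$, I would first record the elementary fact that a cup diagram is completely determined by its cup-set $\overline{K}(-)$, the through-strands being forced by planarity; thus $\euler{x}\neq\euler{y}$ means $\overline{K}(\euler{x})\neq\overline{K}(\euler{y})$. A planar matching always has an innermost cup joining two adjacent points, so we may write $\euler{y}=B\circ\euler{y}'$ where $B:=\on{id}_{\obj{c-1}}\otimes\on{cup}\otimes\on{id}_{\obj{m+2k-c-1}}$ inserts a cup at positions $c,c+1$ and $\euler{y}'$ is a cup diagram with $k-1$ cups. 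Since $\overline{(-)}$ is contravariant monoidal, $\overline{B}=\on{id}_{\obj{c-1}}\otimes\on{cap}\otimes\on{id}_{\obj{m+2k-c-1}}$, and since the circle relation at $q=0$ reads $\on{cap}\circ\on{cup}=\on{id}_{\obj{0}}$, the interchange law gives $\overline{B}\circ B=\on{id}_{\obj{m+2k-2}}$; in particular $B$ is a split monomorphism.

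Within the inductive step I would distinguish two cases. If $\overline{\euler{x}}$ has a cap joining $c$ and $c+1$ — equivalently $\euler{x}$ has a cup there — write $\euler{x}=B\circ\euler{x}'$ with $\euler{x}'$ a cup diagram with $k-1$ cups; then $\euler{x}'\neq\euler{y}'$ because $B$ is monic, and $\overline{\euler{x}}\circ\euler{y}=\overline{\euler{x}'}\circ(\overline{B}\circ B)\circ\euler{y}'=\overline{\euler{x}'}\circ\euler{y}'$, so the inductive hypothesis closes this case. If $\overline{\euler{x}}$ has no such cap, then $\thru(\overline{\euler{x}}\circ\euler{y})\le\thru(\overline{\euler{x}}\circ B)$, so it suffices to bound the latter. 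Now $\overline{\euler{x}}\circ B$ is obtained from the cap diagram $\overline{\euler{x}}$ by joining its two adjacent domain points $c,c+1$ by a new cup; since $\{c,c+1\}$ is not a cap of $\overline{\euler{x}}$ and the connected components of a cap diagram are precisely its caps and through-strands, these two points lie on distinct strands, and the new cup merges them without creating a closed loop. If both strands are through-strands, the merge turns them into a cup among the codomain points, so $\overline{\euler{x}}\circ B$ has exactly $m-2$ through-strands. Otherwise one of the two is a cap, and tracing the merged strand exhibits a cap immediately followed by a cup, i.e.\ a zigzag; since the zigzag relation at $q=0$ makes any diagram containing such a factor vanish, $\overline{\euler{x}}\circ B=0$. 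In both cases $\thru(\overline{\euler{x}}\circ B)<m$, completing the induction.

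The only genuinely delicate point is the last paragraph's structural claim: that gluing a basic cup onto two adjacent boundary points of a planar cap diagram either merges two through-strands into a cup or produces a zigzag. This rests on the planar picture of $\overline{\euler{x}}$ near a pair of adjacent points together with the $q=0$ specialization of the zigzag relation; everything else is bookkeeping with the interchange law, the functor $\overline{(-)}$, and the circle relation. (In the boundary case $m=0$ only the alternative $\overline{\euler{x}}\circ B=0$ can occur, so one reads $\thru(0)<0$.)
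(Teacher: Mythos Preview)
Your proof is correct and complete at the same level of rigor as the paper's, but it takes a genuinely different route. The paper argues directly: if $\euler{x}\neq\euler{y}$, it locates a single outgoing position $i$ where the two diagrams disagree and does a three-way case split (through-strand in $\euler{x}$ vs.\ cup in $\euler{y}$; the symmetric case; both cups but with different partners). In each branch, the composite $\overline{\euler{x}}\circ\euler{y}$ is seen either to acquire a cup at the codomain (dropping the through-count strictly below $m$) or to contain a zig-zag and hence vanish. Your argument instead inducts on $k$: you peel off an innermost basic cup $B$ of $\euler{y}$, use the circle relation $\overline{B}\circ B=\on{id}$ to reduce when $\euler{x}$ shares that cup, and otherwise bound $\thru(\overline{\euler{x}}\circ B)$ by the same ``two through-strands merge into a cup, or a cap meets the new cup and zig-zags to zero'' dichotomy. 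Both arguments ultimately rest on the same two ingredients (submultiplicativity of $\thru$ and the vanishing of zig-zags at $q=0$), and both invoke the diagrammatic fact that a cap and a cup sharing exactly one endpoint force the composite to zero without spelling out the rectilinear-isotopy reduction in full. Your inductive packaging is slightly more systematic and makes the role of the circle relation $\on{cap}\circ\on{cup}=\on{id}_{\obj 0}$ explicit; the paper's direct case analysis is shorter and avoids setting up the induction. One cosmetic point: your parenthetical about reading $\thru(0)<0$ when $m=0$ is a convention issue (the paper later adopts $\thru(\ast)=-\infty$), and the paper's own proof is equally informal at that boundary.
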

	
	\begin{proof}
		In the case $\euler{x} = \euler{y}$, we use the previously made observation $\overline{\euler{x}} \circ \euler{x} = \on{id}_{\obj{\thru(\euler{x})}}$, to conclude the equality.
		
		Assume now that $\euler{x} \neq \euler{y}$. We have three cases to consider:
		\begin{enumerate}
			\item there is $i \in \setj{1,\ldots, m+2k}$ such that the strand in the $i^\text{th}$ outgoing position of $\euler{x}$ is a through-strand, but the strand in the $i^\text{th}$ outgoing position of $\euler{y}$ is not;
			\item there is $i \in \setj{1,\ldots,m +2k}$ such that the strand in $i^\text{th}$ outoing position in $\euler{y}$ is a through-strand, but in $\euler{x}$ it is not;
			\item there is $i \in \setj{1,\ldots,m +2k}$ such that the strand in $i^\text{th}$ outgoing position in $\euler{x}$ is cupped to $j^\text{th}$ outgoing position, while in $\euler{y}$ it is capped to $(j')^\text{th}$ outgoing position, with $j\neq j'$.
		\end{enumerate}
		
		We begin by treating the first case. Let $(i,j) \in \overline{K}(\euler{y})$. If the strand in the $j^\text{th}$ outgoing position in $\euler{x}$ is a through-strand, then $(i,j) \in \overline{K}(\overline{\euler{x}} \circ \euler{y})$, and thus $\thru(\overline{\euler{x}} \circ \euler{y}) < m = \thru(\overline{\euler{x}})$.
		
		If $(j,k) \in \overline{K}(\euler{x})$, where by assumption $k\neq i$, then the cup $(i,j) \in \overline{K}(\euler{y})$ is joined to the cap $(j,k) \in K(\overline{\euler{x}})$, and thus the composite $\overline{\euler{x}} \circ \euler{y}$ is zero by the zig-zag relation. The second case follows similarly to the first, by symmetry.
		
		In the last case, $\overline{\euler{x}} \circ \euler{y} = 0$ by the zig-zag relation.
	\end{proof}
	
	\begin{lemma}\label{triangularbasis}
		Any Temperley--Lieb diagram can be uniquely written as $\euler{x}_{\on{cup}} \circ \euler{x}_{\on{cap}}$, where $\euler{x}_{\on{cup}}$ is a cup diagram and $\euler{x}_{\on{cap}}$ is a cap diagram.
	\end{lemma}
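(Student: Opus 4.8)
The plan is to treat existence and uniqueness separately: existence by a normalization argument on expressions in basic caps and cups, and uniqueness by an extraction argument built on \autoref{linindepcupcap}.

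For existence, I would start from any expression of the given Temperley--Lieb diagram $\euler{x}$ as a finite composite $g_{1}\circ\cdots\circ g_{r}$ of basic caps and basic cups (available by definition; standalone identity factors may be discarded, and $\euler{x}=\on{id}_{\obj{m}}$ is the trivial case $\on{id}_{\obj{m}}\circ\on{id}_{\obj{m}}$). Call a pair $i<j$ a \emph{defect} if $g_{i}$ is a basic cap and $g_{j}$ is a basic cup; the desired form $\euler{x}_{\on{cup}}\circ\euler{x}_{\on{cap}}$ is exactly a defect-free expression, and whenever a defect exists so does an \emph{adjacent} one (taking, inside a defect window, the first basic cup, which is then immediately preceded by a basic cap). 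I would then classify the local composite $g_{i}\circ g_{i+1}$, a basic cup followed by a basic cap, by the two-element position sets of its cup and its cap. If these are disjoint, the interchange law (a rectilinear isotopy) rewrites $g_{i}\circ g_{i+1}$ as a basic cup followed by a basic cap, strictly lowering the number of defects. If they share exactly one strand, the composite is a zig-zag, hence by the zig-zag relation either $0$ --- impossible, since a Temperley--Lieb diagram is non-zero by definition --- or an identity morphism, so deleting the pair strictly lowers the number of defects. If the two sets coincide, $g_{i}\circ g_{i+1}$ contains a closed circle, which by the circle relation at $q=0$ equals $\on{id}_{\obj{0}}$, and deleting the pair again lowers the count. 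Inducting on the number of defects yields a defect-free expression $\euler{x}=\euler{x}_{\on{cup}}\circ\euler{x}_{\on{cap}}$ with $\euler{x}_{\on{cup}}$ a composite of basic cups (so a cup diagram, non-zero as a composition factor of $\euler{x}\neq 0$) and $\euler{x}_{\on{cap}}$ a composite of basic caps (a cap diagram).

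For uniqueness, I would first pin down the intermediate object. By inspection of the crossingless forms, a cap diagram $\obj{m}\to\obj{\ell}$ and a cup diagram $\obj{\ell}\to\obj{n}$ each have exactly $\ell$ through-strands (one may also note $\euler{c}\circ\overline{\euler{c}}=\on{id}_{\obj{\ell}}$ for a cap diagram $\euler{c}$, the mirror of the identity $\overline{\euler{x}}\circ\euler{x}=\on{id}$ recorded before \autoref{linindepcupcap}, and use $\thru(f)\le\min(m,n)$ for $f\colon\obj{m}\to\obj{n}$); moreover, since every incoming strand of a cup diagram is a through-strand, stacking it on a cap diagram joins each of the $\ell$ intermediate strands to a through-strand on each side and produces no zig-zag, so $\thru(\euler{x}_{\on{cup}}\circ\euler{x}_{\on{cap}})=\ell$. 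Hence in any factorization of $\euler{x}$ the intermediate object is forced to be $\obj{p}$ with $p=\thru(\euler{x})$. Now suppose $\euler{x}_{\on{cup}}\circ\euler{x}_{\on{cap}}=\euler{y}_{\on{cup}}\circ\euler{y}_{\on{cap}}$ with all four diagrams through $\obj{p}$. Composing on the left with $\overline{\euler{y}_{\on{cup}}}$ and using $\overline{\euler{y}_{\on{cup}}}\circ\euler{y}_{\on{cup}}=\on{id}_{\obj{p}}$ gives $\euler{y}_{\on{cap}}=\big(\overline{\euler{y}_{\on{cup}}}\circ\euler{x}_{\on{cup}}\big)\circ\euler{x}_{\on{cap}}$, whence $p=\thru(\euler{y}_{\on{cap}})\le\thru\big(\overline{\euler{y}_{\on{cup}}}\circ\euler{x}_{\on{cup}}\big)\le p$; applying \autoref{linindepcupcap} to the cup diagrams $\euler{x}_{\on{cup}},\euler{y}_{\on{cup}}$, this equality forces $\euler{x}_{\on{cup}}=\euler{y}_{\on{cup}}$. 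Cancelling $\euler{x}_{\on{cup}}$ on the left (via $\overline{\euler{x}_{\on{cup}}}\circ\euler{x}_{\on{cup}}=\on{id}_{\obj{p}}$) then yields $\euler{x}_{\on{cap}}=\euler{y}_{\on{cap}}$.

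The part I expect to be most delicate is the bookkeeping in the existence step: confirming that an adjacent defect exists whenever a defect does, that each of the three local moves strictly decreases the defect count and creates no new defects, and correctly tracking the positions of the cup and the cap through the interchange-law rewrites. A secondary point needing care is the claim that stacking a cup diagram on a cap diagram produces no zig-zag, which is what forces the intermediate object to be $\obj{\thru(\euler{x})}$; once that is in place, and with \autoref{linindepcupcap} available, the uniqueness argument itself is short.
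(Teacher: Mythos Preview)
Your proof is correct and follows essentially the same strategy as the paper: a bubble-sort normalization for existence (the paper inducts on a slightly different quantity, but the three local cases---circle, zig-zag, disjoint swap---are identical), and the same appeal to \autoref{linindepcupcap} via a $\thru$-count for uniqueness, where you compose on one side rather than both.

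One small correction that does not affect the argument: in $\mathcal{TL}_{0}(\Bbbk)$ \emph{both} zig-zags evaluate to $0$ (the defining relation is $\text{zig-zag}=q\cdot\on{id}$ in the renormalized category, specialized to $q=0$), so the ``or an identity morphism'' branch in your one-shared-strand case never occurs. The only way an adjacent cap-over-cup collapses to an identity is the circle case, which you treat separately.
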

	
	\begin{proof}
		By definition of $\mathcal{TL}_{0}(\Bbbk)$, we may write $\euler{x} = \euler{x}_{n} \circ \cdots \circ \euler{x}_{1}$, where $\euler{x}_{i}$, for $i = 1,\ldots,n$, are basic cups and basic caps. 
        If there is $m \in \setj{1,\ldots,n}$ such that $\euler{x}_{i}$ is a basic cap for $i \leq m$ and a basic cup for $i > m$, then $\euler{x}_{\on{cap}} := \euler{x}_{n} \circ \cdots \circ \euler{x}_{m+1}$ and $\euler{x}_{\on{cup}} := \euler{x}_{m} \circ \cdots \circ \euler{x}_{1}$ provide the claimed decomposition. 
		
		Otherwise, let $k$ be minimal such that $\euler{x}_{k}$ is a basic cap, but there is $j < k$ such that $\euler{x}_{j}$ is a basic cup. Let $\euler{x}_{k} := (a_{k},\on{cap},b_{k})$ and $\euler{x}_{k-1} := (a_{k-1},\on{cup},b_{k-1})$. We have a trichotomy:
		\begin{enumerate}
			\item $a_{k}=a_{k-1}$, and $\euler{x}_{k} \circ \euler{x}_{k-1} = \on{id}_{\obj{a_{k}+b_{k}}}$, by circle evaluation;
			\item $|a_{k} - a_{k-1}| = 1$, and $\euler{x}_{k} \circ \euler{x}_{k-1} = 0$ by zig-zag evaluation;
			\item $|a_{k} - a_{k-1}| \geq 2$ and $\euler{x}_{k} \circ \euler{x}_{k-1} = \euler{x}'_{k} \circ \euler{x}'_{k-1}$, where $\euler{x}_{k}'$ is a basic cup and $\euler{x}_{k-1}'$ is a basic cap.
		\end{enumerate}
		In the second case we obtain a contradiction, since $\euler{x}$ was assumed to be non-zero. 
        In the other two cases we have decreased either the number of basic caps with basic cups below them, or the height of the minimal basic cap with basic cups below it. 
        Basic cups with basic caps can be treated analogously, and the existence of decomposition follows by induction.
		
		To see uniqueness, observe first that $\thru(\euler{x}_{\on{cup}}) = \thru(\euler{x}) = \thru(\euler{x}_{\on{cap}})$. Writing $\euler{x}_{\on{cup}} \circ \euler{x}_{\on{cap}} = \euler{x} = \euler{x}'_{\on{cup}} \circ \euler{x}'_{\on{cap}}$, we find 
		\[
		\thru(\overline{\euler{x}'_{\on{cup}}} \circ \euler{x}_{\on{cup}} \circ \euler{x}_{\on{cap}} \circ \overline{\euler{x}_{\on{cap}}}) = \thru(\euler{x}) \leq \min\setj{\thru(\overline{\euler{x}'_{\on{cup}}} \circ \euler{x}_{\on{cup}}), \thru(\euler{x}_{\on{cap}} \circ \overline{\euler{x}_{\on{cap}}})} = \thru(\overline{\euler{x}'_{\on{cup}}} \circ \euler{x}_{\on{cup}}).
		\]
		showing that $\thru(\overline{\euler{x}'_{\on{cup}}} \circ \euler{x}_{\on{cup}}) = \thru(\euler{x})$, which by \autoref{linindepcupcap} proves $\euler{x}_{\on{cup}} = \euler{x}'_{\on{cup}}$. One shows $\euler{x}_{\on{cap}} = \euler{x}'_{\on{cap}}$ similarly.
	\end{proof}

    \begin{notation}
        Henceforth, whenever we write $\euler{x} = \euler{u} \circ \euler{v}$, we mean that $\euler{u}=\euler{x}_{\on{cup}}$ and $\euler{v} = \euler{x}_{\on{cap}}$.
    \end{notation}
	
	\begin{corollary}\label{TLBasis}
		Temperley--Lieb diagrams give a basis for $\on{Hom}_{\mathcal{TL}_{0}(\Bbbk)}(\obj{m},\obj{n})$. Hence, $\on{dim}\on{Hom}_{\mathcal{TL}_{0}(\Bbbk)}(\obj{m},\obj{n})$ is $C_{\frac{m+n}{2}}$, the $\left(\frac{m+n}{2}\right)^\text{th}$ Catalan number.
	\end{corollary}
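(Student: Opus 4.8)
The plan is to prove spanning and linear independence separately, and then read off the dimension. Spanning is essentially definitional: a morphism of $\mathcal{TL}_{0}(\Bbbk)$ is a $\Bbbk$-linear combination of composites of tensor products of $\on{cup}$, $\on{cap}$ and identities, each of which (by the interchange law) is a word in basic cups and caps, hence is either $0$ or a Temperley--Lieb diagram. By \autoref{triangularbasis} the Temperley--Lieb diagrams $\obj m \to \obj n$ are in bijection with the pairs consisting of a cup diagram $\obj k \to \obj n$ and a cap diagram $\obj m \to \obj k$ for some $k$, that is, with planar (non-crossing) perfect matchings of $m+n$ points; so there are exactly $C_{(m+n)/2}$ of them (and none when $m+n$ is odd), and in particular $\dim\on{Hom}_{\mathcal{TL}_{0}(\Bbbk)}(\obj m,\obj n)\le C_{(m+n)/2}$.

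For linear independence, suppose $\sum_{i}\lambda_{i}\euler{x}_{i}=0$ with the $\euler{x}_{i}$ distinct Temperley--Lieb diagrams. Let $K$ be the largest through-strand number among the $\euler{x}_{i}$ with $\lambda_{i}\neq 0$; I would pick such an $\euler{x}_{j}$ with $\thru(\euler{x}_{j})=K$, write $\euler{x}_{j}=\euler{u}_{j}\circ\euler{v}_{j}$, and compose the relation with the cap diagram $\overline{\euler{u}_{j}}$ on the left and the cup diagram $\overline{\euler{v}_{j}}$ on the right, obtaining an identity in $\on{End}_{\mathcal{TL}_{0}(\Bbbk)}(\obj K)$. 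The summand $\overline{\euler{u}_{j}}\circ\euler{x}_{i}\circ\overline{\euler{v}_{j}}=(\overline{\euler{u}_{j}}\circ\euler{u}_{i})\circ(\euler{v}_{i}\circ\overline{\euler{v}_{j}})$ factors through $\obj{\thru(\euler{x}_{i})}$, hence lies in $\thru_{\le K-1}(\obj K,\obj K)$ when $\thru(\euler{x}_{i})<K$; when $\thru(\euler{x}_{i})=K$ but $\euler{x}_{i}\neq\euler{x}_{j}$, applying \autoref{linindepcupcap} to the cup-diagram pairs $\euler{u}_{i},\euler{u}_{j}$ and $\overline{\euler{v}_{i}},\overline{\euler{v}_{j}}$ shows one of the two factors already has through-strand number $<K$, so the summand again lies in $\thru_{\le K-1}(\obj K,\obj K)$; and for $i=j$ the summand equals $\on{id}_{\obj K}\circ\on{id}_{\obj K}=\on{id}_{\obj K}$, using $\overline{\euler{x}}\circ\euler{x}=\on{id}_{\obj{\thru(\euler{x})}}$ for cup diagrams. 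Thus $\lambda_{j}\on{id}_{\obj K}\in\thru_{\le K-1}(\obj K,\obj K)$, so once we know $\on{id}_{\obj K}\notin\thru_{\le K-1}(\obj K,\obj K)$ — that the identity of $\obj K$ is not a linear combination of Temperley--Lieb diagrams with fewer through-strands — we get $\lambda_{j}=0$, contradicting the choice of $\euler{x}_{j}$ unless the relation is trivial. Together with the spanning count this identifies the $C_{(m+n)/2}$ diagrams as a basis and gives the dimension.

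The one step I expect to require genuine work, rather than bookkeeping, is exactly $\on{id}_{\obj K}\notin\thru_{\le K-1}(\obj K,\obj K)$: since $\mathcal{TL}_{0}(\Bbbk)$ is not rigid one cannot just use a Markov trace (at $q=0$ it degenerates), so instead I would exhibit a $\Bbbk$-linear monoidal functor $G:\mathcal{TL}_{0}(\Bbbk)\to\mathfrak{sl}_{2}\mathbf{-Crys}$ with $G(\obj 1)=B_{1}$, sending $\on{cup}$ to the crystal embedding $B_{0}\hookrightarrow B_{1}\otimes B_{1}$ onto the isolated vertex and $\on{cap}$ to the matching projection; a short computation with the decomposition $B_{1}\otimes B_{1}=B_{2}\sqcup B_{0}$ and Schur's lemma for $\on{Hom}(B_{1},B_{1})$ shows the circle goes to $\on{id}_{B_{0}}$ and the zig-zag to $0$, so $G$ respects the defining relations. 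Because $\mathfrak{sl}_{2}\mathbf{-Crys}$ is semisimple and $G(\obj j)=B_{1}^{\otimes j}$ has highest weight $j$, it contains no copy of $B_{K}$ for $j<K$, so every morphism $B_{1}^{\otimes K}\to B_{1}^{\otimes K}$ factoring through some $B_{1}^{\otimes j}$ with $j<K$ annihilates the $B_{K}$-isotypic component; as $\thru_{\le K-1}(\obj K,\obj K)$ is spanned by diagrams factoring (via \autoref{triangularbasis}) through objects $\obj j$ with $j<K$, its image under $G$ consists of such morphisms, whereas $G(\on{id}_{\obj K})=\on{id}_{B_{1}^{\otimes K}}$ does not annihilate that component. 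Hence $\on{id}_{\obj K}\notin\thru_{\le K-1}(\obj K,\obj K)$, completing the argument. (Alternatively, one obtains the reverse bound $\dim\on{End}_{\mathcal{TL}_{0}(\Bbbk)}(\obj n)\ge C_{n}$ directly by noting that this algebra surjects onto the contracted monoid algebra of the Temperley--Lieb planar-diagram monoid, whose non-zero elements are the $C_{n}$ diagrams by construction; the $\mathbf{Set}_{\ast}$-enriched category $\mathcal{TL}_{\ast}$ from the earlier remark extends this to all hom-spaces.)
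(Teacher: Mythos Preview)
Your reduction matches the paper's: both compose a hypothetical relation on the left by $\overline{\euler{u}_{j}}$ and on the right by $\overline{\euler{v}_{j}}$ and then invoke \autoref{linindepcupcap} to isolate a single term. Where you diverge is in the treatment of $\on{id}_{\obj K}\notin\thru_{\le K-1}(\obj K,\obj K)$. The paper's argument appeals to the identity $\thru\bigl(\sum_{i}\lambda_{i}\euler{x}_{i}\bigr)=\max_{i}\thru(\euler{x}_{i})$ asserted just before \autoref{linindepcupcap}; read for distinct diagrams with nonzero coefficients, that identity already presupposes the linear independence under discussion, so this step is effectively asserted rather than argued. You instead supply an external witness: your functor $G$ is exactly the functor $F$ constructed later in \autoref{sec:equiv}, its well-definedness is a direct check of the circle and zig-zag relations that does not rely on \autoref{TLBasis}, and the observation that $G$ maps $\thru_{\le K-1}(\obj K,\obj K)$ into endomorphisms annihilating the $B_{K}$-summand of $B_{1}^{\otimes K}$ while $G(\on{id}_{\obj K})$ does not is precisely what is required. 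This is the standard ``realize the diagrammatic category somewhere concrete'' device for basis theorems, and it makes your argument self-contained at the cost of importing the crystal functor a section early; there is no circularity, since nothing in \autoref{sec:equiv} up through the construction of $F$ uses \autoref{TLBasis}. Your parenthetical alternative via the $\mathbf{Set}_{\ast}$-category $\mathcal{TL}_{\ast}$ achieves the same lower bound combinatorially, provided one takes as given that planar matchings with zig-zag sent to $\ast$ form a well-defined category, which is exactly the content of the remark preceding \autoref{linindepcupcap}.
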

	
	\begin{proof}
		Temperley--Lieb diagrams clearly span $\on{Hom}_{\mathcal{TL}_{0}(\Bbbk)}(\obj{m},\obj{n})$.
		To see their linear independence, let $\euler{u}$ be a cup diagram, let $\euler{v}$ be a cap diagram, and let $\euler{u}\circ \euler{v} = \sum_{i =1}^{n} \lambda_{i} \euler{u}_{i} \circ \euler{v}_{i}$, for cup diagrams $\euler{u}_{i}$ and cap diagrams $\euler{v}_{i}$. 
		
		We have 
		\[
		\thru(\euler{u} \circ \euler{v}) = \thru(\overline{\euler{u}} \circ \euler{u} \circ \euler{v} \circ \overline{\euler{v}}) = \max\setj{\thru(\overline{\euler{u}} \circ \euler{u}_{i} \circ \euler{v} \circ \overline{\euler{v}_{i}})}.
		\]
		Thus there must be $i$ such that $\thru(\euler{u} \circ \euler{v}) = \thru(\overline{\euler{u}} \circ \euler{u}_{i} \circ \euler{v}_{i} \circ \overline{\euler{v}})$.
		This implies $\thru(\euler{u}_{i}) = \thru(\euler{u})$ and $\thru(\euler{v}_{i}) = \thru(\euler{v})$, since composition can only decrease the number of through-strands. Thus the domains of $\euler{u}$ and $\euler{u}_{i}$ are the same, and, by assumption, so are the codomains. 
        We may then use \autoref{linindepcupcap} to find $\euler{u} = \euler{u}_{i}$, and $\euler{v} = \euler{v}_{i}$.
		Thus the coefficient of $\euler{u}\circ \euler{v}$ in $\sum_{i =1}^{\euler{v}} \lambda_{i} \euler{u}_{i} \circ \euler{v}_{i}$ must be non-zero, proving linear independence.
	\end{proof}

	\subsection{Jones--Wenzl Projectors}
	
	Similarly to the basis theorem, the category $\mathcal{TL}_{0}(\Bbbk)$ admits Jones--Wenzl projectors, behaving similarly to those for $\mathcal{TL}_{a}(\Bbbk)$ for $a \neq 0$. 
    However, in the case $a = 0$, we give an explicit closed formula for the Jones--Wenzl projectors, much simpler than the formulas for their coefficients in the case $a \neq 0$, as determined in \cite[Section~3.3]{FK}, \cite{Mo}, \cite{Oc}. 
	
	Given $n \in \mathbb{N}$, we say that a subset $I$ of $\setj{1,\ldots,n}$ is {\it apt} if $n \not\in I$ and $i \in I$ implies $i-1,i+1 \not\in I$. Given an apt subset $I$ of $\setj{1,\ldots,n}$, we define $\on{cap}_{I,n} \in \on{End}_{\mathcal{TL}_{0}(\Bbbk)}(\obj{n})$ as the cap diagram consisting of caps in positions $(i,i+1)$, for $i \in I$.
	
	\begin{definition}\label{def:JW}
		We define the {\it $n^\text{th}$ Jones--Wenzl projector} $\euler{j}_{n} \in \on{End}_{\mathcal{TL}_{0}(\Bbbk)}(\obj{n})$ as 
		\[
		\sum_{I\subseteq \setj{1,\ldots,n} \text{ apt}} (-1)^{|I|} \on{cup}_{I,n}\circ  \on{cap}_{I,n} =: \sum_{I\subseteq \setj{1,\ldots,n} \text{ apt}} (-1)^{|I|} c_{I,n}.
		\]
	\end{definition}
	
	\begin{example}
		The Jones--Wenzl projector $\euler{j}_{4}$ on four strands is given by 
		\[
		\begin{aligned}
			&\begin{tikzpicture}
	\begin{pgfonlayer}{nodelayer}
		\node [style=none] (0) at (0, 0) {};
		\node [style=none] (1) at (0.25, 0) {};
		\node [style=none] (2) at (0.125, 0.25) {};
		\node [style=none] (3) at (0.5, 0) {};
		\node [style=none] (4) at (0.75, 0) {};
		\node [style=none] (5) at (0.625, 0.25) {};
		\node [style=none] (6) at (0, 1) {};
		\node [style=none] (7) at (0.25, 1) {};
		\node [style=none] (8) at (0.125, 0.75) {};
		\node [style=none] (9) at (0.5, 1) {};
		\node [style=none] (10) at (0.75, 1) {};
		\node [style=none] (11) at (0.625, 0.75) {};
		\node [style=none] (12) at (1.25, 0.5) {$-$};
		\node [style=none] (13) at (1.75, 0) {};
		\node [style=none] (14) at (2, 0) {};
		\node [style=none] (16) at (2.25, 0) {};
		\node [style=none] (17) at (2.5, 0) {};
		\node [style=none] (18) at (2.375, 0.25) {};
		\node [style=none] (19) at (1.75, 1) {};
		\node [style=none] (20) at (2, 1) {};
		\node [style=none] (22) at (2.25, 1) {};
		\node [style=none] (23) at (2.5, 1) {};
		\node [style=none] (24) at (2.375, 0.75) {};
		\node [style=none] (25) at (3.5, 0) {};
		\node [style=none] (29) at (4.25, 0) {};
		\node [style=none] (31) at (3.75, 1) {};
		\node [style=none] (32) at (4, 1) {};
		\node [style=none] (33) at (3.875, 0.75) {};
		\node [style=none] (37) at (3, 0.5) {$-$};
		\node [style=none] (38) at (4.25, 1) {};
		\node [style=none] (39) at (3.5, 1) {};
		\node [style=none] (40) at (3.75, 0) {};
		\node [style=none] (41) at (4, 0) {};
		\node [style=none] (42) at (3.875, 0.25) {};
		\node [style=none] (43) at (4.75, 0.5) {$-$};
		\node [style=none] (44) at (5.75, 0) {};
		\node [style=none] (45) at (6, 0) {};
		\node [style=none] (46) at (5.25, 0) {};
		\node [style=none] (47) at (5.5, 0) {};
		\node [style=none] (48) at (5.375, 0.25) {};
		\node [style=none] (49) at (5.75, 1) {};
		\node [style=none] (50) at (6, 1) {};
		\node [style=none] (51) at (5.25, 1) {};
		\node [style=none] (52) at (5.5, 1) {};
		\node [style=none] (53) at (5.375, 0.75) {};
		\node [style=none] (54) at (7.5, 0) {};
		\node [style=none] (55) at (7.75, 0) {};
		\node [style=none] (56) at (7, 0) {};
		\node [style=none] (57) at (7.25, 0) {};
		\node [style=none] (59) at (7.5, 1) {};
		\node [style=none] (60) at (7.75, 1) {};
		\node [style=none] (61) at (7, 1) {};
		\node [style=none] (62) at (7.25, 1) {};
		\node [style=none] (64) at (6.5, 0.5) {$+$};
	\end{pgfonlayer}
	\begin{pgfonlayer}{edgelayer}
		\draw [style=thickstrand, in=0, out=90, looseness=1.25] (1.center) to (2.center);
		\draw [style=thickstrand, in=-180, out=90, looseness=1.25] (0.center) to (2.center);
		\draw [style=thickstrand, in=0, out=90, looseness=1.25] (4.center) to (5.center);
		\draw [style=thickstrand, in=-180, out=90, looseness=1.25] (3.center) to (5.center);
		\draw [style=thickstrand, in=0, out=-90, looseness=1.25] (7.center) to (8.center);
		\draw [style=thickstrand, in=180, out=-90, looseness=1.25] (6.center) to (8.center);
		\draw [style=thickstrand, in=0, out=-90, looseness=1.25] (10.center) to (11.center);
		\draw [style=thickstrand, in=180, out=-90, looseness=1.25] (9.center) to (11.center);
		\draw [style=thickstrand, in=0, out=90, looseness=1.25] (17.center) to (18.center);
		\draw [style=thickstrand, in=-180, out=90, looseness=1.25] (16.center) to (18.center);
		\draw [style=thickstrand, in=0, out=-90, looseness=1.25] (23.center) to (24.center);
		\draw [style=thickstrand, in=180, out=-90, looseness=1.25] (22.center) to (24.center);
		\draw [style=thickstrand] (19.center) to (13.center);
		\draw [style=thickstrand] (20.center) to (14.center);
		\draw [style=thickstrand, in=0, out=-90, looseness=1.25] (32.center) to (33.center);
		\draw [style=thickstrand, in=180, out=-90, looseness=1.25] (31.center) to (33.center);
		\draw [style=thickstrand] (39.center) to (25.center);
		\draw [style=thickstrand] (38.center) to (29.center);
		\draw [style=thickstrand, in=0, out=90, looseness=1.25] (41.center) to (42.center);
		\draw [style=thickstrand, in=-180, out=90, looseness=1.25] (40.center) to (42.center);
		\draw [style=thickstrand, in=0, out=90, looseness=1.25] (47.center) to (48.center);
		\draw [style=thickstrand, in=-180, out=90, looseness=1.25] (46.center) to (48.center);
		\draw [style=thickstrand, in=0, out=-90, looseness=1.25] (52.center) to (53.center);
		\draw [style=thickstrand, in=180, out=-90, looseness=1.25] (51.center) to (53.center);
		\draw [style=thickstrand] (49.center) to (44.center);
		\draw [style=thickstrand] (50.center) to (45.center);
		\draw [style=thickstrand] (59.center) to (54.center);
		\draw [style=thickstrand] (60.center) to (55.center);
		\draw [style=thickstrand] (61.center) to (56.center);
		\draw [style=thickstrand] (62.center) to (57.center);
	\end{pgfonlayer}
\end{tikzpicture} \\   
			& = c_{\setj{1,3},4} - c_{\setj{3},4} - c_{\setj{2},4} - c_{\setj{1},4} + c_{\varnothing, 4}.
		\end{aligned}
		\]
	\end{example}
	
	The following claim is easy to verify:
	
	\begin{lemma}
		Let $I,I'$ be apt subsets of $\setj{1,\ldots,n}$. We have a dichotomy:
		\begin{enumerate}
			\item There is $i \in I'$ such that $i+1 \in I$ or $i-1 \in I$, and $c_{I} \circ c_{I'} = 0$;
			\item The set $I \cup I'$ is an apt subset of $\setj{1,\ldots,n}$ and $c_{I} \circ c_{I'} = c_{I \cup I'}$.
		\end{enumerate}
	\end{lemma}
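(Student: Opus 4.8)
The plan is to unwind $c_I \circ c_{I'} = \on{cup}_I \circ \on{cap}_I \circ \on{cup}_{I'} \circ \on{cap}_{I'}$; the only nontrivial simplification happens in the inner composite $\on{cap}_I \circ \on{cup}_{I'}$, which I would analyse by stacking the cup diagram $\on{cup}_{I'}$ below the cap diagram $\on{cap}_I$ and reading off what happens along the $n$ glued strands. Fix $i \in I'$, so that $\on{cup}_{I'}$ carries a cup joining strands $i$ and $i+1$. Because $I$ and $I'$ are apt, there are exactly three ways the caps of $\on{cap}_I$ can touch this cup: if $i \in I$, the cap of $\on{cap}_I$ joining $i,i+1$ closes the cup into a loop, which the circle relation replaces by $(q^{2}+1)\on{id} = \on{id}$; if $i+1\in I$ or $i-1\in I$, the corresponding cap joining $i+1,i+2$ (resp.\ $i-1,i$) shares exactly one strand with the cup, forming a zig-zag; and if none of $i-1,i,i+1$ lies in $I$, strands $i,i+1$ are through-strands of $\on{cap}_I$ and the cup passes through untouched.

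For the first alternative of the dichotomy, suppose some $i\in I'$ has $i+1\in I$ or $i-1\in I$. Then $\on{cap}_I\circ\on{cup}_{I'}$ contains such a zig-zag, which the defining relation of $\overline{\mathcal{TL}}_{q}$ evaluates to $q$ times a through-strand; at $q=0$ this is $0$, so $\on{cap}_I\circ\on{cup}_{I'} = 0$ and hence $c_I\circ c_{I'} = 0$. (One must confirm that the zig-zag appearing is the one normalized to $q$, not the one normalized to $1$; this is immediate by matching the orientation of the cup--cap pair against the two sides of the zig-zag relation.)

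For the second alternative I would first note that, $I$ and $I'$ being apt, $I\cup I'$ is apt precisely when no $i\in I'$ has $i\pm 1\in I$ --- a consecutive pair in $I\cup I'$ must straddle $I$ and $I'$, and the two ways it can do so are exactly the two options above. So under this hypothesis only the loop and pass-through cases occur: the $|I\cap I'|$ loops each contribute the scalar $1$, and, invoking the uniqueness of the cup/cap factorization in \autoref{triangularbasis}, one identifies $\on{cap}_I\circ\on{cup}_{I'}$ with the Temperley--Lieb diagram whose cap part has caps at $(j,j+1)$ for $j\in I\setminus I'$ and whose cup part has cups at $(i,i+1)$ for $i\in I'\setminus I$. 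Feeding this back in gives $c_I\circ c_{I'} = \on{cup}_I\circ\on{cup}_{I'\setminus I}\circ\on{cap}_{I\setminus I'}\circ\on{cap}_{I'}$; since $I\cup I' = I\sqcup(I'\setminus I)$ is apt, the indices $I$ and $I'\setminus I$ are pairwise non-adjacent, so composing the two cup diagrams produces no nesting and yields $\on{cup}_{I\cup I'}$, and dually the cap diagrams compose to $\on{cap}_{I\cup I'}$. Therefore $c_I\circ c_{I'} = \on{cup}_{I\cup I'}\circ\on{cap}_{I\cup I'} = c_{I\cup I'}$.

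I expect the one place demanding care to be the positional bookkeeping in the second case: one must check that, after cancelling the $|I\cap I'|$ loops, the surviving caps and cups sit at exactly the positions indexed by $I\setminus I'$ and $I'\setminus I$ in $\{1,\dots,n\}$ rather than at positions shifted by the relabelling composition induces, and likewise that recombining the cup (resp.\ cap) diagrams gives $\on{cup}_{I\cup I'}$ (resp.\ $\on{cap}_{I\cup I'}$) and not a nested diagram. Both are routine once apt-ness is combined with \autoref{triangularbasis}; everything else is an application of the circle and zig-zag relations. One could instead argue algebraically, writing $c_K$ as the product over $k\in K$ of the usual Temperley--Lieb idempotents $e_k$ --- which commute when their indices differ by at least $2$, square to themselves since a loop evaluates to $1$, and satisfy a vanishing relation for adjacent indices coming from the zig-zag relation --- and then reorganizing $(\prod_{j\in I}e_j)(\prod_{i\in I'}e_i)$ into $0$ or $\prod_{m\in I\cup I'}e_m$ accordingly.
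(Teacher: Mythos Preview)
The paper does not actually prove this lemma; it is introduced with the phrase ``the following claim is easy to verify'' and no argument is given. Your proposal is correct and supplies what the paper omits. Of your two approaches, the second --- writing $c_I = \prod_{i\in I} e_i$ with $e_i := c_{\{i\},n}$ and using the relations $e_i e_j = e_j e_i$ for $|i-j|\geq 2$, $e_i^2 = e_i$ (circle evaluates to $1$), and $e_i e_{i\pm 1} = 0$ --- is the cleaner route and sidesteps the positional relabelling issue you rightly flag in the diagrammatic argument; the displayed equation $c_I\circ c_{I'} = \on{cup}_I\circ\on{cup}_{I'\setminus I}\circ\on{cap}_{I\setminus I'}\circ\on{cap}_{I'}$ does not typecheck as written, since the inner factors live between objects of the wrong size.

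One small correction to your parenthetical: in $\overline{\mathcal{TL}}_q$ as defined here, \emph{both} zig-zags equal $q\cdot\on{id}$, not just one (this is how the defining relation reads, and it is used in both orientations in the case $|a_k - a_{k-1}| = 1$ of the proof of \autoref{triangularbasis}). So at $q=0$ both vanish and no orientation check is needed. In particular $e_i e_{i+1} = 0$ and $e_{i+1} e_i = 0$ symmetrically; for case~(1) one simply commutes the offending $e_j$ to the right end of the $I$-product and $e_i$ to the left end of the $I'$-product, and for case~(2) all factors commute and repeated ones are absorbed by $e_k^2 = e_k$.
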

	
	\begin{proposition}
		The Jones--Wenzl projectors are idempotent: $\euler{j}_{n}^{2} = \euler{j}_{n}$.
	\end{proposition}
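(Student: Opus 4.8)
The plan is to expand $\euler{j}_n^2$ by bilinearity of composition and then reduce each term using the lemma just proved. Writing out the product,
\[
\euler{j}_n^2 \;=\; \sum_{\substack{I,\, I' \subseteq \setj{1,\ldots,n}\\ I,\, I' \text{ apt}}} (-1)^{|I|+|I'|}\, c_I \circ c_{I'},
\]
I would apply the preceding lemma, which for apt $I, I'$ gives $c_I \circ c_{I'} = c_{I\cup I'}$ when $I \cup I'$ is apt and $c_I \circ c_{I'} = 0$ otherwise (the two cases are mutually exclusive: if $i-1 \in I$ or $i+1 \in I$ for some $i \in I'$, then $I \cup I'$ has two consecutive elements and so is not apt). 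Hence the double sum collapses to
\[
\euler{j}_n^2 \;=\; \sum_{\substack{I,\, I' \text{ apt}\\ I \cup I' \text{ apt}}} (-1)^{|I|+|I'|}\, c_{I\cup I'}.
\]

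The next step is to reindex this sum by $J := I \cup I'$. Since every subset of an apt set is apt, for each fixed apt $J$ the pairs $(I, I')$ that contribute a copy of $c_J$ are exactly those with $I, I' \subseteq J$ and $I \cup I' = J$, so
\[
\euler{j}_n^2 \;=\; \sum_{J \text{ apt}} \Bigl(\,\sum_{\substack{I,\, I' \subseteq J\\ I \cup I' = J}} (-1)^{|I|+|I'|}\Bigr)\, c_J.
\]
The inner sum factors over the elements of $J$: choosing a pair $(I, I')$ with $I \cup I' = J$ amounts to deciding, independently for each $x \in J$, whether $x$ lies in $I$ only, in $I'$ only, or in $I \cap I'$, and these three options contribute the factors $-1$, $-1$, $+1$ to $(-1)^{|I|+|I'|}$. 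Therefore the inner sum equals $\prod_{x \in J}(-1-1+1) = (-1)^{|J|}$, and we obtain
\[
\euler{j}_n^2 \;=\; \sum_{J \text{ apt}} (-1)^{|J|}\, c_J \;=\; \euler{j}_n,
\]
the last equality being just the definition of $\euler{j}_n$.

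I do not expect a genuine obstacle here. Given the lemma, the whole statement reduces to the sign-weighted count $\sum_{I\cup I' = J}(-1)^{|I|+|I'|} = (-1)^{|J|}$, a one-line variant of the elementary fact that there are $3^{|J|}$ pairs $(I,I')$ of subsets of $J$ with union $J$; the only things needing care are the reindexing by $J$ and the per-element factorization of the sign, together with the (minor) observation that the resulting expression is already, verbatim, $\euler{j}_n$, so that no appeal to linear independence of diagrams is needed at the last step.
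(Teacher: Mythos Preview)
Your proof is correct and follows essentially the same approach as the paper: expand the square, apply the dichotomy lemma to reduce to pairs with apt union, reindex by $J = I \cup I'$, and verify the coefficient of $c_J$ is $(-1)^{|J|}$. The only difference is cosmetic: you compute the inner sum by a per-element factorization $(-1-1+1)^{|J|}$, whereas the paper stratifies by $|I \cap I'|$ and applies the binomial theorem to get $(1-2)^{|J|}$.
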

	
	\begin{proof}
		We have
		\[
		\begin{aligned}
			\left(\sum_{I\subseteq \setj{1,\ldots,n} \text{ apt}} (-1)^{|I|} c_{I,n}\right)^{2} = \sum_{(I,I') \text{ s.t. } I \cup I' \text{ apt}} (-1)^{|I| + |I'|} c_{I \cup I'} = \sum_{J \text{ apt}} \sum_{(I,I') \text{ s.t. } I \cup I' = J} (-1)^{|S|+|S'|} c_{J}.
		\end{aligned}
		\]
		Thus, the coefficient of $c_{J}$ in $\euler{j}_{n}^{2}$ is
		\[
		\sum_{(I,I') \text{ s.t. } I \cup I' = J} (-1)^{|S|+|S'|},
		\]
		and it suffices to show that it is equal to $(-1)^{|J|}$. We have
		\[
		\begin{aligned}
			\sum_{(I,I') \text{ s.t. } I \cup I' = J} (-1)^{|S|+|S'|} = \sum_{|I \cap I'| = 0}^{|I \cap I'| = |J|} (-1)^{|J| + |I \cap I'|} \binom{|J|}{|I \cap I'|} 2^{|J| - |S \cap S'|} = (1-2)^{|J|} = (-1)^{|J|},
		\end{aligned}
		\]
		where the first equality follows from counting the pairs $(I,I')$ by first determining the cardinality $|I \cap I'|$, then choosing the set $I \cap I'$, which can be done in $\binom{|J|}{|I \cap I'|}$ ways, and then for each element in $J \setminus (I \cap I')$, choosing whether it belongs to $I$ or to $I'$ -- this is an exclusive disjunction, and thus we can do so in $2^{|J| - |S \cap S'|}$ ways. 
        The second equality is an application of the binomial theorem.
	\end{proof}
	
	\begin{proposition}\label{annihilation}
		The Jones--Wenzl projectors are annihilated by basic cups and caps: for any $i \in \setj{1,\ldots,n-1}$, we have
		\[
		\on{cap}_{\setj{i},n} \circ\, \euler{j}_{n} = 0 = \euler{j}_{n} \circ \,\on{cup}_{\setj{i},n}.
		\]
        In other words, $\euler{j}_{n}$ is annihilated under both post- and precomposition with the ideal $\thru_{<n}$.
	\end{proposition}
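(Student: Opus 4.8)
The plan is to expand
$\on{cap}_{\setj{i},n}\circ\euler{j}_{n}=\sum_{I\text{ apt}}(-1)^{|I|}\,\on{cap}_{\setj{i},n}\circ c_{I,n}$,
discard the summands that vanish, and pair up the remaining ones so that they cancel by sign. First I would reduce the second identity to the first using the contravariant monoidal equivalence $\overline{(-)}$: since $\overline{\on{cup}_{I,n}}=\on{cap}_{I,n}$ and $\overline{\on{cap}_{I,n}}=\on{cup}_{I,n}$ we get $\overline{c_{I,n}}=c_{I,n}$, so by $\Bbbk$-linearity $\overline{(-)}$ fixes $\euler{j}_{n}$; as moreover $\overline{\on{cup}_{\setj{i},n}}=\on{cap}_{\setj{i},n}$, applying $\overline{(-)}$ to $\euler{j}_{n}\circ\on{cup}_{\setj{i},n}$ produces $\on{cap}_{\setj{i},n}\circ\euler{j}_{n}$, and faithfulness of $\overline{(-)}$ makes the two vanishing statements equivalent. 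Hence it suffices to show $\on{cap}_{\setj{i},n}\circ\euler{j}_{n}=0$.

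The main step is to compute $\on{cap}_{\setj{i},n}\circ c_{I,n}=(\on{cap}_{\setj{i},n}\circ\on{cup}_{I,n})\circ\on{cap}_{I,n}$ for each apt $I$, in three cases according to the position of $i$ relative to $I$; throughout one uses that, since $I$ is apt, every cup of $\on{cup}_{I,n}$ occupies a pair of adjacent, non-nested positions and so may be extracted as the outermost factor of $\on{cup}_{I,n}$. \emph{Case (i): $i\in I$.} Then $\on{cap}_{\setj{i},n}$ meets the cup of $\on{cup}_{I,n}$ at $(i,i+1)$ head-on, contributing a circle, which evaluates to $(q^{2}+1)|_{q=0}=1$; thus $\on{cap}_{\setj{i},n}\circ c_{I,n}$ is the cap diagram obtained from $c_{I,n}$ by erasing that cup, i.e.\ $\on{cup}_{\widehat{I\setminus\setj{i}},\,n-2}\circ\on{cap}_{I,n}$, where the hat denotes the evident reindexing into $\setj{1,\dots,n-2}$. \emph{Case (ii): $i\notin I$ and $i-1\in I$ or $i+1\in I$} — say $i+1\in I$, the other subcase being mirror-symmetric. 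Extracting the cup at $(i+1,i+2)$ from $\on{cup}_{I,n}$ and composing with $\on{cap}_{\setj{i},n}$, that cup and the cap combine on three consecutive strands into the zig-zag $(\on{cap}\otimes\on{id}_{\obj{1}})\circ(\on{id}_{\obj{1}}\otimes\on{cup})=q\cdot\on{id}_{\obj{1}}$, which is $0$ in $\mathcal{TL}_{0}(\Bbbk)$; hence $\on{cap}_{\setj{i},n}\circ c_{I,n}=0$. \emph{Case (iii): $i\notin I$ and neither $i-1$ nor $i+1$ lies in $I$} — equivalently $I\cup\setj{i}$ is again apt. Here positions $i,i+1$ are free strands of $\on{cup}_{I,n}$; capping them yields $\on{cap}_{\setj{i},n}\circ c_{I,n}=\on{cup}_{\widehat{I},\,n-2}\circ\on{cap}_{I\cup\setj{i},\,n}$, and comparing with Case (i) applied to the apt set $I\cup\setj{i}$ gives $\on{cap}_{\setj{i},n}\circ c_{I,n}=\on{cap}_{\setj{i},n}\circ c_{I\cup\setj{i},n}$.

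With these computations in hand I would finish as follows. The map $I\mapsto I\cup\setj{i}$ is a bijection from $\setj{I\text{ apt}\,:\,\setj{i-1,i,i+1}\cap I=\varnothing}$ onto $\setj{I\text{ apt}\,:\,i\in I}$, with inverse $J\mapsto J\setminus\setj{i}$. Regrouping $\sum_{I}(-1)^{|I|}\on{cap}_{\setj{i},n}\circ c_{I,n}$ along this bijection, each pair $\setj{I,\,I\cup\setj{i}}$ contributes $(-1)^{|I|}\on{cap}_{\setj{i},n}\circ c_{I,n}+(-1)^{|I|+1}\on{cap}_{\setj{i},n}\circ c_{I\cup\setj{i},n}=0$ by Case (iii), and every remaining summand is $0$ by Case (ii); so $\on{cap}_{\setj{i},n}\circ\euler{j}_{n}=0$. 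The ``in other words'' clause then follows from \autoref{triangularbasis}: any Temperley--Lieb diagram with source $\obj{n}$ and fewer than $n$ through-strands has a nontrivial cap part, hence factors as $(\cdots)\circ\on{cap}_{\setj{i},n}$ for some basic cap, and dually a Temperley--Lieb diagram with target $\obj{n}$ and fewer than $n$ through-strands factors as $\on{cup}_{\setj{i},n}\circ(\cdots)$; together with $\Bbbk$-bilinearity of composition this shows $\euler{j}_{n}$ is annihilated by $\thru_{<n}$ under both pre- and postcomposition.

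The hard part is Case (ii): a naive diagrammatic computation would straighten the cap--cup zig-zag to the identity and conclude that $\on{cap}_{\setj{i},n}\circ c_{I,n}$ is a nonzero smaller cap diagram, as happens for $q\neq0$; the point is that for $q=0$ the straightening instead contributes the factor $q$, so the term vanishes, and this is exactly what makes the sign-cancellation in the final step immediate. The only other delicate point is keeping track of the reindexing of apt subsets under the basic cap, but that is routine bookkeeping.
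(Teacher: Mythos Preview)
Your proof is correct and follows essentially the same approach as the paper: both arguments discard the terms with $i\pm 1\in I$ via the zig-zag relation (which vanishes at $q=0$), and then pair the remaining terms via the bijection $I\leftrightarrow I\cup\{i\}$ between apt subsets with $i-1,i,i+1\notin I$ and those with $i\in I$, observing that the paired summands agree and have opposite signs. Your write-up is in fact a bit more explicit than the paper's --- you spell out the reduction of the cup case to the cap case via the involution $\overline{(-)}$, give concrete descriptions of the resulting diagrams in Cases~(i) and~(iii), and justify the ``in other words'' clause using \autoref{triangularbasis} --- whereas the paper simply says the cup case is analogous and leaves the final clause implicit.
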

	
	\begin{proof}
		We show that $\on{cap}_{\setj{i},n} \circ \euler{j}_{n} = 0$; the proof for cups is analogous. Let $I$ be an apt subset of $\setj{1,\ldots,n}$. If $i-1 \in I$ or $i+1 \in I$, then $\on{cap}_{\setj{i},n} \circ c_{I,n} = 0$, by the zig-zag relation. Thus,
		\[
		\on{cap}_{\setj{i},n} \circ \,\euler{j}_{n} = \sum_{I \text{ apt; }i-1,i+1 \not\in I} (-1)^{|I|} c_{I,n}.
		\]
		Clearly, we have a bijection
		\[
		\begin{aligned}
			\setj{I \subseteq \setj{1,\ldots,n} \; | \; I \text{ is apt and } i-1,i,i+1 \not\in I} &\leftrightarrow  \setj{I \subseteq \setj{1,\ldots,n} \; | \; I \text{ is apt, } i-1,i+1 \not\in I \text{ and } i \in I} \\
			I &\mapsto I \cup \setj{i} \\
			I\setminus\setj{i} &\mapsfrom I
		\end{aligned},
		\]
		which partitions the set $\setj{I \subseteq \setj{1,\ldots,n} \; | \; I \text{ is apt and } i-1,i+1 \not\in J}$ into two subsets of the same cardinality, which we denote by $N$. 
        Then we have 
		\[
		\begin{aligned}
			\on{cap}_{\setj{i},n} \circ\, \euler{j}_{n} &= \on{cap}_{\setj{i},n} \circ \left(\sum_{I \text{ apt; }i-1,i,i+1 \not\in I} (-1)^{|I|} c_{I,n} + \sum_{I \text{ apt; }i-1,i,i+1 \not\in I} (-1)^{|I|+1} c_{I \cup \setj{i},n}\right) \\
			&= \sum_{I \text{ apt; }i-1,i,i+1 \not\in I} (-1)^{|I|} c_{I\cup \setj{i},n} + \sum_{I \text{ apt; }i-1,i,i+1 \not\in I} (-1)^{|I|+1} c_{I \cup \setj{i},n} = 0.
		\end{aligned}
		\]
	\end{proof}
	We now show that the sequence of projectors $(\euler{j}_{n})_{n=0}^{\infty}$ satisfies the recursive relation described in \cite{We}. 
	\begin{proposition}
		The Jones--Wenzl projectors satisfy the Jones--Wenzl recursion -- the following equality holds:
		\[\begin{tikzpicture}[scale=0.5]
	\begin{pgfonlayer}{nodelayer}
		\node [style=none] (0) at (1.25, 0) {};
		\node [style=none] (1) at (-0.25, 0) {};
		\node [style=jonesrectangle] (2) at (0.5, 2.5) {$\euler{j}_{n}$};
		\node [style=none] (3) at (-0.25, 2) {};
		\node [style=none] (4) at (1.25, 2) {};
		\node [style=none] (5) at (0.5, 1) {$\cdots$};
		\node [style=none] (6) at (-0.25, 3) {};
		\node [style=none] (7) at (1.25, 3) {};
		\node [style=none] (8) at (1.25, 5) {};
		\node [style=none] (9) at (-0.25, 5) {};
		\node [style=none] (10) at (2, 2.5) {$=$};
		\node [style=jonesrectangle] (11) at (3.5, 2.5) {$\euler{j}_{n-1}$};
		\node [style=jonesrectangle] (12) at (7.75, 3.75) {$\euler{j}_{n-1}$};
		\node [style=none] (13) at (0.5, 4) {$\cdots$};
		\node [style=none] (14) at (2.75, 3) {};
		\node [style=none] (15) at (4.25, 3) {};
		\node [style=none] (16) at (4.25, 2) {};
		\node [style=none] (17) at (2.75, 2) {};
		\node [style=none] (18) at (2.75, 0) {};
		\node [style=none] (19) at (4.25, 0) {};
		\node [style=none] (20) at (4.25, 5) {};
		\node [style=none] (21) at (2.75, 5) {};
		\node [style=none] (22) at (8, 4.25) {};
		\node [style=none] (23) at (7, 4.25) {};
		\node [style=none] (24) at (7, 3.25) {};
		\node [style=none] (26) at (8.5, 0) {};
		\node [style=none] (27) at (7, 0) {};
		\node [style=none] (28) at (7, 5) {};
		\node [style=none] (29) at (8, 5) {};
		\node [style=none] (30) at (8.5, 4.25) {};
		\node [style=none] (31) at (8.5, 5) {};
		\node [style=none] (32) at (5, 5) {};
		\node [style=none] (33) at (5, 0) {};
		\node [style=jonesrectangle] (34) at (7.75, 1.25) {$\euler{j}_{n-1}$};
		\node [style=none] (35) at (8, 1.75) {};
		\node [style=none] (36) at (7, 1.75) {};
		\node [style=none] (37) at (7, 0.75) {};
		\node [style=none] (38) at (8.5, 0.75) {};
		\node [style=none] (39) at (8.5, 1.75) {};
		\node [style=none] (43) at (9, 1.75) {};
		\node [style=none] (44) at (8.75, 2.25) {};
		\node [style=none] (45) at (9, 0) {};
		\node [style=none] (46) at (8.5, 3.25) {};
		\node [style=none] (47) at (9, 3.25) {};
		\node [style=none] (48) at (8.75, 2.75) {};
		\node [style=none] (49) at (9, 5) {};
		\node [style=none] (50) at (8, 3.25) {};
		\node [style=none] (51) at (7.5, 2.5) {$\cdots$};
		\node [style=none] (52) at (7.5, 4.7) {$\cdots$};
		\node [style=none] (53) at (8, 0.75) {};
		\node [style=none] (54) at (8, 0) {};
		\node [style=none] (55) at (7.5, 0.275) {$\cdots$};
		\node [style=none] (56) at (3.5, 4) {$\cdots$};
		\node [style=none] (57) at (3.5, 1) {$\cdots$};
		\node [style=none] (58) at (6, 2.5) {$-$};
	\end{pgfonlayer}
	\begin{pgfonlayer}{edgelayer}
		\draw [style=thickstrand] (4.center) to (0.center);
		\draw [style=thickstrand] (3.center) to (1.center);
		\draw [style=thickstrand] (9.center) to (6.center);
		\draw [style=thickstrand] (8.center) to (7.center);
		\draw [style=thickstrand] (29.center) to (22.center);
		\draw [style=thickstrand] (31.center) to (30.center);
		\draw [style=thickstrand] (17.center) to (18.center);
		\draw [style=thickstrand] (16.center) to (19.center);
		\draw [style=thickstrand] (21.center) to (14.center);
		\draw [style=thickstrand] (20.center) to (15.center);
		\draw [style=thickstrand] (32.center) to (33.center);
		\draw [style=thickstrand, in=0, out=90] (43.center) to (44.center);
		\draw [style=thickstrand, in=180, out=90] (39.center) to (44.center);
		\draw [style=thickstrand, in=0, out=-90] (47.center) to (48.center);
		\draw [style=thickstrand, in=-180, out=-90] (46.center) to (48.center);
		\draw [style=thickstrand] (43.center) to (45.center);
		\draw [style=thickstrand] (49.center) to (47.center);
		\draw [style=thickstrand] (23.center) to (28.center);
		\draw [style=thickstrand] (37.center) to (27.center);
		\draw [style=thickstrand] (38.center) to (26.center);
		\draw [style=thickstrand] (50.center) to (35.center);
		\draw [style=thickstrand] (24.center) to (36.center);
		\draw [style=thickstrand] (53.center) to (54.center);
	\end{pgfonlayer}
\end{tikzpicture}\]
	\end{proposition}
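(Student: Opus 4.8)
The plan is to deduce the recursion from the closed formula of \autoref{def:JW} by comparing coefficients in the Temperley--Lieb basis of \autoref{TLBasis}. Write $A := \euler{j}_{n-1}\otimes\on{id}_{\obj{1}}$ and note that the turnback on the last two strands is exactly $c_{\setj{n-1},n} = \on{id}_{\obj{n-2}}\otimes(\on{cup}\circ\on{cap})$, so the right-hand side of the asserted identity is $A - A\circ c_{\setj{n-1},n}\circ A$. Every apt subset $I$ of $\setj{1,\ldots,n}$ either omits $n-1$, or contains $n-1$ and then (being apt) also omits $n-2$ and $n$; hence the sum defining $\euler{j}_{n}$ splits into the part running over apt $I$ with $n-1\notin I$ and the part over apt $I$ with $n-1\in I$. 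The first part equals $A$, since $c_{I,n-1}\otimes\on{id}_{\obj{1}} = c_{I,n}$ for every apt subset $I$ of $\setj{1,\ldots,n-1}$, and those $I$ are precisely the apt subsets of $\setj{1,\ldots,n}$ omitting $n-1$. So it remains to prove $A\circ c_{\setj{n-1},n}\circ A = -\sum_{I\ \text{apt},\ n-1\in I}(-1)^{|I|}c_{I,n}$.

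The key ingredient is a sliding identity, $\on{cap}_{\setj{n-1},n}\circ A = \euler{j}_{n-2}\circ\on{cap}_{\setj{n-1},n}$, together with its mirror $A\circ\on{cup}_{\setj{n-1},n} = \on{cup}_{\setj{n-1},n}\circ\euler{j}_{n-2}$ (which follows by applying $\overline{(-)}$, under which each $c_{I,n}$, and hence $\euler{j}_{n}$, is fixed). To prove the first identity, write $A = \sum_{I}(-1)^{|I|}c_{I,n}$ with $I$ running over the apt subsets of $\setj{1,\ldots,n}$ omitting $n-1$, and postcompose with $\on{cap}_{\setj{n-1},n}$ termwise: if $n-2\in I$, then the cup on $(n-2,n-1)$ inside $c_{I,n}$ and the cap on $(n-1,n)$ form a zig-zag, which evaluates to $0$ at $q=0$, killing the term; if $n-2\notin I$, then the last two strands of $c_{I,n}$ are through-strands, $c_{I,n} = c_{I,n-2}\otimes\on{id}_{\obj{2}}$, and the interchange law gives $\on{cap}_{\setj{n-1},n}\circ c_{I,n} = c_{I,n-2}\circ\on{cap}_{\setj{n-1},n}$. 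The surviving $I$ are exactly the apt subsets of $\setj{1,\ldots,n-2}$, so the sum is $\euler{j}_{n-2}\circ\on{cap}_{\setj{n-1},n}$. Feeding the sliding identities into $c_{\setj{n-1},n} = \on{cup}_{\setj{n-1},n}\circ\on{cap}_{\setj{n-1},n}$ and using that $\euler{j}_{n-2}$ is idempotent (the preceding proposition), we get $A\circ c_{\setj{n-1},n}\circ A = \on{cup}_{\setj{n-1},n}\circ\euler{j}_{n-2}\circ\on{cap}_{\setj{n-1},n}$.

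To finish, expand $\euler{j}_{n-2} = \sum_{J\ \text{apt in }\setj{1,\ldots,n-2}}(-1)^{|J|}c_{J,n-2}$; by the interchange law $\on{cup}_{\setj{n-1},n}\circ c_{J,n-2}\circ\on{cap}_{\setj{n-1},n} = c_{J,n-2}\otimes(\on{cup}\circ\on{cap}) = c_{J\cup\setj{n-1},n}$, since pre-composing with $\on{cap}_{\setj{n-1},n}$ and post-composing with $\on{cup}_{\setj{n-1},n}$ merely adjoins a cap--cup pair on strands $(n-1,n)$. As $J\mapsto J\cup\setj{n-1}$ is a bijection from the apt subsets of $\setj{1,\ldots,n-2}$ onto the apt subsets of $\setj{1,\ldots,n}$ containing $n-1$, and it reverses the parity of the cardinality, we obtain $A\circ c_{\setj{n-1},n}\circ A = -\sum_{I\ \text{apt},\ n-1\in I}(-1)^{|I|}c_{I,n}$, which together with the first part is precisely the recursion. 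I expect the only genuinely delicate point to be the bookkeeping in the sliding identity --- recognising which $c_{I,n}$ the zig-zag relation annihilates, and tracking apt subsets under the two reindexings --- everything else being the interchange law and the idempotency already in hand. (One may instead argue without computing coefficients: the right-hand side lies in $\on{id}_{\obj{n}} + \thru_{<n}$ and, since every cap diagram has an adjacent innermost cap, is annihilated by $\thru_{<n}$ on both sides as soon as $\on{cap}_{\setj{i},n}\circ(-) = 0 = (-)\circ\on{cup}_{\setj{i},n}$ for all $i$, which holds by \autoref{annihilation} when $i\le n-2$ and by the sliding identity when $i = n-1$; and any morphism of that shape equals the idempotent $\euler{j}_{n}$.)
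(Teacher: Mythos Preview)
Your argument is correct and, while it shares the paper's starting point (the closed formula of \autoref{def:JW}), it is organised rather differently. The paper partitions the apt subsets of $\{1,\ldots,m\}$ into three classes $A_{m},B_{m},C_{m}$ according to membership of $m-2$ and $m-1$, applies the recursion operator $\mathtt{r}(x)=x\otimes\on{id}_{\obj{1}}-(x\otimes\on{id}_{\obj{1}})\,c_{\{m\},m+1}\,(x\otimes\on{id}_{\obj{1}})$ to each class, and then matches against $\euler{j}_{n+1}$ via three explicit bijections $C_{n}\leftrightarrow B_{n+1}$, $A\!B_{n}\leftrightarrow A_{n+1}$, $A\!B_{n}\leftrightarrow C_{n+1}$. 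In the course of this the paper effectively treats the quadratic map $\mathtt{r}$ as additive across the pieces; this is in fact legitimate here (the $C$-cross-terms die by zig-zag and the $A\!B$-piece collapses by idempotency of $\euler{j}_{n-1}$), but that justification is left implicit. Your sliding identity $\on{cap}_{\{n-1\},n}\circ A=\euler{j}_{n-2}\circ\on{cap}_{\{n-1\},n}$ is precisely the mechanism behind that collapse, and isolating it lets you pass from $A\,c_{\{n-1\},n}\,A$ to $\on{cup}_{\{n-1\},n}\circ\euler{j}_{n-2}\circ\on{cap}_{\{n-1\},n}$ in one transparent step, with the use of idempotency made explicit; it is also the $l=1$ case of what the paper later records as \autoref{JWcaps}. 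Your parenthetical alternative via the characterising properties is likewise sound and, as you note, does not require first checking that the right-hand side is idempotent.
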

	
	\begin{proof}
		
		First, for any $m$, let 
		\[
		\begin{aligned}
			&A_{m} := \setj{I \subseteq \setj{1,\ldots,m} \; | \; I \text{ is apt and } m-2,m-1 \not\in I}; \\
			&B_{m} := \setj{I \subseteq \setj{1,\ldots,m} \; | \; I \text{ is apt and } m-2 \in I}; \\
			&C_{m} := \setj{I \subseteq \setj{1,\ldots,m} \; | \; I \text{ is apt and } m-1 \in I}.
		\end{aligned}
		\]
		Clearly, $\setj{I \subseteq \setj{1,\ldots,m} \; | \; I \text{ is apt}}$ equals the disjoint union $A_{m} \sqcup B_{m} \sqcup C_{m}$. Finally, we denote $A_{m} \sqcup B_{m}$ by $A\!B_{m}$.
		
		Second, for $\euler{x} \in \on{End}_{\mathcal{TL}_{0}(\Bbbk)}(\obj{m})$, let
        \[
        \mathtt{r}(\euler{x}) := \euler{x} \otimes \on{id}_{\obj{1}} - (\euler{x} \otimes \on{id}_{\obj{1}}) \circ \on{cap}_{\setj{m},m+1} \circ \on{cup}_{\setj{m},m+1} \circ (\euler{x} \otimes \on{id}_{\obj{1}}) \in \on{End}_{\mathcal{TL}_{0}(\Bbbk)}(\obj{m+1}),
        \]
         so that the Jones--Wenzl recursion we aim to show can be written as $\euler{j}_{n+1} = \mathtt{r}(\euler{j}_{n})$. 
		
		It is easy to check that for $I \in C_{n}$, we have $\mathtt{r}(c_{I,n}) = c_{I,n+1} - 0 = c_{I,n+1}$. Moreover, $I \in B_{n+1}$ and this establishes a bijection between morphisms of the form $c_{I,n}$ with $I \in C_{n}$ and morphisms of the form $c_{J,n+1}$ with $J \in B_{n+1}$. For $I \in A\!B_{n}$ we have $\mathtt{r}(c_{I,n}) = c_{I,n+1} - c_{I \cup \setj{n},n+1}$. Moreover, $I \in A_{n+1}$ and $I \cup \setj{n} \in C_{n+1}$, and there are bijections
		\[
		\begin{aligned}
			A\!B_{n} &\leftrightarrow A_{n+1}, \\
			I &\leftrightarrow I
		\end{aligned}
		\]
		and 
		\[
		\begin{aligned}
			A\!B_{n} &\leftrightarrow C_{n+1}. \\
			I &\mapsto I \cup \setj{n}
		\end{aligned}
		\]
		Thus, 
		\[
		\begin{aligned}
			\mathtt{r}(\euler{j}_{n}) &= \mathtt{r}\left(\sum_{I \in A\!B_{n}} (-1)^{|I|} c_{I,n} + \sum_{I \in C_{n}} (-1)^{|I|} c_{I,n}\right) = \mathtt{r}\left(\sum_{I \in A\!B_{n}} (-1)^{|I|} c_{I,n}\right) + \mathtt{r}\left(\sum_{I \in C_{n}} (-1)^{|I|} c_{I,n}\right) \\
			&=\sum_{I \in B_{n+1}} c_{I,n+1} + \sum_{I \in A\!B_{n}} (-1)^{|I|} (c_{I,n+1} - c_{I \cup \setj{n}, n+1}) \\
			&= \sum_{I \in B_{n+1}} c_{I,n+1} + \sum_{I \in A_{n+1}} (-1)^{|I|} c_{I,n+1} + \sum_{I \in A\!B_{n}} (-1)^{|I|+1} c_{I\cup \setj{n},n+1} \\
			&= \sum_{I \in B_{n+1}} c_{I,n+1} + \sum_{I \in A_{n+1}} (-1)^{|I|} c_{I,n+1} + \sum_{I \in C_{n+1}} (-1)^{|J|} c_{J,n+1} = \euler{j}_{n+1},
		\end{aligned}
		\]
		which establishes the result.
	\end{proof}
	
	\begin{remark}
		The idempotence of the morphisms $\euler{j}_{n}$ could also be proved similarly to the standard proof for $\mathcal{TL}_{a}(\Bbbk)$ where $a \neq 0$, namely by first proving the absorption property and then calculating directly. 
        Again, the calculations would be slightly different, and in some cases simpler due to additional cancellation entailed by the zig-zag relation.
	\end{remark}
	
	\begin{lemma} \label{JWuniqueness}
		The morphism $\euler{j}_{n} \in \on{End}_{\mathcal{TL}_{0}(\Bbbk)}(n)$ is the unique endomorphism of $n$ such that:
		\begin{enumerate}
			\item It is of the form $\on{id}_{\obj{n}} + x$, where $x \in \thru_{< n}(\obj{n},\obj{n})$;    
			\item It is idempotent;
			\item It is annihilated by basic caps and basic cups, in the sense of \autoref{annihilation}.
		\end{enumerate}
	\end{lemma}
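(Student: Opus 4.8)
The plan is to treat existence and uniqueness separately, and existence is in fact already in hand. The preceding propositions give idempotence of $\euler{j}_{n}$ and its annihilation by basic caps and cups in the sense of \autoref{annihilation}, i.e.\ conditions (2) and (3); and from \autoref{def:JW} the only apt subset of $\setj{1,\ldots,n}$ of cardinality $0$ is $\varnothing$, with $c_{\varnothing,n}=\on{id}_{\obj{n}}$, while every other summand $c_{I,n}$ has $\thru(c_{I,n})=n-2|I|<n$; hence $\euler{j}_{n}=\on{id}_{\obj{n}}+x$ with $x\in\thru_{<n}(\obj{n},\obj{n})$, which is condition (1). So only uniqueness needs an argument.

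For uniqueness I would first record the following reformulation of condition (3): if $f$ has domain $\obj{n}$ and $f\circ\on{cup}_{\setj{i},n}=0$ for all $i\in\setj{1,\ldots,n-1}$, then $f\circ g=0$ for every $g\in\thru_{<n}$ with codomain $\obj{n}$; dually, if $f$ has codomain $\obj{n}$ and $\on{cap}_{\setj{i},n}\circ f=0$ for all $i$, then $h\circ f=0$ for every $h\in\thru_{<n}$ with domain $\obj{n}$. For the first statement, expand $g$ by \autoref{TLBasis} and \autoref{triangularbasis} as a linear combination of Temperley--Lieb diagrams $\euler{u}_{\ell}\circ\euler{v}_{\ell}$, cup diagram composed with cap diagram. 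Since $g\in\thru_{<n}$, every diagram occurring has fewer than $n$ through-strands, so by the equality $\thru(\euler{u}_{\ell})=\thru(\euler{u}_{\ell}\circ\euler{v}_{\ell})$ recorded in the proof of \autoref{triangularbasis} we get $\thru(\euler{u}_{\ell})<n$; as $\euler{u}_{\ell}$ is a cup diagram onto $\obj{n}$, it is then not an identity, hence a composition of at least one basic cup, the last of which (having codomain $\obj{n}$) is some $\on{cup}_{\setj{i_{\ell}},n}$, yielding $\euler{u}_{\ell}=\on{cup}_{\setj{i_{\ell}},n}\circ\euler{u}_{\ell}'$ and therefore $f\circ\euler{u}_{\ell}\circ\euler{v}_{\ell}=(f\circ\on{cup}_{\setj{i_{\ell}},n})\circ\euler{u}_{\ell}'\circ\euler{v}_{\ell}=0$. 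The dual statement is entirely symmetric, factoring each $\euler{v}_{\ell}$ through its first basic cap.

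Granting this, uniqueness is formal. Let $p\in\on{End}_{\mathcal{TL}_{0}(\Bbbk)}(\obj{n})$ satisfy (1) and (3), and write $p=\on{id}_{\obj{n}}+x$ and $\euler{j}_{n}=\on{id}_{\obj{n}}+x'$ with $x,x'\in\thru_{<n}(\obj{n},\obj{n})$. Applying the first recorded fact to $f=p$ and $g=x'$ gives $p\circ x'=0$, hence $p\circ\euler{j}_{n}=p$; applying the dual fact to $f=\euler{j}_{n}$ (legitimate by \autoref{annihilation}) and $h=x$ gives $x\circ\euler{j}_{n}=0$, hence $p\circ\euler{j}_{n}=\euler{j}_{n}$. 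Comparing the two, $p=\euler{j}_{n}$. Note that this argument uses only conditions (1) and (3), so condition (2) is automatic and is kept in the statement only for emphasis. The only step that is more than bookkeeping is the claim, inside the reformulation of (3), that every Temperley--Lieb diagram appearing in $x$ (or $x'$) genuinely has fewer than $n$ through-strands and therefore factors through a basic cup (resp.\ a basic cap) -- but this is exactly what \autoref{triangularbasis} together with the description of the filtration by the ideals $\thru_{\leq i}$ provides, so I do not anticipate a genuine obstacle.
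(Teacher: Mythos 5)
Your proof is correct and is exactly the standard Jones--Wenzl uniqueness argument that the paper invokes by reference (``follows in the exact same way as it does for $\mathcal{TL}_{a}(\Bbbk)$ where $a\neq 0$''): the key step, that any element of $\thru_{<n}$ with codomain (resp.\ domain) $\obj{n}$ factors through a basic cup (resp.\ basic cap) at the $\obj{n}$ end, is justified correctly via \autoref{TLBasis} and \autoref{triangularbasis}, and then $p\circ\euler{j}_{n}=p=\euler{j}_{n}$ follows formally. Your side remark that condition (2) is a consequence of (1) and (3) is also accurate.
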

	
	\begin{proof}
		This uniqueness result follows in the exact same way as it does for $\mathcal{TL}_{a}(\Bbbk)$ where $a \neq 0$.
	\end{proof}

	\subsection{Properties of \texorpdfstring{$\mathcal{TL}_{0}(\Bbbk)$}{TL0(k)}}
	
	In this section, we collect some of the general monoidal categorical properties of $\mathcal{TL}_{0}(\Bbbk)$. 
	
	\textbf{Rigidity. } Recall that an object $X$ of a monoidal category $\mathcal{C}$ is said to have a right dual if there is an object $X^{\vee}$ and morphisms $\eta: \mathbb{1}_{\mathcal{C}} \rightarrow X^{\vee} \otimes X$ and $\varepsilon: X \otimes X^{\vee} \rightarrow \mathbb{1}_{\mathcal{C}}$ which satisfy triangle identities analogous to those for adjoint functors. 
    Similarly one defines left duals. An object is said to be {\it rigid} if it admits both a left and a right dual, and $\mathcal{C}$ is said to be rigid if all of its objects are rigid.
	
	\begin{proposition}\label{prop:notrigid}
		The category $\mathcal{TL}_{0}(\Bbbk)$ is not rigid.
	\end{proposition}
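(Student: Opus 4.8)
The plan is to prove the stronger statement that the object $\obj{1}$ of $\mathcal{TL}_{0}(\Bbbk)$ admits no right dual; since, by definition, rigidity requires every object to have a right (and left) dual, this already gives the proposition.

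The first step is to show that if $\obj{1}$ had a right dual $X^{\vee}$, then necessarily $X^{\vee} = \obj{1}$. The objects of $\mathcal{TL}_{0}(\Bbbk)$ are exactly $\obj{0},\obj{1},\obj{2},\ldots$, so we may write $X^{\vee} = \obj{j}$ for some $j \in \mathbb{N}$. A right-dual structure on $\obj 1$, with coevaluation $\eta \in \on{Hom}(\obj{0}, \obj{j}\otimes\obj{1})$ and evaluation $\varepsilon \in \on{Hom}(\obj{1}\otimes\obj{j},\obj{0})$, makes $-\otimes\obj{j}$ left adjoint to $-\otimes\obj{1}$ (the two snake identities translate exactly into the triangle identities of this adjunction), hence yields a natural isomorphism $\on{Hom}(Z\otimes\obj{j}, W) \cong \on{Hom}(Z, W\otimes\obj{1})$. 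Taking $Z = \obj{0}$ and $W = \obj{1}$ gives $\on{Hom}(\obj{j},\obj{1}) \cong \on{Hom}(\obj{0},\obj{2})$. By \autoref{TLBasis} the right-hand side is one-dimensional, while the left-hand side has dimension $0$ if $j$ is even and equals the Catalan number $C_{(j+1)/2}$ if $j$ is odd. Since $C_{m} = 1$ only for $m \in \{0,1\}$ and $(j+1)/2 \geq 1$, this forces $j = 1$.

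The second step disposes of the remaining candidate $X^{\vee} = \obj{1}$. Such a dual structure would consist of $\eta \in \on{Hom}(\obj{0},\obj{2})$ and $\varepsilon \in \on{Hom}(\obj{2},\obj{0})$ satisfying the triangle identities. By \autoref{TLBasis} we have $\on{Hom}(\obj{0},\obj{2}) = \Bbbk\cdot\on{cup}$ and $\on{Hom}(\obj{2},\obj{0}) = \Bbbk\cdot\on{cap}$, so $\eta = a\cdot\on{cup}$ and $\varepsilon = b\cdot\on{cap}$ for some $a,b \in \Bbbk$. The first triangle identity then reads
\[
\on{id}_{\obj{1}} \;=\; (\varepsilon \otimes \on{id}_{\obj{1}}) \circ (\on{id}_{\obj{1}} \otimes \eta) \;=\; ab\cdot\big((\on{cap}\otimes\on{id}_{\obj{1}})\circ(\on{id}_{\obj{1}}\otimes\on{cup})\big).
\]
The morphism in parentheses is precisely the zig-zag, which in $\mathcal{TL}_{0}(\Bbbk) = \overline{\mathcal{TL}}_{0}(\Bbbk)$ is the specialization at $q = 0$ of $q\cdot\on{id}_{\obj{1}}$, hence is $0$. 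So the identity would force $\on{id}_{\obj{1}} = 0$, contradicting \autoref{TLBasis}, under which $\on{id}_{\obj{1}}$ is a basis element of the nonzero space $\on{Hom}(\obj{1},\obj{1})$. Therefore $\obj{1}$ has no right dual, and $\mathcal{TL}_{0}(\Bbbk)$ is not rigid.

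There is no genuinely hard step here: the argument is just the combination of the hom-space dimensions provided by the basis theorem (which both forces $j = 1$ and supplies the nonvanishing of $\on{id}_{\obj 1}$) with the collapse of the zig-zag relation at $q = 0$. The only point that needs a little care is the bookkeeping in the first step — correctly orienting the adjunction induced by the right-dual structure and choosing the test objects so that the Catalan-number comparison actually pins down $j = 1$. (The same reasoning, with left duals in place of right duals, shows $\obj 1$ has no left dual either, but that is not needed.)
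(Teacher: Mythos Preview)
Your proof is correct and follows essentially the same strategy as the paper's: use the hom-space dimensions from \autoref{TLBasis} to force any putative right dual of $\obj{1}$ to be $\obj{1}$ itself, and then observe that the zig-zag relation at $q=0$ kills the candidate triangle identity. The only cosmetic differences are that the paper compares $\on{Hom}(\obj{m+1},\obj{n})$ with $\on{Hom}(\obj{m},\obj{n+k})$ for general $m,n$ (rather than your single instance $Z=\obj{0}$, $W=\obj{1}$) and evaluates the other zig-zag $(\on{id}\otimes\on{cap})\circ(\on{cup}\otimes\on{id})$; neither difference affects the argument.
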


	\begin{proof}
		We show that the object $\obj{1} \in \mathcal{TL}_{0}(\Bbbk)$ is not rigid. Assume that there is an object $\obj{k}$ which is right dual to $\obj{1}$. 
        The duality would then yield isomorphisms $\on{Hom}_{\mathcal{TL}_{0}(\Bbbk)}(\obj{m+1},\obj{n}) \simeq \on{Hom}_{\mathcal{TL}_{0}(\Bbbk)}(\obj{m},\obj{n+k})$. But $\on{dim}\on{Hom}_{\mathcal{TL}_{0}(\Bbbk)}(\obj{m+1},\obj{n}) = C_{(m+1+n)/2}$ and similarly $\on{dim}\on{Hom}_{\mathcal{TL}_{0}(\Bbbk)}(\obj{m},\obj{n+k}) = C_{(m+n+k)/2}$, where $C_{j}$ denotes the $j^\text{th}$ Catalan number. 
        Thus, the only possibility is $k=1$. 
        The spaces $\on{Hom}_{\mathcal{TL}_{0}\Bbbk)}(\obj{0}, \obj{1} \otimes \obj{1}) \simeq \Bbbk\setj{\on{cup}}$ and $\on{Hom}_{\mathcal{TL}_{0}\Bbbk)}(\obj{1 \otimes 1},0) \simeq \Bbbk\setj{\on{cap}}$ are both one-dimensional, and so the only candidates for the unit and counit for the duality are scalar multiples of $\on{cup}$ and $\on{cap}$ respectively. 
        Thus, we just need to check that the triangle identities do not hold for such scalar multiples. And indeed, for any $\alpha,\beta \in \Bbbk$ we have
		\[
		(1 \otimes \alpha \cdot \on{cap}) \circ (\beta \cdot \on{cup} \otimes 1) = \alpha\cdot \beta \cdot (1 \otimes \on{cap}) \circ (\on{cup} \otimes 1) = \alpha \cdot \beta \cdot 0 = 0 \neq \on{id}_{1}.
		\]
	\end{proof}
	
	\textbf{Semisimplicity.} We will now use Jones--Wenzl projectors to find semisimple bases for endomorphism algebras of $\mathcal{TL}_0(\mathbb k)$, thus showing that the category is semisimple.
	
	The following claim is similar to \autoref{linindepcupcap}.
	\begin{lemma}\label{jnuj}
		Let $\euler{u}$ be a cup diagram in $\on{Hom}_{\mathcal{TL}_{0}(\Bbbk)}(\obj{a},\obj{b})$ and let $\euler{v}$ be a cap diagram in $\on{Hom}_{\mathcal{TL}_{0}(\Bbbk)}(\obj{b},\obj{c})$. Then 
		\[
		\euler{j}_{c} \circ \euler{v} \circ \euler{u} \circ \euler{j}_{a} = 
		\begin{cases}
			\euler{j}_{a} \quad&\text{ if } \euler{v} = \overline{\euler{u}} \\
			0 &\text{ else.}
		\end{cases}
		\]
	\end{lemma}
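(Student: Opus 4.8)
The plan is to dispose of the case $\euler{v}=\overline{\euler{u}}$ directly, and to reduce the complementary case to the annihilation property of the Jones--Wenzl projectors (\autoref{annihilation}), the only extra input being \autoref{linindepcupcap} to handle one borderline situation. Throughout I would use the through-strand bookkeeping already set up before \autoref{linindepcupcap}: a cup diagram $\obj{a}\to\obj{b}$ has $\thru = a$, a cap diagram $\obj{b}\to\obj{c}$ has $\thru = c$, and $\thru$ is non-increasing under composition, so $\thru(\euler{v}\circ\euler{u})\leq\min(a,c)$.

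If $\euler{v}=\overline{\euler{u}}$ then necessarily $c=a$, and $\euler{v}\circ\euler{u}=\overline{\euler{u}}\circ\euler{u}=\on{id}_{\obj{a}}$ by the identity $\overline{\euler{x}}\circ\euler{x}=\on{id}_{\obj{\thru(\euler{x})}}$ for cup diagrams; hence the left-hand side equals $\euler{j}_{a}\circ\euler{j}_{a}=\euler{j}_{a}$ by idempotence of $\euler{j}_a$. So from now on I assume $\euler{v}\neq\overline{\euler{u}}$, and I claim $\thru(\euler{v}\circ\euler{u})<a$ or $\thru(\euler{v}\circ\euler{u})<c$. When $a\neq c$ this is immediate, since $\thru(\euler{v}\circ\euler{u})\leq\min(a,c)<\max(a,c)$. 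When $a=c$, I would apply \autoref{linindepcupcap} with $\euler{x}=\overline{\euler{v}}$ and $\euler{y}=\euler{u}$: these are both cup diagrams in $\on{Hom}_{\mathcal{TL}_{0}(\Bbbk)}(\obj{a},\obj{b})$, and they are distinct because $\euler{v}\neq\overline{\euler{u}}$ forces $\overline{\euler{v}}\neq\euler{u}$ (apply the involution $\overline{(-)}$); since $\overline{\overline{\euler{v}}}=\euler{v}$, \autoref{linindepcupcap} then gives the strict inequality $\thru(\euler{v}\circ\euler{u})=\thru(\overline{\overline{\euler{v}}}\circ\euler{u})<a=c$. In either case $\euler{v}\circ\euler{u}$ lies in the ideal $\thru_{<a}$ or in $\thru_{<c}$, so \autoref{annihilation} yields $\euler{v}\circ\euler{u}\circ\euler{j}_{a}=0$ or $\euler{j}_{c}\circ\euler{v}\circ\euler{u}=0$ respectively, and hence $\euler{j}_{c}\circ\euler{v}\circ\euler{u}\circ\euler{j}_{a}=0$ in both cases.

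The only point with genuine content is the borderline case $a=c$: a priori $\euler{v}\circ\euler{u}$ could be $\on{id}_{\obj{a}}$ -- of full through-rank -- without $\euler{v}$ being $\overline{\euler{u}}$, which would contradict the lemma. The key observation resolving this is that $\overline{(-)}$ converts a cap-diagram left inverse of the cup diagram $\euler{u}$ into a second cup diagram competing with $\euler{u}$ in \autoref{linindepcupcap}, so that full through-rank of $\euler{v}\circ\euler{u}$ forces $\overline{\euler{v}}=\euler{u}$. I expect this reduction to \autoref{linindepcupcap} to be the crux; the rest is routine manipulation of the through-strand filtration together with the already-established idempotence and annihilation properties of $\euler{j}_{n}$. (One could alternatively factor $\euler{v}\circ\euler{u}$ via \autoref{triangularbasis} as $\euler{x}_{\on{cup}}\circ\euler{x}_{\on{cap}}$ and kill it with \autoref{annihilation} unless both factors are $\on{id}_{\obj{a}}$, but the route via \autoref{linindepcupcap} seems cleanest.)
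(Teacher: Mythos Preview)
Your proof is correct. Both you and the paper handle the case $\euler{v}=\overline{\euler{u}}$ identically, but for $\euler{v}\neq\overline{\euler{u}}$ you take a different route: you bound $\thru(\euler{v}\circ\euler{u})$ abstractly (trivially when $a\neq c$, and by invoking \autoref{linindepcupcap} when $a=c$), then appeal once to the ideal-formulation of \autoref{annihilation}. The paper instead reruns a three-case strand analysis parallel to the proof of \autoref{linindepcupcap}, tracing in each case how either a zig-zag or a cup/cap adjacent to $\euler{j}_a$ or $\euler{j}_c$ is produced. Your approach is more economical, since it treats \autoref{linindepcupcap} as a black box rather than reproving its content; the paper's approach is more self-contained at the diagrammatic level and does not separate the $a=c$ and $a\neq c$ cases. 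Your parenthetical alternative via \autoref{triangularbasis} would also work and is closer in spirit to what the paper does.
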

	
	\begin{proof}
		If $\euler{v} = \overline{\euler{u}}$ then $\euler{j}_{c} \circ (\euler{v} \circ \euler{u}) \circ \euler{j}_{a} = \euler{j}_{c} \circ \on{id}_{\obj{\thru(\euler{u})}} \circ \euler{j}_{a} = \euler{j}_{a} \circ \euler{j}_{a} = \euler{j}_{a}$, using the fact that $\euler{v} = \overline{\euler{u}}$ implies $a = c$.
		
		If $\euler{v} \neq \overline{\euler{u}}$, then, similarly to the proof of \autoref{linindepcupcap}, there is $i \in \setj{1,\ldots,m}$ such that one of the following three statements holds:
		\begin{enumerate}
			\item there is $i \in \setj{1,\ldots,b}$ such that the strand in $\euler{u}$ outgoing in position $i$ is a through-strand while the strand incoming in position $i$ in $\euler{v}$ is not;
			\item the strand in $\euler{v}$ incoming in position $i$ is a through-strand while that in $\euler{u}$ outgoing in position $i$ is not;
			\item the strand in $\euler{u}$ outgoing in position $i$ is cupped to position $d$, the strand incoming in position $i$ in $\euler{v}$ is capped to position $d'$ and $d\neq d'$.
		\end{enumerate} 
		In the first two cases, the strand in position cupped (or capped) to $i$ joins to a through-strand, forming a cup (or a cap) which is then followed by $\euler{j}_{c}$ (or preceded by $\euler{j}_{a}$), evaluating to $0$, by \autoref{annihilation}, or it joins to a cap (cup), again evaluating to $0$, by the zig-zag relation. 
        In the last case, we have a cap and a cup join to form a zig-zag, and thus again evaluate to zero.
	\end{proof}

	\begin{definition}\label{maxsummand} 
		Given a Temperley--Lieb diagram $\euler{x} = \euler{u} \circ \euler{v}$ in $\on{Hom}_{\mathcal{TL}_{0}(\Bbbk)}(\obj{m},
        \obj{n})$, with $\thru(\euler{x}) = k$, we define its associated {\it max-summand morphism} as $\widehat{\euler{x}}: = \euler{u} \circ \euler{j}_{k} \circ \euler{v}$, where $\euler{j}_{k}$ is the $k^{\text{th}}$ Jones--Wenzl projector, $\euler{j}_{k} \in \on{End}_{\mathcal{TL}_{0}(\Bbbk)}(\obj{k})$.
	\end{definition}
	
	\begin{proposition}\label{JWBasis}
		The set $\setj{\widehat{\euler{x}} \; | \; \euler{x} \text{ is a TL diagram in }\on{Hom}_{\mathcal{TL}_{0}(\Bbbk)}(\obj{m},\obj{n})}$ is a basis for $\on{Hom}_{\mathcal{TL}_{0}(\Bbbk)}(\obj{m},\obj{n})$.
	\end{proposition}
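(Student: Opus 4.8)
The plan is to compare the proposed set $\setj{\widehat{\euler{x}}}$ with the Temperley--Lieb diagram basis of \autoref{TLBasis}, and to show that the transition between the two is triangular with respect to the filtration by number of through-strands, hence a change of basis.

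First I would expand $\widehat{\euler{x}}$ in the Temperley--Lieb basis. Write $\euler{x} = \euler{u}\circ\euler{v}$ as in \autoref{triangularbasis} and set $k = \thru(\euler{x})$, so that $\euler{u}$ and $\euler{v}$ each have exactly $k$ through-strands and $\euler{j}_{k}\in\on{End}_{\mathcal{TL}_{0}(\Bbbk)}(\obj{k})$. By \autoref{def:JW}, the $I=\varnothing$ summand of $\euler{j}_{k}$ is $c_{\varnothing,k}=\on{cup}_{\varnothing,k}\circ\on{cap}_{\varnothing,k}=\on{id}_{\obj{k}}$, while every other summand $c_{I,k}$ satisfies $\thru(c_{I,k}) = k-2|I| < k$; hence $\euler{j}_{k}-\on{id}_{\obj{k}}\in\thru_{<k}(\obj{k},\obj{k})$. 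Since composition never increases the number of through-strands,
\[
\widehat{\euler{x}} = \euler{u}\circ\euler{j}_{k}\circ\euler{v} = \euler{x} + \euler{u}\circ(\euler{j}_{k}-\on{id}_{\obj{k}})\circ\euler{v}, \qquad \thru\!\left(\euler{u}\circ(\euler{j}_{k}-\on{id}_{\obj{k}})\circ\euler{v}\right) < k.
\]
Expanding the correction term in the Temperley--Lieb basis of $\on{Hom}_{\mathcal{TL}_{0}(\Bbbk)}(\obj{m},\obj{n})$, and using that the number of through-strands of a linear combination of diagrams is the maximum over its constituents, I find that every diagram occurring there has strictly fewer than $k = \thru(\euler{x})$ through-strands.

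Next I would order the finitely many Temperley--Lieb diagrams $\euler{x}_{1},\dots,\euler{x}_{N}$ in $\on{Hom}_{\mathcal{TL}_{0}(\Bbbk)}(\obj{m},\obj{n})$ so that $\thru(\euler{x}_{1})\ge\cdots\ge\thru(\euler{x}_{N})$, where $N = C_{(m+n)/2}$ by \autoref{TLBasis}. The preceding step yields $\widehat{\euler{x}_{a}} = \euler{x}_{a} + \sum_{b}\gamma_{ab}\,\euler{x}_{b}$ with $\gamma_{ab}=0$ unless $\thru(\euler{x}_{b}) < \thru(\euler{x}_{a})$, hence unless $b>a$. Thus $(\widehat{\euler{x}_{a}})$ is obtained from the basis $(\euler{x}_{a})$ by a triangular transition matrix with every diagonal entry equal to $1$; such a matrix is invertible, so $\setj{\widehat{\euler{x}_{a}}}$ is again a basis.

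I do not expect a serious obstacle here beyond this bookkeeping; the one point needing care is that no cancellation spoils the diagonal, which is automatic since $\euler{x}$ lies in $\thru_{\le k}\setminus\thru_{<k}$ while the correction term lies in $\thru_{<k}$. Alternatively, linear independence can be established directly in the spirit of \autoref{jnuj}: given a relation $\sum_{i}\lambda_{i}\widehat{\euler{x}_{i}}=0$ with $\euler{x}_{i}=\euler{u}_{i}\circ\euler{v}_{i}$ and $\thru(\euler{x}_{i})=k_{i}$, I would fix $j$, apply $\euler{j}_{k_{j}}\circ\overline{\euler{u}_{j}}\circ(-)\circ\overline{\euler{v}_{j}}\circ\euler{j}_{k_{j}}$ to the relation, split each resulting summand using $\euler{j}_{k_{j}}=\euler{j}_{k_{j}}^{2}$, and apply \autoref{jnuj} twice to the two halves; this shows that the summand indexed by $i$ equals $\lambda_{i}\,\euler{j}_{k_{j}}$ when $\euler{x}_{i}=\euler{x}_{j}$ (by uniqueness of the cup--cap decomposition) and $0$ otherwise, so the relation collapses to $\lambda_{j}\,\euler{j}_{k_{j}}=0$, and $\euler{j}_{k_{j}}\ne 0$ (it is $\on{id}_{\obj{k_{j}}}$ plus terms in $\thru_{<k_{j}}$, nonzero by \autoref{TLBasis}) forces $\lambda_{j}=0$. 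Combined with the count $\#\setj{\widehat{\euler{x}}} = C_{(m+n)/2} = \dim\on{Hom}_{\mathcal{TL}_{0}(\Bbbk)}(\obj{m},\obj{n})$, either route completes the proof.
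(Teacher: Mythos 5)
Your argument is correct, and your main route is genuinely different from the one in the paper. The paper proves linear independence directly, in the style of \autoref{TLBasis}: it posits a dependence relation among the $\widehat{\euler{x}}$, sandwiches it between $\overline{\euler{u}}\circ(-)\circ\overline{\euler{v}}$, and uses the through-strand count together with \autoref{linindepcupcap} to force the cup and cap parts to coincide; spanning is then deduced from the cardinality count of \autoref{TLBasis}. Your main argument instead isolates the identity summand $c_{\varnothing,k}$ of $\euler{j}_{k}$ to get $\widehat{\euler{x}} = \euler{x} + (\text{terms in }\thru_{<\thru(\euler{x})})$, so that the transition matrix from the Temperley--Lieb diagram basis is unitriangular with respect to any ordering refining the through-strand filtration; this yields independence and spanning simultaneously, with no further appeal to \autoref{linindepcupcap} or to the dimension count, and is the standard ``cellular'' unitriangularity argument. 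Your alternative route (pre- and post-composing with $\euler{j}_{k_{j}}\circ\overline{\euler{u}_{j}}$ and $\overline{\euler{v}_{j}}\circ\euler{j}_{k_{j}}$, splitting $\euler{j}_{k_{j}}=\euler{j}_{k_{j}}^{2}$, and applying \autoref{jnuj} twice) is closer in structure to the paper's proof (independence plus counting), and in effect re-derives the orthogonality relation that the paper only records afterwards as \autoref{multiplicationrule}; since \autoref{jnuj} precedes the proposition, there is no circularity. Both of your routes are complete.
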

	
	\begin{proof}
		Similarly to the proof of \autoref{TLBasis}, if we write $\euler{u} \circ \euler{j}_{m} \circ \euler{v} = \sum_{k=1}^{n} \lambda_{k} \euler{u}_{k} \circ \euler{j}_{m_{k}} \circ \euler{v}_{k}$, we have 
		\[
		m = \thru(\euler{u} \circ \euler{j}_{m} \circ \euler{v}) = \max_{k}\setj{\thru(\euler{u}_{k} \circ \euler{j}_{m_{k}} \circ \euler{v}_{k})} = \max_{k}m_{k}
		\]
		and
		\[
		m = \thru(\overline{\euler{u}} \circ \euler{u} \circ \euler{j}_{m} \circ \euler{v}\circ \overline{\euler{v}}) = \max\limits_{k} \thru(\overline{\euler{u}} \circ \euler{u}_{k} \circ \euler{j}_{m} \circ \euler{v}_{k} \circ \overline{\euler{v}})
		\]
		and the $k$ making the latter maximum attained must also attain the former. By \autoref{linindepcupcap}, it follows that $\overline{\euler{u}} = \overline{\euler{u}_{k}}$ and hence that $\euler{u} = \euler{u}_{k}$, and similarly one shows that $\euler{v} = \euler{v}_{k}$. 
        This establishes the linear independence of the set $\setj{\widehat{\euler{x}} \; | \; \euler{x} \text{ is a TL diagram in }\on{Hom}_{\mathcal{TL}_{0}(\Bbbk)}(\obj{m},\obj{n})}$; it spans $\on{Hom}_{\mathcal{TL}_{0}(\Bbbk)}(\obj{m},\obj{n})$, since its cardinality equals the dimension of $\on{Hom}_{\mathcal{TL}_{0}(\Bbbk)}(\obj{m},\obj{n})$, as calculated in \autoref{TLBasis}.
	\end{proof}

    \begin{example}
        For $\on{Hom}_{\mathcal{TL}_0(\Bbbk)}(\obj{2},\obj{4})$ the basis described in \ref{JWBasis} is the following set of maps:
        \[\begin{tikzpicture}[scale=0.65]
	\begin{pgfonlayer}{nodelayer}
		\node [style=none] (51) at (-2, 0.5) {};
		\node [style=none] (52) at (-1.5, 0.5) {};
		\node [style=none] (53) at (-1, 0.5) {};
		\node [style=none] (54) at (-0.5, 0.5) {};
		\node [style=none] (55) at (1, 0.5) {};
		\node [style=none] (56) at (1.5, 0.5) {};
		\node [style=none] (57) at (0.5, 0.5) {};
		\node [style=none] (58) at (2, 0.5) {};
		\node [style=none] (59) at (3, 0.5) {};
		\node [style=none] (60) at (3.5, 0.5) {};
		\node [style=none] (61) at (4, 0.5) {};
		\node [style=none] (62) at (4.5, 0.5) {};
		\node [style=none] (63) at (3.5, -0.5) {};
		\node [style=none] (64) at (4, -0.5) {};
		\node [style=none] (65) at (6, 0.5) {};
		\node [style=none] (66) at (6.5, 0.5) {};
		\node [style=none] (67) at (5.5, 0.5) {};
		\node [style=none] (68) at (7, 0.5) {};
		\node [style=none] (69) at (6, -0.5) {};
		\node [style=none] (70) at (6.5, -0.5) {};
		\node [style=none] (71) at (9, 0.5) {};
		\node [style=none] (72) at (9.5, 0.5) {};
		\node [style=none] (73) at (8, 0.5) {};
		\node [style=none] (74) at (8.5, 0.5) {};
		\node [style=none] (75) at (8.5, -0.5) {};
		\node [style=none] (76) at (9, -0.5) {};
		\node [style=none] (85) at (0, -0.5) {,};
		\node [style=none] (86) at (2.5, -0.5) {,};
		\node [style=none] (87) at (5, -0.5) {,};
		\node [style=none] (88) at (7.5, 0) {,};
		\node [style=none] (90) at (-1.5, -1.5) {};
		\node [style=none] (91) at (-1, -1.5) {};
		\node [style=none] (92) at (1, -1.5) {};
		\node [style=none] (93) at (1.5, -1.5) {};
		\node [style=none] (94) at (3.5, -1.5) {};
		\node [style=none] (95) at (4, -1.5) {};
		\node [style=none] (96) at (6, -1.5) {};
		\node [style=none] (97) at (6.5, -1.5) {};
		\node [style=none] (98) at (8.5, -1.5) {};
		\node [style=none] (99) at (9, -1.5) {};
		\node [style=jonesrectangle] (100) at (3.75, -0.5) {$\euler{j}_2$};
		\node [style=jonesrectangle] (101) at (6.25, -0.5) {$\euler{j}_2$};
		\node [style=jonesrectangle] (102) at (8.75, -0.5) {$\euler{j}_2$};
		\node [style=none] (105) at (-2.75, 0.75) {};
		\node [style=none] (106) at (-2.5, -0.25) {};
		\node [style=none] (107) at (-2.75, -0.5) {};
		\node [style=none] (108) at (-2.5, -0.75) {};
		\node [style=none] (109) at (-2.75, -1.75) {};
		\node [style=none] (111) at (-2.5, -2) {};
		\node [style=none] (112) at (-2.5, 1) {};
		\node [style=none] (113) at (10.25, 0.75) {};
		\node [style=none] (114) at (10, -0.25) {};
		\node [style=none] (115) at (10.25, -0.5) {};
		\node [style=none] (116) at (10, -0.75) {};
		\node [style=none] (117) at (10.25, -1.75) {};
		\node [style=none] (118) at (10, -2) {};
		\node [style=none] (119) at (10, 1) {};
		\node [style=none] (120) at (-5, -3.25) {};
		\node [style=none] (121) at (-4.5, -3.25) {};
		\node [style=none] (122) at (-4, -3.25) {};
		\node [style=none] (123) at (-3.5, -3.25) {};
		\node [style=none] (124) at (-2, -3.25) {};
		\node [style=none] (125) at (-1.5, -3.25) {};
		\node [style=none] (126) at (-2.5, -3.25) {};
		\node [style=none] (127) at (-1, -3.25) {};
		\node [style=none] (128) at (0, -3.25) {};
		\node [style=none] (129) at (0.5, -3.25) {};
		\node [style=none] (130) at (1, -3.25) {};
		\node [style=none] (131) at (1.5, -3.25) {};
		\node [style=none] (134) at (5.5, -3.25) {};
		\node [style=none] (135) at (6, -3.25) {};
		\node [style=none] (136) at (5, -3.25) {};
		\node [style=none] (137) at (6.5, -3.25) {};
		\node [style=none] (140) at (11, -3.25) {};
		\node [style=none] (141) at (11.5, -3.25) {};
		\node [style=none] (142) at (10, -3.25) {};
		\node [style=none] (143) at (10.5, -3.25) {};
		\node [style=none] (146) at (-3, -4.25) {,};
		\node [style=none] (147) at (-0.5, -4.25) {,};
		\node [style=none] (148) at (4.5, -4.25) {,};
		\node [style=none] (149) at (9.5, -4.25) {,};
		\node [style=none] (150) at (-4.5, -5.25) {};
		\node [style=none] (151) at (-4, -5.25) {};
		\node [style=none] (152) at (-2, -5.25) {};
		\node [style=none] (153) at (-1.5, -5.25) {};
		\node [style=none] (154) at (0.5, -5.25) {};
		\node [style=none] (155) at (1, -5.25) {};
		\node [style=none] (156) at (5.5, -5.25) {};
		\node [style=none] (157) at (6, -5.25) {};
		\node [style=none] (158) at (10.5, -5.25) {};
		\node [style=none] (159) at (11, -5.25) {};
		\node [style=none] (163) at (-5.75, -3) {};
		\node [style=none] (164) at (-5.5, -4) {};
		\node [style=none] (165) at (-5.75, -4.25) {};
		\node [style=none] (166) at (-5.5, -4.5) {};
		\node [style=none] (167) at (-5.75, -5.5) {};
		\node [style=none] (168) at (-5.5, -5.75) {};
		\node [style=none] (169) at (-5.5, -2.75) {};
		\node [style=none] (170) at (14.75, -3) {};
		\node [style=none] (171) at (14.5, -4) {};
		\node [style=none] (172) at (14.75, -4.25) {};
		\node [style=none] (173) at (14.5, -4.5) {};
		\node [style=none] (174) at (14.75, -5.5) {};
		\node [style=none] (175) at (14.5, -5.75) {};
		\node [style=none] (176) at (14.5, -2.75) {};
		\node [style=none] (177) at (2.5, -3.25) {};
		\node [style=none] (178) at (3, -3.25) {};
		\node [style=none] (179) at (3.5, -3.25) {};
		\node [style=none] (180) at (4, -3.25) {};
		\node [style=none] (181) at (3, -5.25) {};
		\node [style=none] (182) at (3.5, -5.25) {};
		\node [style=none] (183) at (2, -4.25) {$-$};
		\node [style=none] (184) at (8, -3.25) {};
		\node [style=none] (185) at (8.5, -3.25) {};
		\node [style=none] (186) at (7.5, -3.25) {};
		\node [style=none] (187) at (9, -3.25) {};
		\node [style=none] (188) at (8, -5.25) {};
		\node [style=none] (189) at (8.5, -5.25) {};
		\node [style=none] (190) at (7, -4.25) {$-$};
		\node [style=none] (191) at (12.5, -3.25) {};
		\node [style=none] (192) at (13, -3.25) {};
		\node [style=none] (193) at (13.5, -3.25) {};
		\node [style=none] (194) at (14, -3.25) {};
		\node [style=none] (195) at (13, -5.25) {};
		\node [style=none] (196) at (13.5, -5.25) {};
		\node [style=none] (197) at (12, -4.25) {$-$};
		\node [style=none] (198) at (-6.5, -4.25) {$=$};
	\end{pgfonlayer}
	\begin{pgfonlayer}{edgelayer}
		\draw [style=thickstrand] [bend right=90, looseness=1.75] (51.center) to (52.center);
		\draw [style=thickstrand] [bend right=90, looseness=1.75] (53.center) to (54.center);
		\draw [style=thickstrand] [bend right=90, looseness=1.75] (55.center) to (56.center);
		\draw [style=thickstrand] [in=-90, out=-90, looseness=1.25] (57.center) to (58.center);
		\draw [style=thickstrand] [bend right=90, looseness=1.75] (59.center) to (60.center);
		\draw [style=thickstrand] [bend right=90, looseness=1.75] (65.center) to (66.center);
		\draw [style=thickstrand] [bend right=90, looseness=1.75] (71.center) to (72.center);
		\draw [style=thickstrand] [in=90, out=-90] (61.center) to (63.center);
		\draw [style=thickstrand] [in=90, out=-90] (62.center) to (64.center);
		\draw [style=thickstrand] [in=90, out=-90] (67.center) to (69.center);
		\draw [style=thickstrand] [in=90, out=-90] (68.center) to (70.center);
		\draw [style=thickstrand] [in=90, out=-90] (73.center) to (75.center);
		\draw [style=thickstrand] [in=-90, out=90] (76.center) to (74.center);
		\draw [style=thickstrand] [bend left=90, looseness=1.75] (90.center) to (91.center);
		\draw [style=thickstrand] [bend left=90, looseness=1.75] (92.center) to (93.center);
		\draw [style=thickstrand] (63.center) to (94.center);
		\draw [style=thickstrand] (64.center) to (95.center);
		\draw [style=thickstrand] (69.center) to (96.center);
		\draw [style=thickstrand] (70.center) to (97.center);
		\draw [style=thickstrand] (75.center) to (98.center);
		\draw [style=thickstrand] (76.center) to (99.center);
		\draw [style=thickstrand] [in=90, out=-90] (105.center) to (106.center);
		\draw [style=thickstrand] [in=180, out=-90, looseness=1.25] (106.center) to (107.center);
		\draw [style=thickstrand] [in=90, out=-180] (107.center) to (108.center);
		\draw [style=thickstrand] [in=90, out=-90] (108.center) to (109.center);
		\draw [style=thickstrand] [in=180, out=90] (105.center) to (112.center);
		\draw [style=thickstrand] [in=180, out=-90] (109.center) to (111.center);
		\draw [style=thickstrand] [in=90, out=-90] (113.center) to (114.center);
		\draw [style=thickstrand] [in=0, out=-90, looseness=1.25] (114.center) to (115.center);
		\draw [style=thickstrand] [in=90, out=0] (115.center) to (116.center);
		\draw [style=thickstrand] [in=90, out=-90] (116.center) to (117.center);
		\draw [style=thickstrand] [in=0, out=90] (113.center) to (119.center);
		\draw [style=thickstrand] [in=0, out=-90] (117.center) to (118.center);
		\draw [style=thickstrand] [bend right=90, looseness=1.75] (120.center) to (121.center);
		\draw [style=thickstrand] [bend right=90, looseness=1.75] (122.center) to (123.center);
		\draw [style=thickstrand] [bend right=90, looseness=1.75] (124.center) to (125.center);
		\draw [style=thickstrand] [in=-90, out=-90, looseness=1.25] (126.center) to (127.center);
		\draw [style=thickstrand] [bend right=90, looseness=1.75] (128.center) to (129.center);
		\draw [style=thickstrand] [bend right=90, looseness=1.75] (134.center) to (135.center);
		\draw [style=thickstrand] [bend right=90, looseness=1.75] (140.center) to (141.center);
		\draw [style=thickstrand] [bend left=90, looseness=1.75] (150.center) to (151.center);
		\draw [style=thickstrand] [bend left=90, looseness=1.75] (152.center) to (153.center);
		\draw [style=thickstrand] [in=90, out=-90] (163.center) to (164.center);
		\draw [style=thickstrand] [in=180, out=-90, looseness=1.25] (164.center) to (165.center);
		\draw [style=thickstrand] [in=90, out=-180] (165.center) to (166.center);
		\draw [style=thickstrand] [in=90, out=-90] (166.center) to (167.center);
		\draw [style=thickstrand] [in=180, out=90] (163.center) to (169.center);
		\draw [style=thickstrand] [in=180, out=-90] (167.center) to (168.center);
		\draw [style=thickstrand] [in=90, out=-90] (170.center) to (171.center);
		\draw [style=thickstrand] [in=0, out=-90, looseness=1.25] (171.center) to (172.center);
		\draw [style=thickstrand] [in=90, out=0] (172.center) to (173.center);
		\draw [style=thickstrand] [in=90, out=-90] (173.center) to (174.center);
		\draw [style=thickstrand] [in=0, out=90] (170.center) to (176.center);
		\draw [style=thickstrand] [in=0, out=-90] (174.center) to (175.center);
		\draw [style=thickstrand] [bend right=90, looseness=1.75] (177.center) to (178.center);
		\draw [style=thickstrand] [bend right=90, looseness=1.75] (179.center) to (180.center);
		\draw [style=thickstrand] [bend left=90, looseness=1.75] (181.center) to (182.center);
		\draw [style=thickstrand] [bend right=90, looseness=1.75] (184.center) to (185.center);
		\draw [style=thickstrand] [in=-90, out=-90, looseness=1.25] (186.center) to (187.center);
		\draw [style=thickstrand] [bend left=90, looseness=1.75] (188.center) to (189.center);
		\draw [style=thickstrand] [bend right=90, looseness=1.75] (191.center) to (192.center);
		\draw [style=thickstrand] [bend right=90, looseness=1.75] (193.center) to (194.center);
		\draw [style=thickstrand] [bend left=90, looseness=1.75] (195.center) to (196.center);
		\draw [style=thickstrand] [in=90, out=-90, looseness=1.25] (130.center) to (154.center);
		\draw [style=thickstrand] [in=90, out=-90, looseness=1.25] (131.center) to (155.center);
		\draw [style=thickstrand] [in=90, out=-90, looseness=1.25] (136.center) to (156.center);
		\draw [style=thickstrand] [in=-90, out=90, looseness=1.25] (157.center) to (137.center);
		\draw [style=thickstrand] [in=90, out=-90, looseness=1.25] (142.center) to (158.center);
		\draw [style=thickstrand] [in=-90, out=90, looseness=1.25] (159.center) to (143.center);
	\end{pgfonlayer}
\end{tikzpicture}.\]
        
    \end{example}
	
	\begin{lemma}\label{multiplicationrule}
		Given Temperley--Lieb diagrams $\euler{x} = \euler{u} \circ \euler{v}$ and $\euler{x}' = \euler{u}' \circ \euler{v}'$, we have
		\[
		\widehat{\euler{x}'} \circ \widehat{\euler{x}} =
		\begin{cases}
			\widehat{\euler{u}' \circ \euler{v}} = \euler{u}' \circ \euler{j}_{\thru(\euler{u})} \circ \euler{v} \quad\quad &\text{ if } \euler{v}' = \overline{\euler{u}} \\
			0 &\text{ otherwise.}
		\end{cases}
		\]
	\end{lemma}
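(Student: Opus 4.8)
The plan is to substitute the definitions of the two max-summand morphisms and recognize the middle of the resulting composite as an instance of \autoref{jnuj}. Write $\euler{x}\colon\obj{m}\to\obj{n}$ and $\euler{x}'\colon\obj{n}\to\obj{p}$, so that $\euler{v}\colon\obj{m}\to\obj{k}$ and $\euler{u}\colon\obj{k}\to\obj{n}$ with $k=\thru(\euler{x})=\thru(\euler{u})=\thru(\euler{v})$, and likewise $\euler{v}'\colon\obj{n}\to\obj{k'}$, $\euler{u}'\colon\obj{k'}\to\obj{p}$ with $k'=\thru(\euler{x}')$. By \autoref{maxsummand} and associativity,
\[
\widehat{\euler{x}'}\circ\widehat{\euler{x}}=\big(\euler{u}'\circ\euler{j}_{k'}\circ\euler{v}'\big)\circ\big(\euler{u}\circ\euler{j}_{k}\circ\euler{v}\big)=\euler{u}'\circ\big(\euler{j}_{k'}\circ\euler{v}'\circ\euler{u}\circ\euler{j}_{k}\big)\circ\euler{v}.
\]
The first step is to apply \autoref{jnuj} to the cup diagram $\euler{u}\colon\obj{k}\to\obj{n}$ and the cap diagram $\euler{v}'\colon\obj{n}\to\obj{k'}$: it shows that the parenthesized factor equals $\euler{j}_{k}$ when $\euler{v}'=\overline{\euler{u}}$ (which forces $k=k'$, since both sides of that equation must have the same codomain) and equals $0$ otherwise. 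In the ``otherwise'' case this immediately gives $\widehat{\euler{x}'}\circ\widehat{\euler{x}}=\euler{u}'\circ 0\circ\euler{v}=0$, which is the second branch of the claimed formula.

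For the first branch, assume $\euler{v}'=\overline{\euler{u}}$; then $k=k'$ and the display collapses to $\euler{u}'\circ\euler{j}_{k}\circ\euler{v}$. It remains to identify this with $\widehat{\euler{u}'\circ\euler{v}}$, for which the only thing to verify is that $\euler{u}'\circ\euler{v}$ is itself a Temperley--Lieb diagram with $\thru(\euler{u}'\circ\euler{v})=k$ and with canonical decomposition $(\euler{u}'\circ\euler{v})_{\on{cup}}=\euler{u}'$, $(\euler{u}'\circ\euler{v})_{\on{cap}}=\euler{v}$. This uses the structural description of cup and cap diagrams: a cup diagram $\euler{u}'\colon\obj{k}\to\obj{p}$ has all $k$ of its bottom endpoints lying on through-strands, and a cap diagram $\euler{v}\colon\obj{m}\to\obj{k}$ has all $k$ of its top endpoints lying on through-strands, so in the composite the $k$ interface strands glue up without producing any cap--cup zig-zag or any closed circle. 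Hence $\euler{u}'\circ\euler{v}$ is a nonzero Temperley--Lieb diagram whose caps are exactly those of $\euler{v}$, whose cups are exactly those of $\euler{u}'$, and which therefore has exactly $k$ through-strands; by \autoref{triangularbasis} this is its canonical decomposition, so $\widehat{\euler{u}'\circ\euler{v}}=\euler{u}'\circ\euler{j}_{k}\circ\euler{v}$, as desired.

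The argument is short, and the only point that is not a formal substitution into \autoref{jnuj} is the gluing claim for $\euler{u}'\circ\euler{v}$ in the first branch; since it merely records that stacking a cup diagram on top of a cap diagram along a matching number of through-strands introduces neither zig-zags nor circles, I do not anticipate any real difficulty there, and I expect no other obstacle.
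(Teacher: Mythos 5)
Your proof is correct and follows essentially the same route as the paper: substitute the definition of the max-summand morphisms, apply \autoref{jnuj} to the middle factor $\euler{j}_{k'}\circ\euler{v}'\circ\euler{u}\circ\euler{j}_{k}$, and identify $\euler{u}'\circ\euler{j}_{k}\circ\euler{v}$ with $\widehat{\euler{u}'\circ\euler{v}}$ in the surviving case. The only difference is that you make explicit the (correct, and implicitly used by the paper) verification that $\euler{u}'\circ\euler{v}$ is a Temperley--Lieb diagram whose canonical decomposition is $(\euler{u}',\euler{v})$.
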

	
	\begin{proof}
		By \autoref{jnuj}, we have
		\[
		\widehat{\euler{x}'} \circ \widehat{\euler{x}} = \euler{u}' \circ \euler{j}_{\thru(\euler{u}')} \circ \euler{v}' \circ \euler{u} \circ \euler{j}_{\thru(\euler{u})} \circ \euler{v} = 
		\begin{cases}
            \euler{u}' \circ \euler{j}_{\thru(\euler{u})}^{2} \circ \euler{v} = \widehat{\euler{u}' \circ \euler{v}} \quad\quad &\text{ if } \euler{v}' = \overline{\euler{u}}\\
			\euler{u}' \circ 0 \circ \euler{v} = 0  &\text{ otherwise.}
		\end{cases}
		\]
	\end{proof}
	
	\begin{corollary}\label{orthogonalisotypic}
		Given Temperley--Lieb diagrams $\euler{x}, \euler{x}': \obj{a} \rightarrow \obj{a}$ such that $\thru(\euler{x}) \neq \thru(\euler{x}')$, we have $\widehat{\euler{x}'} \circ \widehat{\euler{x}} = 0$.
	\end{corollary}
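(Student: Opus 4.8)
The plan is to deduce this immediately from the multiplication rule of \autoref{multiplicationrule}. Write $\euler{x} = \euler{u} \circ \euler{v}$ and $\euler{x}' = \euler{u}' \circ \euler{v}'$ for the canonical cup--cap factorizations of \autoref{triangularbasis} (as fixed in the notation convention following that lemma), so that \autoref{multiplicationrule} applies verbatim. That rule tells us $\widehat{\euler{x}'} \circ \widehat{\euler{x}}$ is either $0$, or equals $\euler{u}' \circ \euler{j}_{\thru(\euler{u})} \circ \euler{v}$ in the case $\euler{v}' = \overline{\euler{u}}$. Hence it suffices to rule out the equality $\euler{v}' = \overline{\euler{u}}$ under the hypothesis $\thru(\euler{x}) \neq \thru(\euler{x}')$.

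For this I would use the through-strand bookkeeping already established: for any Temperley--Lieb diagram $\euler{y}$ one has $\thru(\euler{y}) = \thru(\euler{y}_{\on{cup}}) = \thru(\euler{y}_{\on{cap}})$, as noted in the proof of \autoref{triangularbasis}, and the contravariant equivalence $\overline{(-)}$ manifestly preserves the number of through-strands, so $\thru(\overline{\euler{u}}) = \thru(\euler{u})$. Consequently, if $\euler{v}' = \overline{\euler{u}}$, then
\[
\thru(\euler{x}') = \thru(\euler{v}') = \thru(\overline{\euler{u}}) = \thru(\euler{u}) = \thru(\euler{x}),
\]
contradicting the assumption. Therefore $\euler{v}' \neq \overline{\euler{u}}$, and \autoref{multiplicationrule} yields $\widehat{\euler{x}'} \circ \widehat{\euler{x}} = 0$. (Alternatively, one sees directly that $\euler{v}'$ and $\overline{\euler{u}}$ have codomains $\obj{\thru(\euler{x}')}$ and $\obj{\thru(\euler{x})}$ respectively, so they cannot be equal when the through-strand counts differ.)

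I do not anticipate any genuine obstacle here: the corollary is a formal consequence of \autoref{multiplicationrule} once the through-strand counts are matched up, and the only thing to be careful about is that $\euler{x} = \euler{u}\circ\euler{v}$ refers to the canonical factorization so that the hypotheses of \autoref{multiplicationrule} are literally met.
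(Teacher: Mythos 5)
Your proof is correct and follows exactly the paper's argument: both apply \autoref{multiplicationrule} and rule out the case $\euler{v}' = \overline{\euler{u}}$ via the chain $\thru(\euler{x}') = \thru(\euler{v}') = \thru(\overline{\euler{u}}) = \thru(\euler{u}) = \thru(\euler{x})$. No issues.
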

	
	\begin{proof}
		Writing $\euler{x} = \euler{u} \circ \euler{v}$ and $\euler{x}' = \euler{u}' \circ \euler{v}'$, the equality $\euler{v}' = \overline{\euler{u}}$ implies $\thru(\euler{x}') = \thru(\euler{v}') = \thru(\overline{\euler{u}}) = \thru(\euler{u}) = \thru(\euler{x})$.
	\end{proof}
	
	We say that an object $X$ of a $\Bbbk$-linear category $\mathcal{A}$ is {\it simple} if any non-zero endomoprhism of $X$ is an isomorphism. 
    The category $\mathcal{A}$ is said to be semisimple if it is additive and any object of $\mathcal{A}$ is a direct sum of finitely many simple objects.
	
	\begin{lemma}[{\cite[Lemma~2.1]{MS}}]\label{MorrisonSnyder}
		Let $\mathcal{A}$ be a $\Bbbk$-linear, idempotent split category. Then $\mathcal{A}$ is semisimple if and only if for every $X \in \mathcal{A}$, the algebra $\on{End}_{\mathcal{A}}(X)$ is semisimple.
	\end{lemma}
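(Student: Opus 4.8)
The plan is to derive both implications from the Artin--Wedderburn structure theorem, using that $\mathcal{A}$ is idempotent split (and additive, so that direct sum decompositions make sense). The key bookkeeping fact I would isolate first is standard: in such a category, a decomposition $1_{X} = \sum_{i} e_{i}$ of the identity of an object $X$ into pairwise orthogonal idempotents $e_{i} \in \on{End}_{\mathcal{A}}(X)$ induces a direct sum decomposition $X \cong \bigoplus_{i} X_{i}$, where $X_{i}$ is the image of $e_{i}$, and moreover $\on{End}_{\mathcal{A}}(X_{i}) \cong e_{i}\on{End}_{\mathcal{A}}(X)e_{i}$ as rings.

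For the ``only if'' direction, assume $\mathcal{A}$ is semisimple and fix $X \in \mathcal{A}$; write $X \cong \bigoplus_{i=1}^{k} S_{i}^{\oplus n_{i}}$ with the $S_{i}$ pairwise non-isomorphic simple objects. By Schur's lemma in a semisimple category, $\on{Hom}_{\mathcal{A}}(S_{i},S_{j}) = 0$ for $i \neq j$, and each $\on{End}_{\mathcal{A}}(S_{i}) =: D_{i}$ is a division ring (the latter being precisely the definition of simplicity adopted here). Hence $\on{End}_{\mathcal{A}}(X) \cong \prod_{i=1}^{k} \on{Mat}_{n_{i}}(D_{i})$, a finite product of simple Artinian rings, which is therefore semisimple.

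For the ``if'' direction, fix $X \in \mathcal{A}$ and put $R := \on{End}_{\mathcal{A}}(X)$, which is semisimple by hypothesis. A unital semisimple ring is semisimple Artinian, so Artin--Wedderburn gives $R \cong \prod_{i=1}^{k}\on{Mat}_{n_{i}}(D_{i})$ for division rings $D_{i}$. Transporting the diagonal matrix units of each factor yields a finite family of pairwise orthogonal idempotents $e_{i,j}$ (for $1 \le i \le k$, $1 \le j \le n_{i}$) with $\sum_{i,j} e_{i,j} = 1_{X}$, and the off-diagonal matrix units give isomorphisms between the corresponding pieces within each block $i$. By the fact recalled above, $X \cong \bigoplus_{i,j} X_{i,j}$ with $X_{i,j}$ the image of $e_{i,j}$, the $X_{i,j}$ with $i$ fixed are mutually isomorphic, and $\on{End}_{\mathcal{A}}(X_{i,j}) \cong e_{i,j}Re_{i,j} \cong D_{i}$ is a division ring; so every nonzero endomorphism of $X_{i,j}$ is invertible, i.e. $X_{i,j}$ is simple. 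Thus $X$ is a finite direct sum of simple objects, and since $X$ was arbitrary, $\mathcal{A}$ is semisimple.

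I expect the main point requiring care to be the ``if'' direction, specifically the translation of the ring-theoretic Wedderburn decomposition of $R$ into an actual biproduct decomposition of $X$ in $\mathcal{A}$: one must verify that idempotent splitting together with additivity really does produce direct summands indexed by the matrix idempotents, and that $\on{End}_{\mathcal{A}}(X_{i,j}) \cong e_{i,j}Re_{i,j}$ holds as rings, so that ``$e_{i,j}Re_{i,j}$ is a division ring'' genuinely yields simplicity of $X_{i,j}$ in the sense of the statement. A secondary, easily overlooked point is that no finiteness hypothesis on Hom-spaces is needed: a unital semisimple ring is automatically a finite product of matrix rings over division rings, which is what makes both the family $(e_{i,j})$ and the resulting decomposition of $X$ finite.
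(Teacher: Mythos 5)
The paper does not prove this lemma at all --- it is quoted verbatim from \cite{MS} --- so there is no in-paper argument to compare yours against; I am assessing your proof on its own terms. Your ``if'' direction is correct and complete, and it is the only direction the paper actually uses (to deduce \autoref{cor:semisimple}): a unital semisimple ring is automatically a finite product of matrix rings over division rings, transporting the diagonal matrix units gives a finite family of orthogonal idempotents summing to $1_{X}$, idempotent splitting together with additivity converts this into a biproduct decomposition $X \cong \bigoplus_{i,j} X_{i,j}$ with $\on{End}_{\mathcal{A}}(X_{i,j}) \cong e_{i,j}\on{End}_{\mathcal{A}}(X)e_{i,j} \cong D_{i}$ a division ring, and ``every non-zero endomorphism is invertible'' is exactly the paper's notion of simplicity. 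Your remark that no finiteness of Hom-spaces is needed here is also correct.

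The genuine gap is in the ``only if'' direction, at the step ``by Schur's lemma, $\on{Hom}_{\mathcal{A}}(S_{i},S_{j})=0$ for $i \neq j$.'' With simplicity defined as in the paper (every non-zero endomorphism is an isomorphism), Schur's lemma gives you that each $\on{End}_{\mathcal{A}}(S_{i})$ is a division ring, but it does \emph{not} give vanishing of Hom-spaces between non-isomorphic simples; that requires either an ambient abelian structure with ``simple'' meaning ``no proper non-zero subobjects,'' or an explicit orthogonality hypothesis. Indeed, under the paper's literal definitions this direction is false: take $\mathcal{A}=\on{proj}(\Bbbk Q)$ for the quiver $Q=(1\to 2)$. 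This category is $\Bbbk$-linear, additive and idempotent split, and every object is a finite direct sum of the two indecomposable projectives $P_{1},P_{2}$, each of which has $\on{End}\cong\Bbbk$ and is therefore ``simple''; yet $\on{Hom}(P_{2},P_{1})\cong\Bbbk$ while $\on{Hom}(P_{1},P_{2})=0$, so $\on{End}(P_{1}\oplus P_{2})$ is the three-dimensional algebra of triangular $2\times 2$ matrices, which is not semisimple. So your Schur step is not a routine omission but precisely where the claim can break; to repair it you must build the orthogonality $\on{Hom}(S_{i},S_{j})=0$ for $S_{i}\not\cong S_{j}$ into the definition of semisimplicity (as is in effect done in \cite{MS}), or otherwise establish it, after which your computation $\on{End}_{\mathcal{A}}(X)\cong\prod_{i}\on{Mat}_{n_{i}}(D_{i})$ goes through.
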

	
	\begin{definition}\label{matrixlabels}
		For $k,m \in \mathbb{N}$ with $k\leq m$, let $$\on{End}_{\mathcal{TL}_{0}(\Bbbk)}(\obj{m})_{\thru = k} = \on{Span}\setj{\widehat{\euler{x}} \; | \; \euler{x}: \obj{m}\rightarrow \obj{m} \text{ is a TL-diagram with } \thru(\euler{x}) = k}.$$ 
		
		Let $\setj{\euler{u}_{1},\ldots,\euler{u}_{r}}$ be an enumeration of all cup diagrams with domain $\obj{k}$ and codomain $\obj{m}$.
        %, with $\thru = k$ 
        For $a,b \in \setj{1,\ldots,r}$, let $\widehat{\euler{x}}_{ab} = \euler{u}_{a} \circ \euler{j}_{k} \circ \overline{\euler{u}}_{b}$.
	\end{definition}
	
	\begin{lemma}\label{algebraiso}
		For any $k,m \in \mathbb{N}$ with $k\leq m$, the subspace $\on{End}_{\mathcal{TL}_{0}(\Bbbk)}(\obj{m})_{\thru = k}$ is a $\Bbbk$-subalgebra of $\on{End}_{\mathcal{TL}_{0}(\Bbbk)}(\obj{m})$. 
		Moreover, for an enumeration $\setj{\euler{u}_{1},\ldots,\euler{u}_{r}}$ as in \autoref{matrixlabels}, the map
		\[
		\begin{aligned}
			\on{End}_{\mathcal{TL}_{0}(\Bbbk)}(\obj{m})_{\thru = k} &\to \on{Mat}_r(\Bbbk)\\
        %\setj{\widehat{\mathbb{}\euler{x}} \; | \; \mathbb{}\euler{x}: \obj{m}\rightarrow \obj{m} \text{ is a TL-diagram with } \thru(\euler{x}) = k} &\rightarrow \on{Mat}_{r}(\Bbbk) \\
			\widehat{\euler{x}}_{ab} &\mapsto E_{ab}
		\end{aligned}
		\]
		where $E_{ab}$ is the $(a,b)^\text{th}$ elementary matrix, is an algebra isomorphism.
	\end{lemma}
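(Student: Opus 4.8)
The plan is to exhibit an explicit basis of $\on{End}_{\mathcal{TL}_{0}(\Bbbk)}(\obj{m})_{\thru = k}$ indexed by pairs $(a,b)$, check that these basis elements multiply like matrix units, and then read off both assertions.

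First I would identify the spanning set. By \autoref{triangularbasis}, every Temperley--Lieb diagram $\euler{x}\colon \obj{m}\to\obj{m}$ with $\thru(\euler{x})=k$ has a unique decomposition $\euler{x}=\euler{u}\circ\euler{v}$ with $\euler{u}$ a cup diagram and $\euler{v}$ a cap diagram, and then necessarily $\euler{u}\colon\obj{k}\to\obj{m}$ and $\euler{v}\colon\obj{m}\to\obj{k}$. Since the contravariant monoidal equivalence $\overline{(-)}$ is faithful and interchanges cup and cap diagrams, $\euler{v}\mapsto\overline{\euler{v}}$ is a bijection between cap diagrams $\obj{m}\to\obj{k}$ and cup diagrams $\obj{k}\to\obj{m}$; thus $\setj{\overline{\euler{u}_1},\ldots,\overline{\euler{u}_r}}$ enumerates all cap diagrams $\obj{m}\to\obj{k}$. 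Conversely, for any $a,b$ the composite $\euler{u}_a\circ\overline{\euler{u}_b}$ is a genuine (nonzero) Temperley--Lieb diagram with $\thru=k$ -- gluing the $k$ through-strand endpoints of $\overline{\euler{u}_b}$ to those of $\euler{u}_a$ creates neither a closed loop nor a zig-zag -- whose cup--cap decomposition is precisely $(\euler{u}_a,\overline{\euler{u}_b})$ by uniqueness, so in the notation of \autoref{matrixlabels} we have $\widehat{\euler{x}}_{ab}=\widehat{\euler{u}_a\circ\overline{\euler{u}_b}}$. Hence the Temperley--Lieb diagrams $\obj{m}\to\obj{m}$ with $\thru=k$ are exactly the $\euler{u}_a\circ\overline{\euler{u}_b}$, these are pairwise distinct, and by \autoref{JWBasis} the max-summand morphisms form a basis of $\on{Hom}_{\mathcal{TL}_{0}(\Bbbk)}(\obj{m},\obj{m})$, so $\setj{\widehat{\euler{x}}_{ab}\mid 1\le a,b\le r}$ is linearly independent and is a basis of $\on{End}_{\mathcal{TL}_{0}(\Bbbk)}(\obj{m})_{\thru=k}$.

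Next I would compute the multiplication via \autoref{multiplicationrule}. Taking $\euler{u}'=\euler{u}_c$, $\euler{v}'=\overline{\euler{u}_d}$, $\euler{u}=\euler{u}_a$, $\euler{v}=\overline{\euler{u}_b}$, the hypothesis $\euler{v}'=\overline{\euler{u}}$ becomes $\overline{\euler{u}_d}=\overline{\euler{u}_a}$, which by faithfulness of $\overline{(-)}$ holds iff $d=a$; and in that case the product equals $\euler{u}_c\circ\euler{j}_{\thru(\euler{u}_a)}\circ\overline{\euler{u}_b}=\euler{u}_c\circ\euler{j}_k\circ\overline{\euler{u}_b}=\widehat{\euler{x}}_{cb}$, using $\thru(\euler{u}_a)=k$. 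Thus $\widehat{\euler{x}}_{cd}\circ\widehat{\euler{x}}_{ab}=\delta_{ad}\,\widehat{\euler{x}}_{cb}$. In particular $\on{End}_{\mathcal{TL}_{0}(\Bbbk)}(\obj{m})_{\thru=k}$ is closed under composition (and visibly under addition and scaling), hence is a $\Bbbk$-subalgebra of $\on{End}_{\mathcal{TL}_{0}(\Bbbk)}(\obj{m})$, with two-sided unit $\sum_a\widehat{\euler{x}}_{aa}$.

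Finally, the assignment $\widehat{\euler{x}}_{ab}\mapsto E_{ab}$ is a $\Bbbk$-linear bijection, since it sends the basis $\setj{\widehat{\euler{x}}_{ab}}$ to the basis $\setj{E_{ab}}$ of $\on{Mat}_r(\Bbbk)$, and it is multiplicative because $E_{cd}E_{ab}=\delta_{ad}E_{cb}$ matches the rule just established; hence it is an algebra isomorphism (carrying $\sum_a\widehat{\euler{x}}_{aa}$ to the identity matrix). I expect the only real care needed is the bookkeeping -- matching the cup/cap decomposition of a TL diagram with the fixed enumeration $\setj{\euler{u}_a}$ and keeping track of where the Kronecker delta lands -- together with the small observation that each composite $\euler{u}_a\circ\overline{\euler{u}_b}$ is nonzero, which is exactly what licenses identifying $\widehat{\euler{x}}_{ab}$ with a max-summand morphism of an honest TL diagram; everything else is a direct appeal to \autoref{triangularbasis}, \autoref{JWBasis}, and \autoref{multiplicationrule}.
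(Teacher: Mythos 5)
Your proposal is correct and follows essentially the same route as the paper: the key computation $\widehat{\euler{x}}_{cd}\circ\widehat{\euler{x}}_{ab}=\delta_{d,a}\,\widehat{\euler{x}}_{cb}$ via \autoref{multiplicationrule}, from which both the subalgebra claim and the matching of structure constants with elementary matrices follow. The only difference is that you explicitly verify (via \autoref{triangularbasis} and \autoref{JWBasis}) that the $\widehat{\euler{x}}_{ab}$ form a basis of $\on{End}_{\mathcal{TL}_{0}(\Bbbk)}(\obj{m})_{\thru=k}$, a point the paper leaves implicit; this is a worthwhile addition but not a different argument.
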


	\begin{proof}
		By \autoref{multiplicationrule}, we have 
		$\widehat{\euler{x}}_{cd}\circ \widehat{\euler{x}}_{ab} = \delta_{d,a} \widehat{\euler{x}}_{cb}$, where $\delta$ denotes the Kronecker delta. 
		This shows the first statement, and also the second since we obtain structure constants $\gamma_{(cd),(ab)}^{(ef)} = \delta_{(ef),(cb)}\cdot \delta_{d,a}$, which agrees with the structure constants for the basis of elementary matrices in $\on{Mat}_{r}(\Bbbk)$.
	\end{proof}
	
	\begin{corollary}\label{EndAlgebras}
		Fix $m \in \mathbb{N}$. We have 
		\[
		\on{End}_{\mathcal{TL}_{0}(\Bbbk)}(\obj{m}) \cong \prod_{k\leq m} \on{End}_{\mathcal{TL}_{0}(\Bbbk)}(\obj{m})_{\thru = k} \cong \prod_{k\leq m} \on{Mat}_{r_{k}}(\Bbbk),
		\]
		where $r_{k} = \left|\setj{\text{cup diagrams in } \on{Hom}_{\mathcal{TL}_{0}(\Bbbk)}(\obj{k},\obj{m})}\right| 
        %= \left|\setj{\text{TL-diagrams } \on{End}_{\mathcal{TL}_{0}(\Bbbk)}(\obj{m}) \text{ with } \thru = k}\right|
        $.
	\end{corollary}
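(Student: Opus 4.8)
The plan is to assemble the structural results already established for the max-summand basis, so that \autoref{EndAlgebras} becomes a bookkeeping consequence. By \autoref{JWBasis}, the set $\setj{\widehat{\euler{x}} \; | \; \euler{x} \text{ a TL diagram in } \on{End}_{\mathcal{TL}_{0}(\Bbbk)}(\obj{m})}$ is a basis of $\on{End}_{\mathcal{TL}_{0}(\Bbbk)}(\obj{m})$. Sorting this basis according to the value $k = \thru(\euler{x})$ — which ranges over the integers $k \leq m$ with $k \equiv m \pmod 2$, the summand being zero for $k$ of the opposite parity — yields a direct sum decomposition of $\Bbbk$-vector spaces
\[
\on{End}_{\mathcal{TL}_{0}(\Bbbk)}(\obj{m}) = \bigoplus_{k\leq m} \on{End}_{\mathcal{TL}_{0}(\Bbbk)}(\obj{m})_{\thru = k}.
\]

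First I would recall from \autoref{algebraiso} that each summand $\on{End}_{\mathcal{TL}_{0}(\Bbbk)}(\obj{m})_{\thru = k}$ is a $\Bbbk$-subalgebra, and that fixing an enumeration $\setj{\euler{u}_{1},\ldots,\euler{u}_{r_{k}}}$ of the cup diagrams in $\on{Hom}_{\mathcal{TL}_{0}(\Bbbk)}(\obj{k},\obj{m})$ furnishes an algebra isomorphism $\on{End}_{\mathcal{TL}_{0}(\Bbbk)}(\obj{m})_{\thru = k} \cong \on{Mat}_{r_{k}}(\Bbbk)$, $\widehat{\euler{x}}_{ab} \mapsto E_{ab}$. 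Next, I would use \autoref{orthogonalisotypic}: whenever $k \neq l$ we have $\widehat{\euler{x}'} \circ \widehat{\euler{x}} = 0$ for TL diagrams $\euler{x},\euler{x}'$ of through-degree $k,l$ respectively, so by bilinearity of composition $\on{End}_{\mathcal{TL}_{0}(\Bbbk)}(\obj{m})_{\thru = l} \circ \on{End}_{\mathcal{TL}_{0}(\Bbbk)}(\obj{m})_{\thru = k} = 0$.

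It then remains to invoke the elementary fact that if a $\Bbbk$-algebra $A$ decomposes as a direct sum $A = \bigoplus_i A_i$ of $\Bbbk$-subalgebras, each unital as an algebra in its own right, with $A_i A_j = 0$ for $i \neq j$, then $A \cong \prod_i A_i$ as $\Bbbk$-algebras: the element $\sum_i 1_{A_i}$ is a two-sided identity for $A$, hence equals $\on{id}_{\obj{m}}$ by uniqueness of units, and the projections $A \to A_i$ are surjective algebra homomorphisms whose product $A \to \prod_i A_i$ is the claimed isomorphism. Applying this with $A_k = \on{End}_{\mathcal{TL}_{0}(\Bbbk)}(\obj{m})_{\thru = k}$ gives the first isomorphism in the statement, and composing with the isomorphisms of \autoref{algebraiso} gives the second, with $r_k = \left|\setj{\text{cup diagrams in } \on{Hom}_{\mathcal{TL}_{0}(\Bbbk)}(\obj{k},\obj{m})}\right|$.

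I do not expect a genuine obstacle here: the corollary is purely a consequence of \autoref{JWBasis}, \autoref{orthogonalisotypic}, and \autoref{algebraiso}. The only point requiring a modicum of care is the compatibility of $\on{id}_{\obj{m}}$ with the block decomposition — note that $\on{id}_{\obj{m}}$ itself is not one of the basis elements $\widehat{\euler{x}}$ for $m \geq 2$ — but this is automatic from uniqueness of units, as indicated above. (One may additionally remark that, together with \autoref{MorrisonSnyder}, this computation immediately yields the semisimplicity of $\mathcal{TL}_{0}(\Bbbk)$.)
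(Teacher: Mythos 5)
Your proof is correct and follows the same route as the paper, which simply combines \autoref{orthogonalisotypic} and \autoref{algebraiso}; your extra remarks (the vector-space decomposition from \autoref{JWBasis}, and the identification of $\sum_k 1_{A_k}$ with $\on{id}_{\obj{m}}$ via uniqueness of units) just make explicit the standard bookkeeping the paper leaves implicit.
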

	
	\begin{proof}
		This follows by combining \autoref{orthogonalisotypic} and \autoref{algebraiso}.
	\end{proof}

    Let $\mathscr{D}_{\mobijd{n}}$ be the set of all possible cap diagrams in $\on{Hom}_{\mathcal{TL}_0(\Bbbk)}(\obj{n},\obj{m})$ for some $m\leq n$.
    The following easy corollary will be useful in the next section:

        \begin{corollary}\label{DnIdempotents}
		$\{\overline{\euler{x}}\circ \euler{j}_{\thru(\euler{x})}\circ \euler{x}\}_{\euler{x}\in \mathscr{D}_{\mobijd{n}}}$ is a set of orthogonal idempotents of $\on{End}_{\mathcal{TL}_0(\Bbbk)}(\obj{n})$ whose sum is $\on{id}_{\obj{n}}$.
	\end{corollary}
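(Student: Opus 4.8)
The plan is to deduce all three assertions from the multiplication rule \autoref{multiplicationrule} together with the block structure of the endomorphism algebras in \autoref{EndAlgebras}. Fix $\euler{x}\in\mathscr{D}_{\mobijd{n}}$, say a cap diagram $\euler{x}\colon\obj{n}\to\obj{k}$. Then $\overline{\euler{x}}\colon\obj{k}\to\obj{n}$ is a cup diagram, $\thru(\euler{x})=k$, and the composite $\overline{\euler{x}}\circ\euler{x}$ is a Temperley--Lieb diagram already presented in the canonical cup-over-cap form of \autoref{triangularbasis} (with cup part $\overline{\euler{x}}$, cap part $\euler{x}$) and with $\thru(\overline{\euler{x}}\circ\euler{x})=k$. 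Hence, in the notation of \autoref{maxsummand}, its associated max-summand morphism is exactly $\widehat{\overline{\euler{x}}\circ\euler{x}}=\overline{\euler{x}}\circ\euler{j}_{k}\circ\euler{x}$, which is precisely the morphism $e_{\euler{x}}$ appearing in the statement.

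First I would settle idempotence and orthogonality at once. Given $\euler{x},\euler{y}\in\mathscr{D}_{\mobijd{n}}$, apply \autoref{multiplicationrule} to the two Temperley--Lieb diagrams $\overline{\euler{x}}\circ\euler{x}$ and $\overline{\euler{y}}\circ\euler{y}$ (both in canonical form as above). The composite $e_{\euler{y}}\circ e_{\euler{x}}=\widehat{\overline{\euler{y}}\circ\euler{y}}\circ\widehat{\overline{\euler{x}}\circ\euler{x}}$ then vanishes unless the cap part of the first equals the bar of the cup part of the second, i.e. unless $\euler{y}=\overline{(\overline{\euler{x}})}=\euler{x}$, in which case \autoref{multiplicationrule} returns $\widehat{\overline{\euler{x}}\circ\euler{x}}=e_{\euler{x}}$. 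This gives $e_{\euler{x}}^{2}=e_{\euler{x}}$ and $e_{\euler{y}}\circ e_{\euler{x}}=0$ for $\euler{x}\neq\euler{y}$ simultaneously.

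For the identity $\sum_{\euler{x}\in\mathscr{D}_{\mobijd{n}}}e_{\euler{x}}=\on{id}_{\obj{n}}$ I would pass through the ring isomorphism $\on{End}_{\mathcal{TL}_{0}(\Bbbk)}(\obj{n})\cong\prod_{k\leq n}\on{Mat}_{r_{k}}(\Bbbk)$ of \autoref{EndAlgebras}. Since $\overline{(-)}$ restricts to a bijection between cup diagrams $\obj{k}\to\obj{n}$ and cap diagrams $\obj{n}\to\obj{k}$, the set $\mathscr{D}_{\mobijd{n}}$ is the disjoint union over $k\leq n$ of the sets $\setj{\overline{\euler{u}_{1}},\ldots,\overline{\euler{u}_{r_{k}}}}$, where $\setj{\euler{u}_{1},\ldots,\euler{u}_{r_{k}}}$ is the enumeration from \autoref{matrixlabels} taken with $m=n$; and for $\euler{x}=\overline{\euler{u}_{a}}$ one has $e_{\euler{x}}=\euler{u}_{a}\circ\euler{j}_{k}\circ\overline{\euler{u}_{a}}=\widehat{\euler{x}}_{aa}$, the diagonal matrix-unit element of the $k$-th block, which \autoref{algebraiso} carries to the elementary matrix $E_{aa}$. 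Therefore $\sum_{\euler{x}\in\mathscr{D}_{\mobijd{n}}}e_{\euler{x}}$ is mapped to $\sum_{k\leq n}\sum_{a=1}^{r_{k}}E_{aa}=\sum_{k\leq n}I_{r_{k}}$, the unit of $\prod_{k\leq n}\on{Mat}_{r_{k}}(\Bbbk)$; as the isomorphism of \autoref{EndAlgebras} is unital, $\sum_{\euler{x}\in\mathscr{D}_{\mobijd{n}}}e_{\euler{x}}=\on{id}_{\obj{n}}$.

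I do not anticipate a genuine obstacle: once the canonical decomposition of $\overline{\euler{x}}\circ\euler{x}$ is recorded, everything is a direct consequence of the already-established \autoref{multiplicationrule}, \autoref{algebraiso} and \autoref{EndAlgebras}. The only point I would be careful to spell out is the bookkeeping: the identification $\mathscr{D}_{\mobijd{n}}\cong\coprod_{k\leq n}\setj{1,\ldots,r_{k}}$ via $\overline{(-)}$, and the verification that each $e_{\euler{x}}$ corresponds to a \emph{diagonal} matrix unit $E_{aa}$ (rather than an off-diagonal one), so that summing over $\mathscr{D}_{\mobijd{n}}$ exactly recovers the full diagonal of every block and hence the identity.
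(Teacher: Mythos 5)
Your proposal is correct and follows essentially the same route as the paper, which deduces the corollary in one line by identifying each $\overline{\euler{x}}\circ\euler{j}_{\thru(\euler{x})}\circ\euler{x}$ with the preimage of a diagonal matrix unit $E_{aa}$ under the isomorphism of \autoref{algebraiso} and \autoref{EndAlgebras}. You merely spell out the bookkeeping (and re-derive orthogonality from \autoref{multiplicationrule}, which is the same computation underlying \autoref{algebraiso}), so there is no substantive difference.
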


\begin{proof}
    This follows directly from \autoref{EndAlgebras} by examining the preimages of the idempotents $E_{ii}\in\on{Mat}_r(\Bbbk)$ under the map of \autoref{algebraiso}.
\end{proof}

        \begin{example}
        For $\obj{m}=\obj{4}$, we have $r_0=2$, $r_2=3$, and $r_4=1$ corresponding to cup diagrams:
        
        \[\begin{tikzpicture}
	\begin{pgfonlayer}{nodelayer}
		\node [style=none] (51) at (-5, 1) {};
		\node [style=none] (52) at (-4.5, 1) {};
		\node [style=none] (53) at (-4, 1) {};
		\node [style=none] (54) at (-3.5, 1) {};
		\node [style=none] (55) at (-2, 1) {};
		\node [style=none] (56) at (-1.5, 1) {};
		\node [style=none] (57) at (-2.5, 1) {};
		\node [style=none] (58) at (-1, 1) {};
		\node [style=none] (59) at (0, 1) {};
		\node [style=none] (60) at (0.5, 1) {};
		\node [style=none] (61) at (1, 1) {};
		\node [style=none] (62) at (1.5, 1) {};
		\node [style=none] (63) at (0.5, 0) {};
		\node [style=none] (64) at (1, 0) {};
		\node [style=none] (65) at (3, 1) {};
		\node [style=none] (66) at (3.5, 1) {};
		\node [style=none] (67) at (2.5, 1) {};
		\node [style=none] (68) at (4, 1) {};
		\node [style=none] (69) at (3, 0) {};
		\node [style=none] (70) at (3.5, 0) {};
		\node [style=none] (71) at (6, 1) {};
		\node [style=none] (72) at (6.5, 1) {};
		\node [style=none] (73) at (5, 1) {};
		\node [style=none] (74) at (5.5, 1) {};
		\node [style=none] (75) at (5.5, 0) {};
		\node [style=none] (76) at (6, 0) {};
		\node [style=none] (77) at (7.5, 1) {};
		\node [style=none] (78) at (8, 1) {};
		\node [style=none] (79) at (8.5, 1) {};
		\node [style=none] (80) at (9, 1) {};
		\node [style=none] (81) at (7.5, 0) {};
		\node [style=none] (82) at (8, 0) {};
		\node [style=none] (83) at (8.5, 0) {};
		\node [style=none] (84) at (9, 0) {};
		\node [style=none] (85) at (-3, 0.5) {,};
		\node [style=none] (86) at (-0.5, 0.5) {,};
		\node [style=none] (87) at (2, 0.5) {,};
		\node [style=none] (88) at (4.5, 0.5) {,};
		\node [style=none] (89) at (7, 0.5) {,};
	\end{pgfonlayer}
	\begin{pgfonlayer}{edgelayer}
		\draw [style=thickstrand] [bend right=90, looseness=1.75] (51.center) to (52.center);
		\draw [style=thickstrand] [bend right=90, looseness=1.75] (53.center) to (54.center);
		\draw [style=thickstrand] [bend right=90, looseness=1.75] (55.center) to (56.center);
		\draw [style=thickstrand] [in=-90, out=-90, looseness=1.25] (57.center) to (58.center);
		\draw [style=thickstrand] [bend right=90, looseness=1.75] (59.center) to (60.center);
		\draw [style=thickstrand] [bend right=90, looseness=1.75] (65.center) to (66.center);
		\draw [style=thickstrand] [bend right=90, looseness=1.75] (71.center) to (72.center);
		\draw [style=thickstrand] [in=90, out=-90] (61.center) to (63.center);
		\draw [style=thickstrand] [in=90, out=-90] (62.center) to (64.center);
		\draw [style=thickstrand] [in=90, out=-90] (67.center) to (69.center);
		\draw [style=thickstrand] [in=90, out=-90] (68.center) to (70.center);
		\draw [style=thickstrand] [in=90, out=-90] (73.center) to (75.center);
		\draw [style=thickstrand] [in=-90, out=90] (76.center) to (74.center);
		\draw [style=thickstrand] (77.center) to (81.center);
		\draw [style=thickstrand] (78.center) to (82.center);
		\draw [style=thickstrand] (79.center) to (83.center);
		\draw [style=thickstrand] (80.center) to (84.center);
	\end{pgfonlayer}
\end{tikzpicture}
\;.\]
        
        Thus, we have a semisimple decomposition 
        $\on{End}_{\mathcal{TL}_{0}(\Bbbk)}(\obj{m})\cong \on{Mat}_2(\Bbbk)\times \on{Mat}_3(\Bbbk) \times \Bbbk$.
    \end{example}

    Combining \autoref{EndAlgebras} and \autoref{MorrisonSnyder}, we find the following:
	\begin{corollary}\label{cor:semisimple}
		The category $\mathbf{CrysTL}$ is semisimple.
	\end{corollary}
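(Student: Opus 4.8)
The plan is to apply \autoref{MorrisonSnyder}. Since $\mathbf{CrysTL} = \mathcal{TL}_{0}(\Bbbk)^{\mathtt{C}}$ is Cauchy complete it is in particular idempotent split, so it suffices to check that $\on{End}_{\mathbf{CrysTL}}(X)$ is a semisimple $\Bbbk$-algebra for every object $X$. Because $\mathbf{CrysTL}$ is the Karoubi envelope of the additive envelope of $\mathcal{TL}_{0}(\Bbbk)$, every such $X$ is a direct summand of a finite direct sum $\obj{m_{1}} \oplus \cdots \oplus \obj{m_{\ell}}$ of generating objects; fixing $i\colon X \hookrightarrow \bigoplus_{j}\obj{m_{j}}$ and $p\colon \bigoplus_{j}\obj{m_{j}} \twoheadrightarrow X$ with $p\circ i = \on{id}_{X}$, the morphism $e = i \circ p$ is idempotent and $\on{End}_{\mathbf{CrysTL}}(X) \cong e\,\on{End}_{\mathbf{CrysTL}}(\bigoplus_{j}\obj{m_{j}})\,e$. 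A corner of a semisimple algebra is again semisimple, so it is enough to show that $\on{End}_{\mathbf{CrysTL}}(\obj{m_{1}} \oplus \cdots \oplus \obj{m_{\ell}})$ is semisimple; this algebra is finite-dimensional by \autoref{TLBasis}.

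To see this, I would repeat the analysis of \autoref{algebraiso}--\autoref{EndAlgebras} for a direct sum rather than a single generator. By \autoref{DnIdempotents}, applied to each $\obj{m_{j}}$, the identity of $\bigoplus_{j}\obj{m_{j}}$ decomposes as a sum of pairwise orthogonal idempotents, each conjugate (via a split pair consisting of a cup diagram and its reverse) to a Jones--Wenzl projector $\euler{j}_{k}$; hence in $\mathbf{CrysTL}$ one has $\obj{m_{j}} \cong \bigoplus_{k} V_{k}^{\oplus r_{k}^{(j)}}$, where $V_{k} := \on{im}(\euler{j}_{k})$ and $r_{k}^{(j)}$ is the number of cup diagrams $\obj{k} \to \obj{m_{j}}$. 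A short computation with \autoref{jnuj}---using that a cup diagram $\obj{k'} \to \obj{k}$ with $k' < k$ lies in $\thru_{<k}$ and is therefore killed upon composition with $\euler{j}_{k}$ by \autoref{annihilation}, while the only cup diagram $\obj{k} \to \obj{k}$ is the identity---shows $\on{Hom}_{\mathbf{CrysTL}}(V_{k},V_{k'}) = \delta_{k,k'}\,\Bbbk\,\euler{j}_{k}$. Thus the $V_{k}$ are pairwise non-isomorphic objects whose endomorphism algebras are $\Bbbk$, in particular simple in the sense of the paper, and $\on{End}_{\mathbf{CrysTL}}(\bigoplus_{j}\obj{m_{j}}) \cong \prod_{k} \on{Mat}_{N_{k}}(\Bbbk)$ with $N_{k} = \sum_{j} r_{k}^{(j)}$, which is semisimple. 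Alternatively one reaches the same matrix description directly: the max-summand morphisms $\euler{u} \circ \euler{j}_{k} \circ \overline{\euler{w}}$ span $\on{End}_{\mathbf{CrysTL}}(\bigoplus_{j}\obj{m_{j}})$ by \autoref{JWBasis} and compose by the elementary-matrix rule of \autoref{multiplicationrule}, with distinct $k$ giving orthogonal blocks as in \autoref{orthogonalisotypic}. Either way, \autoref{MorrisonSnyder} then gives the claim.

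The computations here are all routine; the one point that genuinely requires attention is that semisimplicity of each individual $\on{End}_{\mathcal{TL}_{0}(\Bbbk)}(\obj{m})$ established in \autoref{EndAlgebras} does \emph{not} by itself imply semisimplicity of $\mathbf{CrysTL}$---one must also control the off-diagonal hom-spaces of a direct sum of generators, and this is precisely what the orthogonality of distinct Jones--Wenzl isotypic components (equivalently, the vanishing $\on{Hom}_{\mathbf{CrysTL}}(V_{k},V_{k'}) = 0$ for $k \neq k'$) supplies. So the ``main obstacle'' is really just making sure this extension from a single generator to arbitrary objects of $\mathbf{CrysTL}$ is carried out rather than assumed.
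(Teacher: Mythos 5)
Your proof is correct and follows the same route as the paper, which simply combines \autoref{EndAlgebras} with \autoref{MorrisonSnyder}. The additional care you take in passing from the generating objects $\obj{m}$ to arbitrary objects of $\mathbf{CrysTL}$ --- via the vanishing of the off-diagonal hom-spaces between distinct Jones--Wenzl summands and the fact that a corner $e A e$ of a semisimple algebra is semisimple --- correctly fills in a step that the paper leaves implicit.
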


 The semisimple decomposition of $\on{End}_{\mathcal{TL}_{0}(\Bbbk)}(\obj{m})$ also easily gives us a complete list of simple module of these algebras, which happens to be the same as the list of simple modules for $\on{End}_{\mathcal{TL}_{q}(\Bbbk)}(\obj{m})$ when $q\neq 0$. See e.g. \cite{RSA}, \cite{Wes} for more on the representation theory in the case $q \neq 0$.
    \begin{definition} \label{simplemods}
        Let $L_{m,k}$ be the $\Bbbk$-span of all cup diagrams from $\obj{k}$ to $\obj{m}$ equipped with the left action of $\on{End}_{\mathcal{TL}_{0}(\Bbbk)}(\obj{m})$ defined on cup diagrams (and extended linearly) as follows: given $\euler{x}\in \on{End}_{\mathcal{TL}_{0}(\Bbbk)}(\obj{m})$ and a cup diagram $\euler{u}:\obj{k}\to\obj{m}$, let $\euler{x}\cdot \euler{u} = \begin{cases}
            \euler{x}\circ \euler{u}  \quad &\text{if $\euler{x}\circ \euler{u}$ is a cup diagram}\\
            0 &\text{otherwise}\\
        \end{cases}$.
        Similarly, define the right modules $R_{m,k}$ using cap (instead of cup) diagrams and pre-composition (instead of post-composition).
    \end{definition}

    \begin{corollary}\label{irredundantlist}
        The set $\{L_{m,k}\}$ (resp. $\{R_{m,k}\}$) for $1\leq k\leq m$ and $k\equiv m \mod 2$ gives a complete and irredundant list of simple left (resp. right) modules over $\on{End}_{\mathcal{TL}_{0}(\Bbbk)}(\obj{m})$
    \end{corollary}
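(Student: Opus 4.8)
The plan is to reduce the classification to the algebra structure of $\on{End}_{\mathcal{TL}_{0}(\Bbbk)}(\obj{m})$ already obtained in \autoref{EndAlgebras}, namely $\on{End}_{\mathcal{TL}_{0}(\Bbbk)}(\obj{m}) \cong \prod_{k} \on{Mat}_{r_{k}}(\Bbbk)$, where the product runs over those $k$ for which a cup diagram $\obj{k}\to\obj{m}$ exists (equivalently $k \leq m$ and $k \equiv m \bmod 2$, so that $r_{k}\geq 1$). Since $\Bbbk$ is a field, each $\on{Mat}_{r_{k}}(\Bbbk)$ is a simple $\Bbbk$-algebra with a single simple module up to isomorphism, the column space $\Bbbk^{r_{k}}$; pulling these back along the projections produces, for each index $k$ in the product, a simple $\on{End}_{\mathcal{TL}_{0}(\Bbbk)}(\obj{m})$-module $S_{k}$, and $\{S_{k}\}_{k}$ is a complete and irredundant list of simples (this is the standard description of modules over a finite product of matrix algebras, and also re-derives \autoref{cor:semisimple}). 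So it suffices to check, for each such $k$, that $L_{m,k}$ is a well-defined module and that $L_{m,k}\cong S_{k}$; the statement for the $R_{m,k}$ then follows by the symmetric argument, or by transporting along the contravariant monoidal equivalence $\overline{(-)}$ of \autoref{sec:crysTL}, which identifies right $\on{End}_{\mathcal{TL}_{0}(\Bbbk)}(\obj{m})$-modules with left ones and cap diagrams with cup diagrams.

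For well-definedness I would realize $L_{m,k}$ as an honest quotient module. By \autoref{TLBasis}, $\on{Hom}_{\mathcal{TL}_{0}(\Bbbk)}(\obj{k},\obj{m})$ has a Temperley--Lieb diagram basis and is a left $\on{End}_{\mathcal{TL}_{0}(\Bbbk)}(\obj{m})$-module under post-composition; the span of those diagrams with fewer than $k$ through-strands is a submodule because post-composition never increases $\thru$. By \autoref{triangularbasis}, a Temperley--Lieb diagram $\obj{k}\to\obj{m}$ with exactly $k$ through-strands has trivial cap part and hence is a cup diagram, so the classes of cup diagrams $\obj{k}\to\obj{m}$ form a basis of the quotient, and a Temperley--Lieb diagram $\euler{x}$ sends the class of a cup diagram $\euler{u}$ to $\euler{x}\circ\euler{u}$ when this is again a cup diagram and to $0$ otherwise. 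This is exactly the action in \autoref{simplemods}, so $L_{m,k}$ is that quotient module and in particular is well-defined.

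For the identification, fix an enumeration $\euler{u}_{1},\dots,\euler{u}_{r_{k}}$ of the cup diagrams $\obj{k}\to\obj{m}$ as in \autoref{matrixlabels}, so the $k$-th matrix factor has basis $\widehat{\euler{x}}_{ab}=\euler{u}_{a}\circ\euler{j}_{k}\circ\overline{\euler{u}}_{b}$ matching the matrix units $E_{ab}$. Computing the action on $L_{m,k}$: by \autoref{linindepcupcap}, $\overline{\euler{u}}_{b}\circ\euler{u}_{c}=\on{id}_{\obj{k}}$ when $b=c$ and has $\thru<k$ otherwise; in the first case $\widehat{\euler{x}}_{ab}\circ\euler{u}_{c}=\euler{u}_{a}\circ\euler{j}_{k}=\euler{u}_{a}+(\text{terms with }\thru<k)$, with class $\euler{u}_{a}$ in $L_{m,k}$, and in the second case factoring $\overline{\euler{u}}_{b}\circ\euler{u}_{c}$ through $\obj{t}$ with $t<k$ and using \autoref{annihilation} gives $\euler{j}_{k}\circ\overline{\euler{u}}_{b}\circ\euler{u}_{c}=0$. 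Hence $\widehat{\euler{x}}_{ab}\cdot\euler{u}_{c}=\delta_{bc}\,\euler{u}_{a}$ in $L_{m,k}$, i.e. the $k$-th factor acts on $L_{m,k}$, with basis $\{\euler{u}_{a}\}$, exactly as $\on{Mat}_{r_{k}}(\Bbbk)$ on $\Bbbk^{r_{k}}$ (this is essentially \autoref{multiplicationrule}). For an index $j\neq k$, a basis element $\euler{v}_{a}\circ\euler{j}_{j}\circ\overline{\euler{v}}_{b}$ of the $j$-th factor annihilates $L_{m,k}$: factoring $\overline{\euler{v}}_{b}\circ\euler{u}_{c}$ through $\obj{t}$, if $t<j$ then \autoref{annihilation} kills the composite with $\euler{j}_{j}$, while if $t=j$ then $t=\thru(\overline{\euler{v}}_{b}\circ\euler{u}_{c})\leq\min(j,k)$ forces $j\leq k$, hence $j<k$ and the full composite has $\thru\leq j<k$, so it vanishes in $L_{m,k}$. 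Therefore $L_{m,k}$ is the $\on{End}_{\mathcal{TL}_{0}(\Bbbk)}(\obj{m})$-module on which the $k$-th factor acts as $\Bbbk^{r_{k}}$ and every other factor acts trivially, i.e. $L_{m,k}\cong S_{k}$; since distinct $S_{k}$ are non-isomorphic and exhaust the simples, $\{L_{m,k}\}_{k}$ is complete and irredundant, and symmetrically for $\{R_{m,k}\}_{k}$.

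The only real content beyond \autoref{EndAlgebras} is the through-strand bookkeeping in the last step: showing that the matrix factors indexed by $j\neq k$ annihilate $L_{m,k}$ while the $k$-th factor reproduces the standard matrix module. I expect this to be the main obstacle only in the sense of requiring care, not depth, since \autoref{annihilation}, \autoref{linindepcupcap} and \autoref{multiplicationrule} reduce it to a handful of short case checks; one should merely note that $\Bbbk$ is an arbitrary field, which is harmless because $\on{Mat}_{n}(\Bbbk)$ is a simple algebra with a unique simple module over any field.
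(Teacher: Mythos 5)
Your proof is correct and follows essentially the same route as the paper: both reduce the classification to the decomposition $\on{End}_{\mathcal{TL}_{0}(\Bbbk)}(\obj{m}) \cong \prod_{k}\on{Mat}_{r_{k}}(\Bbbk)$ of \autoref{EndAlgebras}, and the diagrammatic inputs (\autoref{linindepcupcap}, \autoref{annihilation}, \autoref{multiplicationrule}) are the same. The paper is terser: it checks that the elements $\euler{u}'\circ\overline{\euler{u}}$ act transitively (in fact as matrix units) on the cup-diagram basis of $L_{m,k}$, concludes simplicity, and then settles completeness and irredundancy ``by comparing dimensions'' with \autoref{EndAlgebras}. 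Your explicit identification of $L_{m,k}$ with the column module of the $k$-th matrix factor --- in particular the verification that the factors indexed by $j\neq k$ annihilate $L_{m,k}$ --- is a genuine refinement of that last step: dimension comparison alone does not distinguish $L_{m,k}$ from $L_{m,k'}$ when $r_{k}=r_{k'}$ (e.g.\ $r_{0}=r_{4}=5$ for $m=6$), whereas your argument pins down which central idempotent acts nontrivially on each $L_{m,k}$ and so yields irredundancy without that caveat.
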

    \begin{proof}
        It is easy to verify that the definition above defines left/right actions and that it is transitive on cup and cap diagrams: indeed, for any cup diagrams $\euler{u},\euler{u}'$, we have $(\euler{u'}\circ \overline{\euler{u}})\cdot \euler{u} = \euler{u}'$, and similarly for cap diagrams; hence, these modules are simple.
        By comparing dimensions with the decomposition given in \autoref{EndAlgebras}, we see that the list is complete and irredundant.
    \end{proof}

    	\subsection{Pointed Sets, Monoid Algebras, M{\" o}bius Inversion and Cells}\label{sec:mobius}

    In \cite{Sm}, a $\mathbf{Set}_{\ast}$-enriched category of $\mathfrak{g}$-crystals, is defined. In the case $\mathfrak{g} = \mathfrak{sl}_{2}$, we define the category $\mathbf{Crys}_{\ast}$ as follows:
    \begin{itemize}
        \item $\on{Ob}(\mathbf{Crys}_{\ast}) = \mathbb{N}$;
        \item $\on{Hom}_{\mathbf{Crys}_{\ast}}(\obj{m},\obj{n}$ is the disjoint union of the set of Temperley--Lieb diagrams from $\obj{m}$ to $\obj{n}$ and $\setj{\ast}$. 
    \end{itemize}
    We compose Temperley--Lieb diagrams as in $\mathcal{TL}_{0}(\Bbbk)$, except that the diagrams which would compose to zero in $\mathcal{TL}_{0}(\Bbbk)$ instead compose to $\ast$ in $\mathbf{Crys}_{\ast}$. 
    The monoidal structure is still given by horizontal concatenation of Temperley--Lieb diagrams, and $\ast \otimes f = \ast$ for any morphism $f$ of $\on{Ob}(\mathbf{Crys}_{\ast})$.

    \begin{definition}
       For a commutative ring $R$, there is a monoidal functor $R \boxtimes_{\!\ast} -: \mathbf{Set}_{\ast} \rightarrow R\!\on{-Mod}$, sending $S \in \mathbf{Set}_{\ast}$ to $R\setj{S}/\on{Span}\setj{\ast}$, where $R\setj{S}$ is the free $R$-module on $S$. 
       Abusing notation, we also denote by $R \boxtimes_{\!\ast} -$ the induced monoidal $2$-functor from $\mathbf{Set}_{\ast}\text{-}\mathbf{Cat} \rightarrow \mathbf{Cat}_{R}$.
    \end{definition}

    Clearly, $\mathcal{TL}_{0}(\Bbbk) \simeq \Bbbk\boxtimes_{\!\ast} \mathbf{Crys}_{\ast}$. Further, since the $2$-functor $\Bbbk \boxtimes_{\!\ast} -$ is monoidal, it sends pseudomonoids to pseudomonoids, endowing $\Bbbk\boxtimes_{\!\ast} \mathbf{Crys}_{\ast}$ with a monoidal structure. 
    It is easy to see that the equivalence with $\mathcal{TL}_{0}(\Bbbk)$ is then monoidal. 
    
    Next, we note that for a monoid object $M$ in $\mathbf{Set}_{\ast}$, i.e. a monoid with a zero element, the monoid $R \boxtimes_{\!\ast} M$ is the
    {\it contracted monoid algebra } (or {\it reduced monoid algebra})
    for $M$ over $R$. So, $\on{End}_{\Bbbk\boxtimes_{\!\ast} \mathbf{Crys}_{\ast}}(\obj{n}) \simeq \on{End}_{\mathcal{TL}_{0}(\Bbbk)}(\obj{n}) \sqcup \setj{\ast}$. 
    We denote this monoid by $T_{n}$.

    In particular, $\on{End}_{\mathcal{TL}_{0}(\Bbbk)}(\obj{n})$ is the contracted monoid algebra of $T_{n}$, obtained as the quotient of $\Bbbk T_{n}$ by the ideal spanned by the zero element $\ast$. 
    Decomposing the regular left module $\Bbbk T_{n}$, this ideal spans the unique trivial direct summand of the regular module, where by trivial we mean a module in which every monoid element acts by identity. 
    
   Thus, restriction along the surjective $\Bbbk$-algebra map $\Bbbk T_{n} \rightarrow \on{End}_{\mathcal{TL}_{0}(\Bbbk)}(\obj{n})$ realizes the category $\on{End}_{\mathcal{TL}_{0}(\Bbbk)}(\obj{n})\!\on{-mod}$ as the full subcategory of $T_{n}\!\on{-mod}$ consisting of modules which are not trivial - since in such a module $\ast$ necessarily acts by zero.

    Recall that for a monoid $T$, the left, right and two-sided preorders $\mathcal{L,R}$ and $\mathcal{J}$ on $T$ are defined by
    \[
    x \leq_{\mathcal{L}} y \text{ if } Tx \subseteq Ty, \; 
    x \leq_{\mathcal{R}} y \text{ if } xT \subseteq yT, \text{ and }
    x \leq_{\mathcal{J}} y \text{ if } TxT \subseteq TyT.
    \]
	Their associated equivalence relations, known as {\it Green's relations}, partition $T$ into so-called cells. 
    The following is a brief description of Green's cells for the monoid $T_{n}$: $x \leq_{\mathcal{J}} y$ if and only if $\thru(x) \leq \thru(y)$, and for $\euler{x},\euler{x}'$ in the same $\mathcal{J}$-cell, writing $\euler{x} = \euler{u} \circ \euler{v}$ and $\euler{x}' = \euler{u}' \circ \euler{v}'$ yields $\euler{x} \sim_{\mathcal{L}} \euler{x}'$ if and only if $\euler{v} = \euler{v}'$, otherwise the elements are not $\mathcal{L}$-comparable; similarly for $\mathcal{R}$ and $\euler{u},\euler{u}'$. 
    This description of the cell structure is valid also in the case $q=0$, and we record a short proof for the $\mathcal{J}$-order. 
    In the formulation of the next statement, we may set $\thru(\ast) = -\infty$.
    
    \begin{lemma}
    For $\euler{x}, \euler{x}' \in T_{n}$ we have $\euler{x} \leq_{\mathcal{J}} \euler{x}'$ if and only if $\thru(\euler{x}) \leq \thru(\euler{x}')$.
    \end{lemma}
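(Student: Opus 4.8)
The plan is to move freely between the monoid $T_{n}$ and the category $\mathcal{TL}_{0}(\Bbbk)$, using only two facts: that composition cannot increase the number of through-strands, i.e.\ $\thru(\euler{f}\circ\euler{g}) \leq \min(\thru(\euler{f}),\thru(\euler{g}))$, and that $\overline{\euler{z}} \circ \euler{z} = \on{id}_{\obj{\thru(\euler{z})}}$ for every cup diagram $\euler{z}$. First I would dispose of the degenerate cases involving the zero element $\ast$: if $\euler{x} = \ast$, then both $\thru(\euler{x}) = -\infty \leq \thru(\euler{x}')$ and $\ast \leq_{\mathcal{J}} \euler{x}'$ hold for trivial reasons; if $\euler{x}' = \ast$, then $T_{n}\ast T_{n} = \setj{\ast}$, so $\euler{x} \leq_{\mathcal{J}} \euler{x}'$ forces $\euler{x} = \ast$, which matches $\thru(\euler{x}) \leq -\infty$. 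After this I may assume $\euler{x},\euler{x}'$ are genuine Temperley--Lieb diagrams.

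For the implication $\euler{x} \leq_{\mathcal{J}} \euler{x}' \Rightarrow \thru(\euler{x}) \leq \thru(\euler{x}')$, I would unwind the definition: $\euler{x} \leq_{\mathcal{J}} \euler{x}'$ means $\euler{x} = \euler{a} \circ \euler{x}' \circ \euler{b}$ for some $\euler{a},\euler{b} \in T_{n}$, and since $\euler{x} \neq \ast$ this is a genuine (non-zero) identity in $\mathcal{TL}_{0}(\Bbbk)$ with $\euler{a},\euler{b}$ Temperley--Lieb diagrams; then $\thru(\euler{x}) \leq \thru(\euler{x}')$ is immediate from the submultiplicativity of $\thru$ recorded earlier.

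For the converse, I would argue by explicit construction. Write $\thru(\euler{x}) = j$ and $\thru(\euler{x}') = k$, so $j \leq k \leq n$ with $k - j$ even (both numbers have the parity of $n$). Decompose $\euler{x}' = \euler{u} \circ \euler{v}$ with $\euler{u}\colon \obj{k} \to \obj{n}$ a cup diagram and $\euler{v}\colon \obj{n} \to \obj{k}$ a cap diagram, and $\euler{x} = \euler{u}_{\euler{x}} \circ \euler{v}_{\euler{x}}$ with $\euler{u}_{\euler{x}}\colon \obj{j} \to \obj{n}$, $\euler{v}_{\euler{x}}\colon \obj{n} \to \obj{j}$. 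Fix any cup diagram $\euler{w}\colon \obj{j} \to \obj{k}$, e.g.\ $\on{id}_{\obj{j}}$ tensored with $(k-j)/2$ copies of $\on{cup}$. Then I would set
\[
\euler{a} := \euler{u}_{\euler{x}} \circ \overline{\euler{w}} \circ \overline{\euler{u}} \in \on{End}_{\mathcal{TL}_{0}(\Bbbk)}(\obj{n}), \qquad \euler{b} := \overline{\euler{v}} \circ \euler{w} \circ \euler{v}_{\euler{x}} \in \on{End}_{\mathcal{TL}_{0}(\Bbbk)}(\obj{n}),
\]
and check, using $\overline{\euler{u}} \circ \euler{u} = \on{id}_{\obj{k}}$, then $\euler{v} \circ \overline{\euler{v}} = \on{id}_{\obj{k}}$ (the latter since $\overline{(-)}$ is involutive and reverses a cap diagram to a cup diagram), and $\overline{\euler{w}} \circ \euler{w} = \on{id}_{\obj{j}}$, that
\[
\euler{a} \circ \euler{x}' \circ \euler{b} = \euler{u}_{\euler{x}} \circ \overline{\euler{w}} \circ (\overline{\euler{u}} \circ \euler{u}) \circ (\euler{v} \circ \overline{\euler{v}}) \circ \euler{w} \circ \euler{v}_{\euler{x}} = \euler{u}_{\euler{x}} \circ (\overline{\euler{w}} \circ \euler{w}) \circ \euler{v}_{\euler{x}} = \euler{x}.
\]
Since $\euler{x} \neq \ast$, neither $\euler{a}$ nor $\euler{b}$ can equal $\ast$, so both are elements of $T_{n}$, and the displayed identity exhibits $\euler{x} \in T_{n}\euler{x}'T_{n}$, i.e.\ $\euler{x} \leq_{\mathcal{J}} \euler{x}'$.

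The only point requiring any care — and the closest thing to an obstacle here — is the bookkeeping of domains and codomains among the objects $\obj{j}, \obj{k}, \obj{n}$ when assembling $\euler{a}$ and $\euler{b}$, together with the observation that these composites are automatically non-zero (hence honest elements of $T_{n}$ rather than $\ast$) precisely because their triple composite equals the non-zero diagram $\euler{x}$. No deeper input is needed; in particular, the cell structure for the case $q \neq 0$ is not used.
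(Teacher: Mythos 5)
Your proposal is correct and follows essentially the same route as the paper: the forward implication from submultiplicativity of $\thru$, and the converse by sandwiching $\euler{x}'$ between $\euler{u}_{\euler{x}} \circ \overline{\euler{w}} \circ \overline{\euler{u}}$ and $\overline{\euler{v}} \circ \euler{w} \circ \euler{v}_{\euler{x}}$, which (up to relabeling) is exactly the paper's construction with $\euler{w} = \on{id}_{\obj{\thru(\euler{x})}} \otimes \euler{z}$ for a through-strand-free cup diagram $\euler{z}$. Your explicit remark that $\euler{a},\euler{b} \neq \ast$ because the triple composite is non-zero is a small point the paper leaves implicit.
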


    \begin{proof}
     The statement is obvious when either $\euler{x}$ or $\euler{x}'$ equals $\ast$, so we omit that case.
     If $\euler{x} \leq_{\mathcal{J}} \euler{x}'$ then $\euler{x} = \euler{y}' \circ \euler{x}' \circ \euler{y}$ for some $\euler{y}, \euler{y}' \in T_{n}$. 
     It follows that $\thru(\euler{x}) \leq \thru(\euler{x}')$. 

     Assume now that $\thru(\euler{x}) \leq \thru(\euler{x}')$, and write $\euler{x} = \euler{u} \circ \euler{v}$ and $\euler{x}' = \euler{u}' \circ \euler{v}'$. 
     Let $\euler{z} \in \on{Hom}_{\mathcal{TL}_{0}(\Bbbk)}(\obj{0},\obj{\thru(\euler{x}') - \thru(\euler{x})})$ be such that $\thru(\euler{z}) = 0$. 
     Then 
     \[
     \euler{x} = \euler{u} \circ (\on{id}_{\obj{\thru(\euler{x})}} \otimes \overline{\euler{z}}) \circ \euler{u}' \circ \euler{x}' \circ \euler{v}' \circ (\on{id}_{\obj{\thru(\euler{x})}} \otimes \euler{z}) \circ \euler{v}.
     \]
    \end{proof}

    To conclude our discussion about cells in $T_{n}$, we remark that $\on{End}_{\mathcal{TL}_{0}(\Bbbk)}(\obj{n})$ is cellular in the sense of \cite{GL}, with cellular data similar to $\mathcal{TL}_{q}(\Bbbk)$ for $q\neq 0$, given by $(\Lambda, M, C,\ast)$, where $\Lambda = \setj{k \in \mathbb{N}_{0}, k \leq n \text{ and } n - k \in 2\mathbb{Z}}$, $M(t)$ is the set of cup diagrams with $t$ through-strands, and $C_{\euler{u}\euler{u}'}^{t} = \overline{\euler{u}'}\circ \euler{u}$. 
    This cellular structure can be used as an alternative way to obtain the description of simple modules in \autoref{irredundantlist} and the proof of semisimplicity of \autoref{cor:semisimple}.

    For a monoid $T$, we denote the set of idempotents in $T$ by $E(T)$. 
    For $e \in E(T)$, the {\it maximal subgroup of $T$ at $e$} is the group of units in the monoid $eTe$. 
    The non-zero idempotents of $T_{n}$ are in bijection with cap diagrams in $\on{End}_{\mathcal{TL}_0(\Bbbk)}(\obj{n})$, mapping a cap diagram $\euler{v}$ to the idempotent $\overline{\euler{v}} \circ \euler{v}$. 
    
	Recall that a monoid $S$ is said to be {\it inverse} if for every $s \in S$ there is a unique $t$ such that $s = sts$ and $t = tst$. The element $t$ is said to be the {\it inverse} of $s$, and we will denote it by $s^{(-1)}$.
	\begin{lemma}
		The monoid $T_{n}$ is an inverse monoid.   
	\end{lemma}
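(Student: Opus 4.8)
The plan is to exhibit, for each element of $T_{n}$, an explicit inverse and then to establish its uniqueness. Recall from \autoref{triangularbasis} that every nonzero $\euler{x}\in T_{n}$ has a unique factorization $\euler{x}=\euler{u}\circ\euler{v}$ with $\euler{u}$ a cup diagram $\obj{k}\to\obj{n}$ and $\euler{v}$ a cap diagram $\obj{n}\to\obj{k}$, where $k=\thru(\euler{x})$. I claim that $\euler{x}^{(-1)}:=\overline{\euler{v}}\circ\overline{\euler{u}}$ (so that $\overline{\euler{v}}$ is a cup diagram $\obj{k}\to\obj{n}$ and $\overline{\euler{u}}$ a cap diagram $\obj{n}\to\obj{k}$, and this is precisely the cup--cap factorization of $\euler{x}^{(-1)}$, hence a nonzero element of $T_{n}$) is an inverse of $\euler{x}$. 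This is a direct computation: using $\overline{\euler{u}}\circ\euler{u}=\on{id}_{\obj{k}}$ for a cup diagram $\euler{u}$, and $\euler{v}\circ\overline{\euler{v}}=\overline{\overline{\euler{v}}}\circ\overline{\euler{v}}=\on{id}_{\obj{k}}$ since $\overline{\euler{v}}$ is a cup diagram, one gets $\euler{x}\circ\euler{x}^{(-1)}\circ\euler{x}=\euler{u}\circ(\euler{v}\circ\overline{\euler{v}})\circ(\overline{\euler{u}}\circ\euler{u})\circ\euler{v}=\euler{x}$ and symmetrically $\euler{x}^{(-1)}\circ\euler{x}\circ\euler{x}^{(-1)}=\euler{x}^{(-1)}$. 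The zero element is its own inverse.

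For uniqueness, suppose $\euler{y}\in T_{n}$ also satisfies $\euler{x}\euler{y}\euler{x}=\euler{x}$ and $\euler{y}\euler{x}\euler{y}=\euler{y}$; then $\euler{y}\neq\ast$, and I write $\euler{y}=\euler{u}'\circ\euler{v}'$ with $\thru(\euler{y})=k'$. Submultiplicativity of $\thru$ together with $\thru(\euler{x})=\thru(\euler{x}\euler{y}\euler{x})$ and $\thru(\euler{y})=\thru(\euler{y}\euler{x}\euler{y})$ forces $k=k'$. Postcomposing $\euler{x}\euler{y}\euler{x}=\euler{x}$ with $\overline{\euler{u}}$ on the left and $\overline{\euler{v}}$ on the right and simplifying with the identities above reduces the relation to $(\euler{v}\circ\euler{u}')\circ(\euler{v}'\circ\euler{u})=\on{id}_{\obj{k}}$ in $\on{End}_{\mathcal{TL}_{0}(\Bbbk)}(\obj{k})$. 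Since $\thru(\on{id}_{\obj{k}})=k$ and $\thru$ of any Temperley--Lieb endomorphism of $\obj{k}$ is at most $k$, submultiplicativity forces both factors to have $k$ through-strands; but a Temperley--Lieb endomorphism of $\obj{k}$ with $k$ through-strands has trivial cup--cap factorization, hence equals $\on{id}_{\obj{k}}$. Thus $\euler{v}\circ\euler{u}'=\on{id}_{\obj{k}}=\euler{v}'\circ\euler{u}$. Writing $\euler{v}=\overline{\overline{\euler{v}}}$ and applying \autoref{linindepcupcap} to the pair of cup diagrams $\overline{\euler{v}},\euler{u}'\colon\obj{k}\to\obj{n}$ (whose composite has $k$ through-strands) yields $\euler{u}'=\overline{\euler{v}}$, and likewise $\euler{v}'=\overline{\euler{u}}$; therefore $\euler{y}=\euler{u}'\circ\euler{v}'=\overline{\euler{v}}\circ\overline{\euler{u}}=\euler{x}^{(-1)}$.

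The existence half is routine once the candidate $\overline{\euler{v}}\circ\overline{\euler{u}}$ is written down, so the only real content is uniqueness; within it, the step I expect to be most delicate is the bookkeeping of domains and codomains needed to reduce $\euler{x}\euler{y}\euler{x}=\euler{x}$ to a relation purely among endomorphisms of $\obj{k}$ so that \autoref{linindepcupcap} applies. An alternative, more structural route would use that $T_{n}$ is already regular and invoke the characterization of inverse monoids as the regular monoids with commuting idempotents: under the bijection between the nonzero idempotents of $T_{n}$ and the noncrossing partial matchings $P$ of $\{1,\dots,n\}$ (with $e_{P}=\overline{\euler{v}}\circ\euler{v}$ for the cap diagram $\euler{v}$ of cap set $P$), one checks that $e_{P}\circ e_{Q}=e_{P\cup Q}$ when $P\cup Q$ is again a noncrossing partial matching and $e_{P}\circ e_{Q}=\ast$ otherwise, both cases visibly symmetric in $P$ and $Q$; here the work is the diagrammatic case analysis showing that any incompatibility between $P$ and $Q$ produces a zigzag, which at $q=0$ evaluates to $\ast$.
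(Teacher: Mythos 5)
Your proof is correct and follows essentially the same route as the paper: the candidate inverse $\overline{\euler{v}}\circ\overline{\euler{u}}$, verification via $\overline{\euler{u}}\circ\euler{u}=\on{id}$ and $\euler{v}\circ\overline{\euler{v}}=\on{id}$, and uniqueness via through-strand counting plus \autoref{linindepcupcap}. Your uniqueness argument is in fact slightly more careful than the paper's, which only invokes the single relation $\euler{x}\euler{y}\euler{x}=\euler{x}$; as your argument implicitly recognizes, that relation alone does not pin down $\euler{y}$ (e.g.\ $\euler{y}=\on{id}_{\obj{n}}$ satisfies it for any idempotent $\euler{x}$), so your explicit use of both relations to force $\thru(\euler{x})=\thru(\euler{y})$ before applying \autoref{linindepcupcap} is a genuine, if small, improvement.
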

	
	\begin{proof}
		First, clearly we have $\ast^{(-1)} = \ast$. 
        Let $\euler{x} = \euler{u} \circ \euler{v}$ be a Temperley--Lieb diagram, where $\euler{u}$ is a cup diagram and $\euler{v}$ is a cap diagram. 
        We claim that $(\euler{u} \circ \euler{v})^{(-1)} = \overline{\euler{u} \circ \euler{v}} = \overline{\euler{v}} \circ \overline{\euler{u}}$. The inverse relation follows directly from $\overline{\euler{u}} \circ \euler{u} = \on{id}_{\obj{\thru(\euler{u})}}$ and $\euler{v} \circ \overline{\euler{v}} = \on{id}_{\obj{\thru(\euler{v})}}$. 
        To see that it is unique, observe that 
		\[
		\thru\left((\euler{u} \circ \euler{v}) \circ (\euler{u}' \circ \euler{v}') \circ (\euler{u} \circ \euler{v})\right) = \thru(\euler{u}\circ \euler{v})
		\]
		requires $\thru(\euler{v} \circ \euler{u}') = \thru(\euler{v})$ and $\thru(\euler{v}' \circ \euler{u}) = \thru(\euler{u})$, which by \autoref{linindepcupcap} requires $\euler{u}' = \overline{\euler{v}}$ and $\euler{v}' = \overline{\euler{u}}$.
	\end{proof}

    We now show that the Jones--Wenzl projectors, as well as the basis obtained from them in \autoref{JWBasis}, can be obtained via M{\" o}bius inversion for the inverse semigroup $T_{n}$, see \cite[Section~9.2]{St}. 
    Recall that a finite inverse monoid $T$ becomes a partial order under the relation $\preccurlyeq$ defined by $x \preccurlyeq y$ if $x = ye$ for $e \in E(T)$. 
    Choosing an idempotent $e_{J}$ for every $\mathcal{J}$-cell $J$ of $T$, there is an isomorphism
    \[
    T \xrightarrow[\sim]{\Psi} \prod_{J} \mathbf{Mat}_{|E(J)|}(\Bbbk G_{e_{J}})
    \]
    of $\Bbbk$-algebras, where $G_{e_{J}}$ is the group of units of the monoid $eTe$, and $E(J)$ is the set of idempotents in $J$. 
    As we have observed previously, in the case of $T_{n}$, these groups are all trivial, which gives us an alternative way of establishing semisimplicity of $\on{End}_{\mathcal{TL}_{0}(\Bbbk)}(\obj{n})$. 
    Choosing again the enumeration of \autoref{matrixlabels}, we denote by $[\euler{x}]$ the element $\Psi^{-1}(E_{ab})$, where $\euler{x} = \euler{u}_{a} \circ \overline{\euler{u}_{b}}$. 
    The set $\setj{[\euler{x}] \; | \; \euler{x} \text{ is a TL-diagram with } \thru(\euler{x}) = n}$ is a basis for $\on{End}_{\mathcal{TL}_{0}(\Bbbk)}(\obj{n})$. 
    By \cite[Theorem~9.3]{St}:
    \begin{equation}\label{bracketsdefined}
        \euler{x} = \sum_{y \leq x} [y] = \sum_{\exists \euler{e}: \euler{y} = \euler{xe}} [\euler{y}].
    \end{equation}
    
    \begin{lemma}\label{inversemonoidorder}
     Let $\euler{x} \in T_{n}\setminus\setj{\ast}$ and write $\euler{x} = \euler{u} \circ \euler{v}$. 
     Then $\euler{y} \in T_{n}\setminus\setj{\ast}$ is of the form $\euler{x} \euler{e}$ for $\euler{e} \in E(T_{n})\setminus\setj{\ast}$ if and only if $\euler{y} = \euler{u} \circ \euler{e}' \circ \euler{v}$, where $\euler{e}' \in E(T_{\thru(\euler{x})}\setminus\setj{\ast})$. 
     
     Moreover, $\euler{e}'$ is unique, so $\setj{\euler{y} \in T_{n}: \exists \euler{e} \text{s.t. }\euler{y} = \euler{xe}} = \setj{\euler{u}\circ \euler{e}' \circ \euler{v} \; | \; \euler{e}' \in E(T_{\thru(\euler{x})}\setminus\setj{\ast})}$.
    \end{lemma}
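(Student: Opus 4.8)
The plan is to exploit the inverse-monoid structure of $T_n$ established above, together with the two ``half-unit'' relations $\overline{\euler{u}}\circ\euler{u} = \on{id}_{\obj{\thru(\euler{u})}}$ for cup diagrams and $\euler{v}\circ\overline{\euler{v}} = \on{id}_{\obj{\thru(\euler{v})}}$ for cap diagrams. The conceptual core is the following: set $\euler{f} := \overline{\euler{v}}\circ\euler{v}$, which by $\overline{\euler{u}}\circ\euler{u} = \on{id}$ and the formula $\euler{x}^{(-1)} = \overline{\euler{v}}\circ\overline{\euler{u}}$ from the proof that $T_n$ is inverse equals $\euler{x}^{(-1)}\circ\euler{x}$, the idempotent attached to the cap diagram $\euler{v}$. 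Then the assignments $\euler{g}\mapsto\overline{\euler{v}}\circ\euler{g}\circ\euler{v}$ and $\euler{h}\mapsto\euler{v}\circ\euler{h}\circ\overline{\euler{v}}$ are mutually inverse monoid isomorphisms between $T_{\thru(\euler{x})}$ and the ``local monoid'' $\euler{f}\circ T_n\circ\euler{f}$ (one composite is the identity by $\euler{v}\circ\overline{\euler{v}} = \on{id}$, the other by $\euler{f}\circ\euler{h}\circ\euler{f} = \euler{h}$ for $\euler{h}$ in the local monoid). In particular they restrict to a bijection between $E(T_{\thru(\euler{x})})$ and the set of idempotents of $T_n$ lying below $\euler{f}$, and this bijection preserves $\ast$.

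With this in hand, the forward direction goes by first normalizing $\euler{e}$. Given $\euler{y}\in T_n\setminus\setj{\ast}$ with $\euler{y} = \euler{x}\circ\euler{e}$ for a non-zero idempotent $\euler{e}$, put $\euler{e}^{\sharp} := \euler{f}\circ\euler{e}$; since idempotents of an inverse monoid commute, $\euler{e}^{\sharp}$ is idempotent and lies below $\euler{f}$, and $\euler{x}\circ\euler{e}^{\sharp} = \euler{x}\circ\euler{f}\circ\euler{e} = \euler{x}\circ\euler{e} = \euler{y}$ because $\euler{x}\circ\euler{f} = \euler{x}\circ\euler{x}^{(-1)}\circ\euler{x} = \euler{x}$. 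Applying the isomorphism above, $\euler{e}^{\sharp} = \overline{\euler{v}}\circ\euler{e}'\circ\euler{v}$ for the idempotent $\euler{e}' := \euler{v}\circ\euler{e}^{\sharp}\circ\overline{\euler{v}}\in T_{\thru(\euler{x})}$, and hence
\[
\euler{y} = \euler{x}\circ\euler{e}^{\sharp} = \euler{u}\circ\euler{v}\circ\overline{\euler{v}}\circ\euler{e}'\circ\euler{v} = \euler{u}\circ\euler{e}'\circ\euler{v}.
\]
Conversely, for any non-zero idempotent $\euler{e}'\in T_{\thru(\euler{x})}$ the element $\euler{e} := \overline{\euler{v}}\circ\euler{e}'\circ\euler{v}$ is an idempotent of $T_n$ (immediately from $\euler{v}\circ\overline{\euler{v}} = \on{id}$), and $\euler{x}\circ\euler{e} = \euler{u}\circ\euler{v}\circ\overline{\euler{v}}\circ\euler{e}'\circ\euler{v} = \euler{u}\circ\euler{e}'\circ\euler{v}$.

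It remains to check that for $\euler{e}'\neq\ast$ the element $\euler{u}\circ\euler{e}'\circ\euler{v}$ is never $\ast$, that $\euler{e}$ is then also non-$\ast$, and that $\euler{e}'$ is uniquely determined; all three follow at once from the identity
\[
\overline{\euler{u}}\circ(\euler{u}\circ\euler{e}'\circ\euler{v})\circ\overline{\euler{v}} = \euler{e}',
\]
another instance of the half-unit relations. Indeed, if $\euler{u}\circ\euler{e}'\circ\euler{v} = \ast = 0$ then the left-hand side is $0$, forcing $\euler{e}' = \ast$; non-$\ast$-ness of $\euler{e}$ then follows since $\euler{x}\circ\euler{e} = \euler{u}\circ\euler{e}'\circ\euler{v}\neq\ast$; and if $\euler{u}\circ\euler{e}'\circ\euler{v} = \euler{u}\circ\euler{e}''\circ\euler{v}$, applying $\overline{\euler{u}}\circ(-)\circ\overline{\euler{v}}$ gives $\euler{e}' = \euler{e}''$. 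This establishes the stated equivalence together with the uniqueness clause; the asserted equality of sets then follows, the only discrepancy being the element $\ast$, which lies in the left-hand set but not in the right-hand one and is harmless since $[\ast] = 0$ in \eqref{bracketsdefined}.

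I do not expect a genuine obstacle here: the whole argument is a short chain of manipulations with the two half-unit relations and the commutativity of idempotents in an inverse monoid. The only thing that needs care is the bookkeeping of sources and targets, the morphisms involved ranging over the distinct objects $\obj{n}$ and $\obj{\thru(\euler{x})}$, and making sure that the normalization step uses nothing about $T_n$ beyond its being an inverse monoid.
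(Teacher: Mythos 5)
Your proof is correct. The paper takes a more hands-on, diagrammatic route: it writes the idempotent $\euler{e}$ explicitly as $\overline{\euler{v}'}\circ\euler{v}'$ for a cap diagram $\euler{v}'$, observes that $\euler{x}\euler{e}\neq\ast$ forces the combinatorial condition $K(\euler{v})\subseteq K(\euler{v}')$, and then commutes $\euler{e}$ past $\euler{v}$ by hand. You instead stay inside abstract inverse-monoid theory: you normalize $\euler{e}$ to $\euler{e}^{\sharp}=\euler{f}\euler{e}$ using the commutativity of idempotents in an inverse monoid (a standard fact the paper never explicitly invokes, but which is available once $T_{n}$ is known to be inverse), and then transport along the local-monoid isomorphism $T_{\thru(\euler{x})}\cong \euler{f}\circ T_{n}\circ\euler{f}$ given by conjugation with $\euler{v}$ and $\overline{\euler{v}}$ — an isomorphism whose verification is exactly the two half-unit relations $\euler{v}\circ\overline{\euler{v}}=\on{id}$ and $\euler{f}\circ\euler{h}\circ\euler{f}=\euler{h}$. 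This buys a cleaner argument with no analysis of which strands are capped where, at the cost of importing one extra piece of semigroup theory; the paper's version buys an explicit combinatorial criterion for nonvanishing of $\euler{x}\euler{e}$, which is mildly informative in its own right. Your uniqueness argument via $\overline{\euler{u}}\circ(\euler{u}\circ\euler{e}'\circ\euler{v})\circ\overline{\euler{v}}=\euler{e}'$ is exactly the right device, and your closing remark that the left-hand set in the final displayed equality also contains $\ast$ (via $\euler{e}=\ast$) while the right-hand side does not is a fair observation about the statement as written; as you note, it is immaterial for the M\"obius-inversion application since $[\ast]=0$ in the contracted algebra.
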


    \begin{proof}
     Since $\euler{e}$ is idempotent, we can write $\euler{e} = \overline{\euler{v}'} \circ \euler{v}'$, and $\euler{x} \euler{e} \neq \ast$ if and only if two strands being capped in $\euler{v}$ implies that they are also capped in $\euler{v}'$, i.e. $K(\euler{v})\subseteq K(\euler{v}')$. 
     In that case we have $\euler{v} \circ \euler{e} = \euler{e}_{\thru(n)}\circ \euler{v}$, where $\euler{e}_{\thru(\euler{v})}$ is $\euler{v} \circ \overline{\euler{v}}$, which clearly is idempotent. 
     The result follows.
    \end{proof}

    \begin{proposition}\label{changeofbasistohats}
       For $\euler{x} = \euler{u} \circ \euler{v} \in \on{End}_{\mathcal{TL}_{0}(\Bbbk)}(\obj{n})$, we have $\euler{x} = \sum_{y \leq x} \widehat{\euler{y}}$. 
    \end{proposition}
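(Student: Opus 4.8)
The plan is to compute the right-hand side $\sum_{\euler{y}\leq\euler{x}}\widehat{\euler{y}}$ directly, reducing it to \autoref{DnIdempotents} after ``conjugating away'' the cup and cap parts of $\euler{x}$. Write $\euler{x} = \euler{u}\circ\euler{v}$ with $\thru(\euler{x}) = k$, so that $\euler{u}\colon\obj{k}\to\obj{n}$ is a cup diagram and $\euler{v}\colon\obj{n}\to\obj{k}$ is a cap diagram. By \autoref{inversemonoidorder}, the set of $\euler{y}\in T_{n}\setminus\setj{\ast}$ with $\euler{y}\leq\euler{x}$ is exactly $\setj{\euler{u}\circ\euler{e}'\circ\euler{v} \; | \; \euler{e}'\in E(T_{k})\setminus\setj{\ast}}$, and each such $\euler{y}$ determines $\euler{e}'$ uniquely. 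Since the non-zero idempotents of $T_{k}$ are precisely the morphisms $\overline{\euler{w}}\circ\euler{w}$ with $\euler{w}\in\mathscr{D}_{k}$ a cap diagram of domain $\obj{k}$, we obtain a bijection $\mathscr{D}_{k}\to\setj{\euler{y}\in T_{n} \; | \; \euler{y}\leq\euler{x}}$, $\euler{w}\mapsto\euler{u}\circ\overline{\euler{w}}\circ\euler{w}\circ\euler{v}$ (injectivity because $\euler{w}$ is recovered as the cap-part of $\overline{\euler{w}}\circ\euler{w}$, and by the uniqueness of $\euler{e}'$).

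Next I would identify the cup--cap decomposition and the number of through-strands of each such $\euler{y}$. Fix $\euler{w}\in\mathscr{D}_{k}$ with $\thru(\euler{w}) = j$. A composite of cup diagrams is a cup diagram and a composite of cap diagrams is a cap diagram (in particular these composites are non-zero), so $\euler{u}\circ\overline{\euler{w}}\colon\obj{j}\to\obj{n}$ is a cup diagram and $\euler{w}\circ\euler{v}\colon\obj{n}\to\obj{j}$ is a cap diagram. Their composite $\euler{y} := (\euler{u}\circ\overline{\euler{w}})\circ(\euler{w}\circ\euler{v})$ is a cup diagram followed by a cap diagram, hence non-zero: no zig-zag or closed loop can form, since the cap diagram contributes only caps among the inputs, the cup diagram only cups among the outputs, and their $j$ through-strands are matched. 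By the uniqueness statement in \autoref{triangularbasis} it follows that $\euler{y}_{\on{cup}} = \euler{u}\circ\overline{\euler{w}}$ and $\euler{y}_{\on{cap}} = \euler{w}\circ\euler{v}$, whence $\thru(\euler{y}) = \thru(\euler{u}\circ\overline{\euler{w}}) = j$. Consequently $\widehat{\euler{y}} = \euler{u}\circ\overline{\euler{w}}\circ\euler{j}_{j}\circ\euler{w}\circ\euler{v}$.

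Summing over the bijection of the first paragraph and factoring out $\euler{u}$ and $\euler{v}$ then yields
\[
\sum_{\euler{y}\leq\euler{x}}\widehat{\euler{y}} = \sum_{\euler{w}\in\mathscr{D}_{k}}\euler{u}\circ\overline{\euler{w}}\circ\euler{j}_{\thru(\euler{w})}\circ\euler{w}\circ\euler{v} = \euler{u}\circ\left(\sum_{\euler{w}\in\mathscr{D}_{k}}\overline{\euler{w}}\circ\euler{j}_{\thru(\euler{w})}\circ\euler{w}\right)\circ\euler{v} = \euler{u}\circ\on{id}_{\obj{k}}\circ\euler{v} = \euler{x},
\]
where the third equality is \autoref{DnIdempotents} applied with $\obj{n}$ replaced by $\obj{k}$. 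This completes the proof.

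The computation is essentially routine once \autoref{inversemonoidorder} and \autoref{DnIdempotents} are available; the only point that needs a little care is the identification of $\euler{y}_{\on{cup}},\euler{y}_{\on{cap}}$ and $\thru(\euler{y})$ via \autoref{triangularbasis}, which in turn rests on the (easy) observation that a cup diagram composed with a cap diagram is always non-zero. I would also note in a remark that, combined with \eqref{bracketsdefined} and the invertibility of a unitriangular change-of-basis, this shows $[\euler{x}] = \widehat{\euler{x}}$ for every Temperley--Lieb diagram $\euler{x}$, so that the basis of \autoref{JWBasis} is exactly the one produced by M{\"o}bius inversion for the inverse monoid $T_{n}$.
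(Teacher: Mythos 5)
Your proof is correct and is essentially the paper's own argument read in the opposite direction: the paper inserts the resolution of the identity $\on{id}_{\obj{k}} = \sum_{\euler{w}\in\mathscr{D}_{k}}\overline{\euler{w}}\circ\euler{j}_{\thru(\euler{w})}\circ\euler{w}$ between $\euler{u}$ and $\euler{v}$ and then invokes \autoref{inversemonoidorder} to reindex the sum over $\setj{\euler{y}\leq\euler{x}}$, exactly the two ingredients you use. Your extra care in verifying via \autoref{triangularbasis} that $\euler{y}_{\on{cup}}=\euler{u}\circ\overline{\euler{w}}$, $\euler{y}_{\on{cap}}=\euler{w}\circ\euler{v}$ and $\thru(\euler{y})=\thru(\euler{w})$ — so that the summands really are the max-summand morphisms $\widehat{\euler{y}}$ — is a step the paper leaves implicit, and is a welcome addition rather than a deviation.
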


    \begin{proof}
        We have
        \[
        \begin{aligned}
        \euler{u} \circ \euler{v} &= \euler{u} \circ \on{id}_{\obj{\thru(\euler{x})}} \circ \euler{v} 
        = \euler{u} \circ \left(\sum_{\euler{v}' \in\mathscr{D}_{\thru(\euler{x})}} \overline{\euler{v}'} \circ \euler{j}_{\thru(\euler{v}')} \circ \euler{v}'\right) \circ \euler{v}
        =\sum_{\euler{v}' \in\mathscr{D}_{\thru(\euler{x})}} \euler{u} \circ \overline{\euler{v}'} \circ \euler{j}_{\thru(\euler{v}')} \circ \euler{v}' \circ \euler{v}\\
        &=\sum_{\euler{v}' \in\mathscr{D}_{\thru(\euler{x})}} \widehat{\euler{u} \circ (\overline{\euler{v}'} \circ \euler{v}') \circ \euler{v}}
        =\sum_{\euler{e}' \in E(T_{\thru(\euler{x})}\setminus\setj{\ast})} \widehat{\euler{u} \circ \euler{e}' \circ \euler{v}}
        =\sum_{\euler{y} \leq \euler{x}} \widehat{\euler{y}}.
        \end{aligned}
        \]
        The second equality follows from \autoref{algebraiso}, and the last equality follows from \autoref{inversemonoidorder}.
    \end{proof}
 
    \begin{corollary}
        For $\euler{x} \in \on{End}_{\mathcal{TL}_{0}(\Bbbk)}(\obj{n})$, we have $\widehat{\euler{x}} = [x]$.
    \end{corollary}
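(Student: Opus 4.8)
The plan is to recognize that $\widehat{(-)}$ and $[-]$ are two solutions of one and the same unitriangular linear system, so that uniqueness of such a solution forces them to agree. Concretely, I would start from the two defining identities already at our disposal: on the one hand, \eqref{bracketsdefined}, coming from \cite[Theorem~9.3]{St}, asserts that for every Temperley--Lieb diagram $\euler{x} \in T_{n}\setminus\setj{\ast}$ one has $\euler{x} = \sum_{\euler{y} \preccurlyeq \euler{x}} [\euler{y}]$, the sum ranging over those $\euler{y} \in T_{n}\setminus\setj{\ast}$ lying below $\euler{x}$ in the natural partial order $\preccurlyeq$ of the inverse monoid $T_{n}$; on the other hand, \autoref{changeofbasistohats} gives $\euler{x} = \sum_{\euler{y} \preccurlyeq \euler{x}} \widehat{\euler{y}}$ over the very same index set. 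It therefore suffices to show that the family $([\euler{y}])_{\euler{y}}$ is uniquely pinned down by these relations.

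The key observation making this work is that the poset $(T_{n}\setminus\setj{\ast}, \preccurlyeq)$ is finite, and that $\euler{y} \precneqq \euler{x}$ forces $\thru(\euler{y}) < \thru(\euler{x})$. Indeed, writing $\euler{x} = \euler{u} \circ \euler{v}$, \autoref{inversemonoidorder} identifies the elements $\euler{y} \preccurlyeq \euler{x}$ with $\euler{y}\neq\ast$ with the morphisms $\euler{u} \circ \euler{e}' \circ \euler{v}$ for $\euler{e}' \in E(T_{\thru(\euler{x})}\setminus\setj{\ast})$, and if $\thru(\euler{y}) = \thru(\euler{x})$ then $\euler{e}'$ must be $\on{id}_{\obj{\thru(\euler{x})}}$, i.e. $\euler{y} = \euler{x}$. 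Consequently the system is unitriangular for the well-founded order refining $\preccurlyeq$ by $\thru$, and I would solve it by downward induction via $[\euler{x}] = \euler{x} - \sum_{\euler{y} \precneqq \euler{x}} [\euler{y}]$; the base case consists of diagrams with $\thru \leq 1$, where the sum is trivial and $\widehat{\euler{x}} = \euler{x} = [\euler{x}]$ since $\euler{j}_{0}$ and $\euler{j}_{1}$ are identities. Equivalently, Möbius inversion on the poset yields $[\euler{x}] = \sum_{\euler{y}\preccurlyeq\euler{x}} \mu(\euler{y},\euler{x})\,\euler{y}$, and the identical formula with $\widehat{(-)}$ in place of $[-]$ follows from \autoref{changeofbasistohats}; comparing the two gives $\widehat{\euler{x}} = [\euler{x}]$.

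I do not anticipate a genuine obstacle, since both inputs are already established; the only point requiring a moment's care is that $\ast$ is the global minimum of $(T_{n},\preccurlyeq)$, so deleting it leaves every interval $[\euler{y},\euler{x}]$ with $\euler{y} \neq \ast$ unchanged, ensuring that finite-poset Möbius inversion applies verbatim and that its sums are exactly those appearing in \eqref{bracketsdefined} and \autoref{changeofbasistohats}. Alternatively, one can phrase the entire argument in one line: both $\euler{x} \mapsto \widehat{\euler{x}}$ and $\euler{x} \mapsto [\euler{x}]$ are the Möbius inverse, over the poset $(T_{n}\setminus\setj{\ast},\preccurlyeq)$, of the inclusion of Temperley--Lieb diagrams into the contracted monoid algebra $\on{End}_{\mathcal{TL}_{0}(\Bbbk)}(\obj{n})$, hence they coincide.
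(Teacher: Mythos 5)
Your proposal is correct and is essentially the paper's own argument: both proofs compare \eqref{bracketsdefined} with \autoref{changeofbasistohats}, observe that the Temperley--Lieb diagrams are expressed by the identical sum $\sum_{\euler{y}\preccurlyeq\euler{x}}(-)$ in either family, and conclude by inverting this common change of basis. The only difference is that you justify the invertibility explicitly (unitriangularity with respect to $\thru$, or Möbius inversion on the finite poset), whereas the paper leaves it implicit by appealing to the fact that $\setj{\widehat{\euler{x}}}$ and $\setj{[\euler{x}]}$ are already known to be bases.
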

    \begin{proof}
        Comparing \autoref{changeofbasistohats} with \autoref{bracketsdefined}, we find that the change of basis matrix from the basis given by Temperley--Lieb diagrams to $\setj{\widehat{\euler{x}} \; | \; \euler{x} = \euler{u} \circ \euler{v}}$ coincides with that from Temperley--Lieb diagrams to $\setj{[\euler{x}] \; | \; \euler{x} = \euler{u} \circ \euler{v}}$, so the latter two bases coincide.
    \end{proof}
    
	\section{Equivalence of $\mathfrak{sl}_2\mathbf{-Crys}$ and $\mathbf{CrysTL}$}\label{sec:equiv}
	
	To fix notation, let $B_\lambda = \{b_0, b_1, \dots, b_\lambda\}$ for each indecomposable $B_\lambda$ in $\mathfrak{sl}_2\mathbf{-Crys}$ of highest weight $\lambda\in \Lambda_+ = \mathbb N$, where $b_0$ is of weight $\lambda$ weight and $f(b_i)=b_{i+1}$ for $i=0,1,\dots,\lambda-1$ and $f(b_\lambda) = 0$. 
    For simplicity, let us also write $B:=B_1$, the crystal basis of the defining 2-dimensional representation.
	
	\subsection{Equivalence as Monoidal Categories}
	
	Define the following two morphisms in $\mathfrak{sl}_2\mathbf{-Crys}$: $\alpha$ is the embedding of $B_0$ into $B\otimes B \cong B_0\oplus B_2$, and $\beta$ is the projection of $B\otimes B$ onto $B_0$.
	On bases, $\alpha$ and $\beta$ are define explicitly by
	$$\alpha(b_0)=b_0\otimes b_1, \quad \beta(b_0\otimes b_1)=b_0, \quad \beta(b_0\otimes b_0)=\beta(b_1\otimes b_0)=\beta(b_1\otimes b_1)=0.$$
	
	Now, we can define a monoidal functor $F: \mathbf{CrysTL} \to \mathfrak{sl}_2\mathbf{-Crys}$ on generators by
    \begin{align*}
        \obj{1}&\mapsto B\\
        \on{cup}&\mapsto \alpha\\
        \on{cap}&\mapsto \beta,
    \end{align*}
	and extend it monoidally and linearly and to direct sums and summands on all of $\mathbf{CrysTL}$, so that $\obj{n}\mapsto B^{\otimes n}$; in particular, $\obj{0}\mapsto B_0$.
	We will show that $F$ is an equivalence of monoidal categories.

    First we shall need the following lemmas:

    \begin{lemma}\label{JWcaps}
        \[\begin{tikzpicture}[scale=0.55]
	\begin{pgfonlayer}{nodelayer}
		\node [style=jonesrectangle] (0) at (-10, 0.5) {$\euler{j}_m$};
		\node [style=none] (1) at (-9.25, -1.5) {};
		\node [style=none] (3) at (-10.75, -1.5) {};
		\node [style=none] (4) at (-10.75, 2.5) {};
		\node [style=none] (6) at (-9.25, 1.5) {};
		\node [style=none] (10) at (-8.5, 1.5) {};
		\node [style=none] (11) at (-8, 2) {};
		\node [style=none] (12) at (-8.5, -1.5) {};
		\node [style=none] (13) at (-8, -1.5) {};
		\node [style=none] (14) at (-7, 0.75) {$=$};
		\node [style=none] (15) at (-9.75, -1.5) {};
		\node [style=none] (16) at (-9.75, 2) {};
		\node [style=none] (17) at (-10.25, -0.75) {\tiny\dots};
		\node [style=none] (21) at (-10.25, 1.75) {\tiny\dots};
		\node [style=none] (25) at (-8.875, 2.25) {\tiny\dots};
		\node [style=jonesrectangle] (26) at (-5.25, 0.5) {$\euler{j}_{m-l}$};
		\node [style=none] (27) at (-3.25, -1.5) {};
		\node [style=none] (28) at (-6, -1.5) {};
		\node [style=none] (29) at (-6, 2.5) {};
		\node [style=none] (30) at (-3.25, -1) {};
		\node [style=none] (31) at (-2.5, -1) {};
		\node [style=none] (32) at (-2, -0.5) {};
		\node [style=none] (33) at (-2.5, -1.5) {};
		\node [style=none] (34) at (-2, -1.5) {};
		\node [style=none] (35) at (-3.75, -1.5) {};
		\node [style=none] (36) at (-3.75, -0.5) {};
		\node [style=none] (37) at (-5.25, -0.75) {\tiny\dots};
		\node [style=none] (38) at (-5.25, 1.75) {\tiny\dots};
		\node [style=none] (39) at (-2.875, -0.25) {\tiny\dots};
		\node [style=none] (40) at (-4.5, -1.5) {};
		\node [style=none] (41) at (-4.5, 2.5) {};
		\node [style=none] (42) at (-1.5, -1.5) {.};
	\end{pgfonlayer}
	\begin{pgfonlayer}{edgelayer}
		\draw [style=thickstrand] (4.center) to (3.center);
		\draw [style=thickstrand] (1.center) to (6.center);
		\draw [style=thickstrand] (16.center) to (15.center);
		\draw [style=thickstrand] (12.center) to (10.center);
		\draw [style=thickstrand] [bend left=90, looseness=2.00] (6.center) to (10.center);
		\draw [style=thickstrand] (11.center) to (13.center);
		\draw [style=thickstrand] [bend left=90, looseness=1.50] (16.center) to (11.center);
		\draw [style=thickstrand] (29.center) to (28.center);
		\draw [style=thickstrand] (27.center) to (30.center);
		\draw [style=thickstrand] (36.center) to (35.center);
		\draw [style=thickstrand] (33.center) to (31.center);
		\draw [style=thickstrand] [bend left=90, looseness=2.00] (30.center) to (31.center);
		\draw [style=thickstrand] (32.center) to (34.center);
		\draw [style=thickstrand] [bend left=90, looseness=1.50] (36.center) to (32.center);
		\draw [style=thickstrand] (41.center) to (40.center);
	\end{pgfonlayer}
\end{tikzpicture}
\]
    \end{lemma}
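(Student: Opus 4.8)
The plan is to prove the identity by expanding both sides with the closed formula for the Jones--Wenzl projectors from \autoref{def:JW} and matching the surviving diagrams term by term, using \autoref{annihilation} as the main structural input. Writing $\euler{j}_{m} = \sum_{I}(-1)^{|I|}c_{I,m}$ and $\euler{j}_{m-l} = \sum_{J}(-1)^{|J|}c_{J,m-l}$ (sums over apt subsets), and denoting by $\euler{w}$ the nested family of caps occurring in the statement, one composes a single summand $c_{I,m}$ (suitably tensored with identities) with $\euler{w}$. The result vanishes unless $I$ is disjoint from the indices touched by $\euler{w}$, because any cup of $\on{cup}_{I,m}$ that meets a cap of $\euler{w}$ produces a zig--zag, which is $0$ in $\mathcal{TL}_{0}(\Bbbk)$; for a surviving $I$ the composite is precisely the diagram obtained from the corresponding $c_{J,m-l}$ by attaching $\euler{w}$ on the other side. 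Since $I \mapsto J$ is a sign-preserving bijection of the relevant apt subsets, the two signed sums coincide.

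In order, the steps are: first, if it streamlines the combinatorics, reduce to a single cap by induction on the number of caps in $\euler{w}$, peeling off the outermost cap and invoking the Jones--Wenzl recursion for $\euler{j}_{m}$ together with \autoref{annihilation}; second, treat the one-cap case by the term-by-term comparison above, whose crux is the dichotomy for each $c_{I,m}$ --- either a zig--zag forms and the term dies, or it passes through unchanged; third, observe that \autoref{annihilation} already eliminates every term in which the cap meets the support of the nontrivial part of $\euler{j}_{m}$, leaving only the $\on{id}$-summand and its mild modifications on each side. A cleaner packaging of the same facts uses \autoref{JWuniqueness}: show that the left-hand side is annihilated by every basic cap acting on its codomain (such a cap, pushed through $\euler{w}$, becomes a basic cap applied to two adjacent legs of $\euler{j}_{m}$, hence is $0$), and then that this forces the left-hand side to equal $\euler{j}_{m-l}$ precomposed with $\euler{w}$, which is the right-hand side.

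The step I expect to be the main obstacle is handling the terms whose index set straddles the boundary between the part of $\euler{j}_{m}$ that $\euler{w}$ caps off and the part that survives: these are not killed by a zig--zag but are rerouted by planar isotopy, and one must check that after the isotopies their contributions agree on the two sides (equivalently, cancel in matching pairs). It is precisely here that the nested shape of $\euler{w}$ is used rather than an arbitrary cap diagram. If one runs the argument through the recursion instead, this same subtlety reappears as the need to formulate the inductive statement so that it remains stable under the operation $-\otimes\on{id}_{\obj{1}}$.
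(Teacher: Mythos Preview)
Your core approach --- expand $\euler{j}_m$ via \autoref{def:JW} and discard the terms that meet the caps --- is exactly the paper's one-line proof. But the ``main obstacle'' you flag is a phantom. A term $c_{I,m}$ with $m-l \in I$ (the straddling case) has a cup at $(m-l,m-l+1)$; composing with the outermost cap, which joins strand $m-l+1$ to an external strand, produces a zig-zag, and in $\mathcal{TL}_0(\Bbbk)$ every zig-zag is $0$, not a planar isotopy. (You may be thinking of the ordinary Temperley--Lieb category, where zig-zags straighten to the identity; here both zig-zags vanish.) The same reasoning applies to every $i \in I$ with $i \geq m-l$, so the surviving terms are exactly those with $I \subseteq \{1,\dots,m-l-1\}$, which are precisely the apt subsets indexing $\euler{j}_{m-l}$. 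No induction on the number of caps, no Jones--Wenzl recursion, and no appeal to \autoref{JWuniqueness} is needed; your own dichotomy ``either a zig-zag forms and the term dies, or it passes through unchanged'' is already the whole proof, and the paper states it in a single sentence.
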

    \begin{proof}
        The statement follows directly from applying \autoref{def:JW} and observing that any term in the expression for $\euler{j}_m$ involving cups in the strands $m-l+1,\dots m$ is killed by the zig-zag relation upon $l$-hooking so that the remaining terms are exactly of the form $\euler{j}_{m-l} \otimes \on{id}_l$.
    \end{proof}
    
    \begin{lemma}\label{2JWwithCaps}
        \[\begin{tikzpicture}[scale=0.5]
	\begin{pgfonlayer}{nodelayer}
		\node [style=none] (14) at (1.25, 0.5) {$=$};
		\node [style=jonesrectangle] (43) at (-3.75, 0.5) {$\euler{j}_k$};
		\node [style=none] (45) at (-4, -1.5) {};
		\node [style=none] (46) at (-4, 2) {};
		\node [style=none] (54) at (-3.5, -0.75) {\tiny\dots};
		\node [style=none] (55) at (-3.5, 1.75) {\tiny\dots};
		\node [style=jonesrectangle] (56) at (-0.75, 0.5) {$\euler{j}_k$};
		\node [style=none] (57) at (-1.5, -1.5) {};
		\node [style=none] (58) at (-1.5, 2) {};
		\node [style=none] (59) at (0, -1.5) {};
		\node [style=none] (60) at (0, 2) {};
		\node [style=none] (61) at (-1, -0.75) {\tiny\dots};
		\node [style=none] (62) at (-1, 1.75) {\tiny\dots};
		\node [style=none] (63) at (-2.25, 3.25) {\tiny\dots};
		\node [style=none] (79) at (8.75, 0.5) {and};
		\node [style=none] (122) at (-4.25, -1.5) {};
		\node [style=none] (123) at (-0.25, -1.5) {};
		\node [style=none] (124) at (-0.25, 2) {};
		\node [style=none] (125) at (-4.25, 2) {};
		\node [style=none] (127) at (-3, -1.5) {};
		\node [style=none] (128) at (-3, 2) {};
		\node [style=none] (130) at (2.5, -1) {};
		\node [style=none] (131) at (2.5, 1) {};
		\node [style=none] (133) at (3.25, 0) {\tiny\dots};
		\node [style=none] (135) at (5.25, -1) {};
		\node [style=none] (136) at (5.25, 1) {};
		\node [style=none] (137) at (6.5, -1) {};
		\node [style=none] (138) at (6.5, 1) {};
		\node [style=none] (140) at (5.75, 0) {\tiny\dots};
		\node [style=none] (141) at (4.5, 2) {\tiny\dots};
		\node [style=none] (142) at (2.75, -1) {};
		\node [style=none] (143) at (6.25, -1) {};
		\node [style=none] (144) at (6.25, 1) {};
		\node [style=none] (145) at (2.75, 1) {};
		\node [style=none] (146) at (3.75, -1) {};
		\node [style=none] (147) at (3.75, 1) {};
		\node [style=none] (148) at (16.5, 0.5) {$=$};
		\node [style=jonesrectangle] (149) at (11.5, 1.5) {$\euler{j}_k$};
		\node [style=none] (150) at (11.25, 3.5) {};
		\node [style=none] (151) at (11.25, 0) {};
		\node [style=none] (152) at (11.75, 2.75) {\tiny\dots};
		\node [style=none] (153) at (11.75, 0.25) {\tiny\dots};
		\node [style=jonesrectangle] (154) at (14.5, 1.5) {$\euler{j}_k$};
		\node [style=none] (155) at (13.75, 3.5) {};
		\node [style=none] (156) at (13.75, 0) {};
		\node [style=none] (157) at (15.25, 3.5) {};
		\node [style=none] (158) at (15.25, 0) {};
		\node [style=none] (159) at (14.25, 2.75) {\tiny\dots};
		\node [style=none] (160) at (14.25, 0.25) {\tiny\dots};
		\node [style=none] (161) at (13, -1.25) {\tiny\dots};
		\node [style=none] (162) at (11, 3.5) {};
		\node [style=none] (163) at (15, 3.5) {};
		\node [style=none] (164) at (15, 0) {};
		\node [style=none] (165) at (11, 0) {};
		\node [style=none] (166) at (12.25, 3.5) {};
		\node [style=none] (167) at (12.25, 0) {};
		\node [style=none] (168) at (17.75, 3) {};
		\node [style=none] (169) at (17.75, 1) {};
		\node [style=none] (170) at (18.5, 2) {\tiny\dots};
		\node [style=none] (171) at (20.5, 3) {};
		\node [style=none] (172) at (20.5, 1) {};
		\node [style=none] (173) at (21.75, 3) {};
		\node [style=none] (174) at (21.75, 1) {};
		\node [style=none] (175) at (21, 2) {\tiny\dots};
		\node [style=none] (176) at (19.75, 0) {\tiny\dots};
		\node [style=none] (177) at (18, 3) {};
		\node [style=none] (178) at (21.5, 3) {};
		\node [style=none] (179) at (21.5, 1) {};
		\node [style=none] (180) at (18, 1) {};
		\node [style=none] (181) at (19, 3) {};
		\node [style=none] (182) at (19, 1) {};
	\end{pgfonlayer}
	\begin{pgfonlayer}{edgelayer}
		\draw [style=thickstrand] (46.center) to (45.center);
		\draw [style=thickstrand] (58.center) to (57.center);
		\draw [style=thickstrand] (60.center) to (59.center);
		\draw [style=thickstrand] (122.center) to (125.center);
		\draw [style=thickstrand] (123.center) to (124.center);
		\draw [style=thickstrand] (127.center) to (128.center);
		\draw [style=thickstrand] [bend left=90, looseness=1.75] (128.center) to (58.center);
		\draw [style=thickstrand] (131.center) to (130.center);
		\draw [style=thickstrand] (136.center) to (135.center);
		\draw [style=thickstrand] (138.center) to (137.center);
		\draw [style=thickstrand] [bend left=90, looseness=1.50] (131.center) to (138.center);
		\draw [style=thickstrand] (142.center) to (145.center);
		\draw [style=thickstrand] (143.center) to (144.center);
		\draw [style=thickstrand] [bend left=90, looseness=1.50] (145.center) to (144.center);
		\draw [style=thickstrand] (146.center) to (147.center);
		\draw [style=thickstrand] [bend left=90, looseness=1.75] (147.center) to (136.center);
		\draw [style=thickstrand] [bend left=90, looseness=1.50] (46.center) to (124.center);
		\draw [style=thickstrand] [bend left=90, looseness=1.50] (125.center) to (60.center);
		\draw [style=thickstrand] (151.center) to (150.center);
		\draw [style=thickstrand] (156.center) to (155.center);
		\draw [style=thickstrand] (158.center) to (157.center);
		\draw [style=thickstrand] (162.center) to (165.center);
		\draw [style=thickstrand] (163.center) to (164.center);
		\draw [style=thickstrand] (166.center) to (167.center);
		\draw [style=thickstrand] [bend right=90, looseness=1.75] (167.center) to (156.center);
		\draw [style=thickstrand] (169.center) to (168.center);
		\draw [style=thickstrand] (172.center) to (171.center);
		\draw [style=thickstrand] (174.center) to (173.center);
		\draw [style=thickstrand] [bend right=90, looseness=1.50] (169.center) to (174.center);
		\draw [style=thickstrand] (177.center) to (180.center);
		\draw [style=thickstrand] (178.center) to (179.center);
		\draw [style=thickstrand] [bend right=90, looseness=1.50] (180.center) to (179.center);
		\draw [style=thickstrand] (181.center) to (182.center);
		\draw [style=thickstrand] [bend right=90, looseness=1.75] (182.center) to (172.center);
		\draw [style=thickstrand] [bend right=90, looseness=1.50] (151.center) to (164.center);
		\draw [style=thickstrand] [bend right=90, looseness=1.50] (165.center) to (158.center);
	\end{pgfonlayer}
\end{tikzpicture}
\]
    \end{lemma}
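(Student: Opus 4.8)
Write $\euler{z}\in\on{Hom}_{\mathcal{TL}_0(\Bbbk)}(\obj{0},\obj{2k})$ for the fully nested cup diagram (cups joining positions $i$ and $2k+1-i$ for $1\le i\le k$), and $\overline{\euler{z}}$ for the corresponding nested cap, so that the two equations of the lemma read $\overline{\euler{z}}\circ(\euler{j}_k\otimes\euler{j}_k)=\overline{\euler{z}}$ and $(\euler{j}_k\otimes\euler{j}_k)\circ\euler{z}=\euler{z}$. From the closed formula in \autoref{def:JW} one reads off $\overline{c_{I,k}}=c_{I,k}$, hence $\overline{\euler{j}_k}=\euler{j}_k$; since $\overline{(-)}$ is monoidal and $\overline{\overline{\euler{z}}}=\euler{z}$, the second equation is the image of the first under $\overline{(-)}$. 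Likewise the left--right reflection of diagrams is a monoidal autoequivalence reversing the tensor order and fixing both $\euler{j}_k$ (again by the symmetry of the formula in \autoref{def:JW}, apt subsets of $\setj{1,\dots,k}$ being permuted by $i\mapsto k-i$) and $\overline{\euler{z}}$; combining this with $\euler{j}_k\otimes\euler{j}_k=(\euler{j}_k\otimes\on{id}_{\obj{k}})\circ(\on{id}_{\obj{k}}\otimes\euler{j}_k)$, it suffices to prove $\overline{\euler{z}}\circ(\on{id}_{\obj{k}}\otimes\euler{j}_k)=\overline{\euler{z}}$: applying the reflection then gives $\overline{\euler{z}}\circ(\euler{j}_k\otimes\on{id}_{\obj{k}})=\overline{\euler{z}}$, and composing the two identities yields the first equation.

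\textbf{Reduction to killing a single cup.} Write $\euler{j}_k=\on{id}_{\obj{k}}+x$ with $x\in\thru_{<k}(\obj{k},\obj{k})$; it remains to show $\overline{\euler{z}}\circ(\on{id}_{\obj{k}}\otimes x)=0$. By \autoref{TLBasis} and \autoref{triangularbasis}, $x$ is a linear combination of diagrams $\euler{u}\circ\euler{v}$ with $\euler{v}$ a cap diagram and $\euler{u}$ a cup diagram carrying at least one cup (since $\thru(\euler{u})=\thru(\euler{u}\circ\euler{v})<k$). As $\on{id}_{\obj{k}}\otimes(\euler{u}\circ\euler{v})=(\on{id}_{\obj{k}}\otimes\euler{u})\circ(\on{id}_{\obj{k}}\otimes\euler{v})$, it is enough to show $\overline{\euler{z}}\circ(\on{id}_{\obj{k}}\otimes\euler{u})=0$ for each such $\euler{u}$; note that inside $\obj{2k}$ all cups of $\on{id}_{\obj{k}}\otimes\euler{u}$ occupy positions among $k+1,\dots,2k$, while positions $1,\dots,k$ carry through-strands.

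\textbf{The zigzag.} Let $(k+c,k+c+1)$, with $1\le c\le k-1$, be the cup of $\on{id}_{\obj{k}}\otimes\euler{u}$ whose left leg occupies the smallest position; by minimality the positions $k+1,\dots,k+c-1$ carry through-strands. The nested cap $\overline{\euler{z}}$ is applied innermost-cap-first: the $c-1$ caps joining $(k+1-m,k+m)$ for $1\le m\le c-1$ — each joining two through-strands, and none of them meeting the distinguished cup — are applied before the cap joining $(k+1-c,k+c)$. Once they are applied, positions $k+1-c$ and $k+c$ have become adjacent, so the cap joining $(k+1-c,k+c)$ now lies directly on top of the cup $(k+c,k+c+1)$ and meets it at its left leg; this is precisely an instance of the left-hand side of the zig-zag relation $(\on{cap}\otimes\on{id}_{\obj{1}})\circ(\on{id}_{\obj{1}}\otimes\on{cup})$, which equals $q\cdot\on{id}_{\obj{1}}=0$ at $q=0$. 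Hence the whole composite vanishes, which completes the chain of reductions. The delicate point — the main obstacle — is exactly the bookkeeping here: one must track how positions are relabelled as the inner caps of $\overline{\euler{z}}$ are contracted, so as to be sure the distinguished cup persists until the cap joining $(k+1-c,k+c)$ is reached, at which moment a genuine zigzag appears. A more computational alternative, in the spirit of the proof of \autoref{JWcaps}, is to expand $\euler{j}_k\otimes\euler{j}_k=\sum_{I,I'}(-1)^{|I|+|I'|}c_{I,k}\otimes c_{I',k}$ over apt $I,I'\subseteq\setj{1,\dots,k}$ and to show by the same zig-zag argument that $\overline{\euler{z}}\circ(c_{I,k}\otimes c_{I',k})=0$ unless $I=I'=\varnothing$; either way the only ingredients used are \autoref{def:JW}, \autoref{TLBasis}, \autoref{triangularbasis}, and the zig-zag relation.
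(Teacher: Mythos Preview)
Your overall strategy is correct and differs from the paper's. The paper's proof is a two-line application of \autoref{JWcaps}: use interchange to slide the left $\euler{j}_k$ above the right one, then apply \autoref{JWcaps} with $m=l=k$ to the top $\euler{j}_k$ (all $k$ of its outputs are hooked, so it collapses to $\euler{j}_0=\on{id}_{\obj{0}}$), and then once more to the remaining $\euler{j}_k$; the dual statement follows analogously. You instead reduce to $\overline{\euler{z}}\circ(\on{id}_{\obj{k}}\otimes\euler{j}_k)=\overline{\euler{z}}$ and argue the needed vanishing directly from the zig-zag relation, essentially re-deriving inline the special case of \autoref{JWcaps} you need. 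Both routes are valid; the paper's is shorter because the lemma is already available.

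There is, however, a slip in your zig-zag paragraph. You claim that the cup of $\on{id}_{\obj{k}}\otimes\euler{u}$ whose left leg occupies the smallest position is of the form $(k+c,k+c+1)$, and hence that positions $k+1,\dots,k+c-1$ carry through-strands. For a general cup diagram $\euler{u}$ this fails: with $k=4$ and $\euler{u}$ the nested pair of cups $(1,4),(2,3)$, the cup of smallest left leg is $(1,4)$, which is not simple, and position $1$ of $\euler{u}$ is not a through-strand. The repair is immediate: factor $\euler{u}=(\on{id}_{c-1}\otimes\on{cup}\otimes\on{id}_{k-c-1})\circ\euler{u}''$ with a single basic cup on top, and run your argument on $\on{id}_{k+c-1}\otimes\on{cup}\otimes\on{id}_{k-c-1}$, where the through-strand hypothesis genuinely holds and your cap-by-cap reduction to a zig-zag goes through verbatim. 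Your own suggested ``computational alternative'' via the expansion $\euler{j}_k=\sum_I(-1)^{|I|}c_{I,k}$ also sidesteps the issue, since each $\on{cup}_{I,k}$ consists only of simple cups separated by through-strands.
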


    \begin{proof}
        \[\begin{tikzpicture}[scale=0.5]
	\begin{pgfonlayer}{nodelayer}
		\node [style=none] (0) at (1.25, 0.5) {$=$};
		\node [style=jonesrectangle] (1) at (-3.75, 0.5) {$\euler{j}_k$};
		\node [style=none] (2) at (-4, -1.5) {};
		\node [style=none] (3) at (-4, 2) {};
		\node [style=none] (4) at (-3.5, -0.75) {\tiny\dots};
		\node [style=none] (5) at (-3.5, 1.75) {\tiny\dots};
		\node [style=jonesrectangle] (6) at (-0.75, 0.5) {$\euler{j}_k$};
		\node [style=none] (7) at (-1.5, -1.5) {};
		\node [style=none] (8) at (-1.5, 2) {};
		\node [style=none] (9) at (0, -1.5) {};
		\node [style=none] (10) at (0, 2) {};
		\node [style=none] (11) at (-1, -0.75) {\tiny\dots};
		\node [style=none] (12) at (-1, 1.75) {\tiny\dots};
		\node [style=none] (13) at (-2.25, 3.25) {\tiny\dots};
		\node [style=none] (14) at (-4.25, -1.5) {};
		\node [style=none] (15) at (-0.25, -1.5) {};
		\node [style=none] (16) at (-0.25, 2) {};
		\node [style=none] (17) at (-4.25, 2) {};
		\node [style=none] (18) at (-3, -1.5) {};
		\node [style=none] (19) at (-3, 2) {};
		\node [style=none] (20) at (18, -0.75) {};
		\node [style=none] (21) at (18, 1.25) {};
		\node [style=none] (22) at (18.75, 0.25) {\tiny\dots};
		\node [style=none] (23) at (20.75, -0.75) {};
		\node [style=none] (24) at (20.75, 1.25) {};
		\node [style=none] (25) at (22, -0.75) {};
		\node [style=none] (26) at (22, 1.25) {};
		\node [style=none] (27) at (21.25, 0.25) {\tiny\dots};
		\node [style=none] (28) at (20, 2.25) {\tiny\dots};
		\node [style=none] (29) at (18.25, -0.75) {};
		\node [style=none] (30) at (21.75, -0.75) {};
		\node [style=none] (31) at (21.75, 1.25) {};
		\node [style=none] (32) at (18.25, 1.25) {};
		\node [style=none] (33) at (19.25, -0.75) {};
		\node [style=none] (34) at (19.25, 1.25) {};
		\node [style=none] (35) at (8.5, 1) {$=$};
		\node [style=jonesrectangle] (36) at (3.25, 3.75) {$\euler{j}_k$};
		\node [style=none] (37) at (3, -1.5) {};
		\node [style=none] (38) at (3, 5) {};
		\node [style=none] (39) at (3.5, -0.75) {\tiny\dots};
		\node [style=none] (40) at (3.5, 4.75) {\tiny\dots};
		\node [style=jonesrectangle] (41) at (6.25, 0.5) {$\euler{j}_k$};
		\node [style=none] (42) at (5.5, -1.5) {};
		\node [style=none] (43) at (5.5, 5) {};
		\node [style=none] (44) at (7, -1.5) {};
		\node [style=none] (45) at (7, 5) {};
		\node [style=none] (46) at (6, -0.75) {\tiny\dots};
		\node [style=none] (47) at (6, 1.75) {\tiny\dots};
		\node [style=none] (48) at (4.75, 6) {\tiny\dots};
		\node [style=none] (49) at (2.75, -1.5) {};
		\node [style=none] (50) at (6.75, -1.5) {};
		\node [style=none] (51) at (6.75, 5) {};
		\node [style=none] (52) at (2.75, 5) {};
		\node [style=none] (53) at (4, -1.5) {};
		\node [style=none] (54) at (4, 5) {};
		\node [style=none] (89) at (16.25, 1) {$=$};
		\node [style=none] (91) at (10.25, -1.5) {};
		\node [style=none] (92) at (10.25, 5) {};
		\node [style=none] (94) at (10.75, 1.75) {\tiny\dots};
		\node [style=jonesrectangle] (95) at (13.5, 0.5) {$\euler{j}_k$};
		\node [style=none] (96) at (12.75, -1.5) {};
		\node [style=none] (97) at (12.75, 5) {};
		\node [style=none] (98) at (14.25, -1.5) {};
		\node [style=none] (99) at (14.25, 5) {};
		\node [style=none] (100) at (13.25, -0.75) {\tiny\dots};
		\node [style=none] (101) at (13.25, 1.75) {\tiny\dots};
		\node [style=none] (102) at (12, 6) {\tiny\dots};
		\node [style=none] (103) at (10, -1.5) {};
		\node [style=none] (104) at (14, -1.5) {};
		\node [style=none] (105) at (14, 5) {};
		\node [style=none] (106) at (10, 5) {};
		\node [style=none] (107) at (11.25, -1.5) {};
		\node [style=none] (108) at (11.25, 5) {};
	\end{pgfonlayer}
	\begin{pgfonlayer}{edgelayer}
		\draw [style=thickstrand] (3.center) to (2.center);
		\draw [style=thickstrand] (8.center) to (7.center);
		\draw [style=thickstrand] (10.center) to (9.center);
		\draw [style=thickstrand] (14.center) to (17.center);
		\draw [style=thickstrand] (15.center) to (16.center);
		\draw [style=thickstrand] (18.center) to (19.center);
		\draw [style=thickstrand] [bend left=90, looseness=1.75] (19.center) to (8.center);
		\draw [style=thickstrand] (21.center) to (20.center);
		\draw [style=thickstrand] (24.center) to (23.center);
		\draw [style=thickstrand] (26.center) to (25.center);
		\draw [style=thickstrand] [bend left=90, looseness=1.50] (21.center) to (26.center);
		\draw [style=thickstrand] (29.center) to (32.center);
		\draw [style=thickstrand] (30.center) to (31.center);
		\draw [style=thickstrand] [bend left=90, looseness=1.50] (32.center) to (31.center);
		\draw [style=thickstrand] (33.center) to (34.center);
		\draw [style=thickstrand] [bend left=90, looseness=1.75] (34.center) to (24.center);
		\draw [style=thickstrand] [bend left=90, looseness=1.50] (3.center) to (16.center);
		\draw [style=thickstrand] [bend left=90, looseness=1.50] (17.center) to (10.center);
		\draw [style=thickstrand] (38.center) to (37.center);
		\draw [style=thickstrand] (43.center) to (42.center);
		\draw [style=thickstrand] (45.center) to (44.center);
		\draw [style=thickstrand] (49.center) to (52.center);
		\draw [style=thickstrand] (50.center) to (51.center);
		\draw [style=thickstrand] (53.center) to (54.center);
		\draw [style=thickstrand] [bend left=90, looseness=1.50] (54.center) to (43.center);
		\draw [style=thickstrand] [bend left=90, looseness=1.25] (38.center) to (51.center);
		\draw [style=thickstrand] [bend left=270, looseness=1.25] (45.center) to (52.center);
		\draw [style=thickstrand] (92.center) to (91.center);
		\draw [style=thickstrand] (97.center) to (96.center);
		\draw [style=thickstrand] (99.center) to (98.center);
		\draw [style=thickstrand] (103.center) to (106.center);
		\draw [style=thickstrand] (104.center) to (105.center);
		\draw [style=thickstrand] (107.center) to (108.center);
		\draw [style=thickstrand] [bend left=90, looseness=1.50] (108.center) to (97.center);
		\draw [style=thickstrand] [bend left=90, looseness=1.25] (92.center) to (105.center);
		\draw [style=thickstrand] [bend left=270, looseness=1.25] (99.center) to (106.center);
	\end{pgfonlayer}
\end{tikzpicture}
\]
        where the second and third equalities follow by \autoref{JWcaps}. The second statement is exactly analogous.
    \end{proof}    
    Recall that a \emph{tensor ideal} in a $\Bbbk$-linear category $\mathcal C$ consists of a collection of $\Bbbk$-submodules $\mathcal{I}(X,Y) \subseteq \on{Hom}_{\mathcal{C}}(X,Y)$, which is stable under precomposition, postcomposition, and tensoring with arbitrary morphisms in $\mathcal{C}$. 

    Given an ideal $\mathcal{I}$ of $\mathcal{C}$, the quotient category $\mathcal C/\mathcal I$ is by definition the category with the same objects as $\mathcal C$ and whose morphisms are given by $$\on{Hom}_{\mathcal {C/I}}(X,Y) = \on{Hom}_{\mathcal C}(X,Y)/\mathcal{I}(X,Y).$$
    Then, the functor $\mathcal C\to \mathcal {C/I}$ is a monoidal functor with kernel $\mathcal I$, and indeed the kernel of any monoidal functor is a tensor ideal. 
    To show that a monoidal functor is faithful, it suffices to show that its kernel is the zero ideal.
    
    \begin{proposition}
\label{tensorIdeals}
        $\mathbf{CrysTL}$ has no non-trivial tensor ideals.
    \end{proposition}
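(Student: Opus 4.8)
The plan is to combine semisimplicity of $\mathbf{CrysTL}$ (\autoref{cor:semisimple}) with the Clebsch--Gordan rule, which we extract from the Jones--Wenzl recursion. First I would record the list of simple objects. By \autoref{DnIdempotents} and \autoref{EndAlgebras}, $\obj{m}$ decomposes in $\mathbf{CrysTL}$ as $\bigoplus_{k\leq m,\ k\equiv m\bmod 2} L_{k}^{\oplus r_{k}}$, where $L_{k}:=\on{im}(\euler{j}_{k})$ is the image of the $k^{\text{th}}$ Jones--Wenzl projector and the $L_{k}$ are pairwise non-isomorphic; hence $\setj{L_{k}\mid k\in\mathbb{N}}$ is a complete irredundant list of simple objects of $\mathbf{CrysTL}$, with $L_{0}=\obj{0}=\mathbb{1}$ and $L_{1}=\obj{1}$. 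Now let $\mathcal{I}$ be a nonzero tensor ideal. Picking a nonzero $f\in\mathcal{I}(X,Y)$, decomposing $X,Y$ into simples and using that $\mathcal{I}$ is closed under pre- and postcomposition, some component $L_{n}\to L_{n}$ of $f$ is nonzero, hence invertible by Schur, so $\on{id}_{L_{n}}\in\mathcal{I}$ for some $n\in\mathbb{N}$. Conversely, if $\on{id}_{\mathbb{1}}\in\mathcal{I}$ then for every $X$ the morphism $\on{id}_{\mathbb{1}\otimes X}=\on{id}_{\mathbb{1}}\otimes\on{id}_{X}$ lies in $\mathcal{I}$, hence so does its conjugate $\on{id}_{X}$ under the unit isomorphism $\mathbb{1}\otimes X\cong X$, and thus $\mathcal{I}=\mathbf{CrysTL}$. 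So it suffices to prove the descent step $\on{id}_{L_{n}}\in\mathcal{I}\Rightarrow\on{id}_{L_{n-1}}\in\mathcal{I}$ for $n\geq 1$ and iterate it down to $L_{0}$.

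The descent follows once $L_{n-1}$ is exhibited as a retract of $L_{n}\otimes\obj{1}$: given $\iota\colon L_{n-1}\to L_{n}\otimes\obj{1}$ and $\pi\colon L_{n}\otimes\obj{1}\to L_{n-1}$ with $\pi\circ\iota=\on{id}_{L_{n-1}}$, from $\on{id}_{L_{n}}\in\mathcal{I}$ we get $\on{id}_{L_{n}\otimes\obj{1}}=\on{id}_{L_{n}}\otimes\on{id}_{\obj{1}}\in\mathcal{I}$, whence $\on{id}_{L_{n-1}}=\pi\circ\on{id}_{L_{n}\otimes\obj{1}}\circ\iota\in\mathcal{I}$. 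Thus the core of the argument is the Clebsch--Gordan decomposition $L_{n}\otimes\obj{1}\cong L_{n+1}\oplus L_{n-1}$ for $n\geq 1$.

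To establish it, note that $\obj{1}=L_{1}$, so $L_{n}\otimes\obj{1}=\on{im}(\euler{j}_{n}\otimes\on{id}_{\obj{1}})$, a retract of $\obj{n+1}$. The Jones--Wenzl recursion established above reads $\euler{j}_{n}\otimes\on{id}_{\obj{1}}=\euler{j}_{n+1}+e$, where the correction term $e$ factors through $\obj{n-1}$ and hence lies in $\thru_{\leq n-1}$. By \autoref{annihilation}, $\euler{j}_{n+1}$ is killed by $\thru_{<n+1}\supseteq\thru_{\leq n-1}$ under both pre- and postcomposition, so $\euler{j}_{n+1}\circ e=0=e\circ\euler{j}_{n+1}$; expanding $\euler{j}_{n+1}+e=(\euler{j}_{n}\otimes\on{id}_{\obj{1}})^{2}=(\euler{j}_{n+1}+e)^{2}$ then gives $e^{2}=e$, so $\on{im}(\euler{j}_{n}\otimes\on{id}_{\obj{1}})=L_{n+1}\oplus\on{im}(e)$. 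Writing $e=q\circ p$ with $p=\on{cap}_{\setj{n},n+1}\circ(\euler{j}_{n}\otimes\on{id}_{\obj{1}})$ and $q=(\euler{j}_{n}\otimes\on{id}_{\obj{1}})\circ\on{cup}_{\setj{n},n+1}$, one computes the partial trace $p\circ q=\on{cap}_{\setj{n},n+1}\circ(\euler{j}_{n}\otimes\on{id}_{\obj{1}})\circ\on{cup}_{\setj{n},n+1}=\euler{j}_{n-1}$ via \autoref{JWcaps} (bending the last strand of $\euler{j}_{n}$ replaces it by $\euler{j}_{n-1}\otimes\on{id}_{\obj{1}}$, and the leftover cap and cup close up into a circle, which evaluates to $1$). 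Since $e=q\circ p$ and $p\circ q=\euler{j}_{n-1}$ are both idempotent, $p$ and $q$ restrict to mutually inverse isomorphisms between $\on{im}(e)$ and $\on{im}(\euler{j}_{n-1})=L_{n-1}$, which completes the decomposition and hence the proof.

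I expect the partial-trace computation to be the only real obstacle: one must check that $\on{cap}_{\setj{n},n+1}\circ(\euler{j}_{n}\otimes\on{id}_{\obj{1}})\circ\on{cup}_{\setj{n},n+1}$ is exactly $\euler{j}_{n-1}$ rather than merely a scalar multiple, and \autoref{JWcaps} (together with \autoref{2JWwithCaps}) is the precise tool for this; the remaining steps are bookkeeping inside the semisimple category. A more representation-theoretic variant would instead bound $\dim\on{Hom}_{\mathbf{CrysTL}}(L_{n-1},L_{n}\otimes\obj{1})\leq 1$ directly from the Jones--Wenzl basis of \autoref{JWBasis} and \autoref{annihilation} --- only the cup straddling strands $n$ and $n+1$ survives precomposition with $\euler{j}_{n}\otimes\on{id}_{\obj{1}}$ --- and then conclude that $L_{n-1}$ must appear in $L_{n}\otimes\obj{1}$ by comparing with the decompositions of $\obj{n}$ and $\obj{n+1}$, avoiding any explicit description of $e$.
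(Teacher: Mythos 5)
Your proof is correct, but it takes a genuinely different route from the paper's. The paper argues in two short strokes: given a nonzero $f=\sum_i c_i\widehat{\euler{x}_i}$ in the ideal, it sandwiches $f$ between the single cup diagram $\overline{\euler{u}_1}$ and cap diagram $\overline{\euler{v}_1}$ attached to a term of minimal through-degree to extract $\euler{j}_k\in\mathcal{I}$ directly, and then jumps straight to $\on{id}_{\obj{0}}\in\mathcal{I}$ by sandwiching $\euler{j}_k\otimes\euler{j}_k$ between $k$ nested cups and caps, using \autoref{2JWwithCaps} to strip off both projectors and leave $k$ circles. You instead first identify the simples $L_k=\on{im}(\euler{j}_k)$, use Schur to get $\on{id}_{L_n}\in\mathcal{I}$, and then descend one step at a time via the Clebsch--Gordan rule $L_n\otimes\obj{1}\cong L_{n+1}\oplus L_{n-1}$, which you derive from the Jones--Wenzl recursion together with the partial trace computation $\on{cap}_{\setj{n},n+1}\circ(\euler{j}_n\otimes\on{id}_{\obj{1}})\circ\on{cup}_{\setj{n},n+1}=\euler{j}_{n-1}$. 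That computation does come out exactly as you claim: by \autoref{JWcaps} the closure reduces to $\euler{j}_{n-1}$ tensored with a circle, and at $q=0$ the circle evaluates to $q^2+1=1$, so the normalization constant $[n+1]/[n]$ familiar from generic $q$ degenerates to $1$ and no rescaling of the splitting $e=q\circ p$ is needed; the orthogonality $\euler{j}_{n+1}\circ e=0=e\circ\euler{j}_{n+1}$ via $\thru(e)\leq n-1$ and \autoref{annihilation} is also sound. The trade-off is that your argument is longer and needs the full decomposition of $L_n\otimes\obj{1}$, but it yields the Clebsch--Gordan rule and the classification of simples as byproducts (facts the paper only obtains later, via the equivalence with $\mathfrak{sl}_2\mathbf{-Crys}$), and it is the ``standard'' semisimple argument that any tensor ideal containing some $\on{id}_{L_n}$ must contain $\on{id}_{\mathbb{1}}$ once the unit is reachable from every simple by tensoring with the generator and taking summands. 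One small presentational point: the correction term in the paper's recursion is written as $(\euler{x}\otimes\on{id}_{\obj{1}})\circ\on{cap}_{\setj{m},m+1}\circ\on{cup}_{\setj{m},m+1}\circ(\euler{x}\otimes\on{id}_{\obj{1}})$, which as printed is a circle times the identity; your reading of it as $\on{cup}_{\setj{n},n+1}\circ\on{cap}_{\setj{n},n+1}$ (i.e.\ $c_{\setj{n},n+1}$) is the intended one and is the one consistent with the paper's own proof of the recursion, so your factorization $e=q\circ p$ is the right interpretation.
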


    \begin{proof}
        Suppose $\mathcal I$ is a nonzero tensor ideal of $\mathbf{CrysTL}$, and let $f$ be a nonzero morphism of $\mathcal I$.
        Writing $f = \sum_{i}c_i\widehat{\euler{x}_i}$, where $\{\widehat{\euler{x}_i}\}$ is the semisimple basis of \autoref{JWBasis}, where $\euler{x_i} = \euler{u}_i \circ \euler{v}_i$ and $c_i\in \Bbbk$ are nonzero scalars.
        Assume without loss of generality that $\mathtt{th}(\euler{x}_i)\leq \mathtt{th}(\euler{x}_j)$ whenever $i<j$. 
        Note that $\overline{\euler{u_1}}\circ f \circ \overline{\euler{v_1}} = c_1 \overline{\euler{u_1}}\circ \widehat{\euler{x}_1} \circ \overline{\euler{v_1}} = c_1 \euler{j}_{\mathtt{th}(\euler{x}_1)}$ since all other terms are annihilated by the zig-zag relation or by attaching cups or caps to a Jones--Wenzl projector.
        Indeed, if $\overline{\euler{u_1}}\circ x_i \circ \overline{\euler{v_1}} \neq 0$, we must have $K(\euler{v}_i)\subseteq K(\euler{v_1})$ and   $\overline{K}(\euler{u}_i)\subseteq \overline{K}(\euler{u_1})$; on the other hand, by assumption, $\mathtt{th}(x_1)\leq\mathtt{th}(x_i)$ yielding equalities of the sets of cups and caps so that $x_i=x_1$.
        
        It follows that $\euler{j}_k\in \mathcal I$ for some $k=\mathtt{th}(\euler{x}_1)\geq 0$ (where $\euler{j}_0 = \on{id}_{\obj{0}}$ by convention), hence so is $\euler{j}_k\otimes \euler{j}_k$. 
        By \autoref{2JWwithCaps} pre-composing $\euler{j}_k\otimes \euler{j}_k$ with $k$ nested cups and post-composing with $k$ nested caps shows that $\on{id}_{\obj{0}}\in\mathcal I$. 
        Therefore, $\mathcal I$ is all of $\mathbf{CrysTL}$.
 \end{proof}

In \autoref{tensorFunctorsAreFaithful}, we assume the the codomain of the monoidal functor is non-zero, i.e. not the terminal category, as it is the only monoidal category in which the unit object is the zero object.
    \begin{corollary}\label{tensorFunctorsAreFaithful}
        Any $\Bbbk$-linear monoidal functor from $\mathbf{CrysTL}$ to a non-zero $\Bbbk$-linear monoidal category $\mathcal{C}$ is faithful.
    \end{corollary}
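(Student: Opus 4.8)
The plan is to deduce this immediately from \autoref{tensorIdeals} together with the two general facts recalled just above it: the kernel of a $\Bbbk$-linear monoidal functor is a tensor ideal, and a monoidal functor is faithful precisely when its kernel is the zero ideal. So let $F\colon \mathbf{CrysTL} \to \mathcal{C}$ be a $\Bbbk$-linear monoidal functor, and let $\mathcal{I}(X,Y) = \setj{f \in \on{Hom}_{\mathbf{CrysTL}}(X,Y) \; | \; F(f) = 0}$ be its kernel. This $\mathcal{I}$ is a tensor ideal of $\mathbf{CrysTL}$, so by \autoref{tensorIdeals} either $\mathcal{I} = 0$, in which case $F$ is faithful and we are done, or $\mathcal{I} = \mathbf{CrysTL}$.

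It then remains to rule out the second possibility using the hypothesis that $\mathcal{C}$ is non-zero. If $\mathcal{I} = \mathbf{CrysTL}$, then in particular $\on{id}_{\obj{0}} \in \mathcal{I}$, i.e. $\on{id}_{F(\obj{0})} = F(\on{id}_{\obj{0}}) = 0$. Since $F$ is monoidal, its structure isomorphism identifies $F(\obj{0})$ with $\mathbb{1}_{\mathcal{C}}$, so $\on{id}_{\mathbb{1}_{\mathcal{C}}} = 0$; that is, $\mathbb{1}_{\mathcal{C}}$ is a zero object of $\mathcal{C}$. Conjugating by the unitor, for every $X \in \mathcal{C}$ we then get $\on{id}_{X} = 0$, because $\on{id}_{X \otimes \mathbb{1}_{\mathcal{C}}} = \on{id}_X \otimes \on{id}_{\mathbb{1}_{\mathcal{C}}} = \on{id}_X \otimes 0 = 0$ by $\Bbbk$-bilinearity of the tensor product on morphisms (equivalently, $X \otimes -$ is an additive endofunctor, hence sends the zero object to a zero object). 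Thus every object of $\mathcal{C}$ is a zero object, so $\mathcal{C}$ is the terminal ($=$ zero) category, contradicting the hypothesis. Hence $\mathcal{I} = 0$ and $F$ is faithful.

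There is essentially no obstacle in this corollary: it is a formal consequence of \autoref{tensorIdeals}, with the only step requiring a word of care being the implication ``$\mathbb{1}_{\mathcal{C}}$ is a zero object $\Rightarrow$ $\mathcal{C}$ is terminal'', handled by the bilinearity remark above. The genuinely substantive content is \autoref{tensorIdeals} itself, whose proof exploits the semisimple basis of \autoref{JWBasis}, the annihilation of Jones--Wenzl projectors by cups and caps (\autoref{annihilation}), and the diagrammatic identity \autoref{2JWwithCaps} to collapse an arbitrary nonzero ideal element down to $\on{id}_{\obj{0}}$; the present statement is just the translation of that into the language of functors, and the hypothesis that $\mathcal{C}$ is non-zero is exactly what is needed since the terminal category is the one monoidal category admitting a monoidal functor with full kernel.
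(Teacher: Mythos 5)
Your proof is correct and follows essentially the same route as the paper: the kernel of the functor is a tensor ideal, and the non-vanishing of $\mathbb{1}_{\mathcal{C}}$ forces it to be proper, so \autoref{tensorIdeals} gives faithfulness. The only difference is that you spell out the justification for the implication ``$\on{id}_{\mathbb{1}_{\mathcal{C}}}=0$ implies $\mathcal{C}$ is the zero category'' via bilinearity of $\otimes$ on morphisms, which the paper merely asserts in the remark preceding the corollary.
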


    \begin{proof}
        The kernel of such a functor is a \emph{proper} tensor ideal in $\mathbf{CrysTL}$ since it preserves the unit object, which by assumption is non-zero in $\mathcal{C}$; the statement immediately follows by \autoref{tensorIdeals}.
    \end{proof}

    \begin{corollary}
	   The functor $F: \mathbf{CrysTL} \rightarrow \mathfrak{sl}_2\mathbf{-Crys}$ is faithful.
    \end{corollary}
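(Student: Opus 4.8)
The plan is simply to invoke \autoref{tensorFunctorsAreFaithful}. That result asserts that every $\Bbbk$-linear monoidal functor out of $\mathbf{CrysTL}$ into a non-zero $\Bbbk$-linear monoidal category is faithful, so it suffices to check two things about $F$: that it is a $\Bbbk$-linear monoidal functor, and that its target $\mathfrak{sl}_2\mathbf{-Crys}$ is non-zero in the relevant sense (i.e.\ not the terminal category, equivalently the unit object is not a zero object).

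First I would note that $F$ was defined on generators and then extended monoidally, linearly, and to direct sums and summands, so by construction it is a $\Bbbk$-linear (strong) monoidal functor $\mathbf{CrysTL} \to \mathfrak{sl}_2\mathbf{-Crys}$; the only thing to confirm is that the defining relations of $\mathcal{TL}_0(\Bbbk)$ are respected by the assignments $\on{cup}\mapsto\alpha$, $\on{cap}\mapsto\beta$, which is a direct computation with $\alpha,\beta$ on the crystal $B\otimes B\otimes B$ (the zig-zag, circle-evaluation, and snake relations at $q=0$). Second, $\mathfrak{sl}_2\mathbf{-Crys}$ is manifestly non-zero: its unit object is the one-element crystal $B_0$, which is not the empty crystal, hence not a zero object, so $\mathfrak{sl}_2\mathbf{-Crys}$ is not the terminal category.

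With these two observations in place, \autoref{tensorFunctorsAreFaithful} applies verbatim and yields that $F$ is faithful. I do not anticipate any genuine obstacle here: all the substantive content has already been carried out in \autoref{tensorIdeals} (the absence of non-trivial tensor ideals in $\mathbf{CrysTL}$, via the Jones--Wenzl semisimple basis and \autoref{2JWwithCaps}) and packaged into \autoref{tensorFunctorsAreFaithful}; the corollary is a one-line specialization, the only mild bookkeeping being the relation check for well-definedness of $F$, which can be cited from the construction of $F$ preceding this statement.
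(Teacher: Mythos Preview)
Your proposal is correct and takes exactly the same approach as the paper: the paper states this corollary immediately after \autoref{tensorFunctorsAreFaithful} with no separate proof, as it is a direct specialization. Your additional remarks about well-definedness of $F$ and non-triviality of $\mathfrak{sl}_2\mathbf{-Crys}$ are fine but unnecessary here, since the paper already established $F$ as a monoidal functor when it was defined.
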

	
	\begin{proposition}
	   The functor $F$ is full.
	\end{proposition}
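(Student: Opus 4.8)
The plan is to prove fullness by a dimension count, reducing to the already-established faithfulness. Since both $\mathbf{CrysTL}$ and $\mathfrak{sl}_2\mathbf{-Crys}$ are semisimple and $F$ is $\Bbbk$-linear and additive, it suffices to check that $F$ induces a surjection on each $\on{Hom}$-space, and for this it is enough to compare dimensions: a faithful linear map between finite-dimensional vector spaces of equal dimension is an isomorphism. So the first step is to reduce, using additivity and the fact that every object of $\mathbf{CrysTL}$ is a finite direct sum of summands of the objects $\obj{n}$, to showing that
\[
\on{dim}_{\Bbbk}\on{Hom}_{\mathbf{CrysTL}}(\obj{m},\obj{n}) = \on{dim}_{\Bbbk}\on{Hom}_{\mathfrak{sl}_2\mathbf{-Crys}}(B^{\otimes m}, B^{\otimes n})
\]
for all $m,n \in \mathbb{N}$ (here I use $F(\obj{n}) = B^{\otimes n}$, established when $F$ was defined).

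The second step is to compute the right-hand side. By \autoref{TLBasis}, the left-hand side is $C_{(m+n)/2}$ when $m+n$ is even and $0$ otherwise, where $C_j$ is the $j$-th Catalan number. For the right-hand side, I would decompose $B^{\otimes n}$ into indecomposable crystals using the tensor product rule of \autoref{crysTensor}: one shows $B^{\otimes n} \cong \bigoplus_{k} m_{n,k}\, B_k$ where $m_{n,k}$ counts highest-weight elements of weight $k$ in $B^{\otimes n}$, which is the classical "ballot number" count — the number of lattice paths of length $n$ from $0$ to $k$ staying nonnegative. Since $\mathfrak{sl}_2\mathbf{-Crys}$ is semisimple with the $B_k$ as pairwise non-isomorphic simples (each with one-dimensional endomorphism space), $\on{Hom}(B^{\otimes m}, B^{\otimes n}) = \sum_k m_{m,k} m_{n,k}$. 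The combinatorial identity $\sum_k m_{m,k} m_{n,k} = C_{(m+n)/2}$ (when $m+n$ even, else $0$) is standard: it follows, e.g., by a reflection/concatenation argument matching a Catalan path on $m+n$ steps with a pair of ballot paths of lengths $m$ and $n$ meeting at a common height, or by comparison with the $q \neq 0$ Temperley--Lieb case via \autoref{uqsl2TL}. Alternatively, one can avoid the identity entirely by invoking \autoref{uqsl2TL}: $\on{Hom}_{\mathbf{Fund}(\uqsl)}$ has the same dimension $C_{(m+n)/2}$, and the crystal multiplicities $m_{n,k}$ coincide with the representation-theoretic multiplicities of $V(k)$ in $V(1)^{\otimes n}$, so the two counts agree by general principles.

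The third step assembles the argument: $F$ is faithful (previous corollary) and $\Bbbk$-linear, hence injective on each $\on{Hom}$-space; by Steps 1–2 domain and codomain have equal finite dimension; therefore $F$ is bijective on $\on{Hom}$-spaces, i.e. full (and in fact fully faithful). Combined with essential surjectivity — which is immediate since every $B_k$ appears as a summand of some $B^{\otimes n} = F(\obj{n})$, so $F$ is essentially surjective onto the idempotent-completed target — this will give that $F$ is an equivalence, which is presumably the next result. The main obstacle is the combinatorial identity $\sum_k m_{m,k}m_{n,k} = C_{(m+n)/2}$; the cleanest route is probably to cite it as the well-known dimension of the Temperley--Lieb $\on{Hom}$-space together with the classical branching of $V(1)^{\otimes n}$, or to give a quick bijective proof, rather than to expand it inline.
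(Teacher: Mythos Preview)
Your proposal is correct and follows essentially the same approach as the paper: use faithfulness to reduce fullness to a dimension count, identify the left-hand side as a Catalan number via the basis theorem, and compute the right-hand side by observing that the multiplicities of $B_k$ in $B^{\otimes n}$ agree with those of $V(k)$ in $V(1)^{\otimes n}$, so $\dim\on{Hom}_{\mathfrak{sl}_2\mathbf{-Crys}}(B^{\otimes m},B^{\otimes n}) = \dim\on{Hom}_{\mathbf{Rep}(\mathfrak{sl}_2)}(V^{\otimes m},V^{\otimes n}) = C_{(m+n)/2}$. The paper takes exactly this shortcut rather than proving the ballot-number identity directly, so your ``alternative route'' is in fact the paper's route.
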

		
	\begin{proof}
		Since $F$ is faithful, we have for every $m,n\in\mathbb N$ an injective linear map $$\on{Hom}_{\mathbf{CrysTL}}(\obj{m},\obj{n}) \to \on{Hom}_{\mathfrak{sl}_2\mathbf{-Crys}}(B^{\otimes m},B^{\otimes n}),$$ so it suffices to show that 
		$$\dim \on{Hom}_{\mathbf{CrysTL}}(\obj{m},\obj{n}) = \dim \on{Hom}_{\mathfrak{sl}_2\mathbf{-Crys}}(B^{\otimes m},B^{\otimes n}).$$
		But this is indeed true since the left-hand side is counting crossingless matchings from $m$ points to $n$ points and the right handside is equal to $\dim \on{Hom}_{\textbf{Rep}(\mathfrak{sl}_2)}(V^{\otimes m},V^{\otimes n})$, where $V$ is the defining representation, and both are known to be equal to the Catalan number $C_{\frac{m+n}{2}}$.
	\end{proof}
	
	Finally, note that $F$ is clearly essentially surjective on objects since any indecomposable $B_\lambda$ in $\mathfrak{sl}_2\mathbf{-Crys}$ is a summand in $F([\lambda]) = B^{\otimes\lambda}$, and hence in the image of $F$.
	Furthermore, $F$ is monoidal by construction with the obvious structure maps.
	Thus, we have shown:
	
	\begin{theorem}\label{thm:equivalence}
		The functor $F: \mathbf{CrysTL} \to \mathfrak{sl}_2\mathbf{-Crys}$ is an equivalence of monoidal categories.
	\end{theorem}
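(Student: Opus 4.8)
The plan is to show that $F$ is a monoidal equivalence by establishing, in turn, that it is well defined, monoidal, faithful, full, and essentially surjective; by the standard characterisation, these last three together with monoidality give a monoidal equivalence. In fact almost all of the work has already been done in the preceding results, so the argument is mostly one of assembly.

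First I would check that $F$ is well defined. Since $\mathcal{TL}_0(\Bbbk)=\overline{\mathcal{TL}}_0(\Bbbk)$ is the strict $\Bbbk$-linear monoidal category freely generated by an object carrying morphisms $\on{cup}\colon\obj{0}\to\obj{2}$ and $\on{cap}\colon\obj{2}\to\obj{0}$ subject to the zig-zag relation (which evaluates to $0$ at $q=0$) and the circle relation (which evaluates to $\on{id}_{\obj{0}}$ at $q=0$), a strong monoidal $\Bbbk$-linear functor out of it amounts to a choice of object together with morphisms realising $\on{cup},\on{cap}$ and satisfying these two relations. Both are verified at once from the explicit formulas: a direct computation gives $\beta\circ\alpha=\on{id}_{B_0}$, and the two zig-zag composites $B\to B$ vanish because every summand of their values has the form $\beta(b_i\otimes b_j)$ with $(i,j)\neq(0,1)$, all of which are $0$. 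This produces a monoidal functor $\mathcal{TL}_0(\Bbbk)\to\mathfrak{sl}_2\mathbf{-Crys}$; because $\mathfrak{sl}_2\mathbf{-Crys}$ is Cauchy complete (being semisimple, it is idempotent complete and has finite direct sums), it extends essentially uniquely, by the universal property of the Cauchy completion recorded in~\eqref{eq:MonCauchy}, to a monoidal functor $F\colon\mathbf{CrysTL}=\mathcal{TL}_0(\Bbbk)^{\mathtt{C}}\to\mathfrak{sl}_2\mathbf{-Crys}$ sending $\obj{n}$ to $B^{\otimes n}$.

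Faithfulness is then immediate: $\ker F$ is a tensor ideal, and it is proper since $F$ preserves the unit object $\obj{0}$, whose image $B_0$ is non-zero; by \autoref{tensorIdeals} the zero ideal is the only proper tensor ideal of $\mathbf{CrysTL}$, so $\ker F=0$. (This is exactly \autoref{tensorFunctorsAreFaithful} and its corollary.) For fullness, faithfulness gives injective linear maps $\on{Hom}_{\mathbf{CrysTL}}(\obj{m},\obj{n})\hookrightarrow\on{Hom}_{\mathfrak{sl}_2\mathbf{-Crys}}(B^{\otimes m},B^{\otimes n})$; by \autoref{TLBasis} the source has dimension $C_{(m+n)/2}$, while the target has the same dimension, since crystal hom-spaces have the dimensions predicted by $\mathbf{Rep}(\mathfrak{sl}_2)$, namely the number of crossingless matchings. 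Hence these injections are bijections, so $F$ is full on the objects $\obj{n}$; since every object of $\mathbf{CrysTL}$ is a retract of some $\obj{n}$ and $F$ is additive and retract-preserving, $F$ is full (and still faithful) on all of $\mathbf{CrysTL}$. Finally, $F$ is essentially surjective because each indecomposable $B_\lambda$ is a direct summand of $B^{\otimes\lambda}$, which lies in the image of $F$, and these generate $\mathfrak{sl}_2\mathbf{-Crys}$ under direct sums. Combining monoidality, fullness, faithfulness and essential surjectivity yields the desired monoidal equivalence.

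I do not expect a real obstacle at this stage: the genuine difficulties have been front-loaded into \autoref{TLBasis} — supplying the hom-space dimensions on the $\mathcal{TL}_0(\Bbbk)$ side — and into \autoref{tensorIdeals}, which underlies faithfulness and in turn depends on the Jones–Wenzl machinery of \autoref{JWBasis} and \autoref{2JWwithCaps}. The only minor points needing care are promoting fullness from the objects $\obj{n}$ to arbitrary objects of the idempotent completion, and bookkeeping the (non-strict) monoidal coherence isomorphisms of $F$; both are routine, and a more conceptual argument avoiding the explicit dimension count is sketched in \autoref{rem:alternativeequivalence}.
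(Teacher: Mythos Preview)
Your proposal is correct and follows essentially the same route as the paper: faithfulness via \autoref{tensorIdeals}/\autoref{tensorFunctorsAreFaithful}, fullness via the Catalan-number dimension count from \autoref{TLBasis}, and essential surjectivity from $B_\lambda$ being a summand of $B^{\otimes\lambda}$. You are slightly more explicit than the paper in verifying the defining relations for well-definedness and in addressing the passage from $\mathcal{TL}_0(\Bbbk)$ to its Cauchy completion, but these are points the paper simply takes for granted.
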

    \begin{remark}\label{rem:alternativeequivalence}
        If $\Bbbk$ is algebraically closed, we have an alternative way to prove fullness: first, observe that by semisimplicity and Schur's Lemma any morphism in $\mathfrak{sl}_2-\mathbf{Crys}$ can be written as a linear combination of projections onto followed by embeddings into irreducible summands; then, note that such projections and embeddings are in the image of $F$, which is demonstrated in \autoref{ProjEmb} below. 
        In other words, \autoref{ProjEmb} shows that $F$ is a functor of semisimple categories which induces an isomorphism of Grothendieck rings, and hence it is an equivalence.
    \end{remark}

  \begin{corollary}\label{notbraided}
      The category $\mathbf{CrysTL}$ does not admit a braiding.
  \end{corollary}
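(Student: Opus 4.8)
The plan is to argue directly by contradiction, using only \autoref{TLBasis} and the defining relations of $\mathcal{TL}_{0}(\Bbbk)$; I mention first a shorter alternative. Since $\mathbf{CrysTL}$ is not rigid — this is \autoref{prop:notrigid}, whose proof applies verbatim to $\mathbf{CrysTL}$ because the hom-spaces around $\obj{1}$ and the relevant tensor products are unchanged in the Cauchy completion — one may invoke \cite[Theorem~1.1]{EP}, which forces a braided monoidal category of the relevant kind to be rigid, as already indicated in the introduction; with that route the work lies entirely in checking the hypotheses of that theorem. I would instead prefer the following elementary computation.

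Suppose toward a contradiction that $\mathbf{CrysTL}$ admits a braiding $c$. Since $\mathcal{TL}_{0}(\Bbbk)$ sits in $\mathbf{CrysTL}$ as a full strict monoidal subcategory closed under $\otimes$, the braiding restricts to it, so I may work inside $\mathcal{TL}_{0}(\Bbbk)$ and set $\gamma := c_{\obj{1},\obj{1}} \in \on{End}_{\mathcal{TL}_{0}(\Bbbk)}(\obj{2})$. By \autoref{TLBasis} we have $\on{End}_{\mathcal{TL}_{0}(\Bbbk)}(\obj{2}) = \on{Span}\setj{\on{id}_{\obj{2}},\, \on{cup}\circ\on{cap}}$, so $\gamma = \lambda\,\on{id}_{\obj{2}} + \mu\,(\on{cup}\circ\on{cap})$ for some $\lambda,\mu \in \Bbbk$. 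The circle relation gives $\on{cap}\circ\on{cup} = \on{id}_{\obj{0}}$ (the specialization of $(q^{2}+1)\on{id}_{\obj{0}}$ at $q=0$), whence $\gamma\circ\on{cup} = (\lambda+\mu)\,\on{cup}$.

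The key step combines two standard consequences of the braiding axioms. First, naturality of $c$ in its first argument applied to $\on{cup}\colon\obj{0}\to\obj{2}$ and to $\on{id}_{\obj{1}}$ gives $c_{\obj{2},\obj{1}}\circ(\on{cup}\otimes\on{id}_{\obj{1}}) = (\on{id}_{\obj{1}}\otimes\on{cup})\circ c_{\obj{0},\obj{1}} = \on{id}_{\obj{1}}\otimes\on{cup}$, using that braiding with the monoidal unit $\obj{0}$ is the identity. Second, the hexagon identity gives $c_{\obj{2},\obj{1}} = (\gamma\otimes\on{id}_{\obj{1}})\circ(\on{id}_{\obj{1}}\otimes\gamma)$, where $\gamma$ acts on strands $(1,2)$ in the first factor and on strands $(2,3)$ in the second. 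I would then evaluate the right-hand side on $\on{cup}\otimes\on{id}_{\obj{1}}$: in $(\on{id}_{\obj{1}}\otimes\gamma)\circ(\on{cup}\otimes\on{id}_{\obj{1}})$ the summand coming from $\mu\,(\on{cup}\circ\on{cap})$ contains the zig-zag $(\on{id}_{\obj{1}}\otimes\on{cap})\circ(\on{cup}\otimes\on{id}_{\obj{1}})$, which is $0$ in $\mathcal{TL}_{0}(\Bbbk)$, so $(\on{id}_{\obj{1}}\otimes\gamma)\circ(\on{cup}\otimes\on{id}_{\obj{1}}) = \lambda\,(\on{cup}\otimes\on{id}_{\obj{1}})$; applying $\gamma\otimes\on{id}_{\obj{1}}$ and using $\gamma\circ\on{cup}=(\lambda+\mu)\on{cup}$ yields $c_{\obj{2},\obj{1}}\circ(\on{cup}\otimes\on{id}_{\obj{1}}) = \lambda(\lambda+\mu)\,(\on{cup}\otimes\on{id}_{\obj{1}})$.

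Comparing the two computations of $c_{\obj{2},\obj{1}}\circ(\on{cup}\otimes\on{id}_{\obj{1}})$ forces $\lambda(\lambda+\mu)\,(\on{cup}\otimes\on{id}_{\obj{1}}) = \on{id}_{\obj{1}}\otimes\on{cup}$ in $\on{Hom}_{\mathcal{TL}_{0}(\Bbbk)}(\obj{1},\obj{3})$, which is absurd: $\on{cup}\otimes\on{id}_{\obj{1}}$ and $\on{id}_{\obj{1}}\otimes\on{cup}$ are distinct Temperley--Lieb diagrams, hence by \autoref{TLBasis} a basis of the two-dimensional space $\on{Hom}_{\mathcal{TL}_{0}(\Bbbk)}(\obj{1},\obj{3})$, so they are linearly independent. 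I do not expect a substantive obstacle; the only care required is bookkeeping — fixing the orientation convention in the hexagon identity, recalling $c_{\obj{0},\obj{1}} = \on{id}_{\obj{1}}$, and confirming that a braiding on $\mathbf{CrysTL}$ descends to the strict subcategory $\mathcal{TL}_{0}(\Bbbk)$.
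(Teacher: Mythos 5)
Your computation is correct, and it is a genuinely different argument from the one in the paper. The paper proves this corollary by citing \cite[Section 2.3]{HKam} for the fact that $\mathfrak{sl}_2\mathbf{-Crys}$ admits no braiding and then transporting that fact across the monoidal equivalence of \autoref{thm:equivalence}; your proof is instead a self-contained diagrammatic contradiction that uses only \autoref{TLBasis}, the zig-zag and circle relations, and the braiding axioms, and in particular does not depend on the equivalence with crystals or on any external reference. I checked the key steps: $\on{End}(\obj{2})$ is indeed spanned by $\on{id}_{\obj{2}}$ and $\on{cup}\circ\on{cap}$, the circle relation at $q=0$ gives $\gamma\circ\on{cup}=(\lambda+\mu)\on{cup}$, the zig-zag relation kills the $\mu$-summand in $(\on{id}_{\obj{1}}\otimes\gamma)\circ(\on{cup}\otimes\on{id}_{\obj{1}})$, and naturality together with $c_{\obj{0},\obj{1}}=\on{id}_{\obj{1}}$ and the hexagon forces $\lambda(\lambda+\mu)(\on{cup}\otimes\on{id}_{\obj{1}})=\on{id}_{\obj{1}}\otimes\on{cup}$, which contradicts the linear independence of the two cup diagrams in the two-dimensional space $\on{Hom}(\obj{1},\obj{3})$. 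The restriction of a putative braiding on $\mathbf{CrysTL}$ to the full monoidal subcategory $\mathcal{TL}_0(\Bbbk)$ is unproblematic, as you note. What each approach buys: the paper's argument is essentially free once \autoref{thm:equivalence} is in place and emphasizes that the obstruction is intrinsic to crystals; yours isolates the concrete diagrammatic obstruction (the interaction of the hexagon with the vanishing zig-zag) and would survive even without the equivalence, so it also serves as an independent sanity check on \autoref{thm:equivalence} itself. Either proof is acceptable.
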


  \begin{proof}
      From \cite[Section 2.3]{HKam}, we know that the category $\mathfrak{sl}_2\mathbf{-Crys}$ does not admit a braiding. 
      The claim now follows directly from \autoref{thm:equivalence}.
  \end{proof}

    \subsection{A Commutor for the Crystal Temperley--Lieb Category}\label{sec:commutor}
	
	We will now define a commutor for $\mathcal{TL}_0(\Bbbk)$ that endows it with the structure of a coboundary category. 
	
    Recall that $\mathscr{D}_{\mobijd{n}}$ is the set of all possible cap diagrams in $\on{Hom}_{\mathcal{TL}_0(\Bbbk)}(\obj{n},\obj{m})$ for some $m\leq n$.
	Given $\euler{x}\in\mathscr{D}_{\mobijd{m+n}}$, let $\euler{x}_{\leq m}$ (resp. $\euler{x}_{>m}$) be the diagram obtained by deleting strands $m+1,\dots,n$ (resp. $1,\dots,m)$) in the diagram of $\euler{x}$ and turning any remaining half-caps into through strands.
	We call $\euler{x}$ \emph{$l$-hooked at $m$} if $\thru(\euler{x})+2l = \thru(\euler{x}_{\leq m})+\thru(\euler{x}_{>m})$.
	We also use the notation $\mathtt{hk}_m\euler{x}:=l$ whenever $\euler{x}$ is $l$-hooked at $m$.
	
	Conversely, starting with two diagrams $\euler{y}\in\mathscr{D}_{\mobijd{m}}$ and $\euler{y}'\in \mathscr{D}_{\mobijd{n}}$, for each $l\leq\min\{\thru(\euler{y}),\thru(\euler{y}')\}$, there is a unique $l$-hooked at $m$ diagram $\euler{x}\in \mathscr{D}_{\mobijd{m+n}}$ such that $\euler{x}_{\leq m} = \euler{y}$ and $\euler{x}_{>m} = \euler{y}'$, namely the diagram acquired by concatenating $\euler{y}$ and $\euler{y}'$ and inductively joining the rightmost through strand of $\euler{y}$ with the left-most through strand of $\euler{y}'$ to form a cap $l$ times; we call this operation \emph{$l$-hooking at $m$} and denote it by $\euler{y}\odot_l\euler{y}'$).
	
	For each $n,m\in\mathbb N$, we define a map $\kappa_{\mobij{m},\mobij{n}}:\mathscr{D}_{\mobijd{m+n}}\to \mathscr{D}_{\mobijd{m+n}}$ by
	$\kappa_{\mobij{m},\mobij{n}}(\euler{x})=
	\euler{x}_{> m}\odot_{\mathtt{hk}_m(\euler{x})} \euler{x}_{\leq m}$.
	For example,
	\[\begin{tikzpicture}
	\begin{pgfonlayer}{nodelayer}
		\node [style=none] (0) at (-5.25, -0.75) {};
		\node [style=none] (1) at (-5, -0.75) {};
		\node [style=none] (2) at (-2.5, -0.75) {};
		\node [style=none] (3) at (-1, -0.75) {};
		\node [style=none] (7) at (-4.75, -0.75) {};
		\node [style=none] (8) at (-4.5, -0.75) {};
		\node [style=none] (9) at (-4.25, -0.75) {};
		\node [style=none] (10) at (-4, -0.75) {};
		\node [style=none] (11) at (-3.75, -0.75) {};
		\node [style=none] (12) at (-5.25, 0.75) {};
		\node [style=none] (13) at (-4.625, -1) {};
		\node [style=none] (14) at (-4.625, 1) {};
		\node [style=none] (16) at (-3.25, 0) {$=$};
		\node [style=none] (17) at (-2.75, -0.75) {};
		\node [style=none] (18) at (-2.25, -0.75) {};
		\node [style=none] (19) at (-2.75, 0.75) {};
		\node [style=none] (20) at (-2.5, 0.75) {};
		\node [style=none] (21) at (-2.25, 0.75) {};
		\node [style=none] (22) at (-1.75, 0) {$\odot_2$};
		\node [style=none] (23) at (-1.25, -0.75) {};
		\node [style=none] (24) at (-0.75, -0.75) {};
		\node [style=none] (25) at (-0.5, -0.75) {};
		\node [style=none] (26) at (-1.25, 0.75) {};
		\node [style=none] (27) at (-1, 0.75) {};
		\node [style=none] (35) at (2, 0) {$\odot_2$};
		\node [style=none] (36) at (2.75, -0.75) {};
		\node [style=none] (37) at (2.5, -0.75) {};
		\node [style=none] (38) at (3, -0.75) {};
		\node [style=none] (39) at (2.5, 0.75) {};
		\node [style=none] (40) at (2.75, 0.75) {};
		\node [style=none] (41) at (3, 0.75) {};
		\node [style=none] (42) at (3.5, 0) {$=$};
		\node [style=none] (43) at (1, -0.75) {};
		\node [style=none] (44) at (0.75, -0.75) {};
		\node [style=none] (45) at (1.25, -0.75) {};
		\node [style=none] (46) at (1.5, -0.75) {};
		\node [style=none] (47) at (0.75, 0.75) {};
		\node [style=none] (48) at (1, 0.75) {};
		\node [style=none] (49) at (4, -0.75) {};
		\node [style=none] (50) at (4.25, -0.75) {};
		\node [style=none] (51) at (4.5, -0.75) {};
		\node [style=none] (52) at (4.75, -0.75) {};
		\node [style=none] (53) at (5, -0.75) {};
		\node [style=none] (54) at (5.25, -0.75) {};
		\node [style=none] (55) at (5.5, -0.75) {};
		\node [style=none] (57) at (4.875, -1) {};
		\node [style=none] (58) at (4.875, 1) {};
		\node [style=none] (59) at (5.5, 0.75) {};
		\node [style=none] (61) at (0, 0) {$\overset{\kappa_{3,4}}{\mapsto}$};
	\end{pgfonlayer}
	\begin{pgfonlayer}{edgelayer}
		\draw [style=thickstrand] [bend left=90, looseness=1.75] (7.center) to (8.center);
		\draw [style=thickstrand] (0.center) to (12.center);
		\draw [style=background] (14.center) to (13.center);
		\draw [style=thickstrand] (17.center) to (19.center);
		\draw [style=thickstrand] (2.center) to (20.center);
		\draw [style=thickstrand] (18.center) to (21.center);
		\draw [style=thickstrand] (23.center) to (26.center);
		\draw [style=thickstrand] (37.center) to (39.center);
		\draw [style=thickstrand] (36.center) to (40.center);
		\draw [style=thickstrand] (38.center) to (41.center);
		\draw [style=thickstrand, bend left=90, looseness=1.50] (1.center) to (9.center);
		\draw [style=thickstrand, bend left=90, looseness=1.75] (10.center) to (11.center);
		\draw [style=thickstrand] (3.center) to (27.center);
		\draw [style=thickstrand, bend left=90, looseness=1.75] (24.center) to (25.center);
		\draw [style=thickstrand] (44.center) to (47.center);
		\draw [style=thickstrand] (43.center) to (48.center);
		\draw [style=thickstrand, bend left=90, looseness=1.75] (45.center) to (46.center);
		\draw [style=thickstrand] [bend left=90, looseness=1.75] (51.center) to (52.center);
		\draw [style=background] (58.center) to (57.center);
		\draw [style=thickstrand, bend left=90, looseness=1.25] (49.center) to (54.center);
		\draw [style=thickstrand, bend left=90, looseness=1.25] (50.center) to (53.center);
		\draw [style=thickstrand] (55.center) to (59.center);
	\end{pgfonlayer}
\end{tikzpicture}
\]
	
	It is easy to see that $\kappa_{\mobij{n},\mobij{m}}\circ\kappa_{\mobij{m},\mobij{n}} = \on{id}_{\mathscr D_{\mobijd{m+n}}}$ and that $\thru(\kappa_{\mobij{m},\mobij{n}}(\euler{x}))=\thru(\euler{x})$. 
	
	Next define a function $\tau_{\mobijt{m},\mobijt{n}}:\mathscr{D}_{\mobijd{m},\mobijd{n}}\to \on{Hom}_{\mathcal{TL}_0(\Bbbk)}(\obj{m}\otimes \obj{n},\obj{n}\otimes \obj{m})$ given by $\tau_{\mobijt{m},\mobijt{n}}(\euler{x}) = \overline{\kappa_{\mobij{m},\mobij{n}}(\euler{x})}\circ \euler{j}_{\mathtt{th(\euler{x})}}\circ \euler{x}$. 
    In particular, it is the max-summand morphism of $\overline{\kappa_{\mobij{m},\mobij{n}}(\euler{x})}\circ \euler{x}$, using the terminology of \autoref{maxsummand}.
    For example, the computation of $\kappa_{3,4}$ on the diagram above shows that 
    
    \[\begin{tikzpicture}
	\begin{pgfonlayer}{nodelayer}
		\node [style=none] (0) at (-5.5, -0.25) {};
		\node [style=none] (1) at (-5.25, -0.25) {};
		\node [style=none] (7) at (-5, -0.25) {};
		\node [style=none] (8) at (-4.75, -0.25) {};
		\node [style=none] (9) at (-4.5, -0.25) {};
		\node [style=none] (10) at (-4.25, -0.25) {};
		\node [style=none] (11) at (-4, -0.25) {};
		\node [style=none] (12) at (-5.5, 0.25) {};
		\node [style=none] (16) at (-3, 0) {$=$};
		\node [style=none] (49) at (-2.25, 0.75) {};
		\node [style=none] (50) at (-2, 0.75) {};
		\node [style=none] (51) at (-1.75, 0.75) {};
		\node [style=none] (52) at (-1.5, 0.75) {};
		\node [style=none] (53) at (-1.25, 0.75) {};
		\node [style=none] (54) at (-1, 0.75) {};
		\node [style=none] (55) at (-0.75, 0.75) {};
		\node [style=none] (62) at (-5.75, 0.5) {};
		\node [style=none] (63) at (-3.75, 0.5) {};
		\node [style=none] (64) at (-6.75, 0) {$\tau_{3,4}$};
		\node [style=none] (65) at (-5.75, -0.5) {};
		\node [style=none] (66) at (-3.75, -0.5) {};
		\node [style=none] (67) at (-2.25, -0.75) {};
		\node [style=none] (68) at (-2, -0.75) {};
		\node [style=none] (69) at (-1.75, -0.75) {};
		\node [style=none] (70) at (-1.5, -0.75) {};
		\node [style=none] (71) at (-1.25, -0.75) {};
		\node [style=none] (72) at (-1, -0.75) {};
		\node [style=none] (73) at (-0.75, -0.75) {};
	\end{pgfonlayer}
	\begin{pgfonlayer}{edgelayer}
		\draw [style=thickstrand] [bend left=90, looseness=1.75] (7.center) to (8.center);
		\draw [style=thickstrand] (0.center) to (12.center);
		\draw [style=thickstrand, bend left=90, looseness=1.50] (1.center) to (9.center);
		\draw [style=thickstrand, bend left=90, looseness=1.75] (10.center) to (11.center);
		\draw [style=thickstrand] [bend right=90, looseness=1.75] (51.center) to (52.center);
		\draw [style=thickstrand, bend right=90, looseness=1.25] (49.center) to (54.center);
		\draw [style=thickstrand, bend right=90, looseness=1.25] (50.center) to (53.center);
		\draw [style=thickstrand] [bend right, looseness=0.75] (62.center) to (65.center);
		\draw [style=thickstrand] [bend left, looseness=0.75] (63.center) to (66.center);
		\draw [style=thickstrand] [bend left=90, looseness=1.75] (69.center) to (70.center);
		\draw [style=thickstrand, bend left=90, looseness=1.50] (68.center) to (71.center);
		\draw [style=thickstrand, bend left=90, looseness=1.75] (72.center) to (73.center);
		\draw [style=thickstrand] [in=-90, out=90] (67.center) to (55.center);
	\end{pgfonlayer}
\end{tikzpicture}
\]
    
	We are finally ready to define the commutor.
	
	\begin{theorem}\label{TLCob}
		The maps $\sigma_{\mobijs{m},\mobijs{n}}\in\on{Hom}_{\mathcal{TL}_0(\Bbbk)}(\obj{m}\otimes \obj{n},\obj{n}\otimes \obj{m})$ given by 
		$$\sigma_{\mobijs{m},\mobijs{n}} = \sum_{\euler{x}\in \mathscr{D}_{\mobijd{m+n}}} \tau_{\mobijt{m},\mobijt{n}}(\euler{x}) = \sum_{\euler{x}\in \mathscr{D}_{\mobijd{m+n}}} \overline{\kappa_{\mobij{m},\mobij{n}}(\euler{x})}\circ \euler{j}_{\mathtt{th(\euler{x})}}\circ \euler{x}$$
		define a natural (in $\obj{m}$ and $\obj{n}$) isomorphism, and satisfy the coboundary commutor axioms, turning $\mathcal{TL}_0(\Bbbk)$ into a coboundary category.
	\end{theorem}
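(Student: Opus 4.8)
The plan is to exploit strictness of $\mathcal{TL}_0(\Bbbk)$, so that $\obj{m}\otimes\obj{n}=\obj{m+n}=\obj{n}\otimes\obj{m}$, all associators and unitors in the axioms are identities, and each $\sigma_{\obj{m},\obj{n}}$ is really an endomorphism of $\obj{m+n}$, together with the orthogonal idempotent decomposition of \autoref{DnIdempotents}. The first step is to repackage the commutor in terms of these idempotents: using \autoref{jnuj} to collapse the composite of a summand of $\sigma_{\obj{m},\obj{n}}$ with an idempotent $\overline{\euler{x}}\circ\euler{j}_{\thru(\euler{x})}\circ\euler{x}$, I would show that $\sigma_{\obj{m},\obj{n}}\circ\overline{\euler{x}}\circ\euler{j}_{\thru(\euler{x})}=\overline{\kappa_{m,n}(\euler{x})}\circ\euler{j}_{\thru(\euler{x})}$ for every $\euler{x}\in\mathscr{D}_{m+n}$. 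Thus $\sigma_{\obj{m},\obj{n}}$ is the unique endomorphism of $\obj{m+n}$ carrying the direct summand cut out by the idempotent attached to $\euler{x}$ isomorphically onto the one attached to $\kappa_{m,n}(\euler{x})$; equivalently, in the matrix block $\on{End}_{\mathcal{TL}_0(\Bbbk)}(\obj{m+n})_{\thru=k}\cong\on{Mat}_{r_k}(\Bbbk)$ of \autoref{algebraiso} and \autoref{EndAlgebras}, it is the permutation matrix of the involution $\kappa_{m,n}$ restricted to cap diagrams with $k$ through-strands. This reformulation reduces every identity between commutor morphisms to an identity between permutations of cap diagrams, and it is the conceptual heart of the argument.

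Given this reformulation, invertibility and the symmetry axiom are immediate: \autoref{jnuj} collapses the double sum in $\sigma_{\obj{n},\obj{m}}\circ\sigma_{\obj{m},\obj{n}}$ to $\sum_{\euler{x}\in\mathscr{D}_{m+n}}\overline{\kappa_{n,m}(\kappa_{m,n}(\euler{x}))}\circ\euler{j}_{\thru(\euler{x})}\circ\euler{x}$, which equals $\on{id}_{\obj{m+n}}$ by $\kappa_{n,m}\circ\kappa_{m,n}=\on{id}$ and \autoref{DnIdempotents}. For the unit axiom, strictness reduces the required identity to $\sigma_{\obj{0},\obj{n}}=\on{id}_{\obj{n}}=\sigma_{\obj{n},\obj{0}}$, which holds because $\kappa_{0,n}$ and $\kappa_{n,0}$ act as the identity on $\mathscr{D}_n$ (there is no hooking available to perform), so the defining sums again collapse via \autoref{DnIdempotents}.

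The remaining points, naturality and the cactus axiom, carry the combinatorial weight, and I expect them to be the main obstacle; in both cases the plan is to reduce to a purely combinatorial identity about the maps $\kappa_{m,n}$ and the hooking operation. For naturality, bifunctoriality of $\otimes$ lets me reduce the naturality square to the case where $f$ (resp.\ $g$) is a basic cup $\on{id}_{\obj{a}}\otimes\on{cup}\otimes\on{id}_{\obj{b}}$ or a basic cap $\on{id}_{\obj{a}}\otimes\on{cap}\otimes\on{id}_{\obj{b}}$ and the other slot is an identity; expanding both sides in the semisimple basis of \autoref{JWBasis} and applying the annihilation property \autoref{annihilation} of the Jones--Wenzl projectors together with \autoref{JWcaps} and \autoref{2JWwithCaps} should collapse each side to a single term, after which the square amounts to the compatibility of $\kappa_{m,n}$ with $\kappa_{m\pm2,n}$ (and $\kappa_{m,n\pm2}$) under attaching a basic cup or cap, a statement that can be read directly off the definition of $l$-hooking once the change in hooking number is tracked. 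For the cactus axiom, strictness removes the associators, and expanding the two composites of commutor morphisms in the semisimple basis and repeatedly invoking \autoref{jnuj} and \autoref{annihilation} reduces it to the equality of two composites of $\kappa$-type maps on $\mathscr{D}_{m+n+p}$; both composites implement the same interval-reversing bijection of cap diagrams, mirroring the defining cactus relation, and this I would verify by direct inspection of the hooking construction. The genuine difficulty throughout is the bookkeeping of hooking numbers and the induced cap-diagram bijections through these compositions, not any deeper structural obstruction: after the reformulation of the first paragraph, each axiom becomes a finite, explicit combinatorial identity requiring no analytic or representation-theoretic input.
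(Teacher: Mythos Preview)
Your approach is essentially the same as the paper's, and the permutation-matrix reformulation you lead with is a clean way to say what the paper does implicitly throughout: once \autoref{jnuj} and \autoref{DnIdempotents} are in hand, every axiom becomes a statement about how $\kappa$-type bijections interact on cap diagrams. Your symmetry and unit arguments match the paper's verbatim.

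Two points of comparison are worth noting. For naturality, the paper does \emph{not} reduce to basic cups and caps; instead it takes a general Temperley--Lieb diagram $\euler{f}$, identifies the surviving terms on each side as those $\euler{x}$ with $\overline{K}(\euler{f})\subseteq K(\euler{x})$ (resp.\ $K(\euler{f})\subseteq\overline{K}(\tau(\euler{x}))$), and matches them via the bijection $\euler{x}\mapsto\euler{x}\circ(\euler{f}\otimes\on{id})$. Your reduction to generators is valid and arguably tidier, but your phrase ``collapse each side to a single term'' should be read as ``a single term after precomposing with a fixed idempotent $e_{\euler{x}}$'', not globally---this is what makes the argument work. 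For the cactus axiom, the paper does exactly the ``direct inspection of the hooking construction'' you anticipate: it first shows that only the pairs $(\euler{y},\kappa_{t,s}(\euler{y}_{>r}))$ survive in one composite (and symmetrically in the other), then matches the $\euler{y}$-term to the $\euler{y}'$-term under the bijection $\euler{y}'=\kappa_{t,r+s}\circ\kappa_{r,s+t}(\euler{y})$ via the decomposition $\euler{y}=\euler{y}_r\odot_k\euler{y}_t\odot_l\euler{y}_s$, invoking \autoref{JWcaps} to absorb the inner Jones--Wenzl projector. Your intuition that both sides implement the same interval-reversing bijection $\euler{y}_r\odot_k\euler{y}_s\odot_l\euler{y}_t\mapsto\euler{y}_t\odot_l\euler{y}_s\odot_k\euler{y}_r$ is exactly what this computation confirms; the bookkeeping you flag as the main difficulty is real but entirely mechanical.
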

	
	Together with the formulas for $\euler{j}_n$ given in the previous section, one can explicitly compute the commutor in terms of basis diagrams; for example,
	\\
	\begin{center}
		\begin{tikzpicture}[scale=0.5]
	\begin{pgfonlayer}{nodelayer}
		\node [style=none] (1) at (-1, 0) {$\sigma_{\obj{1},\obj{3}} = $};
		\node [style=none] (2) at (1, 2) {};
		\node [style=none] (3) at (1.5, 2) {};
		\node [style=none] (4) at (2, 2) {};
		\node [style=none] (5) at (2.5, 2) {};
		\node [style=none] (6) at (1, -2) {};
		\node [style=none] (7) at (1.5, -2) {};
		\node [style=none] (8) at (2, -2) {};
		\node [style=none] (9) at (2.5, -2) {};
		\node [style=jonesrectangle] (10) at (1.75, 0) {$\euler{j}_4$};
		\node [style=none] (11) at (4.5, 2) {};
		\node [style=none] (12) at (5, 2) {};
		\node [style=none] (13) at (5.5, 2) {};
		\node [style=none] (14) at (6, 2) {};
		\node [style=none] (15) at (4.5, -2) {};
		\node [style=none] (16) at (5, -2) {};
		\node [style=none] (17) at (5.5, -2) {};
		\node [style=none] (18) at (6, -2) {};
		\node [style=jonesrectangle] (19) at (5.25, 0) {$\euler{j}_2$};
		\node [style=none] (56) at (3.5, 0) {$+$};
		\node [style=none] (57) at (8, 2) {};
		\node [style=none] (58) at (8.5, 2) {};
		\node [style=none] (59) at (9, 2) {};
		\node [style=none] (60) at (9.5, 2) {};
		\node [style=none] (61) at (8, -2) {};
		\node [style=none] (62) at (8.5, -2) {};
		\node [style=none] (63) at (9, -2) {};
		\node [style=none] (64) at (9.5, -2) {};
		\node [style=jonesrectangle] (65) at (8.75, 0) {$\euler{j}_2$};
		\node [style=none] (66) at (7, 0) {$+$};
		\node [style=none] (67) at (11.5, 2) {};
		\node [style=none] (68) at (12, 2) {};
		\node [style=none] (69) at (12.5, 2) {};
		\node [style=none] (70) at (13, 2) {};
		\node [style=none] (71) at (11.5, -2) {};
		\node [style=none] (72) at (12, -2) {};
		\node [style=none] (73) at (12.5, -2) {};
		\node [style=none] (74) at (13, -2) {};
		\node [style=jonesrectangle] (75) at (12.25, 0) {$\euler{j}_2$};
		\node [style=none] (76) at (10.5, 0) {$+$};
		\node [style=none] (77) at (15, 2) {};
		\node [style=none] (78) at (15.5, 2) {};
		\node [style=none] (79) at (16, 2) {};
		\node [style=none] (80) at (16.5, 2) {};
		\node [style=none] (81) at (15, -2) {};
		\node [style=none] (82) at (15.5, -2) {};
		\node [style=none] (83) at (16, -2) {};
		\node [style=none] (84) at (16.5, -2) {};
		\node [style=none] (86) at (14, 0) {$+$};
		\node [style=none] (87) at (18.5, 2) {};
		\node [style=none] (88) at (19, 2) {};
		\node [style=none] (89) at (19.5, 2) {};
		\node [style=none] (90) at (20, 2) {};
		\node [style=none] (91) at (18.5, -2) {};
		\node [style=none] (92) at (19, -2) {};
		\node [style=none] (93) at (19.5, -2) {};
		\node [style=none] (94) at (20, -2) {};
		\node [style=none] (96) at (17.5, 0) {$+$};
		\node [style=none] (97) at (1, -4) {};
		\node [style=none] (98) at (1.5, -4) {};
		\node [style=none] (99) at (2, -4) {};
		\node [style=none] (100) at (2.5, -4) {};
		\node [style=none] (101) at (1, -8) {};
		\node [style=none] (102) at (1.5, -8) {};
		\node [style=none] (103) at (2, -8) {};
		\node [style=none] (104) at (2.5, -8) {};
		\node [style=none] (106) at (15, -4) {};
		\node [style=none] (107) at (15.5, -4) {};
		\node [style=none] (108) at (16, -4) {};
		\node [style=none] (109) at (16.5, -4) {};
		\node [style=none] (110) at (15, -8) {};
		\node [style=none] (111) at (15.5, -8) {};
		\node [style=none] (112) at (16, -8) {};
		\node [style=none] (113) at (16.5, -8) {};
		\node [style=none] (115) at (14, -6) {$+$};
		\node [style=none] (116) at (18.5, -4) {};
		\node [style=none] (117) at (19, -4) {};
		\node [style=none] (118) at (19.5, -4) {};
		\node [style=none] (119) at (20, -4) {};
		\node [style=none] (120) at (18.5, -8) {};
		\node [style=none] (121) at (19, -8) {};
		\node [style=none] (122) at (19.5, -8) {};
		\node [style=none] (123) at (20, -8) {};
		\node [style=none] (125) at (17.5, -6) {$+$};
		\node [style=none] (126) at (22, -4) {};
		\node [style=none] (127) at (22.5, -4) {};
		\node [style=none] (128) at (23, -4) {};
		\node [style=none] (129) at (23.5, -4) {};
		\node [style=none] (130) at (22, -8) {};
		\node [style=none] (131) at (22.5, -8) {};
		\node [style=none] (132) at (23, -8) {};
		\node [style=none] (133) at (23.5, -8) {};
		\node [style=none] (135) at (21, -6) {$+$};
		\node [style=none] (154) at (-0.5, -6) {$=$};
		\node [style=none] (155) at (4.5, -4) {};
		\node [style=none] (156) at (5, -4) {};
		\node [style=none] (157) at (5.5, -4) {};
		\node [style=none] (158) at (6, -4) {};
		\node [style=none] (159) at (4.5, -8) {};
		\node [style=none] (160) at (5, -8) {};
		\node [style=none] (161) at (5.5, -8) {};
		\node [style=none] (162) at (6, -8) {};
		\node [style=none] (163) at (3.5, -6) {$-$};
		\node [style=none] (164) at (8, -4) {};
		\node [style=none] (165) at (8.5, -4) {};
		\node [style=none] (166) at (9, -4) {};
		\node [style=none] (167) at (9.5, -4) {};
		\node [style=none] (168) at (8, -8) {};
		\node [style=none] (169) at (8.5, -8) {};
		\node [style=none] (170) at (9, -8) {};
		\node [style=none] (171) at (9.5, -8) {};
		\node [style=none] (172) at (7, -6) {$-$};
		\node [style=none] (173) at (11.5, -4) {};
		\node [style=none] (174) at (12, -4) {};
		\node [style=none] (175) at (12.5, -4) {};
		\node [style=none] (176) at (13, -4) {};
		\node [style=none] (177) at (11.5, -8) {};
		\node [style=none] (178) at (12, -8) {};
		\node [style=none] (179) at (12.5, -8) {};
		\node [style=none] (180) at (13, -8) {};
		\node [style=none] (181) at (10.5, -6) {$-$};
	\end{pgfonlayer}
	\begin{pgfonlayer}{edgelayer}
		\draw [style=thickstrand] (2.center) to (6.center);
		\draw [style=thickstrand] (3.center) to (7.center);
		\draw [style=thickstrand] (4.center) to (8.center);
		\draw [style=thickstrand] (5.center) to (9.center);
		\draw [style=thickstrand] [bend left=90, looseness=2.50] (15.center) to (16.center);
		\draw [style=thickstrand] [bend right=90, looseness=2.50] (13.center) to (14.center);
		\draw [style=thickstrand] [in=90, out=-90] (11.center) to (17.center);
		\draw [style=thickstrand] [in=-90, out=90] (18.center) to (12.center);
		\draw [style=thickstrand] [bend left=90, looseness=2.50] (62.center) to (63.center);
		\draw [style=thickstrand] [bend left=90, looseness=2.50] (73.center) to (74.center);
		\draw [style=thickstrand] [bend left=90, looseness=2.50] (81.center) to (82.center);
		\draw [style=thickstrand] [bend left=90, looseness=2.50] (83.center) to (84.center);
		\draw [style=thickstrand] [bend left=90, looseness=2.50] (92.center) to (93.center);
		\draw [style=thickstrand] [bend left=90, looseness=2.25] (91.center) to (94.center);
		\draw [style=thickstrand] [bend right=90, looseness=2.50] (78.center) to (79.center);
		\draw [style=thickstrand] [bend right=90, looseness=2.25] (77.center) to (80.center);
		\draw [style=thickstrand] [bend right=90, looseness=2.50] (87.center) to (88.center);
		\draw [style=thickstrand] [bend right=90, looseness=2.50] (89.center) to (90.center);
		\draw [style=thickstrand] [bend right=90, looseness=2.50] (68.center) to (69.center);
		\draw [style=thickstrand] [bend right=90, looseness=2.50] (57.center) to (58.center);
		\draw [style=thickstrand] [in=90, out=-90] (59.center) to (61.center);
		\draw [style=thickstrand] (64.center) to (60.center);
		\draw [style=thickstrand] (67.center) to (71.center);
		\draw [style=thickstrand] [in=-90, out=90] (72.center) to (70.center);
		\draw [style=thickstrand] (97.center) to (101.center);
		\draw [style=thickstrand] (98.center) to (102.center);
		\draw [style=thickstrand] (99.center) to (103.center);
		\draw [style=thickstrand] (100.center) to (104.center);
		\draw [style=thickstrand] [bend left=90, looseness=2.50] (110.center) to (111.center);
		\draw [style=thickstrand] [bend right=90, looseness=2.50] (108.center) to (109.center);
		\draw [style=thickstrand] [in=90, out=-90] (106.center) to (112.center);
		\draw [style=thickstrand] [in=-90, out=90] (113.center) to (107.center);
		\draw [style=thickstrand] [bend left=90, looseness=2.50] (121.center) to (122.center);
		\draw [style=thickstrand] [bend left=90, looseness=2.50] (132.center) to (133.center);
		\draw [style=thickstrand] [bend right=90, looseness=2.50] (127.center) to (128.center);
		\draw [style=thickstrand] [bend right=90, looseness=2.50] (116.center) to (117.center);
		\draw [style=thickstrand] [in=90, out=-90] (118.center) to (120.center);
		\draw [style=thickstrand] (123.center) to (119.center);
		\draw [style=thickstrand] (126.center) to (130.center);
		\draw [style=thickstrand] [in=-90, out=90] (131.center) to (129.center);
		\draw [style=thickstrand] [bend right=90, looseness=2.50] (155.center) to (156.center);
		\draw [style=thickstrand] [bend left=90, looseness=2.25] (159.center) to (160.center);
		\draw [style=thickstrand] (157.center) to (161.center);
		\draw [style=thickstrand] (162.center) to (158.center);
		\draw [style=thickstrand] [bend left=90, looseness=2.50] (169.center) to (170.center);
		\draw [style=thickstrand] [bend right=90, looseness=2.50] (165.center) to (166.center);
		\draw [style=thickstrand] (164.center) to (168.center);
		\draw [style=thickstrand] (171.center) to (167.center);
		\draw [style=thickstrand] [bend left=90, looseness=2.25] (179.center) to (180.center);
		\draw [style=thickstrand] [bend right=90, looseness=2.50] (175.center) to (176.center);
		\draw [style=thickstrand] (173.center) to (177.center);
		\draw [style=thickstrand] (178.center) to (174.center);
	\end{pgfonlayer}
\end{tikzpicture}
	\end{center}

	\begin{proof}
		\textit{Naturality:} We show naturality in $\obj{m}$ and a completely analogous argument shows naturality in $\obj{n}$.
		Given any morphism $\euler{f}:\obj{m}\to \obj{m'}$, we wish to show that $\sigma_{\mobijs{m'},\mobijs{n}}\circ (\euler{f}\otimes \on{id}_{\obj{n}}) = (\on{id}_{\obj{n}}\otimes \euler{f})\circ \sigma_{\mobijs{m},\mobijs{n}}$.
		Since Temperley--Lieb diagrams form a basis of all morphisms, it suffices to show this equality for a Temperley--Lieb diagram $\euler{f}$.
		Note that for any $\euler{x}\in \mathscr{D}_{\mobijd{m'+n}}$, we have $\tau_{\mobijt{m'},\mobijt{n}}(\euler{x}) \circ (\euler{f}\otimes \on{id}_{\obj{n}}) = 0$ unless $\overline{K}(\euler{f})\subseteq K (\euler{x})$; 
		indeed, if we glue a cup at $(i,j)$ in $\euler{f}$ with a cap at $(i,k)$ or $(k,j)$ ($\neq (i,j)$) in $\euler{x}$ we get zero by the zig-zag relation, and if we glue a cup in $\euler{f}$ to two through strands of $\euler{x}$, we get zero upon composition with the Jones--Wenzl projector by \autoref{annihilation}.
		It follows that if $\euler{f}$ has a cup $(i,j)$, then $\tau_{\mobijt{m'},\mobijt{n}}(\euler{x})\circ (\euler{f}\otimes \on{id}_{\obj{n}})\neq 0$ if and only if $\tau_{\mobijt{m'},\mobijt{n}}(\euler{x})$ has a cap at $(i,j)$ if and only if $\tau_{\mobijt{m'},\mobijt{n}}(\euler{x})$ has a cup at $(i+n,j+n)$.
		Analogously, $\euler{f}$ has a cap $(i,j)$, then $(\on{id}_{\obj{n}}\otimes \euler{f})\circ\tau_{\mobijt{m},\mobijt{n}}(\euler{x})\neq 0$ if and only if $\tau_{\mobijt{m},\mobijt{n}}(\euler{x})$ has a cup at $(i,j)$ if and only if $\tau_{\mobijt{m},\mobijt{n}}(\euler{x})$ has a cap at $(i-n,j-n)$.
		Thus, we wish to show that 
		$$\sum_{\substack{\euler{x}\in \mathscr{D}_{\mobijd{m'+n}} \\ \overline{K}(\euler{f})\subseteq K (\tau_{\mobijt{m'},\mobijt{n}}(\euler{x}))}}\tau_{\mobijt{m'},\mobijt{n}}(\euler{x})\circ (\euler{f}\otimes \on{id}_{\obj{n}}) = \sum_{\substack{\euler{x}\in \mathscr{D}_{\mobijd{m+n}}\\ K (\euler{f})\subseteq\overline{K}(\tau_{\mobijt{m},\mobijt{n}}(\euler{x}))}}(\on{id}_{\obj{n}}\otimes \euler{f})\circ\tau_{\mobijt{m},\mobijt{n}}(\euler{x}).$$
		
		Next, consider the map $\euler{x}\mapsto \euler{x}\circ (\euler{f}\otimes\on{id})$ from the indexing set of the left-handside summation to the indexing set of the right-handside summation. This is well-defined since $\euler{x}\circ (\euler{f}\otimes \on{id}_{\obj{n}})$ are cap diagrams by the argument just given, and $ K (\euler{f})\subseteq K (\euler{x}\circ (\euler{f}\otimes \on{id}_{\obj{n}})$ as $\euler{x}$ is a cap diagram.
		This map is a bijection with the inverse $\euler{x}\mapsto\euler{x}\circ(\overline{\euler{f}}\circ \on{id}_{\obj{n}})$ (the inverse is well-defined for similar reasons);
		indeed, $\euler{x}\circ (\euler{f}\otimes\on{id}_{\obj{n}})\circ(\overline{\euler{f}}\circ \on{id}_{\obj{n}}) = \euler{x}$ since $\overline{K}(\euler{f})\subseteq K (\euler{x})$ and similarly for composition in the other direction.
		Hence, it suffices to prove that the equality holds term by term under this bijection, i.e. for every $\euler{x}\in \mathscr{D}_{\mobijd{m'+n}}$ such that $\overline{K}(\euler{f})\subseteq K (\tau_{\mobijt{m'},\mobijt{n}}(\euler{x}))$, we have
		$$\overline{\kappa_{m',n}(\euler{x})}\circ \euler{j}_{\thru(\euler{x})}\circ \euler{x}\circ (\euler{f}\otimes \on{id}_{\obj{n}}) = (\on{id}_{\obj{n}}\otimes \euler{f})\circ \overline{\kappa_{m,n}(\euler{x}\circ (\euler{f}\otimes \on{id}_{\obj{n}}})) \circ \euler{j}_{\thru(\euler{x})}\circ (\euler{x}\circ (\euler{f}\otimes \on{id}_{\obj{n}})).$$
		This is consequence of the equality
		$$\overline{\kappa_{m',n}(\euler{x})} = (\on{id}_{\obj{n}}\otimes \euler{f})\circ \overline{\kappa_{m,n}(\euler{x}\circ (\euler{f}\otimes \on{id}_{\obj{n}}})),$$
		which is proved by applying $\overline{(-)}$ to the following computation:
		\begin{equation*}
			\begin{split}
				\kappa_{m,n}(\euler{x}\circ(\euler{f}\otimes\on{id}_{\obj{n}}))\circ\overline{(\on{id}_{\obj{n}}\otimes \euler{f})} 
				&= \kappa_{m,n}(\euler{x}\circ(\euler{f}\otimes\on{id}_{\obj{n}}))\circ(\on{id}_{\obj{n}}\otimes \overline{\euler{f}}) \\
				&= [(\euler{x}\circ(\euler{f}\otimes\on{id}_{\obj{n}}))_{>m} \odot_l (\euler{x}\circ(\euler{f}\otimes\on{id}_{\obj{n}}))_{\leq m}]\circ(\on{id}_{\obj{n}}\otimes \overline{\euler{f}})\\
				&=[\euler{x}_{>m'} \odot_l (\euler{x}_{\leq m'}\circ \euler{f})]\circ(\on{id}_{\obj{n}}\otimes \overline{\euler{f}}) \\
				&=\euler{x}_{>m'} \odot_l (\euler{x}_{\leq m'}\circ \euler{f}\circ \overline{\euler{f}})\\
				&= \euler{x}_{>m'} \odot_l \euler{x}_{\leq m'} = \kappa_{m',n}(\euler{x}),
			\end{split}
		\end{equation*}
		where $l=\mathtt{hk}_{m'}(\euler{x})=\mathtt{hk}_m(\euler{x}\circ(\euler{f}\otimes \on{id}_{\obj{n}}))$.
		This completes the proof for naturality of $\sigma_{\mobijs{m},\mobijs{n}}$.
		
		Next we prove that $\sigma$ satisfies the properties of a commutor for coboundary category.
		The unit axiom is obvious as the unit morphisms in $\mathcal{TL}_0(\Bbbk)$ are trivial; and the fact that $\sigma_{\mobijs{m},\mobijs{n}}$ is an isomorphism follows from the symmetry axiom, so it remains to show the latter and the cactus axiom.
		
		\textit{Symmetry Axiom:}
		Since $\kappa_{m,n}$ is a bijection on $\mathscr{D}_{\mobijd{m+n}}$ whose inverse is $\kappa_{n,m}$, and since $\thru(\kappa_{m,n}(\euler{x}))=\thru(\euler{x})$, we have
		\begin{equation*}
			\begin{split}
				\sigma_{\mobijs{n},\mobijs{m}}\circ\sigma_{\mobijs{m},\mobijs{n}}
				&= \left(\sum_{\euler{x}\in\mathscr{D}_{\mobijd{m+n}}} \overline{\kappa_{\mobij{n},\mobij{m}}(\euler{y})}\circ \euler{j}_{\mobij{\thru(\euler{y})}}\circ \euler{y} \right) \circ \left(\sum_{\euler{x}\in\mathscr D_{\mobijd{m+n}}} \overline{\kappa_{\mobij{m},\mobij{n}}(\euler{x})}\circ \euler{j}_{\mobij{\thru(\euler{x})}}\circ \euler{x} \right) \\
				&= \left(\sum_{\euler{x}\in\mathscr{D}_{\mobijd{m+n}}} \overline{\kappa_{n,m}(\euler{y})}\circ \euler{j}_{\mathtt{th(\euler{y})}}\circ \euler{y} \right) \circ \left(\sum_{\euler{x}\in\mathscr{D}_{\mobijd{m+n}}} \overline{\euler{x}}\circ \euler{j}_{\mathtt{th(\euler{x})}}\circ \kappa_{n,m}(\euler{x}) \right) \\
				&= \sum_{(\euler{x},\euler{y})\in\mathscr{D}_{\mobijd{m+n}}^2} \overline{\kappa_{n,m}(\euler{y})}\circ \euler{j}_{\mathtt{th(\euler{y})}}\circ \euler{y} \circ \overline{\euler{x}}\circ \euler{j}_{\mathtt{th(\euler{x})}}\circ \kappa_{n,m}(\euler{x}) \\
				&= \sum_{\euler{x}\in\mathscr{D}_{\mobijd{m+n}}} \overline{\kappa_{n,m}(\euler{x})}\circ \euler{j}_{\mathtt{th(\euler{x})}}\circ \kappa_{n,m}(\euler{x}) \\
				&= \sum_{\euler{x}\in\mathscr D_{\mobijd{m+n}}} \overline{\euler{x}}\circ \euler{j}_{\mathtt{th(\euler{x})}}\circ \euler{x} = \on{id}_{\obj{m}\otimes \obj{n}},
			\end{split}
		\end{equation*}
		where we've used the orthogonality and sum statements of \autoref{DnIdempotents} for the fourth and the last equalities, respectively.
		
		\textit{Cactus Axiom:} Since associativity maps are obvious in $\mathcal{TL}_0(\Bbbk)$, we ignore them here. 
		We wish to show that the following diagram commutes:
		% https://q.uiver.app/#q=WzAsNCxbMCwwLCJyXFxvdGltZXMgc1xcb3RpbWVzIHQiXSxbMSwwLCJyXFxvdGltZXMgdFxcb3RpbWVzIHMiXSxbMSwxLCJ0XFxvdGltZXMgc1xcb3RpbWVzIHIiXSxbMCwxLCJzXFxvdGltZXMgclxcb3RpbWVzIHQiXSxbMCwxLCJcXG9ue2lkfV97cn1cXG90aW1lc1xcc2lnbWFfe3MsdH0iXSxbMSwyLCJcXHNpZ21hX3tyLHQrc30iXSxbMCwzLCJcXHNpZ21hX3tyLHN9XFxvdGltZXNcXG9ue2lkfV90IiwyXSxbMywyLCJcXHNpZ21hX3tzK3IsdH0iLDJdXQ==
		\[\begin{tikzcd}
			{\obj{r}\otimes \obj{s}\otimes \obj{t}} & {\obj{r}\otimes \obj{t}\otimes \obj{s}} \\
			{\obj{s}\otimes \obj{r}\otimes \obj{t}} & {\obj{t}\otimes \obj{s}\otimes \obj{r}}
			\arrow["{\on{id}_{\obj{t}}\otimes\sigma_{\obj{s},\obj{t}}}", from=1-1, to=1-2]
			\arrow["{\sigma_{\mobijs{r},\mobijs{s}}\otimes\on{id}_{\obj{t}}}"', from=1-1, to=2-1]
			\arrow["{\sigma_{\mobijs{r},\mobijs{t+s}}}", from=1-2, to=2-2]
			\arrow["{\sigma_{\mobijs{s+r},\mobijs{t}}}"', from=2-1, to=2-2]
		\end{tikzcd}\]
    Let us examine which terms in the summation
    $$\sigma_{\mobijs{r},\mobijs{s+t}}\circ (\on{id}_{\obj{r}}\otimes \sigma_{\mobijs{s},\mobijs{t}}) = \sum_{(\euler{y},\euler{x})\in \mathscr{D}_{\mobijd{r+s+t}} \times \mathscr{D}_{\mobijd{s+t}}} \tau_{\mobijt{r},\mobijt{s+t}}(\euler{y})\circ (\on{id}_{\obj{r}} \otimes \tau_{\mobijt{s},\mobijt{t}}(\euler{x}))$$
    are nonzero.
    Notice that for any given $\euler{y}\in\mathscr{D}_{\mobijd{r+s+t}}$, if $\kappa_{s,t}(\euler{x})\neq \euler{y}_{>r}\in \mathscr{D}_{\mobijd{s+t}}$, we get
    $$\euler{j}_{\thru(\euler{y})}\circ \euler{y} \circ (\on{id}_{\obj{r}}\otimes (\overline{\kappa_{s,t}(\euler{x})} \circ \euler{j}_{\thru(\euler{x})})) = 0$$
    since we get a cup or cap annihilating a Jones--Wenzl projector, or a zig-zag. 
    Thus, $\tau_{\mobijt{r},\mobijt{s+t}}(\euler{y})\circ (\on{id}_{\obj{r}} \otimes \tau_{\mobijt{s},\mobijt{t}}(\euler{x}))\neq 0$ only if $\euler{x} = \kappa_{t,s}(\euler{y}_{>r})$ and 
    $$\sigma_{\mobijs{r},\mobijs{s+t}}\circ (\on{id}_{\obj{r}}\otimes \sigma_{\mobijs{s}, \mobijs{t}}) = \sum_{\euler{y}\in \mathscr{D}_{\mobijd{r+s+t}}} \tau_{\mobijt{r},\mobijt{s+t}}(\euler{y})\circ (\on{id}_{\obj{r}} \otimes \tau_{\mobijt{s},\mobijt{t}}(\kappa_{t,s}(\euler{y}_{>r}))).$$
    Similarly, 
    $$\sigma_{\mobijs{r+s},\mobijs{t}}\circ (\sigma_{\mobijs{r},\mobijs{s}} \otimes \on{id}_{\obj{r}}) = \sum_{\euler{y}\in \mathscr{D}_{\mobijd{r+s+t}}} \tau_{\mobijt{r+s},\mobijt{t}}(\euler{y})\circ (\tau_{\mobijt{r}, \mobijt{s}}(\kappa_{s,r}(\euler{y}_{
    \leq r+s})) \otimes \on{id}_{\obj{t}}).$$
    To prove that the Cactus Axiom diagram commutes, we shall match the terms of the sum one by one using the bijection $\kappa_{t,r+s}\circ\kappa_{r,s+t}:\mathscr{D}_{\mobijd{r+s+t}}\to \mathscr{D}_{\mobijd{r+s+t}}$.
    Letting $\euler{y} = \euler{y}_r \odot_k \euler{y}_t \odot_l \euler{y}_s$ for some $\euler{y}_r\in\mathscr{D}_{\mobijd{r}}$, $\euler{y}_t\in\mathscr{D}_{\mobijd{t}}$ and  $\euler{y}_s\in\mathscr{D}_{\mobijd{s}}$, we get $\euler{y}':=\kappa_{t,r+s}\circ\kappa_{r,s+t}(\euler{y}) = \euler{y}_s \odot_k \euler{y}_r \odot_l \euler{y}_t$. 
    The following computation (illustrated pictorially in the diagram that follows) shows that the $\euler{y}$-term of $\sigma_{\mobijs{r},\mobijs{s+t}}\circ (\on{id}_{\obj{r}}\otimes \sigma_{\mobijs{s}, \mobijs{t}})$ equals the $\euler{y}'$-term of $\sigma_{\mobijs{r+s},\mobijs{t}}\circ (\sigma_{\mobijs{r},\mobijs{s}} \otimes \on{id}_{\obj{r}})$, and hence summing over all $\euler{y}\in\mathscr{D}_{\mobijd{r+s+t}}$ we see that the Cactus Axiom diagram commutes:
    \begin{equation*}
        \begin{split}
            \tau_{\mobijt{r+s},\mobijt{t}}(\euler{y}')\circ (\tau_{\mobijt{r}, \mobijt{s}}(\kappa_{s,r}(\euler{y}'_{
    \leq r+s})) \otimes \on{id}_{\obj{r}}) 
    &= \overline{\kappa_{r+s,t}(\euler{y}')}\circ \euler{j}_{\thru(\euler{y}')}\circ \euler{y}' \circ ((\overline{\euler{y}'_{\leq r+s}} \circ \euler{j}_{\thru(\euler{y}'_{\leq r+s})} \circ \kappa_{s,r}(\euler{y}'_{\leq r+s}))\otimes \on{id}_{\obj{t}}) \\
    &= \overline{\kappa_{r+s,t}(\euler{y}')}\circ \euler{j}_{\thru(\euler{y}')}\circ  ((\on{id}_{\thru(\euler{y}'_{\leq r+s})}\circ \euler{j}_{\thru(\euler{y}'_{\leq r+s})}\circ \kappa_{s,r}(\euler{y}'_{\leq r+s}))\odot_l\euler{y}'_{>r+s}) \\
    &= \overline{\kappa_{r+s,t}(\euler{y}')}\circ \euler{j}_{\thru(\euler{y}')}\circ  (\kappa_{s,r}(\euler{y}'_{\leq r+s})\odot_l\euler{y}'_{>r+s}) \\
    &= \overline{\kappa_{r+s,t}(\kappa_{t,r+s}\circ\kappa_{r,s+t}(\euler{y}))}\circ \euler{j}_{\thru(\euler{y})}\circ  (\kappa_{s,r}(\euler{y}_s \odot_k \euler{y}_r)\odot_l\euler{y}_t) \\
    &= \overline{\kappa_{r,s+t}(\euler{y})}\circ \euler{j}_{\thru(\euler{y})}\circ  (\euler{y}_r \odot_k \euler{y}_s\odot_l\euler{y}_t)\\
    &= \overline{\kappa_{r,s+t}(\euler{y})}\circ \euler{j}_{\thru(\euler{y})}\circ  (\euler{y}_{\leq r} \odot_k \kappa_{t,s}(\euler{y}_{>r})) \\
    &= \overline{\kappa_{r,s+t}(\euler{y})}\circ \euler{j}_{\thru(\euler{y})}\circ  (\euler{y}_{\leq r} \odot_k (\on{id}_{\thru(\euler{y}_{> r})}\circ \euler{j}_{\thru(\euler{y}_{> r})}\circ \kappa_{t,s}(\euler{y}_{> r}))) \\
    &= \overline{\kappa_{r,s+t}(\euler{y})}\circ \euler{j}_{\thru(\euler{y})}\circ \euler{y} \circ (\on{id}_{\obj{r}}\otimes (\overline{\euler{y}_{> r}} \circ \euler{j}_{\thru(\euler{y}_{>r})} \circ \kappa_{t,s}(\euler{y}_{>r}))) \\
    &= \tau_{\mobijt{r},\mobijt{s+t}}(\euler{y})\circ (\on{id}_{\obj{r}} \otimes \tau_{\mobijt{s},\mobijt{t}}(\kappa_{t,s}(\euler{y}_{>r}))),
        \end{split}
    \end{equation*}
    where the third and eighth equalities implicitly use \autoref{JWcaps} below. 
    \[\begin{tikzpicture}[scale=0.65]
	\begin{pgfonlayer}{nodelayer}
		\node [style=none] (0) at (-10.5, 3.5) {$\overline{\euler{y}_r}$};
		\node [style=none] (1) at (-11.5, 3.5) {$\overline{\euler{y}_s}$};
		\node [style=none] (2) at (-12.5, 3.5) {$\overline{\euler{y}_t}$};
		\node [style=none] (3) at (-10, 4) {};
		\node [style=none] (4) at (-11, 4) {};
		\node [style=none] (5) at (-12, 4) {};
		\node [style=none] (6) at (-13, 4) {};
		\node [style=none] (7) at (-13, 3) {};
		\node [style=none] (8) at (-10, 3) {};
		\node [style=none] (9) at (-11, 3) {};
		\node [style=none] (10) at (-12, 3) {};
		\node [style=none] (11) at (-11, 2) {};
		\node [style=none] (12) at (-12, 2) {};
		\node [style=none] (13) at (-13, 2) {};
		\node [style=none] (14) at (-10, 2) {};
		\node [style=none] (15) at (-11, 1) {};
		\node [style=none] (16) at (-12, 1) {};
		\node [style=none] (17) at (-13, 1) {};
		\node [style=none] (18) at (-13, 0) {};
		\node [style=none] (19) at (-12, 0) {};
		\node [style=none] (20) at (-11, 0) {};
		\node [style=none] (21) at (-11.5, 2.5) {JW};
		\node [style=none] (22) at (-10, 1) {};
		\node [style=none] (23) at (-10.5, 1.5) {$\euler{y}_t$};
		\node [style=none] (24) at (-11.5, 1.5) {$\euler{y}_r$};
		\node [style=none] (25) at (-12.5, 1.5) {$\euler{y}_s$};
		\node [style=none] (26) at (-11.5, 0.5) {$\overline{\euler{y}_r}$};
		\node [style=none] (27) at (-12.5, 0.5) {$\overline{\euler{y}_s}$};
		\node [style=none] (28) at (-12, -0.5) {JW};
		\node [style=none] (29) at (-11, -1) {};
		\node [style=none] (30) at (-12, -1) {};
		\node [style=none] (31) at (-13, -1) {};
		\node [style=none] (32) at (-13, -2) {};
		\node [style=none] (33) at (-12, -2) {};
		\node [style=none] (34) at (-11, -2) {};
		\node [style=none] (35) at (-11.5, -1.5) {$\euler{y}_s$};
		\node [style=none] (36) at (-12.5, -1.5) {$\euler{y}_r$};
		\node [style=none] (37) at (-10, -2) {};
		\node [style=none] (38) at (7.5, 3.5) {$\overline{\euler{y}_t}$};
		\node [style=none] (39) at (8.5, 3.5) {$\overline{\euler{y}_s}$};
		\node [style=none] (40) at (9.5, 3.5) {$\overline{\euler{y}_r}$};
		\node [style=none] (41) at (7, 4) {};
		\node [style=none] (42) at (8, 4) {};
		\node [style=none] (43) at (9, 4) {};
		\node [style=none] (44) at (10, 4) {};
		\node [style=none] (45) at (10, 3) {};
		\node [style=none] (46) at (7, 3) {};
		\node [style=none] (47) at (8, 3) {};
		\node [style=none] (48) at (9, 3) {};
		\node [style=none] (49) at (8, 2) {};
		\node [style=none] (50) at (9, 2) {};
		\node [style=none] (51) at (10, 2) {};
		\node [style=none] (52) at (7, 2) {};
		\node [style=none] (54) at (9, 1) {};
		\node [style=none] (55) at (10, 1) {};
		\node [style=none] (56) at (10, 0) {};
		\node [style=none] (57) at (9, 0) {};
		\node [style=none] (58) at (8, 0) {};
		\node [style=none] (59) at (8.5, 2.5) {JW};
		\node [style=none] (60) at (7, 1) {};
		\node [style=none] (61) at (7.5, 1.5) {$\euler{y}_r$};
		\node [style=none] (62) at (8.5, 1.5) {$\euler{y}_t$};
		\node [style=none] (63) at (9.5, 1.5) {$\euler{y}_s$};
		\node [style=none] (64) at (8.5, 0.5) {$\overline{\euler{y}_t}$};
		\node [style=none] (65) at (9.5, 0.5) {$\overline{\euler{y}_s}$};
		\node [style=none] (66) at (9, -0.5) {JW};
		\node [style=none] (67) at (8, -1) {};
		\node [style=none] (68) at (9, -1) {};
		\node [style=none] (69) at (10, -1) {};
		\node [style=none] (70) at (10, -2) {};
		\node [style=none] (71) at (9, -2) {};
		\node [style=none] (73) at (8.5, -1.5) {$\euler{y}_s$};
		\node [style=none] (74) at (9.5, -1.5) {$\euler{y}_t$};
		\node [style=none] (75) at (7, -2) {};
		\node [style=none] (78) at (-0.5, 3.5) {$\overline{\euler{y}_r}$};
		\node [style=none] (79) at (-1.5, 3.5) {$\overline{\euler{y}_s}$};
		\node [style=none] (80) at (-2.5, 3.5) {$\overline{\euler{y}_t}$};
		\node [style=none] (81) at (0, 4) {};
		\node [style=none] (82) at (-1, 4) {};
		\node [style=none] (83) at (-2, 4) {};
		\node [style=none] (84) at (-3, 4) {};
		\node [style=none] (85) at (-3, 3) {};
		\node [style=none] (86) at (0, 3) {};
		\node [style=none] (87) at (-1, 3) {};
		\node [style=none] (88) at (-2, 3) {};
		\node [style=none] (89) at (-1, 2) {};
		\node [style=none] (90) at (-2, 2) {};
		\node [style=none] (91) at (-3, 2) {};
		\node [style=none] (92) at (0, 2) {};
		\node [style=none] (93) at (-1, -2) {};
		\node [style=none] (99) at (-1.5, 2.5) {JW};
		\node [style=none] (101) at (-0.5, 0) {$\euler{y}_t$};
		\node [style=none] (108) at (-2, 2) {};
		\node [style=none] (110) at (-3, -2) {};
		\node [style=none] (111) at (-2, -2) {};
		\node [style=none] (112) at (-1, -2) {};
		\node [style=none] (113) at (-1.5, 0) {$\euler{y}_s$};
		\node [style=none] (114) at (-2.5, 0) {$\euler{y}_r$};
		\node [style=none] (115) at (0, -2) {};
		\node [style=none] (116) at (-4, 1) {$=$};
		\node [style=none] (117) at (1, 1) {$=$};
		\node [style=none] (118) at (-11.5, 4.5) {$\euler{y}'$-term};
		\node [style=none] (119) at (8.5, 4.5) {$\euler{y}$-term};
		\node [style=none] (120) at (11.5, -0.25) {};
		\node [style=none] (121) at (11.5, -1.25) {};
		\node [style=none] (122) at (11.5, 1.5) {};
		\node [style=none] (123) at (11.5, 0.5) {};
		\node [style=none] (124) at (11.5, 3.25) {};
		\node [style=none] (125) at (11.5, 2.25) {};
		\node [style=none] (126) at (12, -0.75) {$\otimes$};
		\node [style=none] (127) at (12, 1) {$\odot_k$};
		\node [style=none] (128) at (12, 2.75) {$\odot_l$};
		\node [style=none] (129) at (8, 1) {};
		\node [style=none] (130) at (8, -2) {};
		\node [style=none] (131) at (-5.5, 3.5) {$\overline{\euler{y}_r}$};
		\node [style=none] (132) at (-6.5, 3.5) {$\overline{\euler{y}_s}$};
		\node [style=none] (133) at (-7.5, 3.5) {$\overline{\euler{y}_t}$};
		\node [style=none] (134) at (-5, 4) {};
		\node [style=none] (135) at (-6, 4) {};
		\node [style=none] (136) at (-7, 4) {};
		\node [style=none] (137) at (-8, 4) {};
		\node [style=none] (138) at (-8, 3) {};
		\node [style=none] (139) at (-5, 3) {};
		\node [style=none] (140) at (-6, 3) {};
		\node [style=none] (141) at (-7, 3) {};
		\node [style=none] (142) at (-6, 2) {};
		\node [style=none] (143) at (-7, 2) {};
		\node [style=none] (144) at (-8, 2) {};
		\node [style=none] (145) at (-5, 2) {};
		\node [style=none] (146) at (-6, -2) {};
		\node [style=none] (147) at (-7, 1) {$\on{id}$};
		\node [style=none] (148) at (-8, 1) {};
		\node [style=none] (149) at (-8, 0) {};
		\node [style=none] (150) at (-7, 0) {};
		\node [style=none] (151) at (-6, 0) {};
		\node [style=none] (152) at (-6.5, 2.5) {JW};
		\node [style=none] (153) at (-5, 1) {};
		\node [style=none] (154) at (-5.5, 0) {$\euler{y}_t$};
		\node [style=none] (159) at (-7, -0.5) {JW};
		\node [style=none] (160) at (-6, -1) {};
		\node [style=none] (161) at (-7, -1) {};
		\node [style=none] (162) at (-8, -1) {};
		\node [style=none] (163) at (-8, -2) {};
		\node [style=none] (164) at (-7, -2) {};
		\node [style=none] (166) at (-6.5, -1.5) {$\euler{y}_s$};
		\node [style=none] (167) at (-7.5, -1.5) {$\euler{y}_r$};
		\node [style=none] (168) at (-5, -2) {};
		\node [style=none] (169) at (2.5, 3.5) {$\overline{\euler{y}_t}$};
		\node [style=none] (170) at (3.5, 3.5) {$\overline{\euler{y}_s}$};
		\node [style=none] (171) at (4.5, 3.5) {$\overline{\euler{y}_r}$};
		\node [style=none] (172) at (2, 4) {};
		\node [style=none] (173) at (3, 4) {};
		\node [style=none] (174) at (4, 4) {};
		\node [style=none] (175) at (5, 4) {};
		\node [style=none] (176) at (5, 3) {};
		\node [style=none] (177) at (2, 3) {};
		\node [style=none] (178) at (3, 3) {};
		\node [style=none] (179) at (4, 3) {};
		\node [style=none] (180) at (3, 2) {};
		\node [style=none] (181) at (4, 2) {};
		\node [style=none] (182) at (5, 2) {};
		\node [style=none] (183) at (2, 2) {};
		\node [style=none] (184) at (4, 1) {$\on{id}$};
		\node [style=none] (185) at (5, 1) {};
		\node [style=none] (186) at (5, 0) {};
		\node [style=none] (187) at (4, 0) {};
		\node [style=none] (188) at (3, 0) {};
		\node [style=none] (189) at (3.5, 2.5) {JW};
		\node [style=none] (190) at (2, 1) {};
		\node [style=none] (191) at (2.5, 0) {$\euler{y}_r$};
		\node [style=none] (196) at (4, -0.5) {JW};
		\node [style=none] (197) at (3, -1) {};
		\node [style=none] (198) at (4, -1) {};
		\node [style=none] (199) at (5, -1) {};
		\node [style=none] (200) at (5, -2) {};
		\node [style=none] (201) at (4, -2) {};
		\node [style=none] (202) at (3.5, -1.5) {$\euler{y}_s$};
		\node [style=none] (203) at (4.5, -1.5) {$\euler{y}_t$};
		\node [style=none] (204) at (2, -2) {};
		\node [style=none] (206) at (3, -2) {};
		\node [style=none] (207) at (-10.5, -0.5) {$\on{id}_t$};
		\node [style=none] (208) at (7.5, -0.5) {$\on{id}_r$};
		\node [style=none] (209) at (-9, 1) {$=$};
		\node [style=none] (210) at (6, 1) {$=$};
	\end{pgfonlayer}
	\begin{pgfonlayer}{edgelayer}
		\draw [style=thickstrand] (3.center) to (6.center);
		\draw [style=thickstrand] (6.center) to (32.center);
		\draw [style=thickstrand] (32.center) to (37.center);
		\draw [style=thickstrand] (37.center) to (3.center);
		\draw [style=thickstrand] (8.center) to (7.center);
		\draw [style=thickstrand] (14.center) to (13.center);
		\draw [style=thickstrand] (22.center) to (17.center);
		\draw [style=thickstrand] (29.center) to (31.center);
		\draw [style=thickstrand] (20.center) to (18.center);
		\draw [style=thickstrand] (41.center) to (44.center);
		\draw [style=thickstrand] (44.center) to (70.center);
		\draw [style=thickstrand] (70.center) to (75.center);
		\draw [style=thickstrand] (75.center) to (41.center);
		\draw [style=thickstrand] (46.center) to (45.center);
		\draw [style=thickstrand] (52.center) to (51.center);
		\draw [style=thickstrand] (60.center) to (55.center);
		\draw [style=thickstrand] (67.center) to (69.center);
		\draw [style=thickstrand] (58.center) to (56.center);
		\draw [style=thickstrand] (34.center) to (15.center);
		\draw [style=redline] (54.center) to (50.center);
		\draw [style=blueline] (48.center) to (43.center);
		\draw [style=redline] (47.center) to (42.center);
		\draw [style=redline] (57.center) to (54.center);
		\draw [style=redline] (71.center) to (68.center);
		\draw [style=blueline] (16.center) to (12.center);
		\draw [style=blueline] (16.center) to (19.center);
		\draw [style=blueline] (30.center) to (33.center);
		\draw [style=redline] (15.center) to (11.center);
		\draw [style=redline] (10.center) to (5.center);
		\draw [style=blueline] (9.center) to (4.center);
		\draw [style=thickstrand] (81.center) to (84.center);
		\draw [style=thickstrand] (84.center) to (110.center);
		\draw [style=thickstrand] (110.center) to (115.center);
		\draw [style=thickstrand] (115.center) to (81.center);
		\draw [style=thickstrand] (86.center) to (85.center);
		\draw [style=thickstrand] (92.center) to (91.center);
		\draw [style=blueline] (108.center) to (111.center);
		\draw [style=redline] (93.center) to (89.center);
		\draw [style=redline] (88.center) to (83.center);
		\draw [style=blueline] (87.center) to (82.center);
		\draw [style=thickstrand] (120.center) to (121.center);
		\draw [style=blueline] (122.center) to (123.center);
		\draw [style=redline] (124.center) to (125.center);
		\draw [style=thickstrand] (130.center) to (129.center);
		\draw [style=blueline] (129.center) to (49.center);
		\draw [style=thickstrand] (134.center) to (137.center);
		\draw [style=thickstrand] (137.center) to (163.center);
		\draw [style=thickstrand] (163.center) to (168.center);
		\draw [style=thickstrand] (168.center) to (134.center);
		\draw [style=thickstrand] (139.center) to (138.center);
		\draw [style=thickstrand] (145.center) to (144.center);
		\draw [style=thickstrand] (160.center) to (162.center);
		\draw [style=thickstrand] (151.center) to (149.center);
		\draw [style=blueline] (161.center) to (164.center);
		\draw [style=redline] (146.center) to (142.center);
		\draw [style=redline] (141.center) to (136.center);
		\draw [style=blueline] (140.center) to (135.center);
		\draw [style=thickstrand] (172.center) to (175.center);
		\draw [style=thickstrand] (175.center) to (200.center);
		\draw [style=thickstrand] (200.center) to (204.center);
		\draw [style=thickstrand] (204.center) to (172.center);
		\draw [style=thickstrand] (177.center) to (176.center);
		\draw [style=thickstrand] (183.center) to (182.center);
		\draw [style=thickstrand] (197.center) to (199.center);
		\draw [style=thickstrand] (188.center) to (186.center);
		\draw [style=blueline] (179.center) to (174.center);
		\draw [style=redline] (178.center) to (173.center);
		\draw [style=redline] (201.center) to (198.center);
		\draw [style=blueline] (206.center) to (180.center);
	\end{pgfonlayer}
\end{tikzpicture}
\]
	\end{proof}

    \begin{definition}
        Define the \emph{diagram reversal} operations on cap diagrams $\vartheta:\mathscr D_n\to \mathscr D_n$ by letting $\vartheta(x)$ be the diagram acquired by vertically reflecting $\euler{x}$ about its center.
    \end{definition}

    Using diagram reversal operations, we can see how the cactus group acts on tensor products in $\mathcal{TL}_0(\Bbbk)$.
    \begin{corollary}\label{cor:intervalreversal}
        The interval reversal morphisms of $\mathcal{TL}_0(\Bbbk)$ (with the commutor defined in \autoref{TLCob}) are given by 
        $$s_{p,q} = \on{id}_{\obj{p-1}}\otimes \left(\sum_{\euler{x}\in\mathscr D_n} \overline{\vartheta(\euler{x})}\circ \euler{j}_{\mathtt{th({\euler{x})}}}\circ \euler{x}\right)\otimes \on{id}_{\obj{n-q}}$$.
    \end{corollary}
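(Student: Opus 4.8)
The plan is to induct on the length of the interval being reversed. First I would observe that, by the recursion $s_{p,q}=\sigma_{p,p,q}\circ s_{p+1,q}$ together with the fact that every $\sigma_{p',r',q'}$ is, by definition, of the form $\on{id}\otimes(\text{a commutor})\otimes\on{id}$, the morphism $s_{p,q}$ always decomposes as $\on{id}_{\obj{p-1}}\otimes s^{(\ell)}\otimes\on{id}$ with $\ell=q-p+1$, where $s^{(\ell)}\in\on{End}_{\mathcal{TL}_{0}(\Bbbk)}(\obj{\ell})$ is the total reversal obtained from the recursion with $p=1$, $q=\ell$. Hence it suffices to prove $s^{(n)}=\sum_{\euler{x}\in\mathscr{D}_{n}}\overline{\vartheta(\euler{x})}\circ\euler{j}_{\thru(\euler{x})}\circ\euler{x}=:\rho_{n}$ for all $n$. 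The cases $n\le 2$ are immediate: for $n=1$ both sides are $\on{id}_{\obj{1}}$, and for $n=2$ one computes from \autoref{def:JW} that $\rho_{2}=\euler{j}_{2}+\on{cup}\circ\on{cap}=\on{id}_{\obj{2}}=\sigma_{\obj{1},\obj{1}}=s^{(2)}$.

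For the inductive step I would use $s^{(n)}=\sigma_{\obj{1},\obj{n-1}}\circ(\on{id}_{\obj{1}}\otimes s^{(n-1)})$, substitute the closed formula of \autoref{TLCob} for $\sigma_{\obj{1},\obj{n-1}}$ and the inductive expression $s^{(n-1)}=\rho_{n-1}$, and analyze the resulting double sum over $(\euler{x},\euler{y})\in\mathscr{D}_{n}\times\mathscr{D}_{n-1}$. Exactly as in the naturality and cactus-axiom computations inside the proof of \autoref{TLCob}, most terms vanish: gluing a cup of $\on{id}_{\obj{1}}\otimes\overline{\vartheta(\euler{y})}$ to a non-matching cap of $\euler{x}$ produces a zig-zag, and gluing it to a pair of through-strands feeding $\euler{j}_{\thru(\euler{x})}$ kills the term by \autoref{annihilation}. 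One then checks that the surviving pairs are exactly $(\euler{x},\vartheta(\euler{x}_{>1}))$ for $\euler{x}\in\mathscr{D}_{n}$ (with $\euler{x}_{>1}$ as in \autoref{sec:commutor}), so the surviving index set is in bijection with $\mathscr{D}_{n}$; and for such a pair one absorbs $\on{id}_{\obj{1}}\otimes\euler{j}_{\thru(\euler{y})}$ into $\euler{j}_{\thru(\euler{x})}$ after the gluing using \autoref{JWcaps} and \autoref{2JWwithCaps}, so that the term collapses to a single max-summand morphism in the sense of \autoref{maxsummand}.

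The remaining point — and the step I expect to be the main obstacle — is the purely combinatorial identity ensuring that the collapsed term equals $\overline{\vartheta(\euler{x})}\circ\euler{j}_{\thru(\euler{x})}\circ\euler{x}$: namely that total diagram reversal on $n$ strands is obtained by first reversing strands $2,\dots,n$ and then block-swapping the first strand past them, i.e. that composing $\kappa_{1,n-1}$ with the reversal of the last $n-1$ strands realizes $\vartheta_{n}$ at the level of cap diagrams. Pinning this down requires tracking the left-to-right order of through-strands and, crucially, verifying that the hooking number $\mathtt{hk}_{1}$ correctly records the caps created when a through-strand of $\euler{x}_{\le 1}$ meets a through-strand of $\euler{x}_{>1}$; once this is established, \autoref{JWBasis} lets one compare the two expressions term by term. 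As a consistency check one can separately verify, via \autoref{jnuj} and \autoref{DnIdempotents}, that $\rho_{n}^{2}=\on{id}_{\obj{n}}$, matching the relation $s_{p,q}^{2}=1$. Finally the general statement follows by tensoring the identity for $s^{(q-p+1)}$ with $\on{id}_{\obj{p-1}}$ on the left and an identity on the right.
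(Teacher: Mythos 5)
Your proposal is correct and follows essentially the same route as the paper's (itself only sketched) argument: reduce to $s_{1,n}$, expand via the closed formula of \autoref{TLCob}, kill cross terms by zig-zags and \autoref{annihilation}, and reduce to the combinatorial identity expressing $\vartheta$ as a composite of $\kappa$'s. The only difference is organizational — you peel off one $\sigma_{1,1,n}$ at a time by induction, with the key identity becoming $\vartheta(\euler{x}_{\leq 1}\odot_h \euler{w}) = \vartheta(\euler{w})\odot_h \euler{x}_{\leq 1}$, whereas the paper unrolls the full product $\sigma_{1,1,n}\circ\cdots\circ\sigma_{n-1,n-1,n}$ and verifies $\vartheta = \kappa_{n-1,1}\circ\cdots\circ\kappa_{1,n-1}$ in one step.
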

    \begin{example}
        The map $s_{1,4}\in \on{End}_{\mathcal{TL}_0(\Bbbk)}(\obj{5})$ is given by
        \[\begin{tikzpicture}[scale=0.55]
	\begin{pgfonlayer}{nodelayer}
		\node [style=none] (51) at (9, 0) {};
		\node [style=none] (52) at (9.5, 0) {};
		\node [style=none] (53) at (10, 0) {};
		\node [style=none] (54) at (10.5, 0) {};
		\node [style=none] (55) at (5.5, 0) {};
		\node [style=none] (56) at (6, 0) {};
		\node [style=none] (57) at (5, 0) {};
		\node [style=none] (58) at (6.5, 0) {};
		\node [style=none] (59) at (1, 0) {};
		\node [style=none] (60) at (1.5, 0) {};
		\node [style=none] (61) at (2, 0) {};
		\node [style=none] (62) at (2.5, 0) {};
		\node [style=none] (71) at (-6, 0) {};
		\node [style=none] (72) at (-5.5, 0) {};
		\node [style=none] (73) at (-7, 0) {};
		\node [style=none] (74) at (-6.5, 0) {};
		\node [style=none] (75) at (-6, 3) {};
		\node [style=none] (76) at (-5.5, 3) {};
		\node [style=none] (77) at (-9, 0) {};
		\node [style=none] (78) at (-9.5, 0) {};
		\node [style=none] (79) at (-10, 0) {};
		\node [style=none] (80) at (-10.5, 0) {};
		\node [style=none] (81) at (-9, 3) {};
		\node [style=none] (82) at (-9.5, 3) {};
		\node [style=none] (83) at (-10, 3) {};
		\node [style=none] (84) at (-10.5, 3) {};
		\node [style=none] (85) at (8, 1.5) {$+$};
		\node [style=none] (86) at (4, 1.5) {$+$};
		\node [style=none] (87) at (0, 1.5) {$+$};
		\node [style=none] (88) at (-4, 1.5) {$+$};
		\node [style=none] (89) at (-8, 1.5) {$+$};
		\node [style=none] (90) at (5.5, 3) {};
		\node [style=none] (91) at (6, 3) {};
		\node [style=none] (92) at (5, 3) {};
		\node [style=none] (93) at (6.5, 3) {};
		\node [style=none] (94) at (9, 3) {};
		\node [style=none] (95) at (9.5, 3) {};
		\node [style=none] (96) at (10, 3) {};
		\node [style=none] (97) at (10.5, 3) {};
		\node [style=none] (98) at (2, 3) {};
		\node [style=none] (99) at (2.5, 3) {};
		\node [style=none] (100) at (1, 3) {};
		\node [style=none] (101) at (1.5, 3) {};
		\node [style=none] (102) at (-2.5, 3) {};
		\node [style=none] (103) at (-2, 3) {};
		\node [style=none] (104) at (-3, 3) {};
		\node [style=none] (105) at (-1.5, 3) {};
		\node [style=none] (106) at (-2.5, 0) {};
		\node [style=none] (107) at (-2, 0) {};
		\node [style=none] (108) at (-3, 0) {};
		\node [style=none] (109) at (-1.5, 0) {};
		\node [style=none] (110) at (-7, 3) {};
		\node [style=none] (111) at (-6.5, 3) {};
		\node [style=jonesrectangle] (112) at (-10.25, 1.5) {$\euler{j}_4$};
		\node [style=jonesrectangle] (113) at (-6.25, 1.5) {$\euler{j}_2$};
		\node [style=jonesrectangle] (114) at (-2.25, 1.5) {$\euler{j}_2$};
		\node [style=jonesrectangle] (115) at (1.75, 1.5) {$\euler{j}_2$};
		\node [style=none] (116) at (11, 3) {};
		\node [style=none] (117) at (11, 0) {};
		\node [style=none] (118) at (7, 3) {};
		\node [style=none] (119) at (7, 0) {};
		\node [style=none] (120) at (3, 3) {};
		\node [style=none] (121) at (3, 0) {};
		\node [style=none] (122) at (-1, 3) {};
		\node [style=none] (123) at (-1, 0) {};
		\node [style=none] (124) at (-5, 3) {};
		\node [style=none] (125) at (-5, 0) {};
		\node [style=none] (126) at (-11, 3) {};
		\node [style=none] (127) at (-11, 0) {};
		\node [style=none] (128) at (-12, 1.5) {$=$};
		\node [style=none] (129) at (-13, 1.5) {$s_{1,4}$};
		\node [style=none] (130) at (-11, -4) {};
		\node [style=none] (131) at (-10.5, -4) {};
		\node [style=none] (132) at (-10, -4) {};
		\node [style=none] (133) at (-9.5, -4) {};
		\node [style=none] (134) at (-11, -1) {};
		\node [style=none] (135) at (-10.5, -1) {};
		\node [style=none] (136) at (-10, -1) {};
		\node [style=none] (137) at (-9.5, -1) {};
		\node [style=none] (138) at (-12, -2.5) {$=$};
		\node [style=none] (140) at (-9, -1) {};
		\node [style=none] (141) at (-9, -4) {};
		\node [style=none] (142) at (-7, -4) {};
		\node [style=none] (143) at (-6.5, -4) {};
		\node [style=none] (144) at (-6, -4) {};
		\node [style=none] (145) at (-5.5, -4) {};
		\node [style=none] (146) at (-7, -1) {};
		\node [style=none] (147) at (-6.5, -1) {};
		\node [style=none] (148) at (-6, -1) {};
		\node [style=none] (149) at (-5.5, -1) {};
		\node [style=none] (150) at (-5, -1) {};
		\node [style=none] (151) at (-5, -4) {};
		\node [style=none] (152) at (-8, -2.5) {$-$};
		\node [style=none] (153) at (-3, -4) {};
		\node [style=none] (154) at (-2.5, -4) {};
		\node [style=none] (155) at (-2, -4) {};
		\node [style=none] (156) at (-1.5, -4) {};
		\node [style=none] (157) at (-3, -1) {};
		\node [style=none] (158) at (-2.5, -1) {};
		\node [style=none] (159) at (-2, -1) {};
		\node [style=none] (160) at (-1.5, -1) {};
		\node [style=none] (161) at (-1, -1) {};
		\node [style=none] (162) at (-1, -4) {};
		\node [style=none] (163) at (-4, -2.5) {$-$};
		\node [style=none] (164) at (1, -4) {};
		\node [style=none] (165) at (1.5, -4) {};
		\node [style=none] (166) at (2, -4) {};
		\node [style=none] (167) at (2.5, -4) {};
		\node [style=none] (168) at (0, -2.5) {$+$};
		\node [style=none] (169) at (2, -1) {};
		\node [style=none] (170) at (2.5, -1) {};
		\node [style=none] (171) at (1, -1) {};
		\node [style=none] (172) at (1.5, -1) {};
		\node [style=none] (174) at (3, -1) {};
		\node [style=none] (175) at (3, -4) {};
		\node [style=none] (176) at (6, -4) {};
		\node [style=none] (177) at (6.5, -4) {};
		\node [style=none] (178) at (5, -4) {};
		\node [style=none] (179) at (5.5, -4) {};
		\node [style=none] (180) at (6, -1) {};
		\node [style=none] (181) at (6.5, -1) {};
		\node [style=none] (182) at (4, -2.5) {$+$};
		\node [style=none] (183) at (5, -1) {};
		\node [style=none] (184) at (5.5, -1) {};
		\node [style=none] (186) at (7, -1) {};
		\node [style=none] (187) at (7, -4) {};
	\end{pgfonlayer}
	\begin{pgfonlayer}{edgelayer}
		\draw [style=thickstrand] [bend left=90, looseness=1.75] (51.center) to (52.center);
		\draw [style=thickstrand] [bend left=90, looseness=1.75] (53.center) to (54.center);
		\draw [style=thickstrand] [bend left=90, looseness=1.75] (55.center) to (56.center);
		\draw [style=thickstrand] [in=90, out=90, looseness=1.25] (57.center) to (58.center);
		\draw [style=thickstrand] [bend left=90, looseness=1.75] (59.center) to (60.center);
		\draw [style=thickstrand] [bend left=90, looseness=1.75] (71.center) to (72.center);
		\draw [style=thickstrand] [in=-90, out=90] (73.center) to (75.center);
		\draw [style=thickstrand] [in=90, out=-90] (76.center) to (74.center);
		\draw [style=thickstrand] (77.center) to (81.center);
		\draw [style=thickstrand] (78.center) to (82.center);
		\draw [style=thickstrand] (79.center) to (83.center);
		\draw [style=thickstrand] (80.center) to (84.center);
		\draw [style=thickstrand] [bend right=90, looseness=1.75] (90.center) to (91.center);
		\draw [style=thickstrand] [bend right=90, looseness=1.25] (92.center) to (93.center);
		\draw [style=thickstrand] [bend right=90, looseness=1.75] (94.center) to (95.center);
		\draw [style=thickstrand] [bend right=90, looseness=1.75] (96.center) to (97.center);
		\draw [style=thickstrand] [bend right=90, looseness=1.75] (98.center) to (99.center);
		\draw [style=thickstrand] [in=90, out=-90] (101.center) to (62.center);
		\draw [style=thickstrand] [in=90, out=-90] (100.center) to (61.center);
		\draw [style=thickstrand] [bend right=90, looseness=1.75] (102.center) to (103.center);
		\draw [style=thickstrand] [bend left=90, looseness=1.75] (106.center) to (107.center);
		\draw [style=thickstrand] (109.center) to (105.center);
		\draw [style=thickstrand] [in=-90, out=90] (108.center) to (104.center);
		\draw [style=thickstrand] [bend right=90, looseness=1.75] (110.center) to (111.center);
		\draw [style=thickstrand] (116.center) to (117.center);
		\draw [style=thickstrand] (118.center) to (119.center);
		\draw [style=thickstrand] (120.center) to (121.center);
		\draw [style=thickstrand] (122.center) to (123.center);
		\draw [style=thickstrand] (124.center) to (125.center);
		\draw [style=thickstrand] (126.center) to (127.center);
		\draw [style=thickstrand] (130.center) to (134.center);
		\draw [style=thickstrand] (131.center) to (135.center);
		\draw [style=thickstrand] (132.center) to (136.center);
		\draw [style=thickstrand] (133.center) to (137.center);
		\draw [style=thickstrand] (140.center) to (141.center);
		\draw [style=thickstrand] [bend left=90, looseness=1.75] (142.center) to (143.center);
		\draw [style=thickstrand] [bend right=90, looseness=1.75] (146.center) to (147.center);
		\draw [style=thickstrand] (150.center) to (151.center);
		\draw [style=thickstrand] [bend left=90, looseness=1.75] (155.center) to (156.center);
		\draw [style=thickstrand] [bend right=90, looseness=1.75] (159.center) to (160.center);
		\draw [style=thickstrand] (161.center) to (162.center);
		\draw [style=thickstrand] [bend left=90, looseness=1.75] (164.center) to (165.center);
		\draw [style=thickstrand] [bend right=90, looseness=1.75] (169.center) to (170.center);
		\draw [style=thickstrand] [in=90, out=-90] (172.center) to (167.center);
		\draw [style=thickstrand] [in=90, out=-90] (171.center) to (166.center);
		\draw [style=thickstrand] (174.center) to (175.center);
		\draw [style=thickstrand] [bend left=90, looseness=1.75] (176.center) to (177.center);
		\draw [style=thickstrand] [in=-90, out=90] (178.center) to (180.center);
		\draw [style=thickstrand] [in=90, out=-90] (181.center) to (179.center);
		\draw [style=thickstrand] [bend right=90, looseness=1.75] (183.center) to (184.center);
		\draw [style=thickstrand] (186.center) to (187.center);
		\draw [style=thickstrand] (148.center) to (144.center);
		\draw [style=thickstrand] (145.center) to (149.center);
		\draw [style=thickstrand] (157.center) to (153.center);
		\draw [style=thickstrand] (154.center) to (158.center);
	\end{pgfonlayer}
\end{tikzpicture}
\]
    \end{example}

    The proof of this corollary involves lengthy, but straight-forward computations, so we omit the details.
    
    \textit{Proof Sketch. } It suffices to prove this formula for $s_{1,n}$ since other interval reversal morphisms are acquired from such via tensoring with identity maps.
    By the recursive formula given for interval reversal maps in \autoref{CobCats}, we have 
    $$s_{1,n} = \sigma_{1,1,n}\circ \sigma_{2,2,n} \circ\dots\circ \sigma_{n-1,n-1,n},$$
    where $\sigma_{i,i,n}$ may be alternatively written as $\on{id}_{\obj{i-1}}\otimes \sigma_{\obj{1},\obj{n-i-1}}$.
    Using the formula of \autoref{TLCob} we get 
    $$s_{1,n}=\left(\sum_{\euler{x}\in \mathscr D_n} \tau_{1,{n-1}}(\euler{x})\right)\circ
    \left(\on{id}_{\obj{1}}\otimes \sum_{\euler{x}\in \mathscr D_{n-1}} \tau_{1,{n-2}}(\euler{x})\right) \circ 
    \dots \circ 
    \left(\on{id}_{\obj{n-2}}\sum_{\euler{x}\in \mathscr D_2} \tau_{1,1}(\euler{x})\right).$$
    Expanding and eliminating zero terms (due to zig-zags or cups/caps annihilating Jones Wenzl projectors) we end up with only one term involving each cap diagram $\euler{x}\in\mathscr D_n$; more explicitly, 
    $$s_{1,n} = \sum_{\euler{x}\in \mathscr D_n} \overline{(\kappa_{n-1, 1}\circ\kappa_{n-2,2}\circ \dots\circ \kappa_{1,n-1})(\euler{x})}
    \circ \euler{j}_{\mathtt{th}(\euler{x})}\circ \euler{x}.$$
    Finally, one can show that $\vartheta=\kappa_{n-1, 1}\circ\kappa_{n-2,2}\circ \dots\circ \kappa_{1,n-1}$.\hfill $\qed$
    
	\subsection{Equivalence as Coboundary Categories}\label{sec:coboundaryequiv}

    Due to naturality of the morphisms constituting a coboundary structure, the one defined in the last section on $\mathcal{TL}_0(\Bbbk)$ extends uniquely to a coboundary structure on its Cauchy completion $\mathbf{CrysTL}$ (see \eqref{eq:MonCauchy} and the explanations there).
    Proving that the equivalence $F$ defined above is an equivalence of coboundary categories relies on defining a bijection between cap diagrams on $n$ strands on the one hand and direct summands of $B^{\otimes n}$ on the other.

    \begin{notation}
        Let $C$ be a component of highest weight $\lambda$ and highest weight element $c_0$ of the crystal $B=B^{\otimes n}$.
		So, the elements of $C$ are $c_0, c_1:=f(c_0), \dots, c_\lambda:= f^\lambda(c_0)$.
		Let $C_+$ (resp. $C_-$) be the component of highest weights $\lambda+1$ (resp. $\lambda-1$) of $B\otimes C$ generated by the highest weight elements $b_0\otimes c_0$ (resp. $b_0\otimes c_1$). 
        Note that $C_-$ is not defined if $C$ is of highest weight $0$ in which case we only get one component, $C_+$ in $B^{\otimes n+1}$.
		Similarly, let $C^+$ (resp. $C^-$) be the component of highest weights $\lambda+1$ (resp. $\lambda-1$) of $C\otimes B$ generated by the highest weight elements $c_0\otimes b_0$ (resp. $c_0\otimes b_1$).
    \end{notation}
    
    Consider the map $\Phi_n:\mathscr{D}_{\mobijd{n}} \to \mathscr{C}_{\mobijd{n}}$ between the set $\mathscr{D}_{\mobijd{n}}$ of cap diagrams on $n$ strands and the set $\mathscr{C}_{\mobijd{n}}$ of connected components of $B^{\otimes n}$ defined recursively as follows:
		define $\Phi_0(\on{id}_{\obj{0}})=B_0=\mathbb 1$; and for every $\euler{x}\in \mathscr{D}_{\mobijd{n}}$, define $\Phi_{n+1}( \euler{x} \odot_0 \on{id}_{\obj{1}}) = \Phi_n(\euler{x})^+$ and $\Phi_{n+1}(\euler{x} \odot_1 \on{id}_{\obj{1}}) = \Phi_n(\euler{x})^-$, whenever defined. 
		It is easy to verify that this defines a bijection. 
		
        The following \emph{branching graph}  depicts this recursively-defined bijection up to $n=4$. 
            
		\[\begin{tikzpicture}[scale=0.6]
	\begin{pgfonlayer}{nodelayer}
		\node [style=none] (0) at (6.75, 3.5) {};
		\node [style=none] (1) at (7.25, 3.5) {};
		\node [style=none] (2) at (7.75, 3.5) {};
		\node [style=none] (3) at (8.25, 3.5) {};
		\node [style=none] (4) at (8.75, 3.5) {};
		\node [style=none] (5) at (7.25, 1.5) {};
		\node [style=none] (6) at (7.75, 1.5) {};
		\node [style=none] (7) at (8.25, 1.5) {};
		\node [style=none] (8) at (7.25, -0.5) {};
		\node [style=none] (9) at (7.75, -0.5) {};
		\node [style=none] (10) at (8.25, -0.5) {};
		\node [style=none] (11) at (7.75, -2.25) {$\bullet$};
		\node [style=none] (12) at (7.25, -3.75) {};
		\node [style=none] (13) at (7.75, -3.75) {};
		\node [style=none] (14) at (8.25, -3.75) {};
		\node [style=none] (15) at (7.75, -5.25) {$\bullet$};
		\node [style=none] (16) at (2, 2.5) {};
		\node [style=none] (17) at (2.5, 2.5) {};
		\node [style=none] (18) at (3, 2.5) {};
		\node [style=none] (19) at (3.5, 2.5) {};
		\node [style=none] (20) at (2.5, -1.5) {};
		\node [style=none] (21) at (3, -1.5) {};
		\node [style=none] (22) at (2.5, -4.75) {};
		\node [style=none] (23) at (3, -4.75) {};
		\node [style=none] (24) at (-1, 0.75) {};
		\node [style=none] (25) at (-1.5, 0.75) {};
		\node [style=none] (26) at (-2, 0.75) {};
		\node [style=none] (28) at (-5, -1) {};
		\node [style=none] (29) at (-5.5, -1) {};
		\node [style=none] (30) at (-1.5, -3) {$\bullet$};
		\node [style=none] (31) at (-4.5, -0.5) {};
		\node [style=none] (32) at (-2.5, 1.25) {};
		\node [style=none] (33) at (-2.5, -2.5) {};
		\node [style=none] (34) at (-0.5, 1.25) {};
		\node [style=none] (35) at (1.5, 3) {};
		\node [style=none] (36) at (1.5, -1) {};
		\node [style=none] (37) at (4, 3) {};
		\node [style=none] (38) at (6, 4) {};
		\node [style=none] (39) at (6, 2) {};
		\node [style=none] (40) at (4, -1) {};
		\node [style=none] (41) at (6, 0) {};
		\node [style=none] (42) at (6, -2) {};
		\node [style=none] (43) at (4, -4.25) {};
		\node [style=none] (44) at (6, -3.5) {};
		\node [style=none] (45) at (6, -5) {};
		\node [style=none] (46) at (-0.5, -2.5) {};
		\node [style=none] (47) at (1.5, -4.25) {};
		\node [style=none] (48) at (-5.25, -0.5) {};
		\node [style=none] (49) at (-5.25, 0) {};
		\node [style=none] (50) at (-1.75, 1.25) {};
		\node [style=none] (51) at (-1.25, 1.25) {};
		\node [style=none] (52) at (-1.75, 1.75) {};
		\node [style=none] (53) at (-1.25, 1.75) {};
		\node [style=none] (54) at (-1.75, -2.5) {};
		\node [style=none] (55) at (-1.25, -2.5) {};
		\node [style=none] (56) at (2.25, 3) {};
		\node [style=none] (57) at (2.75, 3) {};
		\node [style=none] (58) at (3.25, 3) {};
		\node [style=none] (59) at (2.25, 3.5) {};
		\node [style=none] (60) at (2.75, 3.5) {};
		\node [style=none] (61) at (3.25, 3.5) {};
		\node [style=none] (62) at (2.25, -4.25) {};
		\node [style=none] (63) at (2.75, -4.25) {};
		\node [style=none] (64) at (3.25, -4.25) {};
		\node [style=none] (66) at (3.25, -3.75) {};
		\node [style=none] (67) at (2.75, -1) {};
		\node [style=none] (68) at (3.25, -1) {};
		\node [style=none] (69) at (2.25, -1) {};
		\node [style=none] (70) at (2.25, -0.5) {};
		\node [style=none] (71) at (7, 4) {};
		\node [style=none] (72) at (7, 4.5) {};
		\node [style=none] (73) at (8, 4) {};
		\node [style=none] (74) at (8, 4.5) {};
		\node [style=none] (75) at (7.5, 4) {};
		\node [style=none] (76) at (7.5, 4.5) {};
		\node [style=none] (77) at (8.5, 4) {};
		\node [style=none] (78) at (8.5, 4.5) {};
		\node [style=none] (79) at (8, -3.25) {};
		\node [style=none] (80) at (8, -2.75) {};
		\node [style=none] (81) at (8.5, -3.25) {};
		\node [style=none] (82) at (8.5, -2.75) {};
		\node [style=none] (83) at (7, -3.25) {};
		\node [style=none] (84) at (7.5, -3.25) {};
		\node [style=none] (85) at (8.5, 0) {};
		\node [style=none] (86) at (8.5, 0.5) {};
		\node [style=none] (87) at (7, 0) {};
		\node [style=none] (88) at (7, 0.5) {};
		\node [style=none] (89) at (7.5, 0) {};
		\node [style=none] (90) at (8, 0) {};
		\node [style=none] (91) at (7.5, -1.75) {};
		\node [style=none] (92) at (8, -1.75) {};
		\node [style=none] (93) at (7, -1.75) {};
		\node [style=none] (94) at (8.5, -1.75) {};
		\node [style=none] (95) at (7, -4.75) {};
		\node [style=none] (96) at (7.5, -4.75) {};
		\node [style=none] (97) at (8, -4.75) {};
		\node [style=none] (98) at (8.5, -4.75) {};
		\node [style=none] (99) at (7, 2) {};
		\node [style=none] (100) at (7.5, 2) {};
		\node [style=none] (101) at (8, 2) {};
		\node [style=none] (102) at (8.5, 2) {};
		\node [style=none] (103) at (7, 2.5) {};
		\node [style=none] (104) at (7.5, 2.5) {};
		\node [style=none] (105) at (10, -1) {$\dots$};
		\node [style=none] (106) at (0.5, 2.5) {$+$};
		\node [style=none] (107) at (0.5, -0.25) {$-$};
		\node [style=none] (108) at (-3.5, 0.75) {$+$};
		\node [style=none] (109) at (-3.5, -1.75) {$-$};
		\node [style=none] (110) at (0.5, -3.75) {$+$};
		\node [style=none] (111) at (5, 3.75) {$+$};
		\node [style=none] (112) at (5, 2.25) {$-$};
		\node [style=none] (113) at (5, -0.25) {$+$};
		\node [style=none] (114) at (5, -1.75) {$-$};
		\node [style=none] (115) at (5, -3.65) {$+$};
		\node [style=none] (116) at (5, -4.85) {$-$};
		\node [style=none] (117) at (-7.5, -1) {};
		\node [style=none] (118) at (-6, -1) {};
		\node [style=none] (119) at (-6.75, -0.75) {$+$};
		\node [style=none] (120) at (-8, -1) {$\bullet$};
	\end{pgfonlayer}
	\begin{pgfonlayer}{edgelayer}
		\draw [style=arrow] (29.center) to (28.center);
		\draw [style=arrow] (26.center) to (25.center);
		\draw [style=arrow] (25.center) to (24.center);
		\draw [style=arrow] (16.center) to (17.center);
		\draw [style=arrow] (17.center) to (18.center);
		\draw [style=arrow] (18.center) to (19.center);
		\draw [style=arrow] (20.center) to (21.center);
		\draw [style=arrow] (22.center) to (23.center);
		\draw [style=arrow] (0.center) to (1.center);
		\draw [style=arrow] (1.center) to (2.center);
		\draw [style=arrow] (2.center) to (3.center);
		\draw [style=arrow] (3.center) to (4.center);
		\draw [style=arrow] (5.center) to (6.center);
		\draw [style=arrow] (6.center) to (7.center);
		\draw [style=arrow] (8.center) to (9.center);
		\draw [style=arrow] (9.center) to (10.center);
		\draw [style=arrow] (12.center) to (13.center);
		\draw [style=arrow] (13.center) to (14.center);
		\draw [style=blue arrow] (31.center) to (32.center);
		\draw [style=blue arrow] (31.center) to (33.center);
		\draw [style=blue arrow] (34.center) to (35.center);
		\draw [style=blue arrow] (34.center) to (36.center);
		\draw [style=blue arrow] (37.center) to (38.center);
		\draw [style=blue arrow] (37.center) to (39.center);
		\draw [style=blue arrow] (40.center) to (41.center);
		\draw [style=blue arrow] (40.center) to (42.center);
		\draw [style=blue arrow] (43.center) to (44.center);
		\draw [style=blue arrow] (43.center) to (45.center);
		\draw [style=blue arrow] (46.center) to (47.center);
		\draw [style=redline] (48.center) to (49.center);
		\draw [style=redline] (52.center) to (50.center);
		\draw [style=redline] (53.center) to (51.center);
		\draw [style=redline, bend left=90, looseness=2.00] (54.center) to (55.center);
		\draw [style=redline] (56.center) to (59.center);
		\draw [style=redline] (57.center) to (60.center);
		\draw [style=redline] (58.center) to (61.center);
		\draw [style=redline] (64.center) to (66.center);
		\draw [style=redline, bend left=90, looseness=2.00] (62.center) to (63.center);
		\draw [style=redline, bend left=90, looseness=2.00] (67.center) to (68.center);
		\draw [style=redline] (70.center) to (69.center);
		\draw [style=redline] (71.center) to (72.center);
		\draw [style=redline] (73.center) to (74.center);
		\draw [style=redline] (75.center) to (76.center);
		\draw [style=redline] (77.center) to (78.center);
		\draw [style=redline] (79.center) to (80.center);
		\draw [style=redline] (81.center) to (82.center);
		\draw [style=redline, bend left=90, looseness=2.00] (83.center) to (84.center);
		\draw [style=redline] (85.center) to (86.center);
		\draw [style=redline] (87.center) to (88.center);
		\draw [style=redline, bend left=90, looseness=2.00] (89.center) to (90.center);
		\draw [style=redline, bend left=90, looseness=2.00] (91.center) to (92.center);
		\draw [style=redline, bend left=90, looseness=1.50] (93.center) to (94.center);
		\draw [style=redline] (103.center) to (99.center);
		\draw [style=redline] (104.center) to (100.center);
		\draw [style=redline, bend left=90, looseness=1.75] (101.center) to (102.center);
		\draw [style=redline, bend left=90, looseness=1.75] (95.center) to (96.center);
		\draw [style=redline, bend left=90, looseness=1.75] (97.center) to (98.center);
		\draw [style=blue arrow] (117.center) to (118.center);
	\end{pgfonlayer}
\end{tikzpicture}
\]
		So a cap diagram $\euler{x}$ can be represented by a unique sequence $s_\euler{x} = s_1s_2\dots s_n$ of $+$ and $-$ (the edges of the branching graph leading to it); e.g. $s_{\on{cap}\otimes \on{cap}} = +-+-$.
		Such a sequence should be interpreted as 
		$$\euler{x} = ((\dots((\on{id}_{\obj{0}}\odot_{h_1} \on{id}_{\obj{1}})\odot_{h_2} \on{id}_{\obj{1}})\dots ) \odot_{h_n} \on{id}_{\obj{1}}), \quad \text{where } h_j = \begin{cases}
			0 \quad s_j = + \\
			1 \quad s_j = -
		\end{cases}.$$
		On the other hand, under the bijection $\Phi_n$, we have $\Phi_n(\euler{x}) = (\dots(((\mathbb{1})^{s_1})^{s_2} \dots))^{s_n}$
		which, by definition of $C^\pm$, is the irreducible summand with highest weight element $b_{h_1}\otimes b_{h_2}\otimes \dots \otimes b_{h_n}$.
		
		Before we complete the prove that $F$ is coboundary, we shall need the following two lemmas.
		
		\begin{lemma} \label{zComp}
			Given $\euler{y}\in\mathscr{D}_{\mobijd{n}}$ and $h\in \{0,1\}$, we have $\Phi_{n+1}\left(\on{id}_{\obj{1}} \odot_h \euler{y}\right) = \begin{cases}
				\Phi_n(\euler{y})_+ \quad h=0\\
				\Phi_n(\euler{y})_- \quad h=1
			\end{cases}$.
		\end{lemma}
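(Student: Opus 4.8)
The plan is to show both sides are components of $B^{\otimes(n+1)}$ with the \emph{same highest weight element}; since in $\mathfrak{sl}_2\mathbf{-Crys}$ a connected component is generated by its highest weight element under the $f$'s, this forces them to coincide. I will use the combinatorial encoding recalled just before the statement: a cap diagram $\euler{x}\in\mathscr{D}_{\mobijd{n}}$ corresponds to a word $s_{\euler{x}}=s_1\cdots s_n\in\{+,-\}^n$, obtained by scanning strands left to right and recording, for each strand, whether it hooks the current rightmost through-strand ($s_j=-$) or not ($s_j=+$); the through-strands of $\euler{x}$ are exactly the positions of the \emph{uncancelled} $+$'s, each cap is a matched pair $+\cdots-$, and $\Phi_n(\euler{x})$ is the summand with highest weight element $b_{h_1}\otimes\cdots\otimes b_{h_n}$ where $h_j=0$ for $s_j=+$ and $h_j=1$ for $s_j=-$. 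Because a cap diagram is a composition of basic caps $(a,\on{cap},b)$, its caps form a planar matching with no cap arcing over a through-strand, so every $-$ in $s_{\euler{x}}$ is matched and $c_0:=b_{h_1}\otimes\cdots\otimes b_{h_n}$ is automatically a highest weight element. For $h=0$ the diagram $\on{id}_{\obj{1}}\odot_0\euler{y}$ just prepends a through-strand, which is never the rightmost through-strand and so does not disturb the scan of $\euler{y}$; hence $s_{\on{id}_{\obj{1}}\odot_0\euler{y}}=(+)\cdot s_{\euler{y}}$, so $\Phi_{n+1}(\on{id}_{\obj{1}}\odot_0\euler{y})$ has highest weight element $b_0\otimes c_0$, which by definition of $C_+$ is the highest weight element of $\Phi_n(\euler{y})_+$. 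Done.

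\textbf{The case $h=1$.} Now $\thru(\euler{y})\ge 1$; let $p$ be the position of the leftmost through-strand of $\euler{y}$. Since no cap arcs over this through-strand, the strands $1,\dots,p-1$ are completely matched among themselves, so $u:=b_{h_1}\otimes\cdots\otimes b_{h_{p-1}}$ satisfies $\varepsilon(u)=\phi(u)=0$; writing $v:=b_{h_{p+1}}\otimes\cdots\otimes b_{h_n}$ we have $c_0=u\otimes b_0\otimes v$ (as $h_p=0$) and $\varepsilon(v)=0$ (any $-$ of $v$ must be matched inside $v$, since the $+$'s of $u$ are matched internally and the $+$ at $p$ is uncancelled). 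Scanning $\on{id}_{\obj{1}}\odot_1\euler{y}$: the new strand is pushed ($+$), the block $1,\dots,p-1$ is matched off exactly as in $\euler{y}$ (the new strand sitting harmlessly below), the old strand $p$ now hooks the new strand and contributes a $-$, and the remaining strands behave as in $\euler{y}$; thus $s_{\on{id}_{\obj{1}}\odot_1\euler{y}}=(+)\cdot(s_1,\dots,s_{p-1})\cdot(-)\cdot(s_{p+1},\dots,s_n)$, so $\Phi_{n+1}(\on{id}_{\obj{1}}\odot_1\euler{y})$ has highest weight element $b_0\otimes u\otimes b_1\otimes v$. On the other hand, the crystal tensor rule gives $f(c_0)=f\!\big(u\otimes(b_0\otimes v)\big)=u\otimes f(b_0\otimes v)=u\otimes b_1\otimes v$, where the first step uses $\phi(u)=0\not>\varepsilon(b_0\otimes v)$ and the second uses $\phi(b_0)=1>0=\varepsilon(v)$; hence the highest weight element of $\Phi_n(\euler{y})_-$ is $b_0\otimes c_1=b_0\otimes f(c_0)=b_0\otimes u\otimes b_1\otimes v$. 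The two components agree, which finishes the proof.

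\textbf{Main obstacle and an alternative.} All the content is in the case $h=1$: reading off the word $s_{\on{id}_{\obj{1}}\odot_1\euler{y}}$ correctly from the scan-description of $\odot_1$, and matching it with the action of $f$ on $c_0$. A second, essentially equivalent route is induction on $n$: write $\euler{y}=\euler{y}'\odot_k\on{id}_{\obj{1}}$, use the associativity $\on{id}_{\obj{1}}\odot_h(\euler{y}'\odot_k\on{id}_{\obj{1}})=(\on{id}_{\obj{1}}\odot_h\euler{y}')\odot_k\on{id}_{\obj{1}}$ (valid whenever both sides are defined, the left and right hooks using distinct through-strands of $\euler{y}'$) together with the recursive definition of $\Phi_n$, reducing everything to the crystal identities $(C_\epsilon)^\delta=(C^\delta)_\epsilon$ inside $B\otimes C\otimes B$ for $\epsilon,\delta\in\{+,-\}$. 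That version trades the scan bookkeeping for a short case check on the four sign choices, whose only subtle point is verifying that the weight of $C$ is large enough that the relevant Kashiwara operators do not vanish (this is guaranteed precisely by the conditions under which $\on{id}_{\obj{1}}\odot_1\euler{y}$ is defined).
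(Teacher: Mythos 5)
Your proof is correct and follows essentially the same route as the paper's: both reduce the claim to comparing highest weight elements, handle $h=0$ by prepending a $+$, and for $h=1$ identify the leftmost through-strand of $\euler{y}$ as both the position where the Kashiwara operator $f$ acts on $c_0$ (via the tensor rule, using that the prefix is fully matched and the suffix has $\varepsilon=0$) and the strand to which $\on{id}_{\obj{1}}\odot_1$ hooks. The only cosmetic difference is that you make the $\pm$-word bookkeeping slightly more explicit via the scan description, while the paper argues directly with the sets $K(\cdot)$ of caps.
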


        \begin{proof}
            Since the equality in question is that of two irreducible components of $B^{\otimes n+1}$, it suffices to show that they have the same highest weight element.
	Keeping the notation of the proof of \autoref{coboundaryF}, let $s_\euler{y} = s_1s_2\dots s_n$ be a $\pm$ sequence describing the unique path to $\euler{y}$ in the branching graph of $\Phi_n$, and $h_1h_2\dots h_n$ the corresponding binary sequence.
	By definition, the highest weight element of $\Phi_n(\euler{y})= (\dots((\mathbb{1})^{s_1})^{s_2} \dots)^{s_n}$ is $b_{h_1}\otimes b_{h_2}\otimes \dots \otimes b_{h_n}$.
	Thus, the highest weight element of $\Phi_n(\euler{y})_+$ is given by $b_0\otimes b_{h_1}\otimes b_{h_2}\otimes \dots \otimes b_{h_n}$ which is also the highest weight element of $(\dots(((\mathbb{1})^+)^{s_1})^{s_2} \dots)^{s_n} = \Phi_{n+1}(\on{id}_{\obj{1}}\odot_0 \euler{y})$.
	
	The highest weight element of $\Phi_n(\euler{y})_-$ is given by $b_0\otimes f(b_{h_1}\otimes b_{h_2}\otimes \dots \otimes b_{h_n})$, which, by a repeated application of \autoref{crysTensor}, is equal to $ b_0\otimes b_{h_1}\otimes \dots \otimes f(b_{h_i}) \otimes \dots \otimes b_{h_n}$, where $i$ is minimum in $\{1,\dots, n\}$ such that $h_i=0$ and $b_{h_{i+1}}\otimes \dots \otimes b_{h_n}$ is a highest weight element of $B^{\otimes n-i}$.
	The component of $B^{\otimes n-i}$ generated by the highest weight element $b_{h_{i+1}}\otimes \dots \otimes b_{h_n}$ is precisely that which corresponds to the diagram $\euler{y}_{>i}$ under $\Phi_{n-i}$. And the condition that $b_{h_{i+1}}\otimes \dots \otimes b_{h_n}$ is of highest weight guarantees that $\mathtt{hk}_i(\euler{y})=0$; 
    indeed, if $\mathtt{hk}_i(\euler{y})\neq 0$, we would have $h_{i+1}=1$, which according to \autoref{crysTensor} implies $e(b_{h_{i+1}}\otimes \dots \otimes b_{h_n}) = b_0\otimes b_{h_{i+2}}\otimes \dots \otimes b_{h_n} \neq 0$, meaning that $b_{h_{i+1}}\otimes \dots \otimes b_{h_n}$ is not of highest weight. 
    It follows by minimality of $i$ that $i$ is the leftmost through strand of $\euler{y}$.
	Furthermore, $b_0\otimes b_{h_1}\otimes \dots \otimes f(b_{h_i}) \otimes \dots \otimes b_{h_n} = b_0\otimes b_{h_1}\otimes \dots \otimes b_1 \otimes \dots \otimes b_{h_n}$, where $b_0$ and $b_1$ occur in the first and $(i+1)^\text{th}$ places, respectively.
    On the other hand, $\on{id}_{\obj{1}}\odot_1 \euler{y}$ is by definition the diagram resulting from hooking the first strand to the leftmost through strand, namely $i$ of $\euler{y}$, so that $(1,i+1)\in K(\on{id}_{\obj{1}}\odot_1 \euler{y})$. 
    Therefore, the highest weight element of $\Phi_{n+1}(\on{id}_{\obj{1}}\odot_1 \euler{y})$ is also $b_0\otimes b_{h_1}\otimes \dots \otimes b_1 \otimes \dots \otimes b_{h_n}$, where $b_0$ and $b_1$ occur in the first and $(i+1)^\text{th}$ places, respectively.
\end{proof}
		
		\begin{lemma} \label{ProjEmb}
			Given $\euler{x}\in\mathscr{D}_{\mobijd{n}}$ with $\mathtt{th}(\euler{x}) = k$,
			\begin{itemize}
				\item $F(\euler{j}_k\circ \euler{x}):B^{\otimes n}\to B^{\otimes k}$ is the projection map onto the summand $\Phi_n(\euler{x})$.
				\item $F(\overline{\euler{x}}\circ \euler{j}_k):B^{\otimes k}\to B^{\otimes n}$ is the embedding of the top summand $B_k$ of $B^{\otimes k}$ onto the summand $\Phi_n(\euler{x})$.
			\end{itemize}
		\end{lemma}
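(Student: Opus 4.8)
The plan is to prove both assertions simultaneously by induction on $n$, exploiting the recursive descriptions of cap diagrams and of the bijection $\Phi$, and using throughout that $F$ is a $\Bbbk$-linear monoidal equivalence (\autoref{thm:equivalence}). First I would observe that it suffices to prove the first bullet. For $\euler{x}\in\mathscr{D}_{n}$ with $\thru(\euler{x})=k$ we have $\euler{x}\circ\overline{\euler{x}}=\on{id}_{\obj{k}}$, hence $(\euler{j}_{k}\circ\euler{x})\circ(\overline{\euler{x}}\circ\euler{j}_{k})=\euler{j}_{k}$ and $(\overline{\euler{x}}\circ\euler{j}_{k})\circ(\euler{j}_{k}\circ\euler{x})=\overline{\euler{x}}\circ\euler{j}_{k}\circ\euler{x}$, the idempotent of \autoref{DnIdempotents}. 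Applying $F$, the map $F(\overline{\euler{x}}\circ\euler{j}_{k})\colon B^{\otimes k}\to B^{\otimes n}$ factors through the summand $\on{im}F(\euler{j}_{k})$, is nonzero there, and — granting the first bullet — its image contains the summand $\Phi_{n}(\euler{x})$; being a nonzero morphism out of an irreducible it must then embed that summand (which will be $B_{k}$, see below) onto $\Phi_{n}(\euler{x})$, which is the second bullet.

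Next I would record the basic identification: $F(\euler{j}_{m})$ is the projection of $B^{\otimes m}$ onto its top summand $B_{m}$ (the unique constituent of highest weight $m$). Since the block $\on{End}_{\mathcal{TL}_{0}(\Bbbk)}(\obj{m})_{\thru=m}$ is one-dimensional, $\euler{j}_{m}$ is a primitive idempotent, so $F(\euler{j}_{m})$ is a primitive idempotent of $\on{End}(B^{\otimes m})$ with irreducible image; and writing $\euler{j}_{m}=\on{id}_{\obj{m}}+x$ with $x\in\thru_{<m}$ — so that $x$ factors through some $\obj{m'}$, $m'<m$ — the morphism $F(x)$ annihilates the one-dimensional weight-$m$ subspace of $B^{\otimes m}$, since crystal morphisms preserve weight and $B^{\otimes m'}$ has no weight-$m$ vectors. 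Hence $F(\euler{j}_{m})$ fixes a weight-$m$ vector, so its image is $B_{m}$. In particular $\Phi_{m}(\on{id}_{\obj{m}})=B_{m}$, which settles the case $\euler{x}=\on{id}_{\obj{m}}$. I will also use below the absorption property $\euler{j}_{\ell}=\euler{j}_{\ell}\circ(\euler{j}_{\ell-1}\otimes\on{id}_{\obj{1}})=(\euler{j}_{\ell-1}\otimes\on{id}_{\obj{1}})\circ\euler{j}_{\ell}$, which follows from \autoref{annihilation} since $\euler{j}_{\ell-1}-\on{id}_{\obj{\ell-1}}\in\thru_{<\ell-1}$.

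For the inductive step I would take $\euler{x}\in\mathscr{D}_{n+1}$, put $\euler{y}:=\euler{x}_{\le n}\in\mathscr{D}_{n}$, and write $\euler{x}=\euler{y}\odot_{h}\on{id}_{\obj{1}}$ with $h=0$ if the last strand of $\euler{x}$ is a through-strand and $h=1$ otherwise, so that $\Phi_{n+1}(\euler{x})=\Phi_{n}(\euler{y})^{+}$ when $h=0$ and $\Phi_{n}(\euler{y})^{-}$ when $h=1$. If $h=0$, then $\euler{x}=\euler{y}\otimes\on{id}_{\obj{1}}$ and $k=\thru(\euler{y})+1$; by absorption $\euler{j}_{k}\circ\euler{x}=\euler{j}_{k}\circ\big((\euler{j}_{k-1}\circ\euler{y})\otimes\on{id}_{\obj{1}}\big)$, so by the inductive hypothesis $F(\euler{j}_{k}\circ\euler{x})$ equals $F(\euler{j}_{k})$ composed with the projection of $B^{\otimes n}\otimes B$ onto $\Phi_{n}(\euler{y})\otimes B$ and the identification $\Phi_{n}(\euler{y})\otimes B\xrightarrow{\sim}B_{k-1}\otimes B=B_{k}\oplus B_{k-2}$; by the previous paragraph $F(\euler{j}_{k})$ then projects onto the $B_{k}$-summand, which under \autoref{crysTensor} is precisely $\Phi_{n}(\euler{y})^{+}=\Phi_{n+1}(\euler{x})$. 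If $h=1$, set $j:=\thru(\euler{y})$ and $k=j-1$, so $\euler{x}=\on{cap}_{\{j\},j+1}\circ(\euler{y}\otimes\on{id}_{\obj{1}})$; here I would first apply \autoref{JWcaps} to collapse the Jones--Wenzl projector created by hooking, insert $\euler{j}_{j}$ (absorbed as before) and apply the inductive hypothesis to $\euler{y}$, reducing $F(\euler{j}_{k}\circ\euler{x})$ to the composite of the projection onto $\Phi_{n}(\euler{y})\otimes B$, the cap morphism $\on{id}_{B^{\otimes j-1}}\otimes\beta$, and $F(\euler{j}_{j-1})$. Since $\beta$ is the projection $B\otimes B\twoheadrightarrow B_{0}$ and $\Phi_{n}(\euler{y})^{-}$ is, by definition via \autoref{crysTensor} (which applies as $j\ge 1$), the highest-weight-$(j-1)$ summand of $\Phi_{n}(\euler{y})\otimes B$ generated by $c_{0}\otimes b_{1}$, a comparison of highest-weight vectors exactly like the one in \autoref{zComp} shows the surviving image is $\Phi_{n}(\euler{y})^{-}=\Phi_{n+1}(\euler{x})$, all other terms dying through a zig-zag or a cup/cap meeting a Jones--Wenzl projector.

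The hard part will be the $h=1$ step: one must track precisely which summands of $\euler{j}_{j}$ (and, in the fully nested situation, of a second Jones--Wenzl projector, handled by \autoref{2JWwithCaps}) survive the hooking, invoke \autoref{JWcaps} in the correct position, and then match the surviving constituent of $\Phi_{n}(\euler{y})\otimes B$ with $\Phi_{n}(\euler{y})^{-}$ through its highest-weight vector $c_{0}\otimes b_{1}$. Once that matching is in place, the remainder is routine bookkeeping with the absorption property and the explicit crystal maps $\alpha$ and $\beta$.
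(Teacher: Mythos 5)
Your argument is correct, but it takes a genuinely different route from the paper's. The paper's proof is direct and non-inductive: it first computes $F(\euler{x})$ on the monomial basis $b_{i_1}\otimes\cdots\otimes b_{i_n}$ of $B^{\otimes n}$, observing from the formula for $\beta=F(\on{cap})$ that $F(\euler{x})$ kills a monomial unless $(i_r,i_s)=(0,1)$ for every $(r,s)\in K(\euler{x})$ and otherwise deletes the capped factors, so that $F(\euler{x})$ is literally the projection of $B^{\otimes n}$ onto the tensor factors indexed by through-strands; it then identifies $F(\euler{j}_k)$ with the projection onto the top summand $B_k$ by checking that $F(\euler{j}_k)$ satisfies the three characterizing properties of \autoref{JWuniqueness} and invoking uniqueness transported along the equivalence, and finally reads off the highest-weight monomial to recognize $\Phi_n(\euler{x})$ via its $\pm$-sequence. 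You instead identify $F(\euler{j}_k)$ with the top projection by a primitivity-plus-weight argument (the $\thru=k$ block is one-dimensional, and $F(\thru_{<k})$ kills the weight-$k$ line since it factors through lower tensor powers), and then run an induction on $n$ that exactly parallels the recursive definition of $\Phi$, using Jones--Wenzl absorption and \autoref{JWcaps} in the two cases $\euler{x}=\euler{y}\odot_0\on{id}_{\obj{1}}$ and $\euler{x}=\euler{y}\odot_1\on{id}_{\obj{1}}$, with the surviving constituent of $\Phi_n(\euler{y})\otimes B$ pinned down through \autoref{crysTensor} and the highest-weight vectors $c_0\otimes b_0$ and $c_0\otimes b_1$. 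Your route is longer and the $h=1$ case needs the care you flag (inserting $\euler{j}_{\thru(\euler{y})}$ via $\euler{j}_{j-1}\circ\on{cap}_{\{j\},j+1}=\on{cap}_{\{j\},j+1}\circ(\euler{j}_j\otimes\on{id}_{\obj{1}})$ and using faithfulness of $F$ to see the composite is nonzero on the lower constituent), but it avoids any explicit computation of $F(\euler{x})$ on monomials and makes the compatibility with the branching graph defining $\Phi$ completely transparent; the paper's computation is shorter but leans on the separately established description of the highest-weight element of $\Phi_n(\euler{x})$. Both arguments hinge on the same key fact, the identification of $F(\euler{j}_k)$ with the projection onto $B_k$, and your reduction of the second bullet to the first via $(\euler{j}_k\circ\euler{x})\circ(\overline{\euler{x}}\circ\euler{j}_k)=\euler{j}_k$ is a clean substitute for the paper's appeal to a ``dual and completely analogous argument.''
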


\begin{proof}
	Note that a basis for $B^{\otimes n}$ is given by elements of the form $b_{i_1}\otimes b_{i_2}\otimes \dots \otimes b_{i_n}$ where $i_j\in\{0,1\}$, and to simplify notation, let us write such an element simply as $i_1\otimes i_2\otimes\dots\otimes i_n$.
	Recall that $K(\euler{x})$ is the set of pairs $(r,s)$ such that there is a cap in in $\euler{x}$ joining the $r^\text{th}$ and $s^\text{th}$ strands.
	By definition of $F(\on{cap})$, we have $F(\euler{x})(i_1\otimes \dots \otimes i_n) = 0$ unless $(i_r,r_s)=(0,1)$ whenever $(r,s)\in K(\euler{x})$.
	Moreover, $F(\euler{x})$ is equal to the element resulting from deleting every $i_r$ and $i_s$ from $i_1\otimes \dots \otimes i_n$ (leaving the rest unchanged) whenever $(r,s)\in K(\euler{x})$.
	It follows that $F(\euler{x})$ is the projection map $B^{\otimes n}\to B^{\otimes k}$ projecting onto the product of the factors corresponding to through strands of $\euler{x}$.
	
	Next, observe that, by functoriality and additivity of $F$ as well as the properties of the Jones--Wenzl projectors in \autoref{JWuniqueness}, the morphism $F(\euler{j}_k)$
	\begin{enumerate}
		\item can be written as $\on{id}_{B^{\otimes k}}+x$, where $x$ is in the ideal $F(\mathtt{th}_{<k}(k,k))$;
		\item is an idempotent;
		\item is annihilated by $F(\on{cup}_{\{i\},k})$ and $F(\on{cap}_{\{i\},k})$ (by \autoref{annihilation} and functoriality).
	\end{enumerate}
	Since $F$ is an equivalence, crystal morphisms satisfying all of these three properties are unique by \autoref{JWuniqueness}.
	However, the projection onto the top summand $B^{\otimes k}\to B^{\otimes k}$ also satisfies all three properties, and hence $F(\euler{j}_k)$ is the projection onto the top summand $B_k\subseteq B^{\otimes k}$.
	It follows that $F(\euler{j}_k\circ \euler{x})$ is the projection map onto the irreducible summand whose highest weight element is $i_1\otimes \dots \otimes i_n$ with $(i_r,i_s)=(0,1)$ whenever $(r,s)\in K(\euler{x})$.
	But that is exactly the summand $\Phi_n(\euler{x})$. This proves the first claim of the lemma.
	The second claim is proved by a dual and completely analogous argument. 
\end{proof}
	
	\begin{theorem} \label{coboundaryF}
		The equivalence $F:\mathbf{CrysTL} \to \mathfrak{sl}_2\mathbf{-Crys}$ is a coboundary functor.
	\end{theorem}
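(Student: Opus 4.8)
The plan is to reduce the coboundary‑functor square of \autoref{CobFunctor} to an explicit computation on highest weight elements, which is then carried out only for the generating object $\obj{1}$. By naturality and additivity of all the structures involved, and since every object of $\mathbf{CrysTL}$ is a retract of a finite direct sum of the objects $\obj{n}$, it suffices to verify the square for each pair $\obj{m},\obj{n}$; as $F$ is monoidal with canonical structure isomorphisms, this amounts to the identity $F(\sigma^{\mathcal{TL}_0(\Bbbk)}_{\obj{m},\obj{n}}) = \sigma^{\mathfrak{sl}_2\mathbf{-Crys}}_{B^{\otimes m},B^{\otimes n}}$ under the canonical identification $F(\obj{m}\otimes\obj{n})\cong B^{\otimes m}\otimes B^{\otimes n}$. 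We prove this by induction on $m$. The case $m=0$ follows from the unit axioms together with \autoref{DnIdempotents} (note $\kappa_{0,n}=\on{id}$, so $\sigma^{\mathcal{TL}_0(\Bbbk)}_{\obj{0},\obj{n}}=\on{id}_{\obj{n}}$). For $m\ge 2$ the cactus axiom — valid for both commutors — yields the recursion
\[
\sigma_{\obj{m},\obj{n}} \;=\; \sigma_{\obj{m-1},\obj{n+1}}\circ\big(\on{id}_{\obj{m-1}}\otimes\sigma_{\obj{1},\obj{n}}\big)\circ\big(\sigma_{\obj{1},\obj{m-1}}\otimes\on{id}_{\obj{n}}\big),
\]
so applying $F$ and invoking the inductive hypothesis for $\sigma_{\obj{m-1},\obj{n+1}}$ and for the terms $\sigma_{\obj{1},\cdot}$ (each with left index $<m$) disposes of this case. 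It remains to treat $m=1$, i.e.\ to show $F(\sigma^{\mathcal{TL}_0(\Bbbk)}_{\obj{1},\obj{n}}) = \sigma^{\mathfrak{sl}_2\mathbf{-Crys}}_{B,B^{\otimes n}}$ for all $n$.

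For the left‑hand side, \autoref{DnIdempotents} shows that the morphisms $F(\overline{\euler{x}}\circ\euler{j}_{\thru(\euler{x})}\circ\euler{x})$, $\euler{x}\in\mathscr{D}_{n+1}$, are the coordinate projectors of a direct sum decomposition $B^{\otimes(n+1)}=\bigoplus_{\euler{x}}\Phi_{n+1}(\euler{x})$; moreover, by \autoref{ProjEmb}, $F(\euler{j}_{\thru(\euler{x})}\circ\euler{x})$ is the projection of $B\otimes B^{\otimes n}$ onto $\Phi_{n+1}(\euler{x})\cong B_{\thru(\euler{x})}$ and $F(\overline{\euler{y}}\circ\euler{j}_{\thru(\euler{y})})$ is the embedding of $B_{\thru(\euler{y})}$ onto $\Phi_{n+1}(\euler{y})\subseteq B^{\otimes n}\otimes B$. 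Hence $F(\sigma^{\mathcal{TL}_0(\Bbbk)}_{\obj{1},\obj{n}})=\sum_{\euler{x}}F(\overline{\kappa_{1,n}(\euler{x})}\circ\euler{j}_{\thru(\euler{x})})\circ F(\euler{j}_{\thru(\euler{x})}\circ\euler{x})$ is the crystal morphism which on the summand $\Phi_{n+1}(\euler{x})$ of $B\otimes B^{\otimes n}$ is the (unique) crystal isomorphism onto $\Phi_{n+1}(\kappa_{1,n}(\euler{x}))\subseteq B^{\otimes n}\otimes B$. Since $\sigma^{\mathfrak{sl}_2\mathbf{-Crys}}_{B,B^{\otimes n}}$ is also a crystal isomorphism, and a crystal isomorphism between two crystals isomorphic to a fixed $B_\lambda$ is unique, the two morphisms coincide as soon as $\sigma^{\mathfrak{sl}_2\mathbf{-Crys}}_{B,B^{\otimes n}}$ carries each component $\Phi_{n+1}(\euler{x})$ onto $\Phi_{n+1}(\kappa_{1,n}(\euler{x}))$.

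To check this, write $\euler{x}=\on{id}_{\obj{1}}\odot_h\euler{y}$ with $h\in\{0,1\}$ and $\euler{y}\in\mathscr{D}_n$ (every cap diagram on $n+1$ strands arises uniquely this way). A direct inspection of the definitions gives $\kappa_{1,n}(\on{id}_{\obj{1}}\odot_h\euler{y})=\euler{y}\odot_h\on{id}_{\obj{1}}$, so by \autoref{zComp} and the recursive definition of $\Phi$ the assertion reduces to: for every connected $C\subseteq B^{\otimes n}$, the commutor $\sigma^{\mathfrak{sl}_2\mathbf{-Crys}}_{B,C}$ maps $C_+$ onto $C^+$ and, when defined, $C_-$ onto $C^-$; the general statement then follows by naturality of $\sigma^{\mathfrak{sl}_2\mathbf{-Crys}}$ along the component inclusions $C\hookrightarrow B^{\otimes n}$. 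As $\sigma^{\mathfrak{sl}_2\mathbf{-Crys}}_{B,C}$ is a crystal morphism it sends highest weight elements to highest weight elements, so it suffices to evaluate it on $b_0\otimes c_0\in C_+$ and $b_0\otimes c_1\in C_-$; by \cite[Theorem~6]{HKam} and $\xi_{B_\mu}(b_k)=b_{\mu-k}$ one gets $\sigma^{\mathfrak{sl}_2\mathbf{-Crys}}_{B,C}(b_0\otimes c_0)=\xi_{C\otimes B}(c_\lambda\otimes b_1)$ and $\sigma^{\mathfrak{sl}_2\mathbf{-Crys}}_{B,C}(b_0\otimes c_1)=\xi_{C\otimes B}(c_{\lambda-1}\otimes b_1)$, where $\lambda$ is the highest weight of $C$. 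A short computation with the tensor product rule of \autoref{crysTensor} identifies $c_\lambda\otimes b_1$ as the lowest weight element of $C^+\subseteq C\otimes B$ and, for $\lambda\ge 1$, $c_{\lambda-1}\otimes b_1$ as the lowest weight element of $C^-$; applying $\xi_{C\otimes B}$ therefore returns the highest weight element $c_0\otimes b_0$ of $C^+$, respectively $c_0\otimes b_1$ of $C^-$, which is precisely what is required.

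The conceptual parts — the inductive reduction via the cactus axiom and the translation of the diagrammatic commutor into a statement about connected components through \autoref{ProjEmb} and \autoref{DnIdempotents} — are routine. The point demanding real care is the final computation: identifying $\kappa_{1,n}$ on the diagrams $\on{id}_{\obj{1}}\odot_h\euler{y}$ and verifying $\sigma^{\mathfrak{sl}_2\mathbf{-Crys}}_{B,C}(C_\pm)=C^\pm$ from the $\xi$‑formula, where one must track the asymmetry of the crystal tensor‑product rules and handle the degenerate weights $\lambda\in\{0,1\}$; \autoref{zComp} absorbs most of the diagrammatic bookkeeping that would otherwise be needed there.
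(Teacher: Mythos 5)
Your proposal is correct and follows essentially the same route as the paper: reduce via the cactus axiom to the case of $\sigma_{\obj{1},\obj{n}}$, use \autoref{ProjEmb} together with \autoref{zComp} to identify $F(\sigma^{\mathcal{TL}_0(\Bbbk)}_{\obj{1},\obj{n}})$ with the permutation of components $\Phi_{n+1}(\euler{x})\mapsto\Phi_{n+1}(\kappa_{1,n}(\euler{x}))$, and match this against the crystal commutor's action $C_\pm\mapsto C^\pm$. The only divergence is at the last step, where you verify $\sigma^{\mathfrak{sl}_2\mathbf{-Crys}}_{B,C}(C_\pm)=C^\pm$ by an explicit (and correct) computation with the $\xi$-formula, whereas the paper deduces it more softly from naturality of the commutor and weight preservation; both arguments are valid.
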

	
	\begin{proof}
		Let $\sigma^{\mathcal{TL}}$ and $\sigma^{\mathfrak{sl}_2}$ denote the coboundary structures of $\mathbf{CrysTL}$ and $\mathfrak{sl}_2\mathbf{-Crys}$, respectively; and, to simplify notation, let $\sigma^{\mathcal{TL}}_{m,n}=\sigma^{\mathcal{TL}}_{\mobijs{m},\mobijs{n}}$ and $\sigma^{\mathfrak{sl}_2}_{m,n}=\sigma^{\mathfrak{sl}_2}_{B^{\otimes m},B^{\otimes n}}$.
		Since the structure morphisms of $F$ are trivial, the diagram of \autoref{CobFunctor} reduces to showing that $F\left(\sigma^{\mathcal{TL}}_{A,B}\right)=\sigma^{\mathfrak{sl}_2}_{F(A),F(B)}$ for all objects $A$ and $B$ of $\mathbf{CrysTL}$.
		By naturality of the coboundary structure, it suffices to show that $F$ intertwines the coboundary structures before applying the Cauchy completion, i.e. we wish to show $F\left(\sigma^{\mathcal{TL}}_{m,n}\right)=\sigma^{\mathfrak{sl}_2}_{m,n}$ for all $m,n\in \mathbb N$.
		Since all the coboundary morphisms $\sigma_{m,n}$ can be recursively recovered from $\sigma_{1,n}$ (see \autoref{CobCats}), it suffices to show that $F\left(\sigma^{\mathcal{TL}}_{1,n}\right)=\sigma^{\mathfrak{sl}_2}_{1,n}$ for all $n\in \mathbb N$.
		
		Let us examine the action of $\sigma^{\mathfrak{sl}_2}_{1,n}$ on connected components.
		By naturality of the coboundary structure, the map $\sigma^{\mathfrak{sl}_2}_{1,n}$ must send the connected components of $B\otimes C$ to the components of $C\otimes B$ for every connected component $C$ of $B^{\otimes n}$, since $\sigma^{\mathfrak{sl}_2}_{1,n}$ commutes with projections onto any component.
		It follows that $\sigma^{\mathfrak{sl}_2}_{1,n}$ maps $C_+$ to $C^+$ and $C_-$ to $C^-$ (identically) for every component $C$ of $B^{\otimes n}$, since weights must be preserved.
		
		Observe now that, by \autoref{ProjEmb}, for any $\euler{x}\in \mathscr{D}_{\mobijd{n+1}}$ with $\mathtt{th}(\euler{x})= k$, the map 
		$$F(\overline{\kappa_{1,n}(\euler{x})}\circ \euler{j}_k\circ \euler{x}) = F(\overline{\kappa_{1,n}(\euler{x})}\circ \euler{j}_k) \circ F(\euler{j}_k \circ \euler{x}):B^{\otimes n+1}\to B^{\otimes n+1}$$
		takes the component $\Phi_{n+1}(\euler{x})$ identically to the component $\Phi_{n+1}(\kappa_{1,n}(\euler{x}))$ and kills every other component. 
		Hence, their sum $F(\sigma_{1,n}^{\mathcal{TL}})$ permutes all the summands of $B^{\otimes n+1}$ via $\Phi_{n+1}(\euler{x})\mapsto \Phi_{n+1}(\kappa_{1,n}(\euler{x}))$.
		
		To finish the proof, we need to show that, writing $\Phi_{n+1}(\euler{x}) = C_\pm$ for a connected component $C$ of $B^{\otimes n}$, we get $\Phi_{n+1}(\kappa_{1,n}(\euler{x})) = C^\pm$, whence the actions of $F(\sigma_{1,n}^{\mathcal{TL}})$ and $\sigma_{1,n}^{\mathfrak{sl}_2}$ on all summands agree. 
		Indeed, writing $\euler{x}=\on{id}_{\obj{1}}\odot_h \euler{y}$ for some $\euler{y} \in \mathscr{D}_{\mobijd{n}}$ and $h\in \{0,1\}$, we see that $\Phi_{n+1}(\euler{x})=\Phi_n(\euler{y})_\pm$ by \autoref{zComp}.
		Now, 
		$$\Phi_{n+1}(\kappa_{1,n}(\euler{x})) = \Phi_{n+1}(\kappa_{1,n}(\on{id}_{\obj{1}}\odot_h \euler{y})) = \Phi_{n+1}(\euler{y} \odot_h \on{id}_{\obj{1}}) = \Phi_n(\euler{y})^\pm.$$    
	\end{proof}
	
    \begin{remark}
        In \cite{HKam}, a commutor for $\uqsl$ is defined for $q \neq 0$, which is reminiscent of that for the crystal case. 
        It would be interesting to give a diagrammatic description of that coboundary structure on $\mathbf{Rep}(\uqsl)$, building on our description. 
        However, in contrast to the crystal case, the coefficients appearing in the linear combinations in such a description would likely be complicated, and beyond the scope of this article.
    \end{remark}

    \begin{remark}
        A crucial ingredient in our diagrammatic construction of the commutor $\sigma$ in \autoref{sec:commutor} is the permutation $\kappa_{m,n}$ of the set $\mathscr{D}_{\mobijd{m+n}}$ of cap diagrams (equivalently, of the summands of $B^{\otimes m+n}$). 
        Finding a representation-theoretic interpretation of such permutations is, to the best of the authors' knowledge, an open problem.
    \end{remark}
    
	\section{Fiber Functors}\label{subsec:fiberfunctors}

    In this section, we assume $\Bbbk = \mathbb{C}$.
    
    Recall that a {\it fiber functor} on a multiring category $\mathcal{C}$ over $\Bbbk$ is a faithful, exact, $\Bbbk$-linear monoidal functor $U: \mathcal{C} \rightarrow \mathbf{vec}$. 
    We denote the category of fiber functors on $\mathcal{C}$, whose morphisms are the monoidal transformations of fiber functors, by $\mathbf{Fib}(\mathcal{C})$.
    
	\begin{remark}
		If $\mathcal{C}$ is rigid and $\mathbb{1}$ is simple, e.g. if $\mathcal{C}$ is a tensor category, then an exact $\Bbbk$-linear monoidal functor $\mathcal{C} \rightarrow \mathbf{vec}_{\Bbbk}$ is automatically faithful, see \cite[Corollaire~2.10]{De}. 
        For a counterexample in the non-rigid case, see \cite[Example~2.20]{DSPS}.
	\end{remark}

    \begin{lemma}\label{fiberCauchy}
        Any $\Bbbk$-linear monoidal functor on $\mathbf{CrysTL}$ is a fiber functor. 
        Moreover, there are equivalences between the category of fiber functors on $\mathbf{CrysTL}$, the category $\mathbf{MonFun}_{\Bbbk}(\mathcal{TL}_{0}(\Bbbk),\mathbf{vec})$, and that of fiber functors on $\mathfrak{sl}_2\mathbf{-Crys}$.
    \end{lemma}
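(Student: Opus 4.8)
The plan is to deduce both assertions by assembling results already established: the semisimplicity of $\mathbf{CrysTL}$ (\autoref{cor:semisimple}), the absence of non-trivial tensor ideals (\autoref{tensorIdeals}, hence \autoref{tensorFunctorsAreFaithful}), the Cauchy-completion equivalence \eqref{eq:MonCauchy}, and the monoidal equivalence of \autoref{thm:equivalence}. For the first statement, let $U\colon\mathbf{CrysTL}\to\mathbf{vec}$ be a $\Bbbk$-linear monoidal functor. Faithfulness is immediate from \autoref{tensorFunctorsAreFaithful} applied with $\mathcal{C}=\mathbf{vec}$, which is non-zero since its unit object $\Bbbk$ is non-zero. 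Exactness is automatic: $\mathbf{CrysTL}$ is semisimple by \autoref{cor:semisimple}, in particular abelian with every short exact sequence split, so any $\Bbbk$-linear functor out of it is exact. Hence $U$ is a fiber functor, and consequently the category $\mathbf{MonFun}_{\Bbbk}(\mathbf{CrysTL},\mathbf{vec})$ of $\Bbbk$-linear monoidal functors and monoidal transformations coincides with $\mathbf{Fib}(\mathbf{CrysTL})$ on the nose.

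For the equivalences, I would first note that $\mathbf{vec}$ is Cauchy complete (it admits finite direct sums and splits idempotents) and non-zero, so \eqref{eq:MonCauchy} applies with $\mathcal{A}=\mathcal{TL}_0(\Bbbk)$, $\mathcal{A}^{\mathtt{C}}=\mathbf{CrysTL}$, and $\mathcal{E}=\mathbf{vec}$, giving an equivalence $\mathbf{MonFun}_{\Bbbk}(\mathbf{CrysTL},\mathbf{vec})\xrightarrow{\,-\circ\iota\,}\mathbf{MonFun}_{\Bbbk}(\mathcal{TL}_0(\Bbbk),\mathbf{vec})$; combined with the identification $\mathbf{Fib}(\mathbf{CrysTL})=\mathbf{MonFun}_{\Bbbk}(\mathbf{CrysTL},\mathbf{vec})$ from the first paragraph, this is the first asserted equivalence. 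For the second, \autoref{thm:equivalence} provides a monoidal equivalence $F\colon\mathbf{CrysTL}\to\mathfrak{sl}_2\mathbf{-Crys}$; fixing a monoidal quasi-inverse $G$, precomposition $-\circ F$ and $-\circ G$ are mutually quasi-inverse equivalences between $\mathbf{MonFun}_{\Bbbk}(\mathfrak{sl}_2\mathbf{-Crys},\mathbf{vec})$ and $\mathbf{MonFun}_{\Bbbk}(\mathbf{CrysTL},\mathbf{vec})$. Composing a fiber functor with a monoidal equivalence again yields a fiber functor (faithfulness and exactness are preserved), so these restrict to equivalences on the subcategories of fiber functors; moreover, transporting the first statement along $F$ shows that every $\Bbbk$-linear monoidal functor $\mathfrak{sl}_2\mathbf{-Crys}\to\mathbf{vec}$ is already a fiber functor, so these subcategories exhaust both $\mathbf{MonFun}$ categories. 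This gives $\mathbf{Fib}(\mathfrak{sl}_2\mathbf{-Crys})=\mathbf{MonFun}_{\Bbbk}(\mathfrak{sl}_2\mathbf{-Crys},\mathbf{vec})\simeq\mathbf{MonFun}_{\Bbbk}(\mathbf{CrysTL},\mathbf{vec})=\mathbf{Fib}(\mathbf{CrysTL})$, completing the chain.

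The argument is essentially bookkeeping, and I do not anticipate a genuine obstacle; the one non-formal ingredient is faithfulness, which is precisely where \autoref{tensorIdeals} (no non-trivial tensor ideals in $\mathbf{CrysTL}$) enters, via \autoref{tensorFunctorsAreFaithful}. The only points needing a little care are that one works throughout with $\mathbf{vec}$ rather than $\mathbf{Vec}$ — harmless, since $\mathbf{vec}$ is itself Cauchy complete and non-zero, so both \autoref{tensorFunctorsAreFaithful} and \eqref{eq:MonCauchy} apply verbatim — and that the identification of $\mathbf{Fib}$ with the relevant $\mathbf{MonFun}$ category must be made on both sides of each equivalence before invoking \eqref{eq:MonCauchy} and \autoref{thm:equivalence}.
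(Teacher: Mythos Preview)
Your proposal is correct and follows essentially the same approach as the paper: semisimplicity (\autoref{cor:semisimple}) gives exactness, \autoref{tensorFunctorsAreFaithful} gives faithfulness, and the equivalences come from \eqref{eq:MonCauchy} and \autoref{thm:equivalence}. Your write-up is more detailed than the paper's (which compresses the chain of equivalences into a single displayed line), but the content is the same.
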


    \begin{proof}
        By \autoref{cor:semisimple}, $\mathbf{CrysTL}$ is semisimple, so any $\Bbbk$-linear functor from it is exact. 
        By \autoref{tensorFunctorsAreFaithful}, such a functor is also necessarily faithful.
        Thus $\mathbf{Fib}(\mathbf{CrysTL}) = \mathbf{MonFun}_{\Bbbk}(\mathbf{CrysTL},\mathbf{vec})$. 
        %%%
        %Let $U: \mathbf{CrysTL} \rightarrow \mathbf{vec}$ be a monoidal $\Bbbk$-linear functor. 
        %Then $\on{Ker}(U)$ is a thick tensor ideal in $\mathbf{CrysTL}$, i.e. a full subcategory closed under direct sums and summands, such that for $B \in\mathbf{CrysTL}$ and $B' \in \on{Ker}(U)$, we have $B \otimes B',B' \otimes B \in \on{Ker}(U)$. The equivalence \autoref{thm:equivalence} identifies the Grothendieck ring of $\mathbf{CrysTL}$ with $\mathbb{Z}[x]$, where the basis for the former consisting of simple objects identifies with Chebyshev polynomials of second kind. It follows that for any simple object $L \in \mathbf{CrysTL}$, there is $n$ such that $L \otimes \obj{n}$ and $\obj{n} \otimes L$ contain $\mathbb{1}_{\mathbf{CrysTL}}$ as a direct summand. Thus, any thick tensor ideal containing $L$ is all of $\mathbf{CrysTL}$. This proves that the only thick tensor ideals of $\mathbf{CrysTL}$ are $0$ and $\mathbf{CrysTL}$ itself. 
        %%%%
        %Since $U(\mathbb{1}) \simeq \Bbbk$, we cannot have $\on{Ker}(U) = \mathbf{CrysTL}$. So, $\on{Ker}(U) = 0$ and $U$ is faithful. %by \autoref{tensorFunctorsAreFaithful}

        For the latter claim, we have found equivalences
        \[
        \mathbf{MonFun}_{\Bbbk}(\mathcal{TL}_{0}(\Bbbk), \mathbf{vec}) \simeq \mathbf{MonFun}_{\Bbbk}(\mathbf{CrysTL},\mathbf{vec}) = \mathbf{Fib}(\mathbf{CrysTL}) \simeq \mathbf{Fib}(\mathfrak{sl}_2\mathbf{-Crys}).
        \]
    \end{proof}

    A detailed account of fiber functors on $\mathbf{Rep}(\uqsl)$ is given in \cite{EO}, which uses the results of \cite{Tur} to conclude that $\mathbf{Rep}(\uqsl)$ is the free monoidal category on a self-dual object of categorical dimension $[2]_{q}$. We give a slight reformulation of one of the observations in \cite{EO}, \cite{Tur}.
    \begin{definition}
     Let $\mathcal{C}$ be a rigid monoidal category, with a fixed rigid structure consisting of choices of right dual objects $X^{\vee}$ for $X \in \mathcal{C}$, and evaluation and coevaulation morphisms $\on{ev}_{X}: \mathbb{1} \rightarrow X^{\vee} \otimes X$ and $\on{coev}_{X}: X \otimes X^{\vee} \rightarrow \mathbb{1}$. 
     We denote by $\mathcal{E}(\mathcal{C})$ the category whose objects are triples $(X, \eta_{X}: \mathbb{1} \rightarrow X \otimes X, \varepsilon_{X}: X \otimes X \rightarrow \mathbb{1})$ such that $(X \otimes \varepsilon_{X}) \circ (\eta_{X} \otimes X) = \on{id}_{X} = (\varepsilon_{X} \otimes X) \circ (X \otimes \eta_{X})$. 
     For morphisms, we set
     \[
     \on{Hom}_{\mathcal{E}(\mathcal{C})(X,\eta_{X},\varepsilon_{X}),(X,\eta_{X},\varepsilon_{X})} = \setj{f \in \on{Hom}_{\mathcal{C}}(X,Y) \; | \; (f \otimes f) \circ \eta_{X} = \eta_{Y} \text{ and } \varepsilon_{Y} \circ (f \otimes f) = \varepsilon_{X}}.
     \]

     Let $\mathcal{Y}(\mathcal{C})$ be the category whose objects are pairs $(X,\Phi_{X})$, where $X \in \mathcal{C}$ and $\Phi_{X}: X \rightarrow X^{\vee}$ is an isomorphism. For morphisms, we set
     \[
     \on{Hom}_{\mathcal{Y}(\mathcal{C})}((X,\Phi_{X}), (Y,\Phi_{Y})) = \setj{g \in \on{Hom}_{\mathcal{C}}(X,Y) \; | \; \Phi_{X} = g^{\vee} \circ \Phi_{Y} \circ g \text{ and } \Phi_{X}^{-1} = g \circ \Phi_{Y}^{-1} \circ g^{\vee}}.
     \]
     Clearly, $\mathcal{Y}(\mathcal{C})$ is a groupoid, and the latter condition can be replaced by $\Phi_{Y} = (g^{-1})^{\vee} \circ \Phi_{X} \circ g^{-1}$.
    \end{definition}

 The following can be used to prove \cite[Theorem~2.1]{EO}, \cite[Chapter~XII]{Tur}.
   \begin{lemma}\label{Turaev}
       The assignments
       \[
       \begin{aligned}
           \mathcal{E}(\mathcal{C}) &\longleftrightarrow \mathcal{Y}(\mathcal{C}) \\
           (X, \eta_{X}, \varepsilon_{X}) &\longmapsto (X, (X\otimes \varepsilon_{X})\circ  (\on{coev}_{X} \otimes X)) \\
           f &\longmapsto f \\
           (X, (\Phi_{X}^{-1} \otimes X) \circ \on{coev}_{X}, \on{ev}_{X} \circ (X \otimes \Phi_{X})) &\longmapsfrom (X,\Phi_{X}) \\
           g &\longmapsfrom g
       \end{aligned}
       \]
       define mutually quasi-inverse equivalences of categories.
   \end{lemma}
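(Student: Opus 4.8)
The plan is to reduce everything to the uniqueness of duals in a rigid category, made functorial. Observe first that both assignments act as the identity on the underlying $\mathcal{C}$-morphism, and that composition and identities in $\mathcal{E}(\mathcal{C})$ and $\mathcal{Y}(\mathcal{C})$ are inherited from $\mathcal{C}$; hence preservation of composition and identities is automatic, and the genuine content is threefold: well-definedness of the two assignments on objects, well-definedness on morphisms (i.e.\ that the two presentations of the morphism conditions correspond), and that the two object assignments are mutually inverse. I would treat these in turn, mostly by graphical calculus with the fixed duality data $(\on{ev}_X,\on{coev}_X)$ and their zig-zag (snake) relations.

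\emph{Objects.} For $(X,\eta_X,\varepsilon_X)\in\mathcal{E}(\mathcal{C})$, the two snake relations exhibit $X$, equipped with $(\eta_X,\varepsilon_X)$, as a dual of $X$ in the same sense as the fixed datum $(X^\vee,\on{ev}_X,\on{coev}_X)$; the standard comparison morphism between two duals then produces the isomorphism $\Phi_X\colon X\to X^\vee$ of the statement, and the comparison in the opposite direction is precisely the $(\eta,\varepsilon)$-free formula appearing in the reverse assignment. I would check that these two comparisons are mutually inverse by a short diagram computation: composing them and sliding a strand through each of the two pairs of zig-zag relations collapses the picture to $\on{id}_X$, respectively $\on{id}_{X^\vee}$. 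Conversely, for $(X,\Phi_X)\in\mathcal{Y}(\mathcal{C})$ one transports the fixed coevaluation and evaluation along the isomorphism $\Phi_X$ to obtain the pair $(\eta_X,\varepsilon_X)$ of the statement; the required zig-zag relations for this pair follow from those of $(\on{ev}_X,\on{coev}_X)$ together with $\Phi_X\circ\Phi_X^{-1}=\on{id}_{X^\vee}$ and $\Phi_X^{-1}\circ\Phi_X=\on{id}_X$.

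\emph{Morphisms.} The key point is the relation between $f^\vee$, the mate of $f\in\on{Hom}_{\mathcal{C}}(X,Y)$ under the fixed duality, and the mate of $f$ under the self-dualities $(\eta_{(-)},\varepsilon_{(-)})$. Unwinding $\Phi_X$ as a comparison of duals and cancelling via the zig-zag relations, the equation $\Phi_X=f^\vee\circ\Phi_Y\circ f$ becomes literally equivalent to $(f\otimes f)\circ\eta_X=\eta_Y$, and the companion equation $\Phi_X^{-1}=f\circ\Phi_Y^{-1}\circ f^\vee$ becomes equivalent, in the same way, to $\varepsilon_Y\circ(f\otimes f)=\varepsilon_X$. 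Hence a morphism of $\mathcal{E}(\mathcal{C})$ maps to one of $\mathcal{Y}(\mathcal{C})$ and conversely. This is the step I expect to be the main obstacle: it is a routine but somewhat delicate string-diagram check, and care is needed with the sidedness conventions for the dualities and with keeping track of both equations on each side; I would carry it out in full, or present it as a short run of diagrams.

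\emph{Mutual inverseness and conclusion.} Composing the two object assignments in the order $\mathcal{Y}\to\mathcal{E}\to\mathcal{Y}$ recovers $\Phi_X$ on the nose, since transporting the fixed duality along $\Phi_X$ and then forming the comparison of duals returns $\Phi_X$ by construction; in the order $\mathcal{E}\to\mathcal{Y}\to\mathcal{E}$ one recovers $(\eta_X,\varepsilon_X)$ after one further application of the zig-zag relations. Since the morphism assignments are the identity on underlying $\mathcal{C}$-morphisms, the two functors are strictly mutually inverse, hence in particular mutually quasi-inverse equivalences (indeed an isomorphism) of categories, as claimed; I would also remark in passing that, as $\mathcal{Y}(\mathcal{C})$ is manifestly a groupoid, so is $\mathcal{E}(\mathcal{C})$.
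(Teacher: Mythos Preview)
The paper does not supply a proof of this lemma; it is stated as a standard fact, with pointers to \cite{EO} and \cite[Chapter~XII]{Tur}, and the paper immediately moves on to use its consequence that $\mathcal{E}(\mathcal{C})$ is a groupoid. Your proposal is exactly the expected argument---uniqueness of duals, made functorial via the canonical comparison isomorphism---and it is correct in outline and in the division of labour (objects, morphisms, mutual inverseness).

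One small point to watch when you write out the diagrams: in the backward assignment $\varepsilon_X$ is built from $\Phi_X$ while $\eta_X$ is built from $\Phi_X^{-1}$, so under the usual dinaturality of $\on{ev}$ and $\on{coev}$ the condition $\Phi_X = f^\vee\circ\Phi_Y\circ f$ unwinds to $\varepsilon_Y\circ(f\otimes f)=\varepsilon_X$, and the condition on $\Phi_X^{-1}$ to $(f\otimes f)\circ\eta_X=\eta_Y$---the opposite pairing to the one you state. This does not affect the validity of the argument, since both equivalences must be checked and the two verifications are symmetric; but do verify the match-up explicitly once you fix the paper's duality conventions (note that the paper writes $\on{ev}_X:\mathbb{1}\to X^\vee\otimes X$, which is nonstandard). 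Your concluding remark that the equivalence forces $\mathcal{E}(\mathcal{C})$ to be a groupoid is precisely the use the paper makes of the lemma.
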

   The category $\mathcal{E}(\mathcal{C})$ is that of monoidal functors and transformations from the category $\mathcal{B}$ obtained by removing the circle evaluation relation from the definition of the ordinary Temperley--Lieb category for $q \neq 0$. 
   Thus in particular the category of fiber functors on $\mathcal{TL}_{q}(\Bbbk)$ embeds in it. 
   On the other hand, for $\mathcal{C} = \mathbf{vec}_{\Bbbk}$, the category $\mathcal{Y}(\mathcal{C})$ is the groupoid of non-degenerate bilinear forms. 
   Further, in this case the choice of the rigid structure is inessential since the set of rigid structures on $\mathcal{C}$ is a trivial torsor on the group of monoidal automorphisms $\on{Aut}_{\otimes}(\on{Id}_{\mathcal{C}})$ of the identity functor on $\mathcal{C}$.

   Given a monoidal functor $F: \mathcal{TL}_{0}(\Bbbk) \rightarrow \mathcal{C}$, one can still define a functor analogous to that from $\mathcal{E}(\mathcal{C})$ to $\mathcal{Y}(\mathcal{C})$ using the morphism
   $F(1\otimes \on{cap}) \circ F(1)\otimes \on{coev}_{F(1)} \in \mathcal{C}(F(1), F(1)^{\vee})$, but this is merely an analogue of translation between bilinear forms and linear maps $V \rightarrow V^{\ast}$.
  Crucially, $F(1\otimes \on{cap}) \circ F(1)\otimes \on{coev}_{F(1)} \in \mathcal{C}(F(1), F(1)^{\vee})$ is not invertible, so we do not have an equation analogous to
  % still obtains a morphism , and a functor analogous to that from $\mathcal{E}(\mathcal{C})$ to $\mathcal{Y}(\mathcal{C})$ can be defined, but this is merely an analogue of translation between bilinear forms and linear maps $V \rightarrow V^{\ast}$. 
   \begin{equation}\label{unitcounit}
    \left((X\otimes \varepsilon_{X})\circ  (\on{coev}_{X} \otimes X)\right)^{-1} = (X \otimes \on{ev}_{X}) \circ (\eta_{X} \otimes X),
   \end{equation}
  so $F(\on{cap})$ and $F(\on{cup})$ do not determine each other. 
  Indeed, in what follows we will consider the space of fiber functors with fixed space $F(1)$ and bilinear form $F(\on{cap})$.

 Given a bilinear map $\mathsf{b}$ on a space $V$, let $\mathcal{L}(\mathsf{b})$ denote the {\it left radical of $\mathsf{b}$}, given by $\setj{v \in V \; | \; \mathsf{b}(v,-) = 0}$, and denote by $\mathcal{R}(\mathsf{b})$ the right radical of $\mathsf{b}$.
 
  \begin{lemma}\label{lem:e0}
      There is an equivalence of categories
      \[
      \mathbf{Fib}(\mathbf{CrysTL}) \simeq \mathcal{E}_{0},
      \]
      where the objects of $\mathcal{E}_{0}$ are triples $(V,\mathsf{b},\mathsf{t})$, where $\mathsf{b}$ is a bilinear form on $V$, and $\mathsf{t} \in \mathcal{R}(\mathsf{b})\otimes \mathcal{L}(\mathsf{b})$ is such that $\mathsf{b}(\mathsf{t}) = 1$. 
      A morphism $f \in \on{Hom}_{\mathcal{E}_{0}}((V,\mathsf{b},\mathsf{t}),(V',\mathsf{b}',\mathsf{t}'))$ is a linear map which is a morphism of bilinear spaces from $\mathsf{b}$ to $\mathsf{b}'$ such that $(f \otimes f)(\mathsf{t}) = \mathsf{t}'$. 
  \end{lemma}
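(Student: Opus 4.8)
The plan is to exhibit an explicit equivalence between $\mathbf{Fib}(\mathbf{CrysTL})$ and $\mathcal{E}_0$ by unwinding what a monoidal functor $U:\mathcal{TL}_0(\Bbbk)\to\mathbf{vec}$ amounts to. By \autoref{fiberCauchy}, it suffices to work with $\mathbf{MonFun}_{\Bbbk}(\mathcal{TL}_0(\Bbbk),\mathbf{vec})$, and since $\mathcal{TL}_0(\Bbbk)$ is the strict monoidal category with one generating object $\obj{1}$ and generating morphisms $\on{cup},\on{cap}$ subject only to the zig-zag relation (for $q=0$) and circle evaluation $\on{cap}\circ\on{cup}=\on{id}_{\obj{0}}$, a strict monoidal functor $U$ is determined by a finite-dimensional vector space $V:=U(\obj{1})$ together with linear maps $\mathsf{t}:=U(\on{cup}):\Bbbk\to V\otimes V$ and $\mathsf{b}:=U(\on{cap}):V\otimes V\to\Bbbk$, i.e.\ a $2$-tensor $\mathsf{t}\in V\otimes V$ and a bilinear form $\mathsf{b}$ on $V$, subject exactly to the images of the two defining relations. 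First I would spell these out: circle evaluation gives $\mathsf{b}(\mathsf{t})=1$ (in particular $\mathsf{t}\neq 0$, so $V\neq 0$); the $q=0$ zig-zag relation $(\on{id}_{\obj{1}}\otimes\on{cap})\circ(\on{cup}\otimes\on{id}_{\obj{1}})=0=(\on{cap}\otimes\on{id}_{\obj{1}})\circ(\on{id}_{\obj{1}}\otimes\on{cup})$ translates into the conditions that the partial contractions of $\mathsf{t}$ against $\mathsf{b}$ on either side vanish, which is precisely the statement that $\mathsf{t}\in\mathcal{R}(\mathsf{b})\otimes\mathcal{L}(\mathsf{b})$. (Here one must be slightly careful about which leg of $\mathsf{t}$ lands in which radical; writing $\mathsf{t}=\sum_i v_i\otimes w_i$ and contracting, the left zig-zag forces $\sum_i\mathsf{b}(v_i,-)\,w_i=0$ and the right forces $\sum_i v_i\,\mathsf{b}(-,w_i)=0$, and a short linear-algebra argument — choosing $v_i$ linearly independent — shows these are equivalent to $w_i\in\mathcal{L}(\mathsf{b})$ and $v_i\in\mathcal{R}(\mathsf{b})$ respectively, hence $\mathsf{t}\in\mathcal{R}(\mathsf{b})\otimes\mathcal{L}(\mathsf{b})$.) This shows objects of $\mathbf{MonFun}_{\Bbbk}(\mathcal{TL}_0(\Bbbk),\mathbf{vec})$ are in natural bijection with objects of $\mathcal{E}_0$.

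Next I would treat morphisms. A monoidal natural transformation $\phi:U\Rightarrow U'$ is determined by its single component $f:=\phi_{\obj{1}}:V\to V'$ (since $U$ is identity-on-objects-up-to-the-chosen-strict-structure and every object is a tensor power of $\obj{1}$; the components on $\obj{n}$ are forced to be $f^{\otimes n}$ by monoidality and on $\obj{0}$ to be $\on{id}_\Bbbk$ by unitality). Naturality with respect to $\on{cup}$ and $\on{cap}$ then says exactly $(f\otimes f)\circ\mathsf{t}=\mathsf{t}'$ and $\mathsf{b}'\circ(f\otimes f)=\mathsf{b}$, i.e.\ $f$ is a morphism of bilinear spaces sending $\mathsf{t}$ to $\mathsf{t}'$ — precisely a morphism in $\mathcal{E}_0$. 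Naturality on all other morphisms of $\mathcal{TL}_0(\Bbbk)$ follows automatically because every morphism is built from $\on{cup},\on{cap},\on{id}$ by composition and tensor, and monoidal naturality is closed under these operations. This gives a fully faithful, essentially surjective functor $\mathbf{MonFun}_{\Bbbk}(\mathcal{TL}_0(\Bbbk),\mathbf{vec})\to\mathcal{E}_0$, hence an equivalence; composing with the equivalence $\mathbf{Fib}(\mathbf{CrysTL})\simeq\mathbf{MonFun}_{\Bbbk}(\mathcal{TL}_0(\Bbbk),\mathbf{vec})$ of \autoref{fiberCauchy} finishes the proof.

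I expect the main obstacle to be a clean, conceptual handling of the reduction from a functor out of $\mathbf{CrysTL}$ (the Cauchy completion) to the combinatorial data on the single object $\obj{1}$: the universal property \eqref{eq:MonCauchy} already supplies the equivalence $\mathbf{Fib}(\mathbf{CrysTL})\simeq\mathbf{MonFun}_{\Bbbk}(\mathcal{TL}_0(\Bbbk),\mathbf{vec})$, so what remains is genuinely just the presentation-by-generators-and-relations bookkeeping, and the only subtle point is the asymmetry of the two zig-zag conditions and correctly identifying which leg of $\mathsf{t}$ lies in $\mathcal{R}(\mathsf{b})$ versus $\mathcal{L}(\mathsf{b})$. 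I would state this leg-assignment carefully once and then assert that the rest is a routine check. Everything else — exactness and faithfulness being automatic, the component on $\obj{n}$ being $f^{\otimes n}$, closure of the conditions under composition and tensor — is standard and can be dispatched with a sentence each.
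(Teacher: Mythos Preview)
Your proposal is correct and follows essentially the same approach as the paper: reduce via \autoref{fiberCauchy} to $\mathbf{MonFun}_{\Bbbk}(\mathcal{TL}_{0}(\Bbbk),\mathbf{vec})$, then read off the data $(V,\mathsf{b},\mathsf{t})$ from the generators, translate the circle evaluation into $\mathsf{b}(\mathsf{t})=1$ and the two zig-zag relations into $\mathsf{t}\in V\otimes\mathcal{L}(\mathsf{b})$ and $\mathsf{t}\in\mathcal{R}(\mathsf{b})\otimes V$, and handle morphisms by noting that a monoidal transformation is determined by its component at $\obj{1}$. One small slip: your explicit partial-contraction formulas have the roles of $v_i$ and $w_i$ swapped (e.g.\ the zig-zag $(\on{id}\otimes\on{cap})\circ(\on{cup}\otimes\on{id})$ applied to $x$ yields $\sum_i v_i\,\mathsf{b}(w_i,x)$, not $\sum_i \mathsf{b}(v_i,x)\,w_i$), though your conclusions $w_i\in\mathcal{L}(\mathsf{b})$ and $v_i\in\mathcal{R}(\mathsf{b})$ are the correct ones.
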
 

  \begin{proof}
      Using \autoref{fiberCauchy}, it suffices to observe that
      \[
      \begin{aligned}
      \mathbf{MonFun}_{\Bbbk}(\mathcal{TL}_{0}(\Bbbk),\mathbf{vec}) &\rightarrow \mathcal{E}_{0} \\
      U &\mapsto (U(\obj{1}), U(\on{cap}), U(\on{cup})) \\
      \gamma &\mapsto \gamma_{\obj{1}}
      \end{aligned}
      \]
      is an equivalence. And indeed, the circle evaluation is satisfied if and only if $\mathsf{b}(\mathsf{t}) = 1$, and zig-zag relations are satisfied if and only if $\mathsf{t} \in V \kotimes \mathcal{L}(\mathsf{b})$ and $\mathsf{t} \in \mathcal{R}(\mathsf{b}) \kotimes V$ respectively, showing that $\mathsf{t} \in \mathcal{R}(\mathsf{b}) \kotimes \mathcal{L}(\mathsf{b})$. 
      For the morphisms, this follows from the presentation defining $\mathcal{TL}_{0}(\Bbbk)$.
  \end{proof}

In this section we will consider orthogonal decompositions of bilinear forms. 
To emphasize that the category of bilinear forms is not additive, we use the symbol $\perp$ for such decompositions, rather than the symbol $\oplus$, indicating the decomposition of the underlying spaces.
\begin{corollary}
   Let $(V,\mathsf{b},\mathsf{t}) \in \mathcal{E}_{0}$, and let $(V',\mathsf{b}',\mathsf{t})$ be a triple consisting of a space $V'$, a bilinear form $\mathsf{b}'$ on $V'$ and $\mathsf{t} \in \mathcal{R}(W) \otimes \mathcal{L}(W)$ be such that $\mathsf{b}'(\mathsf{t}') = 0$. 
   The triple $(V',\mathsf{b}',\mathsf{t}') \triangleright (V,\mathsf{b},\mathsf{t}) := (V' \oplus V, \mathsf{b}' \perp \mathsf{b}, \mathsf{t}+\mathsf{t}')$ is an object of $\mathcal{E}_{0}$. 
   We refer to it as the {\it inflation} of $(V,\mathsf{b},\mathsf{t})$ by $(V',\mathsf{b}',\mathsf{t})$. 
\end{corollary}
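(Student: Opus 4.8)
The plan is to verify directly that the triple $(V' \oplus V, \mathsf{b}' \perp \mathsf{b}, \mathsf{t} + \mathsf{t}')$ satisfies the two defining conditions of an object of $\mathcal{E}_{0}$, namely that $\mathsf{t} + \mathsf{t}'$ lies in $\mathcal{R}(\mathsf{b}' \perp \mathsf{b}) \otimes \mathcal{L}(\mathsf{b}' \perp \mathsf{b})$ and that $(\mathsf{b}' \perp \mathsf{b})(\mathsf{t} + \mathsf{t}') = 1$. First I would record the elementary fact that for an orthogonal sum of bilinear forms the radicals decompose as $\mathcal{L}(\mathsf{b}' \perp \mathsf{b}) = \mathcal{L}(\mathsf{b}') \oplus \mathcal{L}(\mathsf{b})$ and likewise on the right; this is immediate from the block structure of the form on $V' \oplus V$, since a vector $(v', v)$ pairs to zero with everything precisely when $v'$ is in $\mathcal{L}(\mathsf{b}')$ and $v$ is in $\mathcal{L}(\mathsf{b})$.

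Given this, the containment $\mathsf{t} \in \mathcal{R}(\mathsf{b}) \otimes \mathcal{L}(\mathsf{b})$ and $\mathsf{t}' \in \mathcal{R}(\mathsf{b}') \otimes \mathcal{L}(\mathsf{b}')$ together give $\mathsf{t} + \mathsf{t}' \in \big(\mathcal{R}(\mathsf{b}') \oplus \mathcal{R}(\mathsf{b})\big) \otimes \big(\mathcal{L}(\mathsf{b}') \oplus \mathcal{L}(\mathsf{b})\big) = \mathcal{R}(\mathsf{b}' \perp \mathsf{b}) \otimes \mathcal{L}(\mathsf{b}' \perp \mathsf{b})$, using that both summands sit inside the larger tensor product via the inclusions of the respective radical subspaces. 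For the normalization, I would observe that since $\mathsf{t} \in V \otimes V$ and $\mathsf{t}' \in V' \otimes V'$ with $V$ and $V'$ orthogonal under $\mathsf{b}' \perp \mathsf{b}$, the cross terms vanish, so $(\mathsf{b}' \perp \mathsf{b})(\mathsf{t} + \mathsf{t}') = \mathsf{b}(\mathsf{t}) + \mathsf{b}'(\mathsf{t}') = 1 + 0 = 1$, where the two evaluations are the restrictions of the big form to the respective summands, which agree with $\mathsf{b}$ and $\mathsf{b}'$ by construction of the orthogonal sum.

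There is essentially no obstacle here; the statement is a bookkeeping verification, and the only point requiring a line of care is the radical decomposition for orthogonal sums, which should be stated explicitly before using it. One could also phrase the whole argument more conceptually by noting that inflation corresponds, under the equivalence of \autoref{lem:e0}, to the operation on fiber functors $U'$ and $U$ sending them to the functor $X \mapsto U'(X) \oplus U(X)$ equipped with the evident monoidal structure built from those of $U'$ and $U$ together with the identification $\mathbb{1} \mapsto \Bbbk$ coming from $\mathsf{b}(\mathsf{t}) = 1$ alone; but for the purposes of this corollary the direct check on $\mathcal{E}_{0}$ is shortest and most transparent.
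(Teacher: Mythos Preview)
Your proof is correct and is precisely the natural direct verification. The paper in fact gives no proof of this corollary at all; it is stated as an immediate consequence of the definition of $\mathcal{E}_{0}$ in \autoref{lem:e0}, and the only piece of the argument that appears explicitly in the paper is the computation $(\mathsf{b}\perp\mathsf{b}')(\mathsf{t}+\mathsf{t}') = \mathsf{b}(\mathsf{t}) + \mathsf{b}'(\mathsf{t}') = 1$, which shows up in the proof of the subsequent \autoref{notagroupoid}. Your explicit check of the radical decomposition $\mathcal{R}(\mathsf{b}'\perp\mathsf{b}) = \mathcal{R}(\mathsf{b}')\oplus\mathcal{R}(\mathsf{b})$ is a welcome bit of care that the paper leaves implicit.
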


Observe that for $q \neq 0$, we would need $\mathsf{b},\mathsf{b}'$ to be non-degenerate, and the resulting triple would specify a fiber functor for parameter value $q + q'$ rather than $q$.

\begin{proposition}\label{notagroupoid}
   The projection $\pi: V' \oplus V \twoheadrightarrow V $ and the injection $\iota: V \hookrightarrow V' \oplus V$ are morphisms of $\mathcal{E}_{0}$. In particular, $\mathbf{Fib}(\mathbf{CrysTL})$ is not a groupoid.
\end{proposition}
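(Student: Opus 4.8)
The plan is to unwind the definition of a morphism in $\mathcal{E}_{0}$ from \autoref{lem:e0} — compatibility with the bilinear forms and with the copairings — verify the two conditions directly for $\pi$ and $\iota$, and then transport the resulting non-invertibility to $\mathbf{Fib}(\mathbf{CrysTL})$ through the equivalence $\mathbf{Fib}(\mathbf{CrysTL})\simeq\mathcal{E}_{0}$ coming from \autoref{fiberCauchy} and \autoref{lem:e0}. Throughout I would work with the inflation $(V'\oplus V,\mathsf{b}'\perp\mathsf{b},\mathsf{t}+\mathsf{t}')$ of the corollary preceding the statement.

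For $\iota\colon V\hookrightarrow V'\oplus V$, $v\mapsto(0,v)$, regarded as a candidate morphism $(V,\mathsf{b},\mathsf{t})\to(V'\oplus V,\mathsf{b}'\perp\mathsf{b},\mathsf{t}+\mathsf{t}')$: the identity $(\mathsf{b}'\perp\mathsf{b})\big((0,x),(0,y)\big)=\mathsf{b}(x,y)$ exhibits $\iota$ as an isometric embedding, and $(\iota\otimes\iota)(\mathsf{t})$ is just $\mathsf{t}$ sitting in the $V\otimes V$ summand of $(V'\oplus V)^{\otimes2}$, which equals the target copairing $\mathsf{t}+\mathsf{t}'$ exactly when $\mathsf{t}'=0$; so $\iota$ is a morphism of $\mathcal{E}_{0}$ into the inflation by $(V',\mathsf{b}',0)$. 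Dually, for $\pi\colon V'\oplus V\twoheadrightarrow V$, $(a,x)\mapsto x$, regarded as a candidate morphism from the inflation down to $(V,\mathsf{b},\mathsf{t})$: since $\pi\otimes\pi$ annihilates $\mathsf{t}'\in V'\otimes V'$ and is the identity on $\mathsf{t}\in V\otimes V$, one gets $(\pi\otimes\pi)(\mathsf{t}+\mathsf{t}')=\mathsf{t}$, while $\mathsf{b}\big(\pi(a,x),\pi(b,y)\big)=\mathsf{b}(x,y)=(\mathsf{b}'\perp\mathsf{b})\big((a,x),(b,y)\big)$ holds exactly when $\mathsf{b}'=0$; so $\pi$ is a morphism of $\mathcal{E}_{0}$ out of the inflation by $(V',0,\mathsf{t}')$. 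In particular, for the trivial inflation by $(V',0,0)$ both $\iota$ and $\pi$ are simultaneously morphisms of $\mathcal{E}_{0}$.

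For the final assertion I would fix a nonzero space $V'$ and any object $(V,\mathsf{b},\mathsf{t})$ of $\mathcal{E}_{0}$; such an object exists, e.g.\ $V=\Bbbk^{2}$ with $\mathsf{b}(e_{1},e_{2})=1$ and all other $\mathsf{b}(e_{i},e_{j})=0$ and $\mathsf{t}=e_{1}\otimes e_{2}$ (equivalently, the fiber functor $\obj{1}\mapsto\Bbbk^{2}$, $\on{cup}\mapsto\mathsf{t}$, $\on{cap}\mapsto\mathsf{b}$, whose monoidality amounts precisely to the vanishing of the two zig-zags and $\mathsf{b}(\mathsf{t})=1$). The morphisms $\iota,\pi$ attached to the trivial inflation by $(V',0,0)$ then have non-bijective underlying linear maps, whereas any isomorphism of $\mathcal{E}_{0}$ has bijective underlying map (apply the forgetful functor to it and to its inverse). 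Hence $\mathcal{E}_{0}$, and therefore the equivalent category $\mathbf{Fib}(\mathbf{CrysTL})$, admits a non-invertible morphism and is not a groupoid.

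The argument carries no genuine difficulty beyond bookkeeping; the one point I would state carefully is which inflations realize $\pi$ and $\iota$ as morphisms — the copairing condition makes the $\mathsf{t}'$-part invisible to $\iota$, and the form condition forces $\mathsf{b}'=0$ for $\pi$ — and I would note that this is no loss, since the trivial inflation by a nonzero $V'$ meets both constraints at once while still producing non-isomorphisms.
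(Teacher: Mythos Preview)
Your proof is correct and follows the same direct verification as the paper, but you are in fact more careful than the paper's own (very brief) argument. The paper only records $(\pi\otimes\pi)(\mathsf{t}+\mathsf{t}')=\mathsf{t}$ and the object condition $(\mathsf{b}\perp\mathsf{b}')(\mathsf{t}+\mathsf{t}')=1$, without addressing the bilinear-form compatibility for $\pi$ or the copairing compatibility for $\iota$; you correctly observe that these force $\mathsf{b}'=0$ and $\mathsf{t}'=0$ respectively, so the proposition as literally stated for a general inflation is slightly overstated, and you repair this by passing to the trivial inflation $(V',0,0)$ with $V'\neq 0$. This refinement costs nothing for the intended application --- producing a non-invertible morphism in $\mathcal{E}_0\simeq\mathbf{Fib}(\mathbf{CrysTL})$ --- and your concluding paragraph handles that cleanly.
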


\begin{proof}
    This follows immediately from $(\pi \otimes \pi)\circ (\mathsf{t} + \mathsf{t}') = \mathsf{t}$, and $(\mathsf{b}\perp \mathsf{b}')(\mathsf{t}+\mathsf{t}') = \mathsf{b}(\mathsf{t})  + \mathsf{b}'(\mathsf{t}') = 1$.
\end{proof}

\autoref{notagroupoid} gives a different proof of the fact that $\mathbf{CrysTL}$ is not rigid, in view of \cite[Lemma~3.4]{BV}. We give a Hopf-theoretic interpretation of this.
\begin{proposition}\label{prop:HopfModules}
   Let $\mathcal{C}$ be a ring category admitting fiber functors $U,U'$ and a non-invertible monomorphism $\upsilon: U \hookrightarrow U'$. Then $\mathcal{C}$ is not rigid.
\end{proposition}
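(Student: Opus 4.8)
The plan is to prove the sharper statement that a rigid $\mathcal{C}$ admitting a fiber functor has $\mathbf{Fib}(\mathcal{C})$ a groupoid; since $\upsilon$ is assumed non-invertible, this is already the contradiction we want. First I would invoke Tannakian reconstruction: a ring category $\mathcal{C}$ (so $\operatorname{End}(\mathbb{1}) = \Bbbk$) equipped with a fiber functor $U$ and moreover rigid is a neutral Tannakian situation, so $U$ induces a monoidal equivalence $\mathcal{C} \simeq \operatorname{Comod}_{H}$ for a Hopf algebra $H$ (the coend of $U$), under which $U$ becomes the forgetful functor. It therefore suffices to work with $\mathcal{C} = \operatorname{Comod}_{H}$, $U$ the forgetful functor, $U'$ a second fiber functor, and $\upsilon\colon U \Rightarrow U'$ a monoidal natural transformation, and to show $\upsilon$ is invertible.

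Next I would repackage $U'$ and $\upsilon$ Hopf-algebraically. By the Ulbrich--Schauenburg description of fiber functors, every fiber functor on $\operatorname{Comod}_{H}$ is isomorphic to $V \mapsto A \,\square_{H}\, V$ for an $H$-Galois object $A$, i.e.\ a right $H$-comodule algebra with $A^{\operatorname{co}H} = \Bbbk$, and this assignment is functorial: a monoidal natural transformation of such fiber functors is induced by a morphism of the associated $H$-comodule algebras. The forgetful functor $U$ corresponds to the trivial Galois object $A = H$ with its regular comodule-algebra structure, so $\upsilon\colon U \Rightarrow U'$ is induced by a morphism $f\colon H \to A$ of $H$-comodule algebras --- concretely $f = \upsilon_{\underline{H}}$, the value of $\upsilon$ on the regular ind-comodule, which is an algebra map since $\upsilon$ is monoidal with $\upsilon_{\mathbb{1}} = \operatorname{id}$ and is $H$-colinear by naturality --- and $\upsilon = f \,\square_{H}\, (-)$. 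In particular $\upsilon$ is invertible as soon as $f$ is.

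The heart of the proof, and where the Fundamental Theorem of Hopf Modules enters, is that such an $f$ is automatically an isomorphism. Through $f$, the algebra $A$ becomes a left $H$-module via $h \triangleright a := f(h)a$; since $A$ is an $H$-comodule algebra and $f$ is $H$-colinear, $A$ with this action and its given right $H$-coaction is a left--right Hopf module over $H$, and $f\colon H \to A$ is a morphism of Hopf modules out of the regular one. The Fundamental Theorem of Hopf Modules then says that the structure map $H \otimes A^{\operatorname{co}H} \to A$, $h \otimes a \mapsto h \triangleright a$, is an isomorphism; because $A$ is a Galois object we have $A^{\operatorname{co}H} = \Bbbk\, 1_{A}$, so this isomorphism is precisely $h \mapsto f(h)1_{A} = f(h)$, i.e.\ $f$ itself. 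Hence $f$, and therefore $\upsilon = f \,\square_{H}\, (-)$, is invertible, contradicting the hypothesis; so $\mathcal{C}$ cannot be rigid.

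The main obstacle is the dictionary of the second paragraph: making precise that $\upsilon$ corresponds to a morphism of comodule algebras \emph{out of} the trivial Galois object, and that the Ulbrich--Schauenburg correspondence is an equivalence on morphisms, so that invertibility of $f$ really is equivalent to invertibility of $\upsilon$. Once that is set up, the Hopf-module step is routine. I note in passing that one can also argue without reconstruction: for a rigid object $X$, applying $U'$ to a triangle identity and using naturality of $\upsilon$ (together with $\upsilon_{\mathbb{1}} = \operatorname{id}$ and monoidality) exhibits $\operatorname{id}_{U'(X)}$ as a map factoring through $\operatorname{im}(\upsilon_{X})$, forcing $\upsilon_{X}$ to be surjective, while non-degeneracy of the pairing $U(\operatorname{ev}_{X})$ forces $\upsilon_{X}$ injective; the point of the present formulation is precisely the Hopf-theoretic reading alluded to in the preceding remark.
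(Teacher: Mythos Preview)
Your argument is correct and, like the paper's, pivots on the Fundamental Theorem of Hopf Modules; the difference is in the packaging. The paper stays internal: it represents $U$ and $U'$ by algebra objects $A,A'$ in $\operatorname{Ind}(\mathcal C)$, interprets $\upsilon$ as an algebra epimorphism $A\twoheadrightarrow A'\simeq A/I$, and then, assuming rigidity, identifies $\operatorname{mod}_{\mathcal C}(A)$ with the category of Hopf modules for $U(A)$, hence with $\mathbf{Vec}$; the induced full faithful embedding $\operatorname{mod}_{\mathcal C}(A/I)\hookrightarrow\operatorname{mod}_{\mathcal C}(A)\simeq\mathbf{Vec}$ forces $I=0$. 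You instead externalize via Tannakian reconstruction to $\operatorname{Comod}_H$, invoke the Ulbrich--Schauenburg description of fiber functors as Galois objects, and apply the classical Fundamental Theorem directly to the Galois object $A$ viewed as a Hopf module through the comodule-algebra map $f\colon H\to A$. Your route is more concrete and makes the role of the Galois condition $A^{\operatorname{co}H}=\Bbbk$ transparent; the paper's route avoids the Ulbrich--Schauenburg dictionary (which you rightly flag as the point needing care) at the cost of working with ind-objects and Day convolution. Your closing remark about the direct argument from the triangle identities is a genuinely more elementary third proof, bypassing Hopf modules entirely; it is not the route the paper takes, but it is the shortest of the three.
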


\begin{proof}
   %Since we assume $\mathcal{C}$ admits a fiber functor, we may write $\mathcal{C} = H\!\on{-comod}$, the category of finite-dimensional $H$-comodules.
   Recall that the functors $U(-)^{\ast}$ and $U'(-)^{\ast}$ are of the form $\on{Hom}_{\mathcal{C}}(-,A)^{\ast}$ and $\on{Hom}_{\mathcal{C}}(-,A)^{\ast}$ for objects $A,A' \in \on{Ind}(\mathcal{C})$ -- see e.g. \cite[Theorem~1.10.1]{EGNO}. 
   The oplax monoidal structures on $U,U'$ respectively imply that $\on{Hom}_{\mathcal{C}}(-,A)$ and $\on{Hom}_{\mathcal{C}}(-,A)$ are lax monoidal, and thus so are their $\on{Ind}$-extensions $\on{Hom}_{\on{Ind}(\mathcal{C})}(-,A)$ and $\on{Hom}_{\on{Ind}(\mathcal{C})}(-,A)$ (see \cite[Proposition~2.27]{SZ} for details). 
   Since $\on{Ind}(\mathcal{C})$ is locally finitely presentable, we have a monoidal equivalence $\on{Ind}(\mathcal{C}) \simeq \on{Lex}_{\Bbbk}(\mathcal{C}^{\on{op}},\mathbf{Vec})$ between the $\on{Ind}$-completion of $\mathcal{C}$ and the category of left exact presheaves on $\mathcal{C}$, where the latter is endowed with Day convolution. 
   The lax monoidal structures on $\on{Hom}_{\on{Ind}(\mathcal{C})}(-,A)$ and $\on{Hom}_{\on{Ind}(\mathcal{C})}(-,A)$ correspond to monoid structures on them in $\on{Lex}_{\Bbbk}(\mathcal{C}^{\on{op}},\mathbf{Vec})$ (see \cite[Example~3.2.2]{Da}, \cite[Proposition 22.1]{MMSS}). 
   Thus $A$ and $A'$ are monoids in $\on{Ind}(\mathcal{C})$ and $\upsilon$ gives rise to a monoid epimorphism $\overline{\upsilon}: A \twoheadrightarrow A'$. 
   It is easy to see that then $A' \simeq A/\on{Ker}(\overline{\upsilon})$ as monoids. Thus, we find a full, faithful, finitary functor $\Upsilon: \on{mod}_{\mathcal{C}}(A/I) \hookrightarrow \on{mod}_{\mathcal{C}}(A)$, preserving compact objects.

   However, since $U$ is a fiber functor, we may write $\mathcal{C} = \on{mod}U(A)$. 
   Then the category $\on{mod}_{\mathcal{C}}(A)$ is precisely the category of Hopf modules for the Hopf algebra $U(A)$. 
   Thus, if we assume $\mathcal{C}$ to be a tensor category, and hence $A$ to be Hopf, then by the fundamental theorem of Hopf modules we find $\on{mod}_{\mathcal{C}}(A) \simeq \mathbf{Vec}$. 
   Then, $\Upsilon$ must be an equivalence: the image of its restriction to compact object defines a non-zero Cauchy complete subcategory of $\mathbf{vec}$, which must coincide with $\mathbf{vec}$. 
   This implies that $AI = \on{Im}(A \otimes I \rightarrow A \otimes A \xrightarrow{\mu_{A}} A) = 0$. 
   Thus, so is the image of the inclusion $I \hookrightarrow A$, obtained by precomposing the morphism defining $AI$ with $\eta_{A} \otimes I$, where $\eta_{A}$ is the unit of $A$. 
   Thus $I = 0$ and so $\upsilon$ is invertible.
   %The presheaves $U,U'$ are $\on{Ind}$-representable, since they are exact and the category $H\!\on{-Comod}$ is locally finitely presentable. Write $U = \on{Hom}_{H}(H,-)$ and $U' = \on{Hom}_{H}(A',-)$. Then $\upsilon$ is given by an epimorphism $\nu: A \rightarrow A'$. 
\end{proof}

We now continue our study of the category $\mathbf{Fib}(\mathbf{CrysTL})$.
Recall from \cite{Ga} that the isometry classes of indecomposable degenerate bilinear forms are given by nilpotent Jordan blocks, $J_{1} =
\left(
\begin{smallmatrix}
    0
\end{smallmatrix}
\right)$, $J_{2} = 
\left(
\begin{smallmatrix}
    0 & 1 \\
    0 & 0
\end{smallmatrix}
\right)$ and so on. 
We denote the bilinear form associated to $J_{n}$ by $\mathsf{J}_{n}$. 
Again following \cite{Ga}, we find that a given bilinear form $\mathsf{b}$ decomposes as 
\[
\mathsf{b} = \mathsf{b}_{\on{ndeg}} \perp \mathsf{J}_{1}^{\perp m_{1}} \perp \mathsf{J}_{2}^{\perp m_{2}} \perp \cdots \perp \mathsf{J}_{k}^{\perp m_{k}},
\]
where the isomorphism class of the non-degenerate form $\mathsf{b}_{\on{ndeg}}$ and the multiplicities $m_{1},\ldots, m_{k}$ are uniquely determined.

Denoting by $e_{1},\ldots,e_{n}$ the standard basis vectors, we see that $\mathcal{R}(\mathsf{J}_{n}) = \on{Span}\setj{e_{1}}$ and $\mathcal{L}(\mathsf{J}_{n}) = \on{Span}\setj{e_{n}}$. 
In particular, $(\mathsf{J}_{n})|_{\mathcal{R}(\mathsf{J}_{n}) \otimes \mathcal{L}(\mathsf{J}_{n})} \neq 0$ implies $n = 2$.

\begin{proposition}\label{prop:inflationreduction}
 Any fiber functor is isomorphic to a functor of the form $(V',\mathsf{b}',\mathsf{t}') \triangleright (\Bbbk^{\oplus 2m},\mathsf{J}_{2}^{\perp m}, \mathsf{t})$, where $\mathsf{t} \in \mathcal{R}(\mathsf{J}_{2}^{\perp m}) \otimes \mathcal{L}(\mathsf{J}_{2}^{\perp m})$ and $m > 0$.
\end{proposition}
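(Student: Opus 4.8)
The plan is to pass through the equivalence $\mathbf{Fib}(\mathbf{CrysTL}) \simeq \mathcal{E}_{0}$ of \autoref{lem:e0}, so that it suffices to show that an arbitrary object $(V,\mathsf{b},\mathsf{t}) \in \mathcal{E}_{0}$ is isomorphic to an inflation of the stated shape. First I would invoke the classification from \cite{Ga} to fix an orthogonal decomposition $\mathsf{b} = \mathsf{c} \perp \mathsf{J}_{2}^{\perp m_{2}}$, where $\mathsf{c} := \mathsf{b}_{\on{ndeg}} \perp \mathsf{J}_{1}^{\perp m_{1}} \perp \mathsf{J}_{3}^{\perp m_{3}} \perp \cdots$ has no $\mathsf{J}_{2}$ summand, say $V = Y \perp Z$ with $\mathsf{b}|_{Y} = \mathsf{c}$, $\mathsf{b}|_{Z} = \mathsf{J}_{2}^{\perp m_{2}}$. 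The formulae $\mathcal{R}(\mathsf{J}_{n}) = \on{Span}\setj{e_{1}}$, $\mathcal{L}(\mathsf{J}_{n}) = \on{Span}\setj{e_{n}}$ recorded above show that the pairing $\beta := \mathsf{b}|_{\mathcal{R}(\mathsf{b}) \times \mathcal{L}(\mathsf{b})}$ vanishes identically on the $\mathsf{c}$-summand and restricts to a \emph{perfect} pairing on $\mathcal{R}(\mathsf{J}_{2}^{\perp m_{2}}) \times \mathcal{L}(\mathsf{J}_{2}^{\perp m_{2}})$. Decomposing $\mathsf{t} = \mathsf{t}_{cc} + \mathsf{t}_{c0} + \mathsf{t}_{0c} + \mathsf{t}_{00}$ accordingly (subscript $c$ for the $\mathsf{J}_{2}$-part of a radical, $0$ for the $\mathsf{c}$-part), only $\mathsf{t}_{cc}$ contributes to $\mathsf{b}(\mathsf{t}) = \beta(\mathsf{t})$; so the circle-evaluation relation gives $\beta(\mathsf{t}_{cc}) = 1$, whence $m_{2} \geq 1$, and, identifying $\mathcal{L}(\mathsf{J}_{2}^{\perp m_{2}})$ with $\mathcal{R}(\mathsf{J}_{2}^{\perp m_{2}})^{*}$ via $\beta$, the element $\mathsf{t}_{cc}$ becomes an endomorphism $\Theta$ of $\mathcal{R}(\mathsf{J}_{2}^{\perp m_{2}})$ with $\on{tr}\Theta = 1$.

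Next I would apply Fitting's lemma to $\Theta$, splitting $\mathcal{R}(\mathsf{J}_{2}^{\perp m_{2}}) = R' \oplus R''$ into a $\Theta$-stable invertible block $R'$ and a $\Theta$-stable nilpotent block $R''$; setting $L'' := (R')^{\perp_{\beta}}$ and $L' := (R'')^{\perp_{\beta}}$ inside $\mathcal{L}(\mathsf{J}_{2}^{\perp m_{2}})$, perfectness of $\beta$ there gives that $\beta$ pairs $R'$ with $L'$ and $R''$ with $L''$ perfectly and kills the two mixed blocks, and that accordingly $\mathsf{t}_{cc} \in (R' \otimes L') \oplus (R'' \otimes L'')$ with first component $\Theta|_{R'}$ and second component $\Theta|_{R''}$. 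Since the nilpotent block has trace $0$, $\on{tr}(\Theta|_{R'}) = 1$, so $m := \dim R' \geq 1$. This regroups the picture as $\mathsf{b} = \mathsf{b}' \perp \mathsf{J}_{2}^{\perp m}$, $V = V' \perp Z_{0}$, where $Z_{0} := \on{Span}(R' \cup L') \subseteq Z$ carries $\mathsf{J}_{2}^{\perp m}$ and $V' := Y \oplus \on{Span}(R'' \cup L'')$ carries $\mathsf{b}' := \mathsf{c} \perp \mathsf{J}_{2}^{\perp(m_{2}-m)}$; crucially, by Fitting $\mathsf{t}$ already has no component in $R' \otimes L''$ or $R'' \otimes L'$.

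It remains to arrange that $\mathsf{t}$ has no component crossing $Z_{0}$ and $V'$; by the previous paragraph the only possible cross-components lie in $R' \otimes \mathcal{L}(\mathsf{c})$ and $\mathcal{R}(\mathsf{c}) \otimes L'$. To kill the first, pick a basis $\setj{e_{1}^{(a)}}$ of $R'$ and its $\beta$-dual basis $\setj{e_{2}^{(a)}}$ of $L'$ (so $\mathsf{b}(e_{1}^{(a)},e_{2}^{(b)}) = \delta_{ab}$), write $\Theta|_{R'} = \sum_{a,b}\theta_{ab}\,e_{1}^{(a)} \otimes e_{2}^{(b)}$ with $\theta := (\theta_{ab})$ invertible, and the cross-component as $\sum_{a}e_{1}^{(a)} \otimes v_{a}$ with $v_{a} \in \mathcal{L}(\mathsf{c})$. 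Put $w_{b} := \sum_{c}(\theta^{-1})_{bc}v_{c} \in \mathcal{L}(\mathsf{c})$; then the change of basis $e_{2}^{(b)} \mapsto e_{2}^{(b)} + w_{b}$, $e_{1}^{(a)} \mapsto e_{1}^{(a)}$, $y \mapsto y - \sum_{a}\mathsf{b}(y,w_{a})\,e_{1}^{(a)}$ (for $y \in V'$) is induced by an isometry of $\mathsf{b}$ — the checks are routine given $\mathcal{R}(\mathsf{c}) \subseteq \mathcal{R}(\mathsf{b})$, $\mathcal{L}(\mathsf{c}) \subseteq \mathcal{L}(\mathsf{b})$, $Y \perp Z$, and $\beta(R',L'') = 0$ — and after it the $R' \otimes \mathcal{L}(\mathsf{c})$-component of $\mathsf{t}$ becomes $\sum_{a}e_{1}^{(a)} \otimes \bigl(v_{a} - \sum_{b}\theta_{ab}w_{b}\bigr) = 0$. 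It is precisely invertibility of $\theta$ — equivalently, $\mathsf{b}(\mathsf{t}) = 1 \neq 0$ surviving to the Fitting-invertible block — that makes this linear system solvable. A mirror-image isometry (moving the $e_{1}^{(a)}$ by elements of $\mathcal{R}(\mathsf{c})$) kills the $\mathcal{R}(\mathsf{c}) \otimes L'$-component, and one verifies the two moves do not reintroduce one another's cross-components and preserve the invertible core.

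After these moves $\mathsf{t} = \mathsf{t}_{0} + \mathsf{t}'$ with $\mathsf{t}_{0}$ the $Z_{0}$-part and $\mathsf{t}'$ the $V'$-part; since the $R'' \otimes L''$-component of $\mathsf{t}$ was untouched and equals $\Theta|_{R''}$, we get $\mathsf{b}'(\mathsf{t}') = \beta(\mathsf{t}') = \on{tr}(\Theta|_{R''}) = 0$, hence $\mathsf{J}_{2}^{\perp m}(\mathsf{t}_{0}) = \mathsf{b}(\mathsf{t}) - \mathsf{b}'(\mathsf{t}') = 1$, exhibiting $(V,\mathsf{b},\mathsf{t})$ as $(V',\mathsf{b}',\mathsf{t}') \triangleright (\Bbbk^{\oplus 2m},\mathsf{J}_{2}^{\perp m},\mathsf{t}_{0})$ with $m > 0$, as required. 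I expect the main obstacle to be the construction in the third paragraph: producing enough isometries of the (degenerate) form $\mathsf{b}$ to cancel all cross-components, and bookkeeping that the ``left'' and ``right'' families of transvection-type moves do not interfere; everything else — the classification input, the circle-evaluation reduction, and the Fitting step — is routine once one observes that $\beta$ only sees the $\mathsf{J}_{2}$ blocks.
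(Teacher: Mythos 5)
Your argument is correct in substance and in fact does strictly more than the paper's own proof. The paper's proof chooses the orthogonal decomposition $\mathsf{b} = \mathsf{b}' \perp \mathsf{J}_2^{\perp m_2}$ (with $\mathsf{b}'$ containing no $\mathsf{J}_2$ block), splits $\mathsf{t} = \mathsf{t}_2 + \mathsf{t}'$ accordingly, and stops; it does not address the possible components of $\mathsf{t}$ in $\mathcal{R}(\mathsf{J}_2^{\perp m_2}) \otimes \mathcal{L}(\mathsf{b}')$ and $\mathcal{R}(\mathsf{b}') \otimes \mathcal{L}(\mathsf{J}_2^{\perp m_2})$, which are not allowed in an inflation as defined. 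You identified exactly this point and resolved it: the Fitting decomposition of $\Theta$ isolates an invertible core $R'$ of some dimension $m$ with $1 \leq m \leq m_2$ (possibly strictly smaller than $m_2$ --- the statement permits this, since $\mathsf{b}'$ is unconstrained and your $\mathsf{b}'(\mathsf{t}')=\on{tr}(\Theta|_{R''})=0$ still holds); the off-diagonal Fitting blocks of $\mathsf{t}_{cc}$ vanish automatically; and the remaining cross-components can be cancelled by your transvection-type isometries precisely because $\theta = \Theta|_{R'}$ is invertible. The isometry verifications you call routine do all go through --- they reduce to $Y \perp Z$, to $\mathcal{R}(\mathsf{b})$ being a right kernel and $\mathcal{L}(\mathsf{b})$ a left kernel, and to $\beta(R',L'') = \beta(R'',L') = 0$. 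In short: the paper's route is shorter and keeps all $m_2$ copies of $\mathsf{J}_2$ in the base of the inflation but leaves the cross-term question untreated; yours is longer but actually lands in the image of $\triangleright$, at the cost of possibly leaving some $\mathsf{J}_2$ blocks inside $\mathsf{b}'$.

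One genuine (but harmless) slip: with $w_b := \sum_c (\theta^{-1})_{bc} v_c$ and the isometry $e_2^{(b)} \mapsto e_2^{(b)} + w_b$, the new $R' \otimes \mathcal{L}(\mathsf{c})$-component is $\sum_a e_1^{(a)} \otimes \bigl(v_a + \sum_b \theta_{ab} w_b\bigr) = 2\sum_a e_1^{(a)} \otimes v_a$, not $0$. You need $w_b := -\sum_c (\theta^{-1})_{bc} v_c$, with the compensating map on $V'$ adjusted accordingly to $y \mapsto y + \sum_a \mathsf{b}(y,w_a)\, e_1^{(a)}$ so that the isometry condition $\mathsf{b}(\varphi y, \varphi e_2^{(b)}) = 0$ is preserved. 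This is pure sign bookkeeping and does not affect the structure of the argument.
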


\begin{proof}
    Let $(V, \mathsf{b},\mathsf{t}) \in \mathcal{E}_{0}$. 
    Choose a decomposition $\mathsf{b} = \mathsf{b}_{2} \perp \mathsf{b}'$
    so that $\mathsf{J}_{2}$ does not appear in the decomposition of $\mathsf{b}'$, and $\mathsf{b}_{2} \simeq \mathsf{J}_{2}^{\perp m}$. 
    Write $V = V' \oplus V_{2}$ for the resulting direct sum decomposition of $V$.
    Write $\mathsf{t} = \mathsf{t}_{2} + \mathsf{t}'$ for the resulting decomposition of $\mathsf{t}$. 
    Then $\mathsf{b}'|_{\mathcal{R}(\mathsf{b}') \otimes \mathcal{L}(\mathsf{b}')} = 0$, and thus $\mathsf{b}(\mathsf{t}') = 0$, and $\mathsf{b}'(\mathsf{t}') = 0$. 
    Further, $\mathsf{t}_{2}\neq 0$ as $\mathsf{b}(\mathsf{t}_{2} + \mathsf{t}') \neq 0$. Thus $m > 0$. 
    We find $(V,\mathsf{b}, \mathsf{t}) = (V',\mathsf{b}',\mathsf{t}') \triangleright (V_{2}, \mathsf{b}_{2}, \mathsf{t}_{2}) \simeq (V',\mathsf{b}',\mathsf{t}') \triangleright (\Bbbk^{\oplus 2m},\mathsf{J}_{2}^{\perp m}, \mathsf{t})$.
\end{proof}

For a fixed vector space $V$ and bilinear form $\mathsf{b}$ on $V$,
we will now study the problem of determining the isomorphism classes of fiber functors $U$ satisfying $U(1) = V$ and $U(\on{cap}) = \mathsf{b}$. 
As seen in \autoref{unitcounit}, for generic non-zero $q$, there is only one such isomorphism class. We will see that for $q=0$ this is not the case.

Let $\on{Aut}(\mathsf{b})$ denote the isometry group of $\mathsf{b}$. The following is an immediate consequence of \autoref{lem:e0}:
  \begin{corollary}
  Let $V$ be a finite-dimensional vector space and let $\mathsf{b}$ be a bilinear form on $V$. Then 
      \[
      \setj{U \in \mathbf{Fib}(\mathbf{CrysTL}) \; | \; U(1) = V , U(\on{cap}) = \mathsf{b}}\!/\!\simeq = (\mathcal{R}(\mathsf{b})\otimes \mathcal{L}(\mathsf{b}))/\left\langle \mathsf{t} \sim \mathsf{t}' \text{ if }\exists \varphi \in \on{Aut}(\mathsf{b}): (\varphi \otimes \varphi)(\mathsf{t}) = \mathsf{t}' \right\rangle.
      \] 
  \end{corollary}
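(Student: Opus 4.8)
The plan is simply to transport the classification problem along the equivalence $\mathbf{Fib}(\mathbf{CrysTL}) \simeq \mathcal{E}_{0}$ of \autoref{lem:e0}. First I would recall that this equivalence sends a fiber functor $U$ to the triple $(U(\obj{1}), U(\on{cap}), U(\on{cup}))$, so that the fiber functors $U$ with $U(\obj{1}) = V$ and $U(\on{cap}) = \mathsf{b}$ correspond bijectively, on the level of objects, to the elements $\mathsf{t} \in \mathcal{R}(\mathsf{b}) \kotimes \mathcal{L}(\mathsf{b})$ with $\mathsf{b}(\mathsf{t}) = 1$; moreover two such fiber functors are isomorphic precisely when the corresponding triples $(V, \mathsf{b}, \mathsf{t})$ and $(V,\mathsf{b},\mathsf{t}')$ are isomorphic in $\mathcal{E}_{0}$.

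Next I would unwind what an isomorphism in $\mathcal{E}_{0}$ between two such triples is. By the definition of the morphisms in $\mathcal{E}_{0}$, it is an invertible linear map $\varphi\colon V \to V$ which is a morphism of bilinear spaces from $\mathsf{b}$ to $\mathsf{b}$ and satisfies $(\varphi \otimes \varphi)(\mathsf{t}) = \mathsf{t}'$. Since a bijective morphism of bilinear spaces is an isometry (its inverse is automatically a morphism of bilinear spaces), such $\varphi$ are exactly the elements of $\on{Aut}(\mathsf{b})$. Hence the set of isomorphism classes on the left-hand side is the set of orbits of $\setj{\mathsf{t}\in \mathcal{R}(\mathsf{b})\kotimes \mathcal{L}(\mathsf{b}) \;|\; \mathsf{b}(\mathsf{t}) = 1}$ under the action $\varphi\cdot \mathsf{t} = (\varphi\otimes\varphi)(\mathsf{t})$ of $\on{Aut}(\mathsf{b})$.

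Finally I would match this with the right-hand side. The point to note is that this action of $\on{Aut}(\mathsf{b})$ preserves the scalar $\mathsf{b}(\mathsf{t})$: writing $\mathsf{t} = \sum_{i} v_{i}\otimes w_{i}$ one has $\mathsf{b}((\varphi\otimes\varphi)(\mathsf{t})) = \sum_{i}\mathsf{b}(\varphi v_{i}, \varphi w_{i}) = \sum_{i}\mathsf{b}(v_{i},w_{i}) = \mathsf{b}(\mathsf{t})$, so the circle-evaluation constraint $\mathsf{b}(\mathsf{t}) = 1$ cuts out an $\on{Aut}(\mathsf{b})$-invariant subset, and the displayed quotient on the right is exactly the above orbit set. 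I do not expect a genuine obstacle here — this is why the statement is recorded as a corollary; the only care needed is the bookkeeping that isomorphisms of fiber functors with $(V,\mathsf{b})$ fixed correspond to the group $\on{Aut}(\mathsf{b})$ rather than to the larger monoid of all morphisms of bilinear spaces, together with the invariance of the normalization just noted.
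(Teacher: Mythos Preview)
Your proposal is correct and follows exactly the approach the paper takes: the paper records this as ``an immediate consequence of \autoref{lem:e0}'' without further argument, and you have simply written out that immediate consequence in detail, including the observation that invertible morphisms in $\mathcal{E}_{0}$ fixing $(V,\mathsf{b})$ are precisely the isometries of $\mathsf{b}$ and that the normalization $\mathsf{b}(\mathsf{t})=1$ is $\on{Aut}(\mathsf{b})$-invariant.
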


 We say that $\mathsf{b}$ is {\it of even type} if no odd-dimensional Jordan blocks appear in its orthogonal decomposition into indecomposable forms. 
 If the only Jordan blocks appearing are those of the form $J_{2}$, we say that $\mathsf{b}$ is {\it of homogeneous even type.} 
 We will also frequently decompose $\mathsf{b}$ as $\mathsf{b}_{\on{ndeg}} \perp \mathsf{b}_{\on{tdeg}}$, the orthogonal sum of its {\it non-degenerate part} and {\it totally degenerate part}.
 
 We now recall some of the results of \cite[Sections 4,5]{Dj}. Observe that we use the opposite convention for Jordan blocks to \cite{Dj}, where such blocks are subdiagonal. 
 By \cite[Proposition~5.1]{Dj}, if $\mathsf{b}$ is of even type, then the decomposition $\mathsf{b} = \mathsf{b}_{\on{ndeg}} \perp \mathsf{b}_{\on{tdeg}}$ is unique, given by a generalized eigenspace decomposition. 
 In that case, $\on{Aut}(\mathsf{b}) = \on{Aut}(\mathsf{b}_{\on{tdeg}}) \times \on{Aut}(\mathsf{b}_{\on{ndeg}})$, and since $\mathcal{R}(\mathsf{b}), \mathcal{L}(\mathsf{b}) \subseteq \mathsf{b}_{\on{tdeg}}$, we can reduce to study the orbits of the action of $\on{Aut}(\mathsf{b}_{\on{tdeg}})$ on $\mathcal{R}(\mathsf{b}) \otimes \mathcal{L}(\mathsf{b})$.

 An even-dimensional indecomposable degenerate bilinear form $\mathsf{d}$ given by a nilpotent Jordan block decomposes as a direct sum of two $\on{Aut}(\mathsf{d})$-invariant subspaces $\mathcal{L}^{\infty}(\mathsf{b})$ and $\mathcal{R}^{\infty}(\mathsf{b})$. 
 We have $\mathcal{L}^{\infty}(\mathsf{J}_{m}) = \setj{e_{2},e_{4},\ldots, e_{2m}}$ and $\mathcal{R}^{\infty}(\mathsf{J}_{m}) = \setj{e_{1},e_{3},\ldots, e_{2m-1}}$. Ordering the standard basis as $(e_{1},e_{3},\ldots,e_{2m-1}, e_{2},\ldots,e_{2m})$, the matrix of the bilinear form turns to 
 $
 \left(
 \begin{smallmatrix}
     0 & I_{m} \\
     J_{m} & 0
 \end{smallmatrix}
 \right)
 $ 
 where $I_{m}$ is the $m\times m$ identity matrix and $J_{m}$ an $m\times m$ nilpotent Jordan block. 
 More generally, for a totally degenerate form with no odd-dimensional components, we can find a basis with respect to which the bilinear form's matrix is  $
 \left(
 \begin{smallmatrix}
     0 & I \\
     J & 0
 \end{smallmatrix}
 \right)
 $ where $I$ is an identity matrix and $J$ is a direct sum of Jordan blocks with eigenvalue zero. With respect to this basis, the matrix of an isometry is of the form
 \begin{equation}\label{blockmatrix}
 \left(
 \begin{smallmatrix}
     A & 0 \\
     0 & (A^{-1})^{T}
 \end{smallmatrix}
 \right),
 \end{equation}
 where $AJ = JA$. 
 In particular, $A$ is a direct sum of polynomials in the respective blocks of $J$. 
 We may formulate the previous observation as follows:
 \begin{lemma}\label{lem:factorRL}
     The action of $\on{Aut}(\mathsf{b})$ on $\mathcal{R}(\mathsf{b}) \otimes \mathcal{L}(\mathsf{b})$ factors through the action of $\on{GL}(\mathcal{R}(\mathsf{b}))$, where the latter is given by
     $A \mapsto (A\cdot-) \otimes ((A^{-1})^{T} \cdot -)$, following \autoref{blockmatrix}. 
 \end{lemma}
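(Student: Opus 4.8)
The plan is to reduce to the case where $\mathsf{b}$ is totally degenerate of homogeneous even type, i.e.\ $\mathsf{b}\simeq\mathsf{J}_{2}^{\perp m}$, and then read the statement off the block normal form of \autoref{blockmatrix}. First I would note that $\mathcal{R}(\mathsf{b})$ and $\mathcal{L}(\mathsf{b})$ are contained in the totally degenerate part, and that for $\mathsf{b}$ of even type $\on{Aut}(\mathsf{b})=\on{Aut}(\mathsf{b}_{\on{ndeg}})\times\on{Aut}(\mathsf{b}_{\on{tdeg}})$ with the first factor acting trivially on $\mathcal{R}(\mathsf{b})\kotimes\mathcal{L}(\mathsf{b})$; hence we may replace $\mathsf{b}$ by $\mathsf{b}_{\on{tdeg}}$. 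By \autoref{prop:inflationreduction} the fiber functors of interest involve only the degenerate form $\mathsf{J}_{2}^{\perp m}$, so we may further assume $\mathsf{b}$ is of homogeneous even type. This is the case in which the restriction $\mathsf{b}|_{\mathcal{R}(\mathsf{b})\times\mathcal{L}(\mathsf{b})}\colon\mathcal{R}(\mathsf{b})\times\mathcal{L}(\mathsf{b})\to\Bbbk$ is a \emph{non-degenerate} pairing (it is this pairing that makes the normalization $\mathsf{b}(\mathsf{t})=1$ of \autoref{lem:e0} a genuine condition rather than a contradiction), and the pairing supplies the $\on{Aut}(\mathsf{b})$-equivariant identification $\mathcal{L}(\mathsf{b})\cong\mathcal{R}(\mathsf{b})^{*}$ needed to make sense of the operator $(A^{-1})^{T}$ appearing in the statement.

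For the main step, I would first observe that any $\varphi\in\on{Aut}(\mathsf{b})$ preserves both radicals: if $r\in\mathcal{R}(\mathsf{b})$ then $\mathsf{b}(y,\varphi r)=\mathsf{b}(\varphi^{-1}y,r)=0$ for every $y$, and symmetrically for $\mathcal{L}(\mathsf{b})$. Thus restriction gives a group homomorphism $\on{Aut}(\mathsf{b})\to\on{GL}(\mathcal{R}(\mathsf{b}))\times\on{GL}(\mathcal{L}(\mathsf{b}))$, $\varphi\mapsto(\varphi|_{\mathcal{R}(\mathsf{b})},\varphi|_{\mathcal{L}(\mathsf{b})})$, and the $\on{Aut}(\mathsf{b})$-action on $\mathcal{R}(\mathsf{b})\kotimes\mathcal{L}(\mathsf{b})$ is this homomorphism followed by $\otimes$. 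In the adapted basis of \autoref{blockmatrix}, homogeneous even type forces the nilpotent part to be $J=0$, so that $\mathcal{R}(\mathsf{b})=\mathcal{R}^{\infty}(\mathsf{b})$ and $\mathcal{L}(\mathsf{b})=\mathcal{L}^{\infty}(\mathsf{b})$ are exactly the two halves of the decomposition, perfectly paired by $\mathsf{b}$, and every isometry has matrix $\left(\begin{smallmatrix}A&0\\0&(A^{-1})^{T}\end{smallmatrix}\right)$ for an \emph{arbitrary} $A\in\on{GL}(\mathcal{R}(\mathsf{b}))$, the transpose being taken with respect to that pairing (so $(A^{-1})^{T}$ is the contragredient operator on $\mathcal{L}(\mathsf{b})\cong\mathcal{R}(\mathsf{b})^{*}$). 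Consequently $\varphi|_{\mathcal{R}(\mathsf{b})}=A$ and $\varphi|_{\mathcal{L}(\mathsf{b})}=(A^{-1})^{T}$, the latter determined by the former; in particular the restriction map $\on{Aut}(\mathsf{b})\to\on{GL}(\mathcal{R}(\mathsf{b}))$ is surjective, and the $\on{Aut}(\mathsf{b})$-action on $\mathcal{R}(\mathsf{b})\kotimes\mathcal{L}(\mathsf{b})$ factors as this surjection followed by $A\mapsto(A\cdot-)\kotimes((A^{-1})^{T}\cdot-)$, which is the asserted factorization.

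The main, and essentially only, point requiring care is the equivariant identification $\mathcal{L}(\mathsf{b})\cong\mathcal{R}(\mathsf{b})^{*}$: one must verify that $\mathsf{b}$ restricts to a perfect pairing on $\mathcal{R}(\mathsf{b})\times\mathcal{L}(\mathsf{b})$ — equivalently, block by block, that on each $\mathsf{J}_{2}$-summand the two radicals pair nontrivially — and that the normal form of \autoref{blockmatrix} indeed has $J=0$ and imposes no further constraint on $A$ in the homogeneous even case; both are short computations. I would also make explicit in the write-up that the homogeneous-even-type reduction is not merely a convenience: for a general totally degenerate even-type $\mathsf{b}$ the restricted pairing degenerates, $\mathcal{R}(\mathsf{b})$ and $\mathcal{L}(\mathsf{b})$ no longer coincide with $\mathcal{R}^{\infty}(\mathsf{b}),\mathcal{L}^{\infty}(\mathsf{b})$, and the $\on{GL}(\mathcal{R}(\mathsf{b}))$-factorization can fail, so the appeal to \autoref{prop:inflationreduction} is load-bearing here.
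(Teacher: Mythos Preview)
Your argument follows the paper's approach: the lemma is presented there without a separate proof, as a direct reformulation of the block form \autoref{blockmatrix}, and you likewise read the factorization off that block form in the homogeneous-even-type case where $J=0$, $\mathcal{R}(\mathsf{b})=\mathcal{R}^\infty(\mathsf{b})$, $\mathcal{L}(\mathsf{b})=\mathcal{L}^\infty(\mathsf{b})$, and the constraint $AJ=JA$ is vacuous.

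Two comments. First, your invocation of \autoref{prop:inflationreduction} is not a logical reduction of the proof but a narrowing of the lemma's scope to the case used in the downstream corollaries; the paper states the lemma without that hypothesis. Second, your closing caveat is correct and sharper than the paper: for general even type, say $\mathsf{b}=\mathsf{J}_4\perp\mathsf{J}_2$, one finds isometries (e.g.\ with $A=\left(\begin{smallmatrix}1&0&0\\0&1&0\\0&d&1\end{smallmatrix}\right)$ on $\mathcal{R}^\infty(\mathsf{b})$ in the adapted basis) that restrict to the identity on $\mathcal{R}(\mathsf{b})=\ker J$ yet act nontrivially on $\mathcal{L}(\mathsf{b})=\ker J^{T}$, so the factorization through the restriction map $\on{Aut}(\mathsf{b})\to\on{GL}(\mathcal{R}(\mathsf{b}))$ genuinely fails there. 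The paper's statement is thus best read either with your homogeneous-even-type hypothesis, or, for general even type, as a factorization through $\on{GL}(\mathcal{R}^\infty(\mathsf{b}))$ rather than $\on{GL}(\mathcal{R}(\mathsf{b}))$. Since the subsequent corollaries and \autoref{cor:GIT} use only the homogeneous case, your version suffices for everything the paper needs.
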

 
 We may identify the actions of $\on{Aut}(\mathsf{b})$ and of $\on{Aut}_{\Bbbk[x]}(\mathcal{R}^{\infty}(\mathsf{b}))$ on
 $\mathcal{R}^{\infty}(\mathsf{b})$, where we endow $\mathcal{R}^{\infty}(\mathsf{b})$ with the $\Bbbk[x]$-module structure given by $J$. The right radical $\mathcal{R}(\mathsf{b})$ is the socle of that module. 

 Write $R$ for the $\Bbbk[x]$-module $\mathcal{R}^{\infty}(\mathsf{b})$. Then $R = \bigoplus_{i=1}^{n} R^{(i)}$ where $R^{(i)} = \setj{v \in R \; | \; x^{i}v = 0 \text{ and } x^{i-1}v \neq 0}$. This is the grading on $R$ associated to the radical filtration on it.
 The basis we have previously chosen for $R$ is such that the matrix of $x\cdot- :R^{(k)}\rightarrow R^{(k-1)}$ is of the form
 $
 \left(
 \begin{smallmatrix}
    I \\ 
    0
 \end{smallmatrix}
 \right)
 $, where $I$ is an identity matrix. 
 %Further, denote by $M^{k}$ the indecomposable $k$-dimensional $\Bbbk[x]$ module given by the $k\times k$ nilpotent Jordan block. As in the case of $R$, we find a decomposition $M^{k} = \bigoplus_{i=1}^{k} M_{i}^{k}$.

 Let $\mathsf{b}$ be of even type, and let $\mathsf{b}_{2i} = \mathsf{J}_{2i}^{m_{2i}}$ be the block of $\mathsf{b}$ consisting of its Jordan blocks of size $2i$. 
 Further, let $\mathsf{R}_{2i} = \mathcal{R}(\mathsf{b}_{2i})$. In particular, $\mathcal{R}(\mathsf{b}) = \mathsf{R}_{2} \perp \mathsf{R}_{4} \perp \cdots \perp \mathsf{R}_{2d}$.
\begin{lemma}
 The image of the homomorphism $\on{Aut}(\mathsf{b}) \rightarrow \on{GL}(\mathcal{R}(V))$ consists of the matrices of the form
 \[
 \begin{pmatrix}
     \on{GL}(\mathsf{R}_{2}) & 0 & \cdots & 0 \\
     \on{Hom}_{\Bbbk}(\mathsf{R}_{2},\mathsf{R}_{4}) & \on{GL}(\mathsf{R}_{4}) & \ddots & 0 \\
     \vdots & \ddots & \on{GL}(\mathsf{R}_{j}) & 0 \\
     \on{Hom}_{\Bbbk}(\mathsf{R}_{2},\mathsf{R}_{2d}) & \on{Hom}_{\Bbbk}(\mathsf{R}_{4},\mathsf{R}_{2d}) & \cdots & \on{GL}(\mathsf{R}_{2d})
 \end{pmatrix}
 \]
\end{lemma}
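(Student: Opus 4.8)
The plan is to translate the statement into a computation with finite length torsion modules over $\Bbbk[x]$. As recorded in the paragraphs above, for $\mathsf{b}$ of even type one has $\on{Aut}(\mathsf{b}) = \on{Aut}(\mathsf{b}_{\on{tdeg}}) \times \on{Aut}(\mathsf{b}_{\on{ndeg}})$, the homomorphism to $\on{GL}(\mathcal{R}(\mathsf{b}))$ factors through the first factor, and the image of $\on{Aut}(\mathsf{b}_{\on{tdeg}})$ in $\on{GL}(\mathcal{R}^{\infty}(\mathsf{b}))$ is exactly $\on{Aut}_{\Bbbk[x]}(R)$, where $R := \mathcal{R}^{\infty}(\mathsf{b})$ carries the $\Bbbk[x]$-module structure with $x$ acting as $J$ and $\on{soc}(R) = \mathcal{R}(\mathsf{b})$. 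Since $\mathsf{b}_{2i} = \mathsf{J}_{2i}^{\perp m_{2i}}$ and $\mathcal{R}^{\infty}(\mathsf{J}_{2i}) \cong \Bbbk[x]/(x^{i}) =: M_{i}$ as $\Bbbk[x]$-modules (the $i$-dimensional module on which $x$ acts as a single nilpotent Jordan block), we get $R \cong \bigoplus_{i=1}^{d} M_{i}^{\oplus m_{2i}}$, whence $\mathcal{R}(\mathsf{b}) = \on{soc}(R) = \bigoplus_{i=1}^{d} \on{soc}(M_{i})^{\oplus m_{2i}}$ with $\mathsf{R}_{2i} = \on{soc}(M_{i})^{\oplus m_{2i}}$. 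Thus it suffices to determine the image of the restriction-to-socle map $\on{Aut}_{\Bbbk[x]}(R) \rightarrow \on{GL}(\on{soc}(R))$.

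The first step is to compute the image of $\on{res}\colon \on{End}_{\Bbbk[x]}(R) \rightarrow \on{End}_{\Bbbk}(\on{soc}(R))$. Both sides decompose along $R \cong \bigoplus_{i} M_{i}^{\oplus m_{2i}}$ into blocks of matrices with entries in $\on{Hom}_{\Bbbk[x]}(M_{i},M_{j})$, resp. $\on{Hom}_{\Bbbk}(\on{soc}(M_{i}),\on{soc}(M_{j}))$, and $\on{res}$ acts entrywise, so it is enough to know, for each pair $i,j$, the image of $\on{Hom}_{\Bbbk[x]}(M_{i},M_{j}) \rightarrow \on{Hom}_{\Bbbk}(\on{soc}(M_{i}),\on{soc}(M_{j}))$. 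Writing $\on{soc}(M_{i}) = \langle\,\overline{x^{i-1}}\,\rangle$ and a homomorphism as $\overline{1} \mapsto \overline{g}$, one checks that $\overline{x^{i-1}g} = 0$ in $M_{j}$ when $i > j$, so the map is zero, while for $i \le j$ the admissible $g$ are of the form $x^{j-i}h$ and then $\overline{x^{i-1}g} = h(0)\,\overline{x^{j-1}}$ with $h(0)$ unconstrained, so the map is surjective onto the one-dimensional space $\on{Hom}_{\Bbbk}(\on{soc}(M_{i}),\on{soc}(M_{j}))$. Reassembling the blocks, the image of $\on{res}$ is precisely the space of block lower-triangular matrices with respect to the ordering $\mathsf{R}_{2},\mathsf{R}_{4},\dots,\mathsf{R}_{2d}$: the $(r,c)$-block equals all of $\on{Hom}_{\Bbbk}(\mathsf{R}_{2c},\mathsf{R}_{2r})$ for $r \ge c$ and vanishes for $r < c$.

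The second step is the elementary remark that an endomorphism $\phi$ of the finite-dimensional $\Bbbk[x]$-module $R$ lies in $\on{Aut}_{\Bbbk[x]}(R)$ if and only if $\phi|_{\on{soc}(R)}$ is invertible: $\phi$ preserves $\on{soc}(R)$, and if $\phi$ were not injective then some nonzero $v \in \ker\phi$ would have $\on{soc}(\Bbbk[x]v) \neq 0$, producing a nonzero socle element in $\ker\phi$; injectivity then gives bijectivity by dimension count. Combining the two steps, the image of $\on{Aut}_{\Bbbk[x]}(R)$ in $\on{GL}(\on{soc}(R))$ is the set of invertible block lower-triangular matrices of the shape found in the first step; since such a matrix is invertible exactly when each diagonal block lies in $\on{GL}(\mathsf{R}_{2r})$, and its inverse is again block lower-triangular of the same shape, this image is the matrix group displayed in the statement. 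I expect no genuine obstacle here; the only thing demanding care is to keep the bookkeeping consistent — the direction of the triangularity, and the convention that $\mathcal{R}^{\infty}$ of a $2i$-dimensional nilpotent block is the $i$-dimensional module $\Bbbk[x]/(x^{i})$, so that smaller Jordan blocks contribute socle maps \emph{into} the socles of the larger ones but not conversely.
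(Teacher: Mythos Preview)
Your proof is correct and uses the same core translation as the paper: identify the action of $\on{Aut}(\mathsf{b})$ on $\mathcal{R}(\mathsf{b})$ with that of $\on{Aut}_{\Bbbk[x]}(R)$ on $\on{soc}(R)$, then analyse the restriction map block by block via $\on{Hom}_{\Bbbk[x]}(M_i,M_j)\to\on{Hom}_{\Bbbk}(\on{soc}\,M_i,\on{soc}\,M_j)$. The triangularity computation (zero for $i>j$, surjective for $i\le j$) is the same as the paper's.

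The one tactical difference is in the surjectivity step. The paper, after reducing to two blocks $\mathsf{J}_{2k}\perp\mathsf{J}_{2m}$, explicitly constructs a graded $\Bbbk[x]$-automorphism realising a prescribed lower-triangular matrix on the socle. You instead first compute the image of all of $\on{End}_{\Bbbk[x]}(R)$ on $\on{soc}(R)$ and then invoke the clean criterion that $\phi\in\on{End}_{\Bbbk[x]}(R)$ is invertible if and only if $\phi|_{\on{soc}(R)}$ is. This avoids writing down lifts and makes the argument uniform in the number of block sizes, at the cost of needing that small extra lemma; the paper's explicit construction is more hands-on but displays the lifts concretely. Either way the conclusion is the same.
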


\begin{proof}
    Clearly, it suffices to check the case $\mathsf{b} = \mathsf{J}_{2k} \perp \mathsf{J}_{2m}$, with $k \leq m$. 
    Using the identification of the actions of $\on{Aut}(\mathsf{b})$ and of $\on{Aut}_{\Bbbk[x]}(\mathcal{R}^{\infty}(\mathsf{b}))$, showing that for $g \in \on{Aut}(\mathsf{b})$ and $k < m$, we have $g(\mathsf{R}_{2m}) \cap \mathsf{R}_{2k} = \setj{0}$ can be done by proving that for the $\Bbbk[x]$-modules given by $J_{2k}$ and $J_{2m}$, a morphism $\upsilon: J_{2m} \rightarrow J_{2k}$ sends $\on{Soc}(J_{2m})$ to zero. 
    This follows by $\upsilon$ being non-injective, but can also be seen by $\on{Soc}(J_{2m}) = \on{Rad}^{2m}(J_{2m})$ and $\upsilon(\on{Rad}^{2m}(J_{2m})) \subseteq \on{Rad}^{2m}(J_{2k}) = \setj{0}$.

    It suffices to show that given $\gamma_{k} \in \on{GL}(\on{Soc}(J_{2k})), \gamma_{m} \in \on{GL}(\on{Soc}(J_{2m})$ and $\delta \in \on{Hom}_{\Bbbk}(J_{2k}, J_{2m})$, there is an automorphism of $\xi: J_{2k} \oplus J_{2m}$ such that $\xi|_{\on{Soc}(J_{2k} \oplus J_{2m})}$ is given by 
    $\left(
    \begin{smallmatrix}
        \gamma_{k} & 0 \\ \delta & \gamma_{m}
    \end{smallmatrix}
    \right)$.
     The automorphism we define is graded, in the bases we considered previously (where $x\cdot = \left(
 \begin{smallmatrix}
    I \\ 
    0
 \end{smallmatrix}
 \right)$) it is given by by $J_{k}^{(l)} \oplus J_{m}^{(l)} \xrightarrow{
     \left(
     \begin{smallmatrix}
        \gamma_{k} & 0 \\ \delta & \gamma_{m}
    \end{smallmatrix}
     \right)} J_{k}^{(l)} \oplus J_{m}^{(l)}$
     for $l \leq k$ and by
     $J_{k}^{(l)} \oplus J_{m}^{(l)} \xrightarrow{
     \left(
     \begin{smallmatrix}
        0 & 0 \\ 0 & \gamma_{m}
    \end{smallmatrix}
     \right)} J_{k}^{(l)} \oplus J_{m}^{(l)}$ for $k < l \leq m$, as then $J_{2k}^{(l)} = \setj{0}$. 
     Using the matrix description of the action of $x$ it is easy to see that these assignments commute with the action.
\end{proof}

\begin{corollary}
    If $\mathsf{b}$ is of even type, then the morphism $\on{Aut}(\mathsf{b}) \rightarrow \on{GL}(\mathcal{R}(\mathsf{b}))$ is surjective if and only if $\mathsf{b}$ is of homogeneous even type.
\end{corollary}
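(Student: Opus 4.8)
### Proof proposal

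The plan is to combine the two lemmas immediately preceding this corollary. The first identifies $\operatorname{Aut}(\mathsf{b})$, under the hypothesis that $\mathsf{b}$ is of even type, with the block-triangular group whose diagonal blocks are $\operatorname{GL}(\mathsf{R}_{2}),\operatorname{GL}(\mathsf{R}_{4}),\ldots,\operatorname{GL}(\mathsf{R}_{2d})$ and whose sub-diagonal blocks are arbitrary homomorphisms; the image of $\operatorname{Aut}(\mathsf{b})\to\operatorname{GL}(\mathcal{R}(\mathsf{b}))$ is exactly this group. So the statement reduces to: this block-triangular subgroup equals all of $\operatorname{GL}(\mathcal{R}(\mathsf{b}))=\operatorname{GL}(\mathsf{R}_{2}\perp\mathsf{R}_{4}\perp\cdots\perp\mathsf{R}_{2d})$ if and only if there is only one block, i.e. $d=1$, which is precisely the condition that $\mathsf{b}$ is of homogeneous even type.

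First I would dispose of the easy direction. If $\mathsf{b}$ is of homogeneous even type then, by definition, only Jordan blocks of the form $J_{2}$ appear, so $\mathcal{R}(\mathsf{b})=\mathsf{R}_{2}$ and the block-triangular group of the lemma is just $\operatorname{GL}(\mathsf{R}_{2})=\operatorname{GL}(\mathcal{R}(\mathsf{b}))$; surjectivity is immediate. Conversely, suppose $\mathsf{b}$ is of even type but not of homogeneous even type, so that $d\geq 2$ and at least two distinct block sizes occur, giving a nontrivial decomposition $\mathcal{R}(\mathsf{b})=\mathsf{R}_{2i_{1}}\perp\cdots\perp\mathsf{R}_{2i_{s}}$ with $s\geq 2$ into the (nonzero) radical-of-block subspaces. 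By the preceding lemma, the image of $\operatorname{Aut}(\mathsf{b})$ in $\operatorname{GL}(\mathcal{R}(\mathsf{b}))$ is contained in the parabolic-type subgroup of block lower-triangular matrices with respect to this ordered decomposition. To finish I would exhibit an element of $\operatorname{GL}(\mathcal{R}(\mathsf{b}))$ not of this form — e.g. a block matrix with a nonzero entry strictly above the diagonal, such as one swapping coordinates between $\mathsf{R}_{2i_{1}}$ and $\mathsf{R}_{2i_{2}}$, or more simply a single transposition permutation matrix moving a basis vector of $\mathsf{R}_{2i_{1}}$ past one of $\mathsf{R}_{2i_{2}}$ — which is invertible but has a nonzero entry in a block position that the lemma forbids. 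Hence the homomorphism is not surjective.

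I expect the proof to be essentially a one-line deduction from the two lemmas once the easy direction is handled, so there is no serious obstacle; the only thing to be careful about is the bookkeeping that the ordering of the blocks in $\mathcal{R}(\mathsf{b})=\mathsf{R}_{2}\perp\mathsf{R}_{4}\perp\cdots$ used in the lemma really does force strict block-lower-triangularity, so that any matrix with a genuinely "upper" block entry lies outside the image. One should also note explicitly that the even-type hypothesis is what licenses working inside $\mathcal{R}(\mathsf{b})\subseteq\mathsf{b}_{\on{tdeg}}$ and identifying $\operatorname{Aut}(\mathsf{b})$ with $\operatorname{Aut}(\mathsf{b}_{\on{tdeg}})\times\operatorname{Aut}(\mathsf{b}_{\on{ndeg}})$, as recalled from \cite{Dj} just before the lemmas, so that the previous lemma's description of the image applies verbatim.
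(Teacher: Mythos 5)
Your ``if'' direction is correct and is exactly the intended one-line consequence of the preceding lemma: when only $\mathsf{J}_{2}$-blocks occur there is a single diagonal block, so the image of $\on{Aut}(\mathsf{b})$ is all of $\on{GL}(\mathsf{R}_{2})=\on{GL}(\mathcal{R}(\mathsf{b}))$ (and you rightly note that the even-type hypothesis is what gives $\on{Aut}(\mathsf{b})=\on{Aut}(\mathsf{b}_{\on{tdeg}})\times\on{Aut}(\mathsf{b}_{\on{ndeg}})$ so that the lemma applies).

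The gap is in the converse. You infer from ``even type but not homogeneous even type'' that at least two \emph{distinct} block sizes occur ($s\geq 2$). With the paper's definition (homogeneous even type means only $\mathsf{J}_{2}$-blocks appear), the negation also includes forms whose totally degenerate part is $\mathsf{J}_{2k}^{\perp m}$ for a single $k\geq 2$, e.g.\ $\mathsf{b}=\mathsf{J}_{4}^{\perp m}$. There your argument has nothing to work with: $\mathcal{R}(\mathsf{b})=\mathsf{R}_{2k}$ is a single block, there is no strictly upper block entry to exploit, and the very lemma you invoke says the image is all of $\on{GL}(\mathsf{R}_{2k})$, i.e.\ the restriction map \emph{is} surjective. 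Concretely, for $\mathsf{J}_{4}$ the isometries $\on{diag}(A,(A^{-1})^{T})$ with $A=aI+bJ$ act on the one-dimensional $\mathcal{R}(\mathsf{b})$ by the arbitrary nonzero scalar $a$. So your transposition argument only rules out surjectivity when two different even block sizes are present; what surjectivity actually characterizes, given the lemma, is that the degenerate part consists of blocks of a single (even) size. Thus the ``only if'' direction cannot be completed along your lines under the literal definition of homogeneous even type; note that the subsequent corollaries only use the ``if'' direction together with the observation that $\mathsf{b}|_{\mathcal{R}(\mathsf{b})\otimes\mathcal{L}(\mathsf{b})}\neq 0$ forces $\mathsf{J}_{2}$-blocks, which is where the restriction to blocks of size two genuinely enters.
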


\begin{corollary}
    If $\mathsf{b}$ is of homogeneous even type, then
    $\big(\mathcal{R}(\mathsf{b}) \otimes \mathcal{L}(\mathsf{b})\big)/\on{Aut}(\mathsf{b}) = \big(\mathcal{R}(\mathsf{b}) \otimes \mathcal{L}(\mathsf{b})\big)/\on{GL}(\mathcal{R}(V))$,
    where the latter action is that described in \autoref{blockmatrix}.
\end{corollary}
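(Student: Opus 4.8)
The plan is to combine the two preceding corollaries with \autoref{lem:factorRL}. First I would invoke \autoref{lem:factorRL}, which tells us that the action of $\on{Aut}(\mathsf{b})$ on $\mathcal{R}(\mathsf{b}) \otimes \mathcal{L}(\mathsf{b})$ factors as
\[
\on{Aut}(\mathsf{b}) \xrightarrow{\;\rho\;} \on{GL}(\mathcal{R}(\mathsf{b})) \xrightarrow{\;A \,\mapsto\, (A\cdot-)\,\otimes\,((A^{-1})^{T}\cdot-)\;} \on{GL}\big(\mathcal{R}(\mathsf{b}) \otimes \mathcal{L}(\mathsf{b})\big),
\]
where $\rho$ is the homomorphism $\on{Aut}(\mathsf{b}) \rightarrow \on{GL}(\mathcal{R}(\mathsf{b}))$ appearing in the block-matrix description \eqref{blockmatrix}. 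In particular, the $\on{Aut}(\mathsf{b})$-orbit of any $\mathsf{t} \in \mathcal{R}(\mathsf{b}) \otimes \mathcal{L}(\mathsf{b})$ equals the orbit of the image of $\on{Aut}(\mathsf{b})$ under $\rho$, acting via the second map above.

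Next, since $\mathsf{b}$ is assumed to be of homogeneous even type, the preceding corollary gives that $\rho$ is surjective. Hence $\on{Im}(\rho) = \on{GL}(\mathcal{R}(\mathsf{b}))$, so the $\on{Aut}(\mathsf{b})$-orbits and the $\on{GL}(\mathcal{R}(\mathsf{b}))$-orbits on $\mathcal{R}(\mathsf{b}) \otimes \mathcal{L}(\mathsf{b})$ literally coincide, and therefore the two orbit spaces
\[
\big(\mathcal{R}(\mathsf{b}) \otimes \mathcal{L}(\mathsf{b})\big)/\on{Aut}(\mathsf{b}) \quad\text{and}\quad \big(\mathcal{R}(\mathsf{b}) \otimes \mathcal{L}(\mathsf{b})\big)/\on{GL}(\mathcal{R}(V))
\]
are equal as sets (indeed as GIT quotients, once one observes that $\rho$ is a morphism of algebraic groups with image a closed subgroup, so that surjectivity identifies the quotient stacks as well). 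This completes the argument.

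Since every ingredient is already established, there is no real obstacle here; the only point requiring a line of care is checking that "the action factors through a surjective homomorphism" genuinely implies equality of orbits — but this is immediate: if $G \twoheadrightarrow H$ and $G$ acts on a set $X$ through $H$, then $G\cdot x = H\cdot x$ for all $x \in X$, hence $X/G = X/H$. One may additionally remark, for the GIT-quotient refinement mentioned in \autoref{cor:GIT}, that surjectivity of $\rho$ on closed points (over the algebraically closed field $\Bbbk = \mathbb{C}$) suffices to identify the affine quotients $\on{Spec}\big(\Bbbk[\mathcal{R}(\mathsf{b}) \otimes \mathcal{L}(\mathsf{b})]^{\on{Aut}(\mathsf{b})}\big)$ and $\on{Spec}\big(\Bbbk[\mathcal{R}(\mathsf{b}) \otimes \mathcal{L}(\mathsf{b})]^{\on{GL}(\mathcal{R}(V))}\big)$, since the invariant subalgebras agree.
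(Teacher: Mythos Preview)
Your proposal is correct and is exactly the intended argument: the paper states this corollary without proof, since it follows immediately from combining \autoref{lem:factorRL} (the action factors through $\on{GL}(\mathcal{R}(\mathsf{b}))$) with the preceding corollary (surjectivity of $\on{Aut}(\mathsf{b}) \to \on{GL}(\mathcal{R}(\mathsf{b}))$ for homogeneous even type). Your remark that a surjective factorization yields equal orbits is precisely the missing one-line justification.
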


\begin{corollary}\label{cor:GIT}
    Let $\mathsf{b}$ be of homogeneous even type. 
    The action of $\on{GL}(\mathcal{R}(V))$ on $\setj{\mathtt{t} \in \mathcal{R}(\mathsf{b}) \otimes \mathcal{L}(\mathsf{b}) \; | \; \mathsf{b}(\mathsf{t}) = 1}$ is isomorphic to that of $\on{GL}_{n}(\Bbbk)$ on $\setj{A \in \on{Mat}_{n\times n}(\Bbbk) \; | \; \on{Tr}(A) = 1}$. 
    The dimension of the associated affine GIT quotient is $n-1$.
\end{corollary}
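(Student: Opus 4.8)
The plan is to unwind the three reduction results that precede this corollary. Since $\mathsf{b}$ is of homogeneous even type, say $\mathsf{b} \cong \mathsf{b}_{\on{ndeg}} \perp \mathsf{J}_{2}^{\perp n}$ (possibly with a non-degenerate part that plays no role since $\mathcal{R}(\mathsf{b}), \mathcal{L}(\mathsf{b}) \subseteq \mathsf{b}_{\on{tdeg}}$), we have $\dim \mathcal{R}(\mathsf{b}) = \dim \mathcal{L}(\mathsf{b}) = n$. By the corollary immediately above, the orbit space $\big(\mathcal{R}(\mathsf{b}) \otimes \mathcal{L}(\mathsf{b})\big)/\on{Aut}(\mathsf{b})$ equals $\big(\mathcal{R}(\mathsf{b}) \otimes \mathcal{L}(\mathsf{b})\big)/\on{GL}(\mathcal{R}(\mathsf{b}))$, where by \autoref{lem:factorRL} the action is $A \mapsto (A \cdot -) \otimes ((A^{-1})^{T}\cdot-)$. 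So first I would set up the canonical isomorphism $\mathcal{R}(\mathsf{b}) \otimes \mathcal{L}(\mathsf{b}) \cong \on{Mat}_{n\times n}(\Bbbk)$. This requires fixing an identification $\mathcal{L}(\mathsf{b}) \cong \mathcal{R}(\mathsf{b})^{*}$; the natural one is induced by $\mathsf{b}$ itself, since $\mathsf{b}$ restricts to a perfect pairing $\mathcal{R}^{\infty}(\mathsf{b}) \times \mathcal{L}^{\infty}(\mathsf{b}) \to \Bbbk$ in the homogeneous even type case (recall $\mathsf{b}_{\on{tdeg}}$ has matrix $\left(\begin{smallmatrix} 0 & I \\ J & 0\end{smallmatrix}\right)$, and for $\mathsf{J}_{2}^{\perp n}$ the block $J$ is zero, so $\mathsf{b}$ pairs $\mathcal{R}(\mathsf{b}) = \mathcal{R}^{\infty}(\mathsf{b})$ with $\mathcal{L}(\mathsf{b}) = \mathcal{L}^{\infty}(\mathsf{b})$ via the identity matrix). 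Under $\mathcal{R}(\mathsf{b}) \otimes \mathcal{R}(\mathsf{b})^{*} \cong \on{Mat}_{n\times n}(\Bbbk)$, a pure tensor $r \otimes \ell$ corresponds to the rank-one matrix $r\,\ell^{T}$.

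The second step is to check that the action transports correctly. Under the above identification, $A \otimes (A^{-1})^{T}$ acting on $r \otimes \ell$ sends $r\,\ell^{T}$ to $(Ar)((A^{-1})^{T}\ell)^{T} = Ar\,\ell^{T}A^{-1} = A(r\,\ell^{T})A^{-1}$, which is precisely the conjugation action of $\on{GL}_{n}(\Bbbk)$ on $\on{Mat}_{n\times n}(\Bbbk)$. The third step is to identify the constraint: the condition $\mathsf{b}(\mathsf{t}) = 1$ translates to $\on{Tr}(A) = 1$ for the corresponding matrix $A$. Here $\mathsf{b}(\mathsf{t})$ means applying the bilinear form $\mathsf{b}: V \otimes V \to \Bbbk$ to $\mathsf{t} \in \mathcal{R}(\mathsf{b}) \otimes \mathcal{L}(\mathsf{b})$; writing $\mathsf{t} = \sum_{i,j} a_{ij}\, r_{i} \otimes \ell_{j}$ in the dual bases so that the matrix is $(a_{ij})$, we get $\mathsf{b}(\mathsf{t}) = \sum_{i,j} a_{ij}\,\mathsf{b}(r_{i},\ell_{j}) = \sum_{i,j}a_{ij}\,\delta_{ij} = \sum_{i} a_{ii} = \on{Tr}(A)$, using exactly that the $\mathsf{b}$-pairing of $\mathcal{R}(\mathsf{b})$ with $\mathcal{L}(\mathsf{b})$ is given by the identity matrix in these bases. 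Since conjugation preserves trace, the hyperplane $\setj{\on{Tr} = 1}$ is $\on{GL}_{n}$-stable, and we obtain the claimed isomorphism of actions.

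Finally, for the dimension of the affine GIT quotient: the affine variety $X = \setj{A \in \on{Mat}_{n\times n}(\Bbbk) \mid \on{Tr}(A) = 1}$ is an affine space of dimension $n^{2}-1$. By classical invariant theory (the coordinate ring of the GIT quotient $X /\!\!/ \on{GL}_{n}$ is $\Bbbk[X]^{\on{GL}_{n}}$, generated by the coefficients of the characteristic polynomial, i.e. the $\on{Tr}(\wedge^{k}A)$ for $k = 1,\ldots,n$), the quotient $\on{Mat}_{n\times n}(\Bbbk)/\!\!/\on{GL}_{n} \cong \mathbb{A}^{n}$ via $A \mapsto (c_{1}(A),\ldots,c_{n}(A))$ where $c_{k}$ are the characteristic polynomial coefficients; restricting to $\on{Tr} = 1$ fixes $c_{1} = 1$ (up to sign), leaving $\mathbb{A}^{n-1}$. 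Hence the GIT quotient has dimension $n-1$. I do not expect a serious obstacle here; the one point requiring care is step one — making the identification $\mathcal{L}(\mathsf{b}) \cong \mathcal{R}(\mathsf{b})^{*}$ canonical (i.e. checking it is $\on{Aut}(\mathsf{b})$-equivariant in the right way, which is where the homogeneous even type hypothesis is genuinely used, since for non-homogeneous Jordan blocks the pairing matrix $\left(\begin{smallmatrix}0 & I \\ J & 0\end{smallmatrix}\right)$ has $J \neq 0$ and the radical-to-radical pairing would be more subtle) — and then verifying the trace computation above in a fixed compatible pair of bases.
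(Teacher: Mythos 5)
Your proposal is correct and follows essentially the same route as the paper: identify $\mathcal{R}(\mathsf{b})\otimes\mathcal{L}(\mathsf{b})$ with $\on{Mat}_{n\times n}(\Bbbk)$ via $v\otimes w\mapsto vw^{T}$ using dual bases in which the pairing is $\delta_{ij}$, observe that the action of \autoref{blockmatrix} becomes conjugation and that $\mathsf{b}(\mathsf{t})=\on{Tr}(A)$, and compute the affine GIT quotient from the characteristic polynomial coefficients with $c_{1}$ fixed by the trace condition. Your added remark on why the homogeneous even type hypothesis makes the $\mathcal{R}$--$\mathcal{L}$ pairing the identity (the block $J$ vanishes) is exactly the point the paper handles implicitly through its choice of bases.
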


\begin{proof}
    The action described in \autoref{blockmatrix} is clearly isomorphic to $\on{GL}_{n}(\Bbbk)$ acting on $\Bbbk^{n} \otimes \Bbbk^{n}$ by letting $g \in \on{GL}_{n}(\Bbbk)$ map $v \otimes w$ to $(g\cdot v) \otimes (g^{-1})^{T}\cdot w$. 
    The map $\Bbbk^{n} \otimes \Bbbk^{n} \rightarrow \on{Mat}_{n\times n}(\Bbbk)$ sending $v \otimes w$ to $vw^{T}$ is a $\on{GL}_{n}(\Bbbk)$-module isomorphism to the conjugation action. 
    The bases $e_{1}^{\mathcal{R}},\ldots,e_{n}^{\mathcal{R}}$ for $\mathcal{R}(V)$ and $e_{1}^{\mathcal{R}},\ldots,e_{n}^{\mathcal{R}}$ for $\mathcal{L}(V)$ are such that $\mathsf{b}(e_{i}^{\mathcal{R}},e_{j}^{\mathcal{L}}) = \delta_{ij}$, so $\mathsf{b}(v \otimes w) = \on{Tr}(vw^{T})$ and the trace condition follows. 
    The affine GIT quotient associated to the conjugation action of $\on{GL}_{n}(\Bbbk)$ is well-known: a matrix $A$ is sent to the orbit of the diagonal matrix whose entries are the generalized eigenvalues of $A$, with multiplicities, and for the $\on{GL}_{n}(\Bbbk)$-invariant regular functions we have $\mathcal{O}(\on{Mat}_{n\times n}(\Bbbk)^{\on{GL}_{n}(\Bbbk)} = \Bbbk[c_{1},\ldots,c_{n}]$, the coefficients of the characteristic polynomial. 
    Since $c_{1} = -\on{Tr}$, the dimension of $\setj{A \in \on{Mat}_{n\times n}(\Bbbk) \; | \; \on{Tr}(A) = 1}$ is $n-1$. 
\end{proof}
Note that the latter part of \autoref{cor:GIT} can also be shown by observing that $\setj{A \in \on{Mat}_{n\times n}(\Bbbk) \; | \; \on{Tr}(A) = 1}$ is an isomorphic $\on{GL}_{n}(\Bbbk)$-module to $\mathfrak{sl}_{n}(\Bbbk)$.

The dimension computed in \autoref{cor:GIT} is to be compared with \cite{EO}, quantifying the difference between the case $q\neq 0$, where determining $U(1)$ and $U(\on{cap})$ specifies the fiber functor uniquely, whereas in our case we get an affine space of closed orbits of solutions. 
In the extreme case $\mathsf{b} = \mathsf{J}_{2}^{\perp m}$, the dimension of $U(1)$ is $2m$, while the dimension of the GIT quotient is $m-1$.

	%\vspace{5mm}
	
	%\noindent
	%Department of Mathematics, Uppsala University, Box. 480, SE-75106, Uppsala, SWEDEN, \\
	%email: {\tt mateusz.stroinski\symbol{64}math.uu.se}

\end{document}